\documentclass[11pt,letterpaper]{amsart}

\usepackage{amsmath,amssymb,amsthm,verbatim, csquotes,mathrsfs,stmaryrd}
\usepackage{hyperref}
\usepackage{xcolor}
\usepackage{esint}
\usepackage{mathtools}
\usepackage{graphicx}
\usepackage{float}
\usepackage{caption}
\mathtoolsset{showonlyrefs}
\usepackage{geometry}

\usepackage{bm}
\usepackage{pdfpages}
\usepackage[shortlabels]{enumitem}

\newtheorem{theorem}{Theorem}[section]\newtheorem{lemma}[theorem]{Lemma}\newtheorem{proposition}[theorem]{Proposition}\newtheorem{definition}[theorem]{Definition}\newtheorem{corollary}[theorem]{Corollary}\newtheorem{remark}[theorem]{Remark}

\makeatletter
\def\l@subsection{\@tocline{2}{0pt}{1.5em}{5pc}{}}
\makeatother

\def\om{\omega}
\def\Om{\Omega}
\def\p{\partial}

\def\de{\delta}
\def\De{\Delta}

\def\S{{\Sigma}}
\def\<{\langle}
\def\>{\rangle}

\def\na{\nabla}

\def\spt{{\rm spt}}

\providecommand{\abs}[1]{\lvert#1\rvert}
\providecommand{\Abs}[1]{\left\lvert#1\right\rvert}

\providecommand{\norm}[1]{\lVert#1\rVert}

\newcommand{\mbE}{\mathbb{E}}
\newcommand{\mbF}{\mathbb{F}}

\newcommand{\mbN}{\mathbb{N}}

\newcommand{\mbR}{\mathbb{R}}
\newcommand{\mbS}{\mathbb{S}}

\newcommand{\mbZ}{\mathbb{Z}}
\newcommand{\mcA}{\mathcal{A}}
\newcommand{\mcB}{\mathcal{B}}

\newcommand{\mcH}{\mathcal{H}}

\newcommand{\mcQ}{\mathcal{Q}}

\newcommand{\mfC}{\mathbf{C}}

\newcommand{\mfK}{\mathbf{K}}

\newcommand{\rd}{{\rm d}}

\newcommand{\bseta}{\boldsymbol\eta}

\newcommand{\ra}{\rightarrow}

\newcommand{\eq}[1]{\begin{equation}\begin{alignedat}{2} #1 \end{alignedat}\end{equation}}

\numberwithin{equation} {section}

\begin{document}

\title[Branched stable minimal hypersurfaces]
{Regularity of branched stable minimal immersed hypersurfaces}
\date{\today}

\author[Wang]{Gaoming Wang}
\address[G.W]{Beijing Institute of Mathematical Sciences and Applications\\Huairou District\\101408\\Beijing\\China}
\email{wanggaoming@bimsa.cn}

\author[Zhang]{Xuwen Zhang}
	\address[X.Z]{Mathematisches Institut\\Universit\"at Freiburg\\Ernst-Zermelo-Str.1\\79104\\Freiburg\\ Germany}
\email{xuwen.zhang@math.uni-freiburg.de}

\begin{abstract}
We establish a sharp bound on the Hausdorff dimension of the non-branch singular set
of branched stable minimal immersed hypersurfaces whose singular sets have locally
finite $\mathcal H^{n-2}$-measure: the non-branch singular set is empty when $n=2$,
discrete when $n=3$, and has Hausdorff dimension at most $n-3$ when $n\geq4$.
We also construct a non-flat stable minimal cone in $\mathbb R^4$ arising from a
branched minimal immersion whose vertex is a non-branch singularity. Taking products
with Euclidean factors yields examples whose non-branch singular sets have Hausdorff
dimension exactly $n-3$, showing that our regularity bound is sharp in every dimension
$n\geq3$.
The main ingredients in our proof are a generalized Schoen inequality and a
corresponding branched sheeting theorem near stationary classical cones and unions
of hyperplanes.

\end{abstract}

\maketitle
\tableofcontents

\section{Introduction}

Minimal hypersurfaces are the critical points of the area functional, while stable minimal hypersurfaces are those for which the second variation is nonnegative.
For a two-sided minimal immersed hypersurface $M=\iota(\S)$, where $\iota\colon \S\to U\subset\mathbb{R}^{n+1}$, with global unit normal $\nu$ and second fundamental form $A$, stability is equivalent to
\eq{\label{eq:stability-intro}
  \int_\S\lvert A\rvert^{2}\varphi^{2}\,\rd\mcH^n
  \leq
  \int_\S\lvert\nabla\varphi\rvert^{2}\,\rd\mcH^n
  \quad\text{for every }\varphi\in C_c^1(\S).
}

The classical regularity and compactness theory for stable minimal hypersurfaces is by now well understood.
Schoen-Simon-Yau \cite{Schoen1975curvature} obtained local curvature estimates for stable minimal immersed hypersurfaces in dimensions $n\leq5$, and Bellettini \cite{Bellettini25} completed the Euclidean estimate in the remaining case $n=6$.
Schoen--Simon \cite{SS81} developed the corresponding sheeting, regularity, and compactness theory for stable minimal embedded hypersurfaces and obtained the optimal codimension-$7$ conclusion under the assumption that the singular set has locally finite $\mcH^{n-2}$-measure.
Wickramasekera \cite{Wickramasekera14} removed the need to impose a size condition on the singular set in advance, replacing it with the structural exclusion of classical singularities, and proved the optimal regularity and compactness conclusions for stable codimension-one integral varifolds under this structural hypothesis.

To establish the regularity and compactness theory for stable minimal immersed hypersurfaces, the main new difficulty is the possible formation of branch points.
Unlike transverse intersections of smooth sheets, branch points represent a genuine degeneration of the immersion and need not admit a local decomposition into smooth single-valued sheets.
Simon-Wickramasekera \cite{SimonWickramasekera2007,SimonWickramasekera2016} constructed stable branched minimal immersions and developed a frequency-function analysis of their branch sets.
Wickramasekera \cite{Wickramasekera08} established two-valued $C^{1,\alpha}$ regularity for stable minimal immersed hypersurfaces under the assumption that multiplicity is at most two.
These results show both that codimension-two branching is genuine and that a compactness theory allowing it must use multi-valued graphical descriptions.
For related work concerning the local structure of stable codimension-one stationary integral varifolds near higher-multiplicity planes and classical cones, see e.g. \cite{KrummelWickramasekera2021,MinterWickramasekera2024,Minter2024QHalf,EdelenMinter2024,Minter2025FiveHalves,Minter2025Campanato,BeckerKahnMinterWickramasekera2025}.

Recently, Bellettini \cite{Bellettini25} developed an intrinsic PDE method based on weak Caccioppoli inequalities and De Giorgi iteration, to show tilt estimate for stable minimal immersed hypersurfaces, in the spirit of Schoen-Simon \cite{SS81}.
His estimate gives a Lipschitz multi-valued graphical description near a single hyperplane with multiplicity, when the singular set has vanishing $2$-capacity. In particular,
if the singular set is $\mcH^{n-2}$-measure negligible, this improves to smooth single-valued sheeting.
Together with Hong and Li \cite{HLW2024deltaStable}, the first author obtained compactness for stable minimal immersions under a stronger a priori Hausdorff-dimension bound on the initial singular set.
Minter-Xiao \cite{MinterXiao2026} recently obtained the optimal non-branched regularity and compactness theorem: if the initially prescribed non-immersed singular set is $\mathcal H^{n-2}$-negligible, then the final singular set has Hausdorff dimension at most $n-7$, and the corresponding class is compact under local mass bounds.
Their conclusion is the immersed non-branched analogue of the optimal embedded theory.

It is natural to expect that two-sided stable minimal immersed hypersurfaces with locally finite $\mathcal H^{n-2}$ singular set should be closed under varifold limits subject to local mass bounds.
Bellettini \cite[p.~6]{Bellettini25} identified the analysis near classical cones,
beyond the setting of a hyperplane with multiplicity, as the natural missing step
toward the compactness conjecture. Our branched sheeting theorem addresses this step
by providing the required local analysis near stationary classical cones and unions
of hyperplanes.

\subsection{Main result}

To state our main result, we first introduce the following notations.

\begin{definition}\label{def:intro-varifold-class}
\normalfont
Let $n\geq2, \Lambda\in(0,\infty)$. Define $\mathscr V(\Lambda)$ to be the class of integral $n$-varifolds $V$ in $B_2(0)$ such that:
\begin{enumerate}
    \item [(i)] $V=\lvert(M,\beta)\rvert$ is the induced varifold of $M$ with multiplicity $\beta$, where $M$ is a two-sided, properly immersed, stable minimal hypersurface, and the singular set of $M$, denoted by ${\rm Sing}M=(\overline M\setminus M)\cap B_2(0)$, satisfies $\mathcal H^{n-2}({\rm Sing}M)<\infty$. $\beta$ is a positive integer-valued function on $M$ that is constant on each connected component;
    \item [(ii)] The mass bound holds: $ \frac{\lVert V\rVert(B_2(0))}{2^n\omega_n}\leq\Lambda$.
\end{enumerate}
We denote by $\overline{\mathscr V}(\Lambda)$ the closure of ${\mathscr V}(\Lambda)$ in the varifold topology.
\end{definition}

The branch point singularity considered in this paper is defined as follows.

\begin{definition}
\normalfont
Let $V$ be an integral $n$-varifold in $B_2(0)$.
We call a point $X\in{\rm spt}\lVert V\rVert\cap B_2(0)$ {\em regular point}, if there exists $\rho_X>0$ such that $V\llcorner B_{\rho_X}(X)=\sum_{i=1}^{N}q_i\lvert\Sigma_i\rvert\llcorner B_{\rho_X}(X)$, where $q_i\in\mathbb Z_{>0}$ and the $\Sigma_i$ are smooth, properly embedded hypersurfaces without interior boundary in $B_{\rho_X}(X)$, not necessarily mutually disjoint.
The set of regular points is denoted as ${\rm Reg}V$.
The {\em singular set} of $V$ is then
${\rm Sing}V
  \coloneqq\left({\rm spt}\lVert V\rVert\cap B_2(0)\right)\setminus{\rm Reg}V$.

A point $X\in{\rm Sing}V$ is a \emph{branch point} if some tangent cone is a finite integer sum of distinct hyperplanes, namely:
\eq{\label{eq:precompactness-branch-cone}
  \mathbf C\in{\rm VarTan}(V,X),
  \quad
  \mathbf C=\sum_{i=1}^{N}q_i\lvert P_i\rvert,
  \quad
  q_i\in\mathbb Z_{>0}.
}
The set of branch points is denoted by ${\rm Sing}_b V$. The non-branch singular set is then denoted by
${\rm Sing}_e V={\rm Sing}V\setminus{\rm Sing}_b V.$
\end{definition}

Our main result is the following sharp regularity and precompactness theorem for
$\overline{\mathscr V}(\Lambda)$.
\begin{theorem}\label{thm:l3-precompactness}
Let $n\geq2, \Lambda<\infty$.
For $j\in\mbN$, let $V_j=\lvert(M_j,\beta_j)\rvert\in\mathscr V(\Lambda)$, and assume that $0\in\overline M_j$.
Then, after passing to a subsequence,
$V_j\ra V$ in $B_2(0)$ as varifolds,
where $V\in\overline{\mathscr V}(\Lambda)$ is a nonzero stationary integral $n$-varifold, with $\dim_{\mathcal H}\bigl({\rm Sing}_e V\cap B_1(0)\bigr)\leq n-3$.
More precisely, ${\rm Sing}_e V\cap B_1(0)=\emptyset$ when $n=2$, ${\rm Sing}_e V\cap B_1(0)$ is discrete when $n=3$, and for every $n\geq3$, $\mathcal H^{n-3+\gamma}\bigl({\rm Sing}_e V\cap B_1(0)\bigr)=0$ for every $\gamma>0$.
\end{theorem}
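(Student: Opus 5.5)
The plan has four steps: compactness and stationarity, a classification of tangent cones at non-branch singular points, a dimension reduction, and a separate treatment of the low dimensions $n=2,3$.

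\emph{Compactness and stationarity.} The mass bound in (ii) together with Allard's compactness theorem gives a subsequence $V_j\to V$ as integral varifolds in $B_2(0)$, and each $V_j$ is stationary. For stationarity of $V_j$ across ${\rm Sing}M_j$, note that $\mathcal H^{n-2}({\rm Sing}M_j)<\infty$ forces zero $2$-capacity, so cutoffs extend the first variation formula; the limit $V$ is then stationary. Since $0\in\overline M_j$, monotonicity gives $\Theta(\lVert V_j\rVert,0)\ge1$, and upper semicontinuity of density yields $0\in{\rm spt}\lVert V\rVert$, so $V\neq0$. Stability of the $M_j$ also passes to the regular part of $V$ in the weak (test-function) sense, so the class $\overline{\mathscr V}(\Lambda)$ is closed under blow-ups and translations. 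It is therefore enough to analyze tangent cones of limits, which again lie in $\overline{\mathscr V}$ (with $\Lambda$ replaced by a density bound).

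\emph{Classification of tangent cones at non-branch singular points.} The key claim is this: if $\mathbf C\in\overline{\mathscr V}$ is a cone with $\dim S(\mathbf C)\ge n-2$, where $S(\mathbf C)$ is the spine, i.e. its maximal translation-invariant subspace, then $\mathbf C$ is a sum of hyperplanes. Since $\mathbf C$ is stationary, its link in the $2$-dimensional orthogonal complement of the spine is a union of great-circle arcs. If $\dim S=n-1$, $\mathbf C$ is a classical cone (half-hyperplanes meeting along an axis). If $\dim S=n-2$, the cross-section is a stationary geodesic net on $\mathbb S^2$. Here I would invoke the paper's generalized Schoen inequality and branched sheeting theorem near stationary classical cones and unions of hyperplanes. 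Applied along the approximating sequence $V_j$, the sheeting theorem shows that near a classical cone which is not a union of hyperplanes, $V_j$ must decompose into multivalued $C^{1,\alpha}$ graphs over the half-hyperplanes. That is incompatible with the $V_j$ being immersions unless the half-planes pair up into full hyperplanes, since immersed sheets cannot have boundary along the axis. So classical cones that are not unions of hyperplanes are excluded. In the $\dim S=n-2$ case, the vertices of the geodesic net on $\mathbb S^2$ are points of a classical cone (after one blow-up), so by the first case each vertex is a crossing of full great circles. The net is then a union of great circles, and $\mathbf C$ is a union of hyperplanes.

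\emph{Dimension reduction.} With the classification in hand, Federer/Almgren stratification gives: at any $X\in{\rm Sing}_eV$, no tangent cone is a union of hyperplanes. Hence every tangent cone has $\dim S\le n-3$, and ${\rm Sing}_eV\subset\mathcal S_{n-3}$, the stratum of points whose tangent cones all have spine dimension at most $n-3$. This yields $\mathcal H^{n-3+\gamma}({\rm Sing}_eV\cap B_1)=0$ for every $\gamma>0$. One point needs care: ${\rm Sing}_e$ is defined by "no tangent cone is a sum of hyperplanes", so I must also check that points whose tangent cone is a single multiplicity-one hyperplane are regular. Allard's theorem handles that.

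\emph{Low dimensions.} For $n=3$ the reduction gives $\dim\le0$. Discreteness follows from a standard argument: if $X_k\to X$ with $X_k\in{\rm Sing}_e$, rescale by $|X_k-X|$. In the limit cone there is a non-branch singular point away from the origin, which has a translation-invariant tangent cone, contradicting the $\dim S\le n-3=0$ classification there. This uses upper semicontinuity of the property "not a branch point", which itself follows from the sheeting theorem (a branch-type limit forces nearby points to be regular or branch). For $n=2$, any stationary $2$-cone in $\mathbb R^3$ has spine dimension at least $0=n-2$, so it falls under the classification and is a union of planes. Hence ${\rm Sing}_e=\emptyset$.

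The main obstacle is the classification step: excluding genuinely classical (non-hyperplane-union) cones as limits of branched immersions. This rests entirely on the branched sheeting theorem plus the Schoen inequality giving integrability of $|A|^2$ near the axis, and on keeping track of the $\mathcal H^{n-2}$ singular sets through the limit.
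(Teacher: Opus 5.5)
Your proposal is correct and follows the paper's proof essentially step for step: Allard compactness, the exclusion of unpaired classical cones (including iterated blow-ups) via the branched sheeting theorem, the $d=0,1,2$ classification of cones with spine of dimension at least $n-2$ using the geodesic-network structure of the link, Federer dimension reduction, and the blow-up/sheeting argument giving discreteness for $n=3$ and emptiness for $n=2$. The one hypothesis you leave implicit is that the tilt excess $E_{\mathbf C,R}$ of the rescaled immersions tends to zero. The paper obtains this from varifold convergence, because $G_{\mathbf C}$ is even, so $G_{\mathbf C}^2(\nu)$ is a continuous function on the Grassmannian that vanishes on the tangent planes of $\mathbf C$. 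Also, the sheeting theorem itself outputs that $\mathbf C$ is paired, so your heuristic about sheets over half-hyperplanes (and the $C^{1,\alpha}$ claim, where the paper only gets Lipschitz) is not needed.
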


\begin{remark}
\normalfont
In the proof of Theorem \ref{thm:l3-precompactness}, we will show that when $n\geq3$, ${\rm Sing}_eV\subset\mathcal{S}_{n-3}$, where $\mathcal{S}_{n-3}$ is the {\em$(n-3)$-stratum} of the singular set of $V$, hence by the result of Naber-Valtorta \cite{NV20}, ${\rm Sing}_eV$ is in fact countably $(n-3)$-rectifiable.
\end{remark}

\begin{remark}
\normalfont
Theorem \ref{thm:l3-precompactness} remains valid if the condition
$\mathcal H^{n-2}({\rm Sing}M)<\infty$ in Definition
\ref{def:intro-varifold-class} is replaced by the weaker assumption that
${\rm Sing}M$ has locally vanishing $2$-capacity relative to $M$.
Indeed, the local finiteness of $\mathcal H^{n-2}({\rm Sing}M)$ is used only
through the standard cutoff construction, which gives precisely
this property. These cutoffs extend the stability inequality and the subsequent
testing arguments across ${\rm Sing}M$, and once they are available, the sheeting
and compactness proofs work.
\end{remark}

The dimension estimate in Theorem \ref{thm:l3-precompactness} is sharp, in view of the following example.

\begin{theorem}\label{thm:stable-cone-sharpness}
There is a branched minimal immersion of a closed orientable surface
$F\colon M\ra\mathbb S^3$,
with finite nonempty branch set such that the cone $\mathbf C\subset\mathbb R^4$ over $F$ is non-flat, stationary, and stable.
Moreover, $0\in{\rm Sing}_e\mathbf C$.
For every $n\geq3$, the product cone
$\mathbf C\times\mathbb R^{n-3}\subset\mathbb R^{n+1}$
is stable and satisfies $\{0\}\times\mathbb R^{n-3}
  \subset
  {\rm Sing}_e
  \bigl(\mathbf C\times\mathbb R^{n-3}\bigr)$.
\end{theorem}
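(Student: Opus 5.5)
The plan is to build $F$ by Lawson's conjugate-surface method and then verify the cone properties one at a time. The first step is to find a branched minimal immersion $F\colon M\to\mathbb S^3$ of a closed orientable surface with $\lambda_1(-\Delta_M-|A|^2)\ge 1/4$ on the link. By the standard cone criterion, this is what stability of $\mathbf C$ requires: stability of a cone $\mathbf C^n\subset\mathbb R^{n+1}$ over a link $\Sigma$ is equivalent to
\[
\lambda_1\bigl(-\Delta_\Sigma-|A_\Sigma|^2\bigr)\geq-\Bigl(\frac{n-2}{2}\Bigr)^2,
\]
which for $n=3$ reads $\lambda_1\ge -1/4$. The derivation separates variables, $\varphi=r^{-(n-2)/2+i\tau}\psi$, and uses the Hardy inequality on $\mathbb R_+$. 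To extend the criterion to the branched setting, I will check that the quadratic form on $M$ is unaffected by finitely many branch points: they have zero $2$-capacity on the surface, and the cone over them is a finite union of rays, which has zero $2$-capacity in the $3$-dimensional cone. Any smooth test function can therefore be cut off near them at negligible cost, and the spectral criterion applies with $M$ carrying the pulled-back (singular-conformal) metric.

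The candidates I would try first are branched coverings of the Clifford torus or of a great sphere that are not totally geodesic, for instance Lawson-type surfaces $\xi_{m,k}$ or branched covers of Clifford-type surfaces. The great sphere itself must be excluded, since $\mathbf C$ must be non-flat and $0$ must not be a branch point. The eigenvalue condition fails for the Clifford torus itself: $|A|^2=2$ gives $\lambda_1=-2<-1/4$. So the link has to have small total curvature relative to its area. The natural route is therefore a minimal surface in $\mathbb S^3$ that is a $C^1$-small perturbation, away from branch points, of a multiplicity-$q$ great sphere. One constructs it by solving the Jacobi/minimal-surface equation as a multi-valued ($q$-valued, branched) graph over $\mathbb S^2$ with prescribed branch points; in effect it is a branched minimal immersion close to a $q$-fold branched cover of $S^2$. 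For such a surface $|A|^2$ is uniformly small and $\lambda_1\approx 0\ge-1/4$, so stability follows.

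The main obstacle is that being a small branched perturbation of the great sphere conflicts with $0\in{\rm Sing}_e\mathbf C$. The tangent cone of $\mathbf C$ at $0$ is $\mathbf C$ itself, so I need $\mathbf C$ not to be a sum of hyperplanes. Since $\mathbf C$ is non-flat and is not the cone over a union of great spheres, it is not of the form \eqref{eq:precompactness-branch-cone}; this holds as long as the immersion is genuinely not totally geodesic, and near-sphere is enough. The real difficulty is existence. Jacobi fields on $q$-fold branched covers of $S^2$ come from multivalued harmonic-type functions, namely eigenfunctions with eigenvalue $2$ of the branched Laplacian. I expect to need an explicit construction instead, for example Weierstrass-type or Lawson conjugate-Plateau data with symmetry, which gives a branched genus-$g$ surface whose area lies just above $4\pi q$. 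Stability would then be checked by a symmetry reduction of the Jacobi operator to a first eigenfunction. This existence-plus-eigenvalue verification is the step I expect to be hardest.

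For the products, stability of $\mathbf C\times\mathbb R^{n-3}$ follows by the standard argument. Given $\varphi(x,y)$, apply the stability of $\mathbf C$ slice-wise in $y$; since $|A|$ is independent of $y$ and $|\nabla\varphi|^2\ge|\nabla_x\varphi|^2$, integrating over $y$ gives the inequality. Every point $(0,y)$ has tangent cone $\mathbf C\times\mathbb R^{n-3}$, which is not a sum of hyperplanes. The point is singular, because regularity there would force the tangent cone to be a sum of planes. Hence $\{0\}\times\mathbb R^{n-3}\subset{\rm Sing}_e$.
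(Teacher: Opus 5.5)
\textbf{There is a genuine gap: the existence of a suitable branched link is left open, and the mechanism you propose for it cannot work.} Your cone criterion (that $\lambda_1(-\Delta-|A|^2)\ge-\tfrac14$ on the link, extended across finitely many branch points by a capacity argument) is correct. Your argument for the product cones and for $\{0\}\times\mathbb R^{n-3}\subset\mathrm{Sing}_e$ also matches the paper's. But constructing the link is the whole content of the theorem.

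Take any closed, non-totally-geodesic minimal surface $\Sigma\subset\mathbb S^3$. Use Simons' identity $\tfrac12\Delta|A|^2=|\nabla A|^2+2|A|^2-|A|^4$ and Kato's inequality $|\nabla|A||\le|\nabla A|$. Test with $\eta|A|$, where $\eta$ is a cutoff vanishing near the branch points. The cross terms cancel exactly, and you get
\[
\int\bigl(|\nabla(\eta|A|)|^2-|A|^2(\eta|A|)^2\bigr)\le-2\int\eta^2|A|^2+\sup|A|^2\int|\nabla\eta|^2 .
\]
The Dirichlet integral is conformally invariant in dimension two. So points have zero capacity even for the degenerate induced metric at a branch point. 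Hence, whenever $|A|$ is bounded, $\lambda_1(-\Delta-|A|^2)\le-2<-\tfrac14$ and the cone is unstable.

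This has three consequences for your plan.
\begin{enumerate}
\item Your step ``$|A|^2$ is uniformly small, so $\lambda_1\approx0$'' is impossible, since it would contradict $-\sup|A|^2\le\lambda_1\le-2$.
\item The same computation rules out the unbranched candidates $\xi_{m,k}$ and branched covers of the Clifford torus.
\item Any valid example must have $|A_F|$ unbounded at its branch points, so its stability is a global spectral effect. It is not a perturbative consequence of being close to a multiply covered great sphere. For $q=2$, Wickramasekera's multiplicity-two $C^{1,\alpha}$ sheeting theorem, combined with degree-one homogeneity, would even force such a cone to be a union of at most two hyperplanes.
\end{enumerate}

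\textbf{The missing idea is Lawson's polar map.} Start from a smooth embedded minimal surface $Y\subset\mathbb S^3$ with unit normal $\nu$.
\begin{itemize}
\item The map $F=\nu$ is a branched minimal immersion.
\item Its branch points are the umbilics of $Y$, namely the zeros of the holomorphic Hopf differential. There are $4\,\mathrm{genus}(Y)-4$ of them counted with multiplicity, so the branch set is nonempty once the genus is at least $2$.
\item One has $g_F=\tfrac12|A_Y|^2g_Y$ and $|A_F|^2=4/|A_Y|^2$. This gives exactly the required blow-up of $|A_F|$ at the branch points.
\item By conformal invariance of the Dirichlet integral, $\mathcal J_F(\phi)=\int_M\bigl(|\nabla^Y\phi|^2+\tfrac18|A_Y|^2\phi^2-2\phi^2\bigr)\,d\mu_Y$.
\end{itemize}
So stability of the cone over $F$ reduces to $\lambda_1(-\Delta_Y+\tfrac18|A_Y|^2)\ge2$ on the smooth surface $Y$, where the singular potential has become bounded and nonnegative.

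The paper takes $Y=M_m$, the Kapouleas--Wiygul desingularizations of two orthogonal Clifford tori, of genus $4m+1$.
\begin{itemize}
\item The curvature concentrated on the shrinking Scherk necks forces Dirichlet conditions on the four limiting flat cylinders $(0,\tfrac\pi2)\times\mathbb R/2\pi\mathbb Z$.
\item These cylinders have first Dirichlet eigenvalue $4$, and the potential tends to $\tfrac18\cdot2=\tfrac14$.
\item Hence $\liminf_m\lambda_m\ge\tfrac{17}{4}>2$.
\end{itemize}
Non-flatness and $0\in\mathrm{Sing}_e\mathbf C$ then follow as you describe. Without this construction, or a worked-out alternative, your proposal does not prove the theorem.
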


A key step towards Theorem \ref{thm:l3-precompactness} is the establishment of a
sheeting theorem when $M$ is close to a stationary classical cone or a hyperplane
cone whose spine has dimension at least $n-2$.
We call it a {\em Branched sheeting theorem}, for its statement it is useful to introduce the following two classes of cones.

\begin{definition}\label{defn:hyperplane-and-classical-cones}
\normalfont
Let $n\geq2$, let $\mfC$ be an $n$-dimensional cone in $\mbR^{n+1}$.
\begin{itemize}
    \item We call $\mfC$ a {\em hyperplane cone} if $\mathbf C=\sum_{i=1}^{J}q_i\lvert P_i\rvert$, where $J\geq1$, $q_i\in\mbZ_{>0}$, and $P_i$ are distinct hyperplanes in $\mbR^{n+1}$;
    \item We call $\mfC$ a {\em classical cone} if $\mathbf C=\sum_{i=1}^{J}q_i\lvert H_i\rvert$, where $J\geq3$, $q_i\in\mathbb Z_{>0}$, and $H_i$ are distinct half-hyperplanes in $\mbR^{n+1}$ with a common boundary given by a $(n-1)$-dimensional linear subspace.
Moreover, a classical cone is called {\em paired}, if it is in fact a hyperplane cone, and called {\em unpaired} otherwise.
\end{itemize}
For either type of cone, let $P_1,\dots,P_J$ be its distinct supporting hyperplanes (i.e. for classical cone these are the full extensions of the $H_i$), then choose a unit normal $p_i$ to each $P_i$, and set
$L(\mathbf C)\coloneqq{\rm span}\{p_1,\dots,p_J\}$.
We call $\ell$ the \emph{normal rank} of $\mfC$, defined as $\ell\coloneqq\dim L(\mathbf C)$.
\end{definition}
Note that any classical cone has $\ell=2$.
For any hyperplane cone $\mfC$, its spine is given by $ \mathcal S(\mathbf C)=\bigcap_{i=1}^{J}P_i=L(\mathbf C)^\perp$, with $\dim\mathcal S(\mathbf C)=n+1-\ell$.
In the case $J=1$, $\mfC$ is an integer multiplicity hyperplane, and a corresponding sheeting theorem is proved by Bellettini \cite{Bellettini25}, while if $J>1$ and ${\rm dim}\mathcal S(\mfC)\geq n-2$, then $\ell=2$ or $3$.
In this case we can prove the following generalized Schoen differential inequality (recall \cite{Schoen77Thesis} and \cite[(2.7)]{SS81}):

\begin{theorem}\label{thm:SS-ineq}
Let $n\geq2$, let $p_1,\dots,p_m\in\mathbb S^n$ be distinct, and suppose that
\eq{
\ell
=\dim{\rm span}\{p_1,\dots,p_m\}\in\{2,3\}.
}
There are a nonnegative function $G_{\mathbf p}$ on $\mbS^n$ which is smooth away from $\{p_1,\dots,p_m\}$, and a constant $C_0=C_0(\ell,n,\mathbf p)>0$ with the following properties:

The zero set of $G_{\mathbf p}$ is exactly $\{p_1,\dots,p_m\}$, and $G_{\mathbf p}$ is comparable to the spherical distance from $\{p_1,\cdots,p_m\}$.
For every properly immersed, two-sided minimal hypersurface $M\hookrightarrow\mathbb R^{n+1}$ with unit normal $\nu$, the function $g_{\mathbf p}=G_{\mathbf p}\circ\nu$ satisfies pointwisely
\eq{\label{eq:generalized-SS-intro}
  \lvert A\rvert^2g_{\mathbf p}^2
  +g_{\mathbf p}\Delta g_{\mathbf p}
  \geq C_0\lvert A\rvert^2
  \quad\text{on }\{g_{\mathbf p}>0\}.
}
\end{theorem}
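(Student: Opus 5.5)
The plan is to reduce \eqref{eq:generalized-SS-intro} to a pointwise algebraic condition on $G_{\mathbf p}$ as a function on $\mathbb S^n$, using that the Gauss map of a minimal hypersurface is harmonic. Then I would construct $G_{\mathbf p}$ explicitly so that this condition holds.

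\emph{Step 1: the Bochner-type identity.}
For $M$ minimal in $\mathbb R^{n+1}$, Codazzi gives $\Delta\nu=-\lvert A\rvert^2\nu$. Hence $\nu\colon M\to\mathbb S^n$ is harmonic, i.e.\ its tension field vanishes. For any smooth $G$ on $\mathbb S^n$ the composition formula therefore reads
\[
\Delta(G\circ\nu)=\nabla^2_{\mathbb S^n}G\bigl(d\nu(e_i),d\nu(e_i)\bigr)\quad\text{(summed over } i\text{)},
\]
where $\{e_i\}$ is a principal frame. We have $d\nu(e_i)=\kappa_ie_i$, and $T_xM=\nu^\perp=T_\nu\mathbb S^n$. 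Writing $g=G\circ\nu$ and $H=\nabla^2_{\mathbb S^n}G(\nu)$, the left side of \eqref{eq:generalized-SS-intro} becomes
\[
\sum_i\kappa_i^2\bigl(G^2+G\,H(e_i,e_i)\bigr).
\]
Thus it suffices to find $C_0>0$ with
\[
G^2\lvert S\rvert^2+G\operatorname{tr}(SHS)\geq C_0\lvert S\rvert^2
\]
for every symmetric $S$ on $T_y\mathbb S^n$ at every $y\notin\{p_i\}$. The key point is that we only need this for \emph{trace-free} $S$, because $\sum\kappa_i=0$.

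The trace-free restriction is essential. Near a zero $p$, take $G\approx r=\operatorname{dist}(\cdot,p)$. Then $\nabla^2 r=\cot r\,(g-dr\otimes dr)$, so $G\,\nabla^2G\approx g-dr\otimes dr$, which degenerates in the radial direction. For trace-free $S$, however, the elementary bound $\kappa_1^2\leq\frac{n-1}{n}\lvert S\rvert^2$ shows the tangential directions carry at least $\lvert S\rvert^2/n$. So near each $p_i$ the inequality holds with constant about $1/n$, up to error terms $O(r)$.

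\emph{Step 2: construction of $G_{\mathbf p}$ using $\ell\leq3$.}
Let $L={\rm span}\{p_i\}$, with $\dim L=\ell$, and let $\pi$ be the orthogonal projection onto $L$. I would take $G$ depending only on $\pi(y)$, or more precisely on $(\lvert\pi(y)\rvert,\pi(y)/\lvert\pi(y)\rvert)\in[0,1]\times\mathbb S^{\ell-1}$. This reduces the global problem to a low-dimensional one, on the circle $\mathbb S^1$ when $\ell=2$ and on $\mathbb S^2$ when $\ell=3$, with finitely many marked points.

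A natural candidate is $G(y)=\prod_i\lvert y-p_i\rvert^{a_i}$, or a smoothed minimum of the chordal distances $\lvert y-p_i\rvert$, modified far from the $p_i$ so that $G$ is large. Away from the zero set we have $G^2\geq c>0$, so it is enough that $G\nabla^2G\geq-(1-\delta)G^2$ on trace-free directions. I would try first $G=\Phi(\,\cdot\,)$, with $\Phi$ chosen via the ODE/ordering along great circles in the $\mathbb S^{\ell-1}$ factor, and the $\mathbb S^n$-normal directions to $L$ handled by the $\lvert\pi(y)\rvert$-dependence. Comparability with $\operatorname{dist}(\cdot,\{p_i\})$ is then automatic from the local form near each $p_i$.

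\emph{Main obstacle.}
The hard part is the intermediate region between two nearby $p_i$, where $G$ attains a local maximum and $\nabla^2G$ is negative. There one must check that the negative eigenvalues of $G\nabla^2G$ stay below $G^2$ in the trace-free sense; this is where $C_0$ comes to depend on $\mathbf p$. The restriction $\ell\in\{2,3\}$ should enter exactly here. In $\mathbb S^1$ or $\mathbb S^2$, one can arrange that the only concave direction is one-dimensional (along the arc joining neighbouring points, or across a spherical Voronoi edge for $\ell=3$). The trace-free averaging from Step 1 then again absorbs it. The remaining directions orthogonal to $L$ contribute the positive term $G^2$ plus a nonnegative Hessian part.
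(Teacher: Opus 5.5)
\textbf{There is a genuine gap: Step 2 contains neither a construction nor a verification.}

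Your Step 1 is correct and coincides with the paper's Lemma \ref{lem:minimal-composition}. Harmonicity of $\nu$ reduces the inequality to $\mathrm{tr}(S\mathcal{Q}_GS)\geq C_0\lvert S\rvert^2$ for trace-free $S$, where $\mathcal{Q}_G=G^2g_{\mathbb{S}^n}+G\nabla^2_{\mathbb{S}^n}G$. Your near-zero heuristic is also fine locally. But this is the easy part. The theorem is the existence of $G$, and Step 2 neither constructs one nor checks anything in the region you yourself call the main obstacle.

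\textbf{The heuristic you offer there is wrong.} For $\ell=3$, take $y\in L\cap\mathbb{S}^n$ and $S$ trace-free and supported on the $2$-plane $T_y(L\cap\mathbb{S}^n)$. Since this great sphere is totally geodesic, Lemma \ref{lem:tr-n=2} gives
\[
\mathrm{tr}(S\mathcal{Q}_GS)=\tfrac12\lvert S\rvert^2\,f\,(\Delta_{\mathbb{S}^2}+2)f,\qquad f=G|_{L\cap\mathbb{S}^n}.
\]
So there is no averaging gain at all. A concave direction is absorbed only if the scalar inequality $(\Delta_{\mathbb{S}^2}+2)f>0$ holds on all of $\mathbb{S}^2\setminus\{p_i\}$. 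The argument of Lemma \ref{lem:necessary-condition} gives this for any $G$.

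\textbf{Your candidates violate this condition.}
\begin{itemize}
\item Since $\Delta_{\mathbb{S}^2}\log\lvert y-p\rvert=-\frac12$, the function $f=\prod_{i=1}^m\lvert y-p_i\rvert$ satisfies $f^{-1}(\Delta_{\mathbb{S}^2}+2)f=2-\frac m2+\lvert\nabla\log f\rvert^2$. This is $\leq0$ at the maximum of $f$ once $m\geq4$, which always happens in the application, where the point set is $\{\pm p_i\}$.
\item A minimum of chordal distances smoothed at scale $\epsilon$ has transverse second derivative of order $-1/\epsilon$ on the Voronoi ridges. If you smooth at scale $O(1)$ instead, nothing is checked.
\end{itemize}
Likewise, the claim that directions normal to $L$ contribute ``$G^2$ plus a nonnegative Hessian part'' is asserted without proof. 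The real difficulty sits near but off $L\cap\mathbb{S}^n$, where the normal direction $\xi_y$ must be treated together with the two $L$-directions.

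\textbf{The missing ideas are the paper's.}
\begin{itemize}
\item The condition above singles out the critical profile $\prod_i(1-\langle y,p_i\rangle)^{1/m}$. For it, $\Delta_{\mathbb{S}^2}\log f=-1$ and $f^{-1}(\Delta_{\mathbb{S}^2}+2)f=1+\lvert\nabla\log f\rvert^2>0$.
\item This profile is not comparable to the distance near the $p_i$. So the paper uses $\phi$ interpolating between $T^{2/m-1}t$ and $C_mt^{2/m}$, with the log-derivative control of Lemma \ref{lem:phi}; that control is exactly what the transition region needs.
\item One then takes $G^2=s_L^2+kP$ with $k$ small. Here $\mathcal{Q}_{s_L}$ is an orthogonal projection with $\ell$-dimensional kernel and dominates. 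So only an $\ell$-dimensional block, at most three-dimensional, must be made coercive.
\end{itemize}
That is where $\ell\leq3$ enters. When $s_L$ is bounded below, the paper uses the three-dimensional trace-free criterion (Lemma \ref{lem:three-dimensional-trace-free}). As $s_L\to0$, it uses a Schur-complement estimate (Lemma \ref{lem:schur-trace-estimate}), since there $\mathcal{Q}_G(\xi_y,\xi_y)\approx kP/G^2$ dominates the tangential block. Without a concrete $G$ and these verifications, the proposal does not prove the theorem.
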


For a cone $\mathbf C$ in Definition \ref{defn:hyperplane-and-classical-cones}, let $\mathcal N(\mathbf C)=\{\pm p_1,\dots,\pm p_J\}$ be the set of all unit normals to its supporting full hyperplanes.
For a two-sided immersion $M$ with unit normal $\nu$, define
\eq{\label{defn:g_mfC}
  E_{\mathbf C,R}(M)
  \coloneqq R^{-n}\int_{M\cap B_R(0)}g_{\mathbf C}^{2}\,d\mathcal H^n,
}
where $g_{\mfC}$ is obtained by applying Theorem \ref{thm:SS-ineq} to $\mathcal{N}(\mfC)$, and the corresponding function $G_{\mathbf C}$ is even on $\mbS^n$.

With these notations, we can now state our branched sheeting theorem (the multi-valued notation used below are recorded in Section \ref{sec:multivalued-functions}):

\begin{theorem}\label{thm:epsilon-regularity-sheeting}
Let $n\geq2, \Lambda<\infty$.
Let $\mathbf C$ be either a stationary classical cone or a hyperplane cone whose spine has dimension at least $n-2$, in the sense of Definition \ref{defn:hyperplane-and-classical-cones}.
Let $P_1,\dots,P_J$ be the distinct supporting full hyperplanes of $\mathbf C$.
There are positive constants
$\varepsilon=\varepsilon(n,\Lambda,\mathbf C)$, $C=C(n,\Lambda,\mathbf C)$,
with the following property:

Let $M\hookrightarrow B_R(0)$ be a properly immersed, two-sided, stable minimal hypersurface satisfying $\mathcal H^{n-2}\bigl({\rm Sing}M\cap B_R(0)\bigr)<\infty$ and $\frac{\mathcal H^n(M\cap B_R(0))}{\omega_nR^n}\leq\Lambda$.
If
\begin{equation}\label{eq:sheeting-smallness}
  E_{\mathbf C,R}(M)
  +
  \mathcal D\left((\eta_R)_\#\lvert M\rvert,
  \mathbf C\llcorner B_1(0)\right)
  <\varepsilon,
\end{equation}
where $\bseta_R(X)=R^{-1}X$, and $\mathcal{D}$ is the varifold distance.
Then $\mathbf C$ must be a hyperplane cone (i.e. if $\mfC$ is given as a classical cone, then it must be paired).
Write $\mathbf C=\sum_{i=1}^{J}q_i\lvert P_i\rvert$, $q_i\in\mathbb Z_{>0}$.
For each $i=1,\dots,J$, there is a Lipschitz $q_i$-valued function
$u_i\colon P_i\cap B_{R/2}(0)
  \ra
  \mathcal A_{q_i}(P_i^\perp)$,
such that, as integer varifolds,
\eq{\label{eq:graph-decompose-Lip}
  \lvert M\rvert\llcorner B_{R/2}(0)
  =
  \sum_{i=1}^{J}\mathbf{v}(u_i)\llcorner B_{R/2}(0).
}
Moreover, each $\mathbf{v}(u_i)$ is stationary in $B_{R/2}(0)$.
After choosing an orientation of $P_i^\perp$, the function $u_i$ has ordered Lipschitz representation $u_i=\sum_{a=1}^{q_i}[\![u_{i,a}]\!]$, with $u_{i,1}\leq\cdots\leq u_{i,q_i}$.
Away from its branch set, $\mathbf v(u_i)$ locally admits a possibly different labeling by smooth single-valued solutions of the minimal surface equation.
Finally, we have the estimate
\begin{equation}\label{eq:sheeting-estimate}
  {\rm Lip}(u_i)
  \leq
  C E_{\mathbf C,R}(M)^{1/2}.
\end{equation}
\end{theorem}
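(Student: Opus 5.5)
We plan to follow the Schoen--Simon strategy \cite{SS81}, with Theorem \ref{thm:SS-ineq} replacing the classical Schoen inequality and with Bellettini's De Giorgi/Caccioppoli approach \cite{Bellettini25} supplying the multi-valued Lipschitz description. The first step is an integral estimate on the tilt-excess quantity $g_{\mathbf C}$. Insert $\varphi=g_{\mathbf C}\zeta$ into the stability inequality \eqref{eq:stability-intro}. The cutoff construction made possible by $\mathcal H^{n-2}({\rm Sing}M)<\infty$ (vanishing $2$-capacity) justifies this test function across ${\rm Sing}M$. Combining the result with \eqref{eq:generalized-SS-intro} multiplied by $\zeta^2$ and integrating by parts, we expect
\[
C_0\int_M\lvert A\rvert^2\zeta^2\leq \int_M g_{\mathbf C}^2\lvert\nabla\zeta\rvert^2 ,
\]
and hence $R^{2-n}\int_{M\cap B_{3R/4}}\lvert A\rvert^2\leq C E_{\mathbf C,R}(M)$. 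Next we run a De Giorgi iteration on $g_{\mathbf C}$, which is a subsolution-type quantity by \eqref{eq:generalized-SS-intro} on $\{g_{\mathbf C}>0\}$, and use the Michael--Simon Sobolev inequality with the mass bound. This should give $\sup_{M\cap B_{5R/8}}g_{\mathbf C}^2\leq C E_{\mathbf C,R}(M)$. Because $G_{\mathbf C}$ is comparable to the distance to $\mathcal N(\mathbf C)$, once $\varepsilon$ is small the normal $\nu$ stays uniformly close to the finite set $\{\pm p_i\}$ at every point of $M\cap B_{5R/8}$.

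The second step is to split $M$ into pieces. The sets $\Omega_i=\{x\in M:\operatorname{dist}(\nu(x),\{\pm p_i\})<\delta\}$ are disjoint for small $\delta$ and together cover $M\cap B_{5R/8}$. Each one is a union of connected components of the regular part, and by capacity the singular set does not connect them. Each $M_i=\Omega_i$ is therefore a stable minimal immersed piece whose normal is uniformly close to $\pm p_i$. Since $\nu$ is continuous and $\Omega_i$ cannot meet $\Omega_j$, no piece can bend from one hyperplane to another. This is exactly how we expect to rule out unpaired classical cones. Near the spine, a half-hyperplane $H_i$ approximated by $M$ must continue as an almost-flat sheet tilted near $p_i$ through the spine region; the sheets cannot terminate because $M$ has no boundary and $\mathcal H^{n-2}({\rm Sing}M)<\infty$ gives no mass to boundaries. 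By varifold closeness, the multiplicity on both sides of the spine must then match, so $\mathbf C$ is paired. Each $|M_i|$ has small tilt excess relative to $P_i$, and stationarity of $|M_i|$ follows from the first variation computed by the same capacity cutoff. The varifold distance condition then fixes its mass near $q_i\omega_n$. We now apply Bellettini's hyperplane sheeting theorem \cite{Bellettini25}, or rerun his argument with the pointwise tilt bound, to get a Lipschitz $q_i$-valued graph $u_i$ over $P_i\cap B_{R/2}$ with ${\rm Lip}(u_i)\leq C\sup g_{\mathbf C}\leq CE^{1/2}$. The ordered representation comes from sorting values in the one-dimensional fiber $P_i^\perp$. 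Away from the branch set, an immersed sheet with small slope is locally a smooth single-valued minimal graph.

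The main obstacle is the decomposition step near the spine. There the varifold closeness to $\mathbf C$ does not obviously prevent a connected sheet of $M$ from switching between the different $\pm p_i$, and the sup bound on $g_{\mathbf C}$ must hold uniformly up to the spine. We plan to handle this by a careful covering and iteration argument carried out on all of $B_{5R/8}$ at once, using only intrinsic quantities, so that no separate graphical analysis is needed near the spine. We would also double-check that the constant in Theorem \ref{thm:SS-ineq} is uniform, and that the De Giorgi iteration tolerates the level sets $\{g_{\mathbf C}>k\}$ with $k$ small relative to $\delta$.
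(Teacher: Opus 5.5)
Your proposal is correct and is essentially the paper's argument. Both use the level-set Caccioppoli inequality obtained from stability plus Theorem \ref{thm:SS-ineq}, together with $|\nabla g|\le C|A|$ (Proposition \ref{prop:gradient-reduction}). Bellettini's De Giorgi iteration then gives $\sup_{M\cap B_{R/2}}g\le CE^{1/2}$ intrinsically and uniformly up to the spine, so continuity of $\nu$ already disposes of your ``main obstacle''. After that come Lemma \ref{lem:small-g-imply-normal-closeness}, grouping by $\{\pm p_i\}$, and Bellettini's continuation argument, which produces Lipschitz multi-valued graphs over each $P_i\cap B_{R/2}$. The only difference is in the final step: you read off pairing and $d_i=q_i$ directly, by comparing the constant sheet number with the varifold closeness on both sides of the spine. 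The paper instead argues by contradiction, passing to limits of the stationary graphical pieces and applying the Constancy Theorem.
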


\begin{remark}
\normalfont
Suppose that $q_i=1$ for every $i$, so that each $u_i$ is single-valued.
The hypothesis that ${\rm Sing}M$ has locally finite $\mathcal H^{n-2}$-measure implies that the projection of the singular set of the graph of $u_i$ to $P_i$ has locally finite $\mathcal H^{n-2}$-measure.
By the removability of singularity of minimal surface equation (cf. \cite{Simon77}), we see that $u_i$ is smooth and each points in $\overline{M}\cap B_{R/2}(0)$ is a regular point in the immersed sense.

\end{remark}

When some $q_i>1$, Theorem \ref{thm:epsilon-regularity-sheeting} gives a stationary Lipschitz multi-valued description, but no fine structure or measure control of the
flat branch set of $u_i$. After the graphical reduction in Theorem \ref{thm:epsilon-regularity-sheeting}, the only remaining local ingredient for the compactness conjecture discussed by Bellettini \cite{Bellettini25} is a structure theorem for the branch sets of limiting stationary Lipschitz multi-valued graphs. More precisely, if every such branch set were countably $(n-2)$-rectifiable with locally finite $\mathcal H^{n-2}$ measure, then, together with the estimate for ${\rm Sing}_e V$ in Theorem \ref{thm:l3-precompactness}, the conjecture would follow. Related structure theorems for branch sets of multi-valued minimal graphs, under additional hypotheses or multiplicity restrictions, were obtained in
\cite{MinterWickramasekera2024,KrummelWickramasekera2021,
BeckerKahnMinterWickramasekera2025,KrummelMinterWickramasekera2026}, but the required arbitrary-multiplicity statement remains open.

\subsection{Strategy of the proof}

The key step in our regularity theory is to establish the branched sheeting theorem (Theorem \ref{thm:epsilon-regularity-sheeting}), and in view of Schoen-Simon \cite{SS81}, Bellettini \cite{Bellettini25}, the main point of the proof is to find a suitable tilt function which captures the information of the non-planar cone $\mfC$, namely, $g_{\bf p}$ is smooth and positive on $\left\{X\in M: g_{\bf p}(X)>0\right\}$, $g_{\bf p}\equiv0$ if $M=\mfC$, such that the intrinsic PDE \eqref{eq:generalized-SS-intro} (aka. Schoen differential inequality) holds.

We prove that when the normal rank of the cone $\mfC$ in Definition \ref{defn:hyperplane-and-classical-cones} satisfies $\ell\leq3$, then such a tilt function exists and takes the form $g_{\mfC}=G_{\mfC}\circ\nu$, where
\eq{\label{defn:G_mfC}
G_{\mfC}(y)
  =
  \sqrt{1-\lvert\pi_{L(\mfC)}(y)\rvert^2
    +k\prod^J_{j=1}\phi\left(1-\left<y,p_j\right>\right)},
  \quad\forall y\in\mathbb S^n,
}
for some suitably chosen number $k\in(0,1]$ and smooth function $\phi:[0,2]\ra[0,\infty)$ with $\phi^{-1}(0)=0$, depending only on $n,{\bf C}$.
This is partly motivated by our recent work \cite{WZ26} on the regularity of stable capillary minimal hypersurfaces in the half-space $\{x_1>0\}$.
A reason that we believe the tilt function of the form \eqref{defn:G_mfC} works comes from the following observation: if we reflect the capillary cone $\mfC$ in \cite[Example 1.2]{WZ26} across the hyperplane $\{x_1=0\}$, then we obtain a hyperplane cone $\widehat\mfC$ consisting of two distinct hyperplanes.
The estimates \cite[Proposition 3.2, Lemma 3.6]{WZ26} then yield a Schoen differential inequality for $\widehat\mfC$.
By virtue of this intrinsic PDE, one can use Bellettini's method via De Giorgi iteration to prove a corresponding Branched sheeting theorem when the stable minimal immersed hypersurface is close to $\widehat\mfC$, which then provides a short proof of Wickramasekera's multiplicity-$2$ sheeting theorem \cite[Theorem 1.4]{Wickramasekera08}.

The proof of Theorem \ref{thm:SS-ineq} is technical, mainly because the cone $\mfC$ could still be very complicated even when $\ell\leq3$. For example, think of a classical cone in the sense of Definition \ref{defn:hyperplane-and-classical-cones} with $J=100$.
We postpone more explanation and discussion of why and how the tilt function of the form \eqref{defn:G_mfC} works to Section \ref{sec:generalized-SS}.
Once the generalized Schoen differential inequality \eqref{eq:generalized-SS-intro} is established, we can then use Bellettini's method \cite{Bellettini25} to prove the Branched sheeting theorem (Theorem \ref{thm:epsilon-regularity-sheeting}), as already discussed above.
This local regularity theorem, together with the nowadays standard Federer dimension reduction argument, then leads to our main regularity and precompactness theorem for the branched stable minimal immersed hypersurfaces (Theorem \ref{thm:l3-precompactness}).

The sharpness of the Hausdorff dimension bounds in Theorem \ref{thm:l3-precompactness} can be seen by constructing 
examples based on the Kapouleas-Wiygul gluing construction \cite{KapouleasWiygul2022} and Lawson's polar map \cite{Lawson1970}.
The main intuition here is that Lawson's polar map produces classical branched minimal surfaces, say $F:M\ra\mbS^3$, and the geometry of the {\em polar mapped surface} is controlled by the {\em initial surface}, while by Simons cone stability criterion \cite{Simons68}, the cone with link given by $F(M)$ is stable minimal in $\mbR^4$ if a corresponding quadratic form (see \eqref{eq:stable-cone-stability-quadratic-form} below) is non-negative definite.
This can be satisfied by considering the initial surface using Kapouleas-Wiygul gluing construction \cite{KapouleasWiygul2022}, which desingularizes the two orthogonally intersecting Clifford tori using Scherk necks and produces arbitrary high genus.
After removing the Scherk necks, this minimal surface decomposes into four cylinders, which converge to the flat cylinder $(0,\frac{\pi}{2})\times(\mbR/2\pi\mbZ)$ when the genus goes to $\infty$, and at the same time the Scherk necks collapse, thus imposing a Dirichlet boundary condition on the flat cylinder.
We can then prove the required spectral lower bound (Proposition \ref{thm:stable-cone-spectral-lower-bound}) and show that the cone is indeed stable minimal.
Finally, the vertex of this cone is neither a regular point nor a branched point, thanks to the fact that Clifford tori are non-flat, which gives us the desired example in Theorem \ref{thm:stable-cone-sharpness}.

\subsection{Organization of the paper}
In Section \ref{sec:preliminaries}, we collect some preliminaries from Geometric Measure Theory.
In Section \ref{sec:generalized-SS}, we construct tilt functions for cones with normal link $\ell=2,3$ and prove the generalized Schoen differential inequality (Theorem \ref{thm:SS-ineq}).
Section \ref{Sec:4} is devoted to the proof of the branched sheeting theorem (Theorem \ref{thm:epsilon-regularity-sheeting}).
Section \ref{sec:regularity-precompactness} proves the main regularity and precompactness theorem (Theorem \ref{thm:l3-precompactness}).
In Section \ref{sec:stable-cone-example}, we construct branched stable minimal cones (Theorem \ref{thm:stable-cone-sharpness}) and show that the dimension bound for the non-branch singular set is sharp.

\

\noindent{\em Acknowledgements.}
The authors would like to thank Neshan Wickramasekera for his comments and interest in this work. We also thank Paul Minter and Zhengyi Xiao for their interest and Mario Schulz for the discussions regarding the initial surface used in Section \ref{sec:stable-cone-example}.

\

\noindent{\em On the use of AI.}

{\em How we found the test function for Branched stable minimal immersions}:
Starting from a capillary prototype \cite{WZ26}, we combined hand calculations with AI-assisted programming, principally through GitHub Copilot and Cursor’s Composer 2, to generate and test numerical experiments rapidly. These experiments guided the search but did not replace the proof: the resulting test function was ultimately justified by rigorous analysis. In this \href{https://gaomw.com/notes/test-function-discovery/}{note}, we explain in detail the mathematical origin of this test function, and also describe how intelligent assistance contributed to the research process.

ChatGPT assisted the authors in identifying that minimal surfaces obtained by desingularizing two orthogonally intersecting Clifford tori be used as initial
surfaces for Lawson's polar map in the construction of the examples, and were also used for selected algebraic checks and proofreading. The authors independently verified all arguments and take full responsibility for the completeness and correctness of the proofs.
\section{Preliminaries}\label{sec:preliminaries}

We adopt the following basic notations throughout the paper.
\begin{itemize}
    \item We work with the Euclidean space $\mbR^{n+1}$, with Euclidean scalar product denoted by $\langle\cdot,\cdot\rangle$, and the corresponding Levi-Civita connection denoted by $D$.
When considering the topology of $\mbR^{n+1}$, we denote by $\overline{E}$ the topological closure of a set $E\subset\mbR^{n+1}$.
We denote by $e_i$ ($i=1,\cdots,n+1$) the $i$-th coordinates basis of $\mbR^{n+1}$;
    \item $B_r(X)$ is the open ball in $\mbR^{n+1}$, centered at $X$ with radius $r>0$. We denote by $\overline{B}_r(X)$ the closed ball, understood similarly;
    \item $\mathcal H^k$ denotes $k$-dimensional Hausdorff measure, and $\omega_k=\mathcal H^k(B_1^k(0))$ denotes the volume of the $k$-dimensional unit ball.
\item If $L\subset\mathbb R^{n+1}$ is a linear subspace, then $\pi_L$ denotes the orthogonal projection onto $L$, and $L^\perp$ denotes its orthogonal complement.
    \item If $E$ is a Euclidean vector space, we identify symmetric bilinear forms on $E$ with self-adjoint endomorphisms using the Euclidean metric, and write
    \eq{
    {\rm Sym}_0(E)
    =
    \left\{
    S\in{\rm End}(E):
    S^*=S,\ {\rm tr}S=0
    \right\}.
    }
    For a subspace $E_1\subset E$, we also write
    \eq{
    {\rm Sym}_0(E,E_1)
    =
    \left\{
    S\in{\rm Sym}_0(E):
    {\rm Im}S\subset E_1
    \right\}.
    }
    \item On an immersed hypersurface $M\subset\mbR^{n+1}$, we let $\na,{\rm div},\De$ denote the Levi-Civita connection, divergence, and Laplacian induced by the immersion into $\mbR^{n+1}$. For any vector $e\in\mbR^{n+1}$, we write $e^\top=e-\langle e,\nu\rangle\nu$ for its tangential component along $M$. Let $A$ denote the second fundamental form of $M$ in $\mbR^{n+1}$, defined by $A(\tau,\xi)=\langle D_{\tau}\xi,\nu\rangle$.
\end{itemize}

\subsection{Stable minimal hypersurfaces}
Let $M$ be a properly immersed, two-sided, smooth, stable minimal hypersurface of $B_2(0)\subset\mbR^{n+1}$, then $M$ satisfies
the stability inequality \eqref{eq:stability-intro} (cf. \cite[\textsection 9]{Simon83}).
By a standard argument (cf. \cite[pp. 6]{Bellettini25}), we also have
\eq{\label{ineq:SS81-(1.17)}
\int_M\abs{A}^2\varphi^2\rd\mcH^n
\leq\int_M\abs{\na\varphi}^2\rd\mcH^n,
}
for any $\varphi\in C^1_c(M\cap B_2(0))$.
If $\mcH^{n-2}({\rm Sing}M)<\infty$ and $\frac{\mcH^n\left(M\cap B_2(0)\right)}{2^n\om_n}\leq\Lambda$, then by a standard approximation argument (cf. \cite{SS81,Wickramasekera08}), the stability inequality \eqref{ineq:SS81-(1.17)} extends to hold for any Lipschitz function $\varphi$ with compact support in $B_2(0)$.

\subsection{Varifolds}
We use the notation and terminology in \cite{Simon83}. Recall that an $n$-rectifiable varifold $V$ in $U$ is a positive Radon measure on the trivial Grassmannian bundle $U\times G(n,n+1)$ of the form
\eq{
V(\phi(X,P))
=\int_{R_V}\phi(X,T_XR_V)\beta_{V}(X)\rd\mcH^n(X),\quad\forall\phi\in C^0_c(U\times G(n,n+1)),
}
where $R_V$ is an $n$-rectifiable set in $U$, $\beta_V$ is a non-negative $\mcH^n\llcorner R_V$-measurable function. The weight measure of $V$ is defined as $\norm{V}\coloneqq\pi_\ast V$, where $\pi:U\times G(n,n+1)\ra U$ is the canonical projection, and $\pi_\ast(\cdot)$ denotes the {\em push-forward} of measure through $\pi$.
$V$ is called {\em integral} if in addition, $\beta_V\in\mbN$ at $\norm{V}$-a.e. If $M=\iota(\S)$ and $\iota\colon \S\to U$ is a smooth, proper immersion and $\beta$ is a positive integer-valued function on $M$, we denote by $\lvert(M,\beta)\rvert$ the induced integral varifold:
\eq{\label{eq:immersed-varifold-notation}
  \lvert(M,\beta)\rvert(\varphi)
  =
  \int_\S
  \beta(x)\,
  \varphi\bigl(\iota(x),\rd\iota_x(T_x\S)\bigr)\,\rd\mu_\S(x)
}
for every $\varphi\in C_c(U\times G(n,n+1))$, where $\rd\mu_\S$ is the measure induced by the immersion.
When $\beta\equiv1$, we write simply $\lvert M\rvert$. If $S$ is a $k$-dimensional Lipschitz submanifold of $U$, and $\beta\in\mbZ_{>0}$, we write $\abs{(S,\beta)}=\beta\mcH^k\llcorner S\otimes T_XS$ for the multiplicity-$\beta$ varifold induced by $S$. In the case $\beta\equiv1$, we simply write $\abs{S}$ for the multiplicity-$1$ varifold induced by $S$.

Following \cite[Definition 42.3]{Simon83}, we denote ${\rm VarTan}(V,X)$ to be the set of \textit{varifold tangents} of $V$ at  $X\in{\rm spt}\norm{V}$. By the compactness of Radon measures, ${\rm VarTan}(V,X)$ is compact and non-empty provided that the upper density $\Theta^{\ast n}(\norm{V},X)\coloneqq\limsup_{r\searrow0}\frac{\norm{V}(B_r(X))}{\om_nr^n}$ is finite.
Moreover, there exists a non-zero element in ${\rm VarTan}(V,X)$ if and only if $\Theta^{\ast n}(\mu_V,X)>0$.

\subsection{Multi-valued functions}\label{sec:multivalued-functions}

We record here some notations in \cite{DS11}, see also \cite{MinterWickramasekera2024}.

Let $E$ be a finite-dimensional Euclidean vector space and let $Q\in\mathbb Z_{>0}$.
The space of {\em $Q$-points} in $E$ is (cf. \cite[Definition 0.1]{DS11})
\eq{\label{eq:Aq-definition}
  \mathcal A_Q(E)
  =
  \left\{
    \sum_{i=1}^Q[\![v_i]\!]:v_i\in E\text{ for every }i=1,\cdots,Q
  \right\},
}
where $[\![v]\!]$ is the Dirac mass in $v\in E$.
Clearly, the points $v_i$ do not have to be distinct, e.g. $Q[\![v]\!]$ is an element of $\mcA_Q(E)$ when all $Q$ points coincide at $v$.
For every $T_1=\sum_{i=1}^Q[\![v_i]\!]$ and
  $T_2=\sum_{i=1}^Q[\![w_a]\!]$,
define (cf. \cite[Definition 0.2]{DS11})
\eq{\label{eq:Aq-metric}
  \mathcal G(T_1,T_2)
  =
  \min_{\sigma\in\mathscr{P}_Q}
  \left(
    \sum_{i=1}^Q
    \lvert v_i-w_{\sigma(i)}\rvert^2
  \right)^{1/2},
}
where $\mathscr{P}_Q$ is the group of permutations of $\{1,\dots,Q\}$.
Continuity and Lipschitz continuity of maps with values in $\mathcal A_Q(E)$ are understood with respect to $\mathcal G$.
For a map $u\colon\Omega\to\mathcal A_Q(E)$, where $\Omega$ is a subset of a Euclidean space, we write
${\rm Lip}(u)
  =
  \sup_{\substack{x,y\in\Omega\\x\neq y}}
  \frac{\mathcal G\bigl(u(x),u(y)\bigr)}{\lvert x-y\rvert}$.

Now let $P\subset\mathbb R^{n+1}$ be an $n$-dimensional linear hyperplane, let $\Omega\subset P$ be open.
Since $P^\perp$ is $1$-dim, after choosing either orientation of $P^\perp$, every Lipschitz map $u\colon\Omega\to\mathcal A_Q(P^\perp)$ has ordered Lipschitz representations (cf. \cite[pp. 871--872]{MinterWickramasekera2024}) $u=\sum_{i=1}^Q[\![u_i]\!]$.
Using the map $F_i(x)=x+u_i(x)$, we define the associated {\em integral graph varifold} by
\eq{\label{eq:q-graph-varifold}
  \mathbf{v}(u)
  =
  \sum_{i=1}^Q(F_i)_\#\lvert\Omega\rvert,
}
which is equivalent to the definition in \cite[pp. 872]{MinterWickramasekera2024}.

\section{A generalized Schoen differential inequality}\label{sec:generalized-SS}

\subsection{Construction of the tilt function}

Fix $p_1,\ldots,p_m\in\mbS^n$, put an $m$-tuple ${\bf p}=(p_1,\ldots,p_m)$, and set
\eq{
L
={\rm span}\{p_1,\ldots,p_m\},\quad\text{with }
\ell
=\dim L\in\{2,3\}.
}
Let $k>0$ and let $\phi:[0,\infty)\to[0,\infty)$ be smooth with
$\phi^{-1}(0)=0$. We define
\eq{\label{defn:P}
P(y)
=\prod_{i=1}^m\phi(1-\langle y,p_i\rangle),
\quad\forall y\in\mbS^n.
}
Set the length of the projection of $y$ onto $L^\perp$ as
\eq{\label{defn:s_L}
s_L(y)
=\abs{\pi_{L^\perp}(y)},
}
and define the general tilt function by
\eq{\label{eq:G-p-definition}
G(y)
=\left(s_L^2(y)+kP(y)\right)^{1/2}.
}
We note that these functions depend on ${\bf p}$, and throughout this section we suppress that subscript and simply write $P=P_{\bf p}$, $G=G_{\bf p}$.
Clearly, $P>0$ away from $\{p_1,\ldots,p_m\}$ and vanishes precisely at these points.  The choices of $\phi$ and $k$ will be made below.

For a positive smooth function $f$ on $\mbS^n$, define the quadratic form
\eq{\label{defn:mcQ-1}
\mcQ_f
=f^2g_{\mbS^n}+f\na_{\mbS^n}^2f,
}
or alternatively
\eq{\label{defn:mcQ-2}
\mcQ_f
=f^2g_{\mbS^n}+\frac12\na_{\mbS^n}^2(f^2)
-\frac1{4f^2}\rd(f^2)\otimes\rd(f^2).
}

Our starting point is the following useful lemma.

\begin{lemma}\label{lem:minimal-composition}
Let $M\hookrightarrow\mbR^{n+1}$ be a two-sided minimal immersed hypersurface with unit normal $\nu$, and put $g=G\circ\nu$.
Then
\eq{\label{eq:A2g2-gDelta-g}
\abs{A}^2g^2+g\De g
={\rm tr}(A\mcQ_GA),\quad\text{whenever }g>0.
}
\end{lemma}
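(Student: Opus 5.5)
The plan is a direct chain-rule computation for the Gauss map $\nu\colon M\to\mbS^n$, combined with the two classical identities for minimal hypersurfaces: the Codazzi equations and Simons' identity $\De\nu=-\abs{A}^2\nu$.

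Fix a point $x\in M$ with $g(x)>0$ and choose a local orthonormal frame $\{\tau_1,\dots,\tau_n\}$ of $T_xM$ that is geodesic at $x$. Since $\nu$ takes values in $\mbS^n$ and $T_{\nu(x)}\mbS^n=\nu(x)^\perp=T_xM$, we may regard $\rd\nu_x$ as an endomorphism of $T_xM$. With the convention $A(\tau,\xi)=\langle D_\tau\xi,\nu\rangle$ we get $\rd\nu(\tau_i)=D_{\tau_i}\nu=-\sum_j A_{ij}\tau_j$. Up to sign, this is the second fundamental form, and the sign will disappear because it enters quadratically.

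The composition formula for the Laplacian of $G\circ\nu$ reads
\eq{
\De g=\sum_{i}\na^2_{\mbS^n}G\bigl(\rd\nu(\tau_i),\rd\nu(\tau_i)\bigr)+\langle\na_{\mbS^n}G,\tau(\nu)\rangle,
}
where $\tau(\nu)$ is the tension field of $\nu$ as a map into $\mbS^n$. The first term equals ${\rm tr}(A\,\na^2_{\mbS^n}G\,A)$ after identifying $T_{\nu(x)}\mbS^n$ with $T_xM$. For the tension field, the Codazzi equations together with $H=0$ give the classical identity $\De\nu=-\abs{A}^2\nu$ in $\mbR^{n+1}$. Hence the tangential part of $\De\nu$ relative to the sphere vanishes, so $\tau(\nu)=0$: the Gauss map of a minimal hypersurface is harmonic. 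Therefore
\eq{
\De g={\rm tr}(A\,\na^2_{\mbS^n}G\,A).
}

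Multiplying by $g=G(\nu)$ and adding $\abs{A}^2g^2={\rm tr}(A\,G^2g_{\mbS^n}\,A)$ yields ${\rm tr}(A\,\mcQ_G\,A)$, which is exactly the first form \eqref{defn:mcQ-1} of $\mcQ_G$. The equivalence with \eqref{defn:mcQ-2} follows from
\eq{
\tfrac12\na^2(G^2)=G\na^2G+\rd G\otimes\rd G
\quad\text{and}\quad
\rd(G^2)=2G\,\rd G.
}

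The only point requiring care is that $G=(s_L^2+kP)^{1/2}$ is smooth only where $G>0$. Since all identities above are pointwise on the open set $\{g>0\}$, where $G$ is smooth near $\nu(x)$, this causes no difficulty. The main step is therefore verifying the harmonicity of $\nu$, which I would do by computing $\De\langle\nu,e\rangle=-\abs{A}^2\langle\nu,e\rangle$ for fixed vectors $e$ via Codazzi.
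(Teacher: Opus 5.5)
Your proposal is correct and follows essentially the same route as the paper: the chain rule for $\Delta(G\circ\nu)$ combined with the harmonicity of the Gauss map of a minimal hypersurface, then multiplying by $g$ and adding $\lvert A\rvert^2g^2$ to obtain the first form of $\mathcal Q_G$. The only differences are cosmetic: the paper works in a frame diagonalizing $A$ and simply cites harmonicity, whereas you use a general frame and justify harmonicity through $\Delta\nu=-\lvert A\rvert^2\nu$.
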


\begin{proof}
At any $X\in M$ with $g(X)>0$, choose an orthonormal frame $\{\tau_i\}_{i=1}^n$ diagonalizing $A$, with $\rd\nu(\tau_i)=-\lambda_i\tau_i$.  Since the Gauss map of a minimal
hypersurface is harmonic, we have
\eq{
\abs{A}^2g^2+g\De g
&=\sum_{i=1}^n\lambda_i^2
\left(G^2g_{\mbS^n}(\tau_i,\tau_i)
+G\na_{\mbS^n}^2G(\tau_i,\tau_i)\right)\\
&=\sum_{i=1}^n\lambda_i^2\mcQ_G(\tau_i,\tau_i)
={\rm tr}(A\mcQ_GA).
}
\end{proof}

Theorem \ref{thm:SS-ineq} thus reduces to a uniform lower bound
for ${\rm tr}(S\mcQ_GS)$ on any trace-free symmetric form $S$. For this to hold, we observe the following necessary condition in the case $\ell=3$, which explains the exponent in the product defining
$P$.

\begin{lemma}\label{lem:necessary-condition}
Assume $\ell=3$ and suppose $P$ is positive and $C^2$ on an open set
$U\subset L\cap\mbS^n\cong\mbS^2$.  If, for some $k,C>0$, the function $G$ defined by \eqref{eq:G-p-definition} satisfies
\eq{
{\rm tr}(S\mcQ_GS)\geq C\abs S^2
}
for all $y\in U$ and $S\in{\rm Sym}_0(T_y\mbS^n)$, then for $\widetilde P=P|_{L\cap\mbS^n}$, we have 
\eq{\label{ineq:necessary-condition-subsolution}
(\De_{\mbS^2}+2)\widetilde P^{1/2}>0\quad\text{on }U.
}
\end{lemma}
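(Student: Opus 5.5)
The plan is to test the assumed inequality only on trace-free forms $S$ that live entirely in the tangent plane of the great $2$-sphere $\Sigma\coloneqq L\cap\mbS^n$. On such forms, ${\rm tr}(S\mcQ_GS)$ depends only on the restriction of $G$ to $\Sigma$. On $\Sigma$ we have $s_L\equiv0$, so this restriction is $\sqrt{k}\,\widetilde P^{1/2}$, and the inequality then turns directly into \eqref{ineq:necessary-condition-subsolution}.

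\emph{Step 1: choice of $S$.} Fix $y\in U$. Pick an orthonormal basis $e_1,e_2$ of $T_y\Sigma\subset T_y\mbS^n$, and let $S\in{\rm Sym}_0(T_y\mbS^n)$ be given by $Se_1=e_1$, $Se_2=-e_2$, and $S=0$ on the orthogonal complement of $T_y\Sigma$ in $T_y\mbS^n$, which is $L^\perp$. Then $S$ is symmetric and trace-free, $\abs S^2=2$, and $S^2$ is the orthogonal projection onto $T_y\Sigma$. Hence
\[
{\rm tr}(S\mcQ_GS)={\rm tr}(\mcQ_G S^2)=\mcQ_G(e_1,e_1)+\mcQ_G(e_2,e_2),
\]
and the hypothesis gives $\mcQ_G(e_1,e_1)+\mcQ_G(e_2,e_2)\geq 2C>0$.

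\emph{Step 2: reduction to $\Sigma$.} Since $P>0$ on $U$, the function $G^2=s_L^2+kP$ is positive and $C^2$ near $U$. I would therefore use the form \eqref{defn:mcQ-1}, $\mcQ_G=G^2g_{\mbS^n}+G\na^2_{\mbS^n}G$. The key point is that $\Sigma$ is totally geodesic in $\mbS^n$. Consequently, for tangent vectors $e\in T_y\Sigma$, we have $\na^2_{\mbS^n}G(e,e)=\na^2_\Sigma(G|_\Sigma)(e,e)$: compute along the great circle $t\mapsto\cos t\,y+\sin t\,e$, which lies in $\Sigma$. On $\Sigma$ we have $s_L=0$, so $G|_\Sigma=\sqrt k\,\widetilde P^{1/2}$. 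Summing over $e_1,e_2$ gives
\[
\mcQ_G(e_1,e_1)+\mcQ_G(e_2,e_2)=2G^2+G\,\De_{\mbS^2}(G|_\Sigma)
=k\,\widetilde P^{1/2}\,(\De_{\mbS^2}+2)\widetilde P^{1/2}.
\]
One should check here that $s_L^2$ contributes nothing in these directions. This holds because $s_L^2(\cos t\,y+\sin t\,e)\equiv0$ for $e\in T_y\Sigma$; the nonzero Hessian of $s_L^2$, equal to $2$, occurs only in the $L^\perp$ directions, which $S$ annihilates.

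\emph{Step 3: conclusion.} Combining Steps 1 and 2 gives $k\,\widetilde P^{1/2}(\De_{\mbS^2}+2)\widetilde P^{1/2}\geq 2C>0$. Since $k>0$ and $\widetilde P>0$ on $U$, dividing yields $(\De_{\mbS^2}+2)\widetilde P^{1/2}>0$ on $U$, which is \eqref{ineq:necessary-condition-subsolution}.

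The only point needing care is Step 2, specifically the identification of the ambient spherical Hessian with the intrinsic one on $\Sigma$. This is a routine total-geodesicity check once written out along great circles. The assumption $\ell=3$ is exactly what makes $\Sigma$ a $2$-sphere, so that a trace-free form supported on $T_y\Sigma$ exists.
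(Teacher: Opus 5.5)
Your proposal is correct and follows the paper's proof. Both use the same test form $S$ ($Se_1=e_1$, $Se_2=-e_2$, $S=0$ on $L^\perp$), which reduces the hypothesis to $\mcQ_G(e_1,e_1)+\mcQ_G(e_2,e_2)=k\widetilde P^{1/2}(\De_{\mbS^2}+2)\widetilde P^{1/2}\geq 2C$. The only cosmetic difference is that the paper works with the $G^2$-form \eqref{defn:mcQ-2} and discards the $s_L^2$ terms explicitly, whereas you use \eqref{defn:mcQ-1} and restrict $G$ itself to the totally geodesic sphere $L\cap\mbS^n$, so $s_L$ drops out automatically.
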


\begin{proof}
At $y\in U$ one has $s_L(y)=0$ and $G^2=kP$.  Moreover,
$\rd(s_L^2)$ and the restriction of $\na_{\mbS^n}^2(s_L^2)$ vanish on
$T_y(L\cap\mbS^n)$.  Hence, for $u\in T_y(L\cap\mbS^n)$,
\eqref{defn:mcQ-2} gives
\eq{\label{eq:QG-on-S2}
\mcQ_G(u,u)
=kP\abs u^2+\frac k2\na_{\mbS^2}^2\widetilde P(u,u)
-\frac{k}{4P}\rd P(u)^2.
}
Choose an orthonormal basis $\tau_1,\tau_2$ of
$T_y(L\cap\mbS^n)$ and consider $S\in {\rm Sym}_0(T_y\mbS^n)$ given by
\eq{
S(\tau_1)
=\tau_1,\quad
S(\tau_2)
=-\tau_2,\quad
S(\xi)
=0\text{ for }\xi\in T_y\mbS^n\setminus{\rm span}\{\tau_1,\tau_2\}.
}
It follows that $\abs{S}^2=2$.
Using \eqref{eq:QG-on-S2}, we find
\eq{
{\rm tr}(S\mcQ_GS)
&=\mcQ_G(\tau_1,\tau_1)+\mcQ_G(\tau_2,\tau_2)=k\left(2\widetilde P+\frac12\De_{\mbS^2}\widetilde P
-\frac1{4\widetilde P}\abs{\na_{\mbS^2}\widetilde P}^2\right)\\
&=k\widetilde P^{1/2}
(\De_{\mbS^2}+2)\widetilde P^{1/2},
}
\eqref{ineq:necessary-condition-subsolution} then follows.
\end{proof}

To see which product realizes this necessary condition, we choose
$o\in\mbS^2\setminus\{p_1,\ldots,p_m\}$ and let $F$ be stereographic projection from $o$ onto $\mathbb C$.  Write $z_i=F(p_i)$, then set
\eq{
v(F^{-1}(z))=
\frac{\abs{\prod_{i=1}^m(z-z_i)}^{2/m}}{1+\abs z^2},\quad\forall z\in\mathbb C,
}
with $v(o)\coloneqq\lim_{\abs{z}\ra+\infty}\frac{\abs{\prod_{i=1}^m(z-z_i)}^{\frac{2}{m}}}{1+\abs{z}^2}=1$.
It follows that
\eq{
v>0\text{ on }\mbS^2\setminus\{p_1,\cdots,p_m\},\text{ and }v(p_i)=0,\text{ for }i\in\{1,\cdots,m\}.
}
For any $y,q\in\mbS^2\setminus\{o\}$, the standard identities
\eq{
\abs{F(y)-F(q)}^2
=\frac{\abs{y-q}^2}
{(1-\langle y,o\rangle)(1-\langle q,o\rangle)},
\quad
1+\abs{F(y)}^2=\frac2{1-\langle y,o\rangle}
}
give
\eq{
v^2(y)
=\prod_{i=1}^m
\left(\frac{1-\langle y,p_i\rangle}
{1-\langle o,p_i\rangle}\right)^{2/m}.
}
In stereographic coordinates, $(\De_{\mbS^2}+2)v>0$ away from the
$p_i$. Thus, up to a normalization, the desired profile is $t^{2/m}$ away from its zero. On the other hand, the smoothness of $\phi$ at $0$ requires instead an at least linear growth near $0$. In view of this, we construct $\phi$ as follows.

\begin{lemma}\label{lem:phi}
There are $C_m,\rho_0>0$, depending only on $m$, such that for every
$T>0$ there is a smooth nondecreasing
$\phi:[0,\infty)\to[0,\infty)$ with $\phi^{-1}(0)=0$ and
\eq{\label{defn:phi}
\phi(t)=
\begin{cases}
T^{\frac2m-1}t,&0\leq t\leq T,\\
C_mt^{\frac2m},&t\geq e^{\rho_0}T.
\end{cases}
}
Moreover, for some $C=C(m)$,
\eq{\label{ineq:log-phi-log-t-derivatives}
\frac2m\leq\frac{\rd\log\phi}{\rd\log t}\leq1,
\qquad
-\frac18\left(\frac{\rd\log\phi}{\rd\log t}\right)^2
\leq\frac{\rd}{\rd\log t}
\left(\frac{\rd\log\phi}{\rd\log t}\right)\leq C.
}
\end{lemma}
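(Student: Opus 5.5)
Set $s=\log t$ and construct $\psi(s)=\frac{\rd\log\phi}{\rd\log t}$ directly; then $\phi(t)=\exp\bigl(\int\psi\bigr)$ with the correct normalization. By scaling, first treat $T=1$. If $\phi_1$ works for $T=1$, then $\phi(t)=T^{2/m}\phi_1(t/T)$ satisfies \eqref{defn:phi} for general $T$: on $[0,T]$ we get $T^{2/m}(t/T)=T^{\frac2m-1}t$, and for $t\ge e^{\rho_0}T$ we get $T^{2/m}C_m(t/T)^{2/m}=C_mt^{2/m}$. Both quantities in \eqref{ineq:log-phi-log-t-derivatives} are invariant under $t\mapsto t/T$ and under multiplication of $\phi$ by constants, so $C_m,\rho_0,C$ do not depend on $T$.

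For $T=1$, pick a smooth nonincreasing $\psi:\mbR\to[\frac2m,1]$ with $\psi\equiv1$ on $(-\infty,0]$ and $\psi\equiv\frac2m$ on $[\rho_0,\infty)$. Define $\phi(t)=t$ for $t\le1$ and $\phi(t)=\exp\bigl(\int_0^{\log t}\psi\bigr)$ for $t\ge1$. This $\phi$ is smooth for $t>0$ (it agrees with $t$ near $s\le0$). It is smooth at $0$ and vanishes only there, and it is nondecreasing because $\psi>0$. For $t\ge e^{\rho_0}$ we have $\log\phi=\frac2m\log t+\int_0^{\rho_0}(\psi-\frac2m)$, which gives $C_m=\exp\int_0^{\rho_0}(\psi-\frac2m)$. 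Since $\psi$ takes values in $[\frac2m,1]$, the first inequality in \eqref{ineq:log-phi-log-t-derivatives} holds. Because $\psi'\le0$, the upper bound $\psi'\le C$ holds trivially with $C=0$; any $C\ge0$ works.

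The main constraint is the lower bound $\psi'\ge-\frac18\psi^2$. Since $\psi\ge\frac2m$, it suffices to have $\psi'\ge-\frac{1}{2m^2}$. The total drop of $\psi$ is $1-\frac2m$, so I take $\rho_0$ large, say $\rho_0=4m^2$. I then let $\psi$ be a smooth monotone interpolation whose slope is bounded by $\frac{2(1-2/m)}{\rho_0}\le\frac1{2m^2}$; for example, $\psi(s)=1-(1-\frac2m)\chi(s/\rho_0)$, where $\chi$ is a fixed smooth step with $0\le\chi'\le2$. For a sharper check, one can instead solve the ODE $\psi'=-\frac18\psi^2$ exactly, which gives $1/\psi$ growing linearly at rate $\frac18$. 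Reaching $\frac2m$ then takes time $8(\frac m2-1)$, so any $\rho_0$ slightly larger than $4m$ admits a smoothed version. Either way, $\rho_0$ depends only on $m$.

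When $m\le2$ we have $\frac2m\ge1$, and the statement needs the reading $\frac2m\le\frac{\rd\log\phi}{\rd\log t}\le1$. For $m=2$ this is the trivial case: take $\phi(t)=t$ with $C_2=1$. In the relevant setting $\ell\ge2$ we have $m\ge2$, so this covers all cases. The only delicate step is keeping the decay of $\psi$ slow enough relative to $\psi^2$, and a long enough transition interval $\rho_0(m)$ handles that.
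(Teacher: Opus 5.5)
Your proposal is correct and follows essentially the same route as the paper. Both constructions prescribe $\psi=\frac{\rd\log\phi}{\rd\log t}$ as $1-(1-\frac2m)\chi(s/\rho_0)$ for a fixed smooth step $\chi$, set $\phi(t)=T^{2/m}\widetilde\phi(t/T)$ with $\widetilde\phi(r)=\exp\bigl(\int_0^{\log r}\psi\bigr)$, and take $\rho_0=\rho_0(m)$ large enough that $\psi'\geq-\frac18(\frac2m)^2\geq-\frac18\psi^2$. Your explicit choice $\rho_0=4m^2$ with $\chi'\leq2$, and your observation that the upper bound holds with $C=0$ because $\psi$ is nonincreasing, simply make the paper's constants concrete.
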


\begin{proof}
Note that $m\geq \ell\geq2$.
If $m=2$ we simply let $\phi(t)=t$, and choose $\rho_0=1, \rho_1=2$.
Then $\phi(t)$ satisfies the required properties.
Assume now $m>2$, and choose a smooth nondecreasing
$\chi:\mbR\to[0,1]$ that equals $0$ on $(-\infty,0]$ and $1$ on
$[1,\infty)$, and is constant near $0$ and $1$.  For $\rho_0>0$
to be fixed, we consider the function
\eq{
\psi(s)
=1-\left(1-\frac2m\right)\chi\left(\frac{s}{\rho_0}\right),\quad s\in\mbR,
}
it follows that
\eq{\label{eq:psi-1}
\psi
=1\text{ on }(-\infty,0],\quad
\psi
=\frac2m\text{ on }[\rho_0,\infty),\quad
\frac2m\leq\psi\leq1.
}
Then we choose $\rho_0$ sufficiently large so that
\eq{\label{eq:psi-2}
\abs{\psi'(s)}\leq C(m),\qquad
\psi'(s)
=-\frac{1-\frac{2}{m}}{\rho_0}\chi'\left(\frac{s}{\rho_0}\right)
\geq-\frac18\left(\frac2m\right)^2
\geq-\frac18\psi^2(s).
}
Define the function $\phi=0$ at $0$, and on $t\in(0,\infty)$ as
\eq{
\phi(t)=T^{2/m}\widetilde\phi\left(\frac tT\right)
\quad\text{with}\quad\widetilde\phi(r)=
\exp\left(\int_0^{\log r}\psi(s)\,\rd s\right),\quad\forall r>0.
}
Note that $\widetilde\phi(r)=r$ for $0<r\leq1$ and
$\widetilde\phi(r)=C_mr^{2/m}$ for $r\geq e^{\rho_0}$, which gives
\eqref{defn:phi}.  Finally, by direct computation
\eq{
\frac{\rd\log\phi}{\rd\log t}
=\psi\left(\log\frac tT\right)\quad\text{and }\quad
\frac{\rd}{\rd\log t}
\left(\frac{\rd\log\phi}{\rd\log t}\right)
=\psi'\left(\log\frac tT\right),
}
\eqref{ineq:log-phi-log-t-derivatives} then follows from from \eqref{eq:psi-1} and \eqref{eq:psi-2}.
\end{proof}

For $t>0$, we put for simplicity
\eq{\label{defn:bm-xi-eta-kappa}
\bm\xi
=\frac{\rd\log\phi}{\rd\log t},\quad
\bm\eta
=\frac{\rd}{\rd\log t}
\left(\frac{\rd\log\phi}{\rd\log t}\right),\quad
\bm\kappa
=\bm\eta+\bm\xi^2-\bm\xi,
}
which will be used in due course.
In the linear region $0<t\leq T$, these quantities satisfy
$(\bm\xi,\bm\eta,\bm\kappa)=(1,0,0)$.

\subsection{Spherical quadratic forms and preliminary results}

Recall that $s_L(y)$ given by \eqref{defn:s_L} is the length of the projection of $y$ onto $L^\perp$.
We define for simplicity the following unit vectors: whenever the denominator is not vanishing, we set
\eq{\label{defn:y_L}
y_L
=\frac{\pi_L(y)}{\sqrt{1-s_L^2}}\in L\cap\mbS^n,\quad
y_{L^\perp}=\frac{\pi_{L^\perp}(y)}{s_L}\in L^\perp\cap\mbS^n.
}
Thus when $0<s_L(y)<1$, we can write $y=\sqrt{1-s_L^2}\,y_L+s_Ly_{L^\perp}$.
We also set the vector
\eq{\label{defn:xi_y}
\xi_y
=-s_Ly_L+\sqrt{1-s_L^2}\,y_{L^\perp},
}
which is a well-defined unit vector in $T_y\mbS^n$.

The following formulas will be useful for the later calculations in this section.

\begin{lemma}\label{lem:shared-differentials}
For each $j\in\{1,\ldots,m\}$, set
$t=1-\langle p_j,y\rangle$.
Then
\eq{\label{eq:gradient-Hessian-t}
\na_{\mbS^n}t=-p_j+(1-t)y,\qquad
\abs{\na_{\mbS^n}t}^2=2t-t^2,\qquad
\na_{\mbS^n}^2t=(1-t)g_{\mbS^n}.
}
For $u,v\in T_y\mbS^n$,
\begin{align}
\rd(s_L^2)(u)&=2\langle\pi_{L^\perp}(y),u\rangle,
\label{eq:a2-gradient}\\
\na_{\mbS^n}^2(s_L^2)(u,v)
&=2\langle\pi_{L^\perp}(u),\pi_{L^\perp}(v)\rangle
-2s_L^2\langle u,v\rangle.\label{eq:a2-hessian}
\end{align}
We use the same letter $P$ for the natural ambient extension of
\eqref{defn:P} to $L$.  Then
\begin{equation}\label{eq:P-gradient-hessian}
\begin{aligned}
\rd P(u)&=\langle DP(\pi_L(y)),\pi_L(u)\rangle,\\
\na_{\mbS^n}^2P(u,v)
&=D^2P(\pi_L(y))(\pi_L(u),\pi_L(v))
-\langle DP(\pi_L(y)),\pi_L(y)\rangle\langle u,v\rangle.
\end{aligned}
\end{equation}
\end{lemma}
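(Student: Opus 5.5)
The plan is to verify each formula by working with the ambient extensions of the functions to $\mbR^{n+1}$ and then applying the standard rule relating the Hessian on $\mbS^n$ to the ambient Hessian. For a smooth function $f$ on $\mbR^{n+1}$, restricted to $\mbS^n$ at a point $y$, we have
\begin{equation*}
\na_{\mbS^n}f=(Df)^\top,\qquad \na^2_{\mbS^n}f(u,v)=D^2f(y)(u,v)-\langle Df(y),y\rangle\langle u,v\rangle,
\end{equation*}
for $u,v\in T_y\mbS^n$, where $(\cdot)^\top$ denotes projection onto $T_y\mbS^n$, i.e.\ $w\mapsto w-\langle w,y\rangle y$. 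The correction term comes from the second fundamental form of $\mbS^n\subset\mbR^{n+1}$, which is $-\langle u,v\rangle y$ for the outward normal $y$. All three groups of identities then follow by inserting the appropriate ambient function.

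For $t=1-\langle p_j,y\rangle$, take the affine extension $t(X)=1-\langle p_j,X\rangle$. Then $Dt=-p_j$ and $D^2t=0$. The tangential part is $-p_j+\langle p_j,y\rangle y=-p_j+(1-t)y$. Its squared norm is $1-\langle p_j,y\rangle^2=1-(1-t)^2=2t-t^2$. The Hessian is $0-\langle -p_j,y\rangle g_{\mbS^n}=(1-t)g_{\mbS^n}$.

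For $s_L^2$, use the quadratic extension $X\mapsto\abs{\pi_{L^\perp}X}^2$. Its gradient is $2\pi_{L^\perp}X$, and its Hessian is $2\langle\pi_{L^\perp}u,\pi_{L^\perp}v\rangle$. Since $u\perp y$, pairing $u$ with $2\pi_{L^\perp}(y)$ gives \eqref{eq:a2-gradient}. For the Hessian, note that $\langle 2\pi_{L^\perp}y,y\rangle=2s_L^2$, and this yields \eqref{eq:a2-hessian}.

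For $P$, the natural ambient extension is $X\mapsto P(\pi_L X)$, since each factor depends only on $\langle X,p_i\rangle$ with $p_i\in L$. By the chain rule with the linear map $\pi_L$, the ambient gradient is $DP(\pi_LX)\in L$, and the ambient Hessian is $D^2P(\pi_LX)(\pi_L\cdot,\pi_L\cdot)$. This gives $\rd P(u)=\langle DP(\pi_Ly),u\rangle=\langle DP(\pi_Ly),\pi_Lu\rangle$. Finally, the correction term is $\langle DP(\pi_Ly),y\rangle=\langle DP(\pi_Ly),\pi_Ly\rangle$, because $DP\in L$.

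There is no real obstacle. The only points needing care are the sign of the sphere's second fundamental form, and the fact that $P$ is smooth only where each argument $1-\langle\cdot,p_i\rangle$ is positive or $\phi$ is smooth. Since $\phi$ is smooth on $[0,\infty)$ and each argument lies in $[0,2]$ on the sphere, the ambient extension is smooth in a neighborhood of $\mbS^n$, and the computation is valid.
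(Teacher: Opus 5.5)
Your proposal is correct and follows the paper's proof: extend each function to $\mbR^{n+1}$ (affinely for $t$, quadratically for $s_L^2$, and via $X\mapsto P(\pi_L X)$ for $P$) and apply the Gauss formula $\na^2_{\mbS^n}h(u,v)=D^2h(u,v)-\langle Dh,y\rangle\langle u,v\rangle$. One small correction to your smoothness remark: off the sphere the arguments $1-\langle X,p_i\rangle$ can be negative, so they do not lie in $[0,2]$ on a neighborhood of $\mbS^n$; this is harmless, because $\phi$ is linear near $0$ and therefore extends smoothly to negative values.
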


\begin{proof}
The identities in \eqref{eq:gradient-Hessian-t} follow by differentiating the restriction of
$1-\langle p_j,y\rangle$ to the sphere. For any smooth ambient
function $h$, the Gauss formula yields
\eq{
\na_{\mbS^n}^2h(u,v)
=D^2h(u,v)-\langle Dh,y\rangle\langle u,v\rangle.
}
By direct computations
\eqref{eq:a2-gradient}, \eqref{eq:a2-hessian} and \eqref{eq:P-gradient-hessian} then follow.
\end{proof}

For our analysis in the near-zero (Section \eqref{sec:3-near-zeros}) and transition (Section \eqref{sec:3-transition}) regimes, one factor in the product defining $P$ is singled out, so the following lemma will be useful.

\begin{lemma}
Fix $j\in\{1,\ldots,m\}$ and $y\in\mbS^n\setminus\{p_1,\ldots,p_m\}$. Set for simplicity
\eq{\label{defn:t-f_j-mu}
t=1-\langle p_j,y\rangle,\qquad
f_j=\prod_{i\neq j}\phi(1-\langle p_i,y\rangle),\qquad
P=f_j\phi(t),\qquad
\mu=\frac{kP}{t}.
}
Then
\eq{\label{eq:single-factor-product}
\mcQ_{(f_j\phi)^{1/2}}-f_j\mcQ_{\phi^{1/2}}
=\frac\phi2\na_{\mbS^n}^2f_j
+\frac14(\rd f_j\otimes\rd\phi+\rd\phi\otimes\rd f_j)
-\frac\phi{4f_j}\rd f_j\otimes\rd f_j.
}
Moreover (recalling \eqref{defn:bm-xi-eta-kappa}),
\begin{align}
\rd(kP)&=\mu(\bm\xi\,\rd t+t\,\rd\log f_j),
\label{eq:single-factor-dkP}\\
\na_{\mbS^n}^2(kP)
&=\mu\left[\bm\xi(1-t)g_{\mbS^n}
+\frac{\bm\kappa}{t}\rd t\otimes\rd t\right]\notag\\
&\quad+\mu\bm\xi(\rd t\otimes\rd\log f_j
+\rd\log f_j\otimes\rd t)
+\mu t\frac{\na_{\mbS^n}^2f_j}{f_j}.
\label{eq:single-factor-hess-kP}
\end{align}
\end{lemma}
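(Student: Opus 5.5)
The plan is to expand both sides directly from definition \eqref{defn:mcQ-2}, using only the product rule and the log-derivative quantities $\bm\xi,\bm\eta,\bm\kappa$ from \eqref{defn:bm-xi-eta-kappa}. The computation is local near $y\notin\{p_1,\dots,p_m\}$, so all factors are positive and smooth there.

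For \eqref{eq:single-factor-product}, I will write $h=f_j\phi$ (so $h$ plays the role of $f^2$ in \eqref{defn:mcQ-2} with $f=h^{1/2}$). This gives
\[
\mcQ_{h^{1/2}}=hg_{\mbS^n}+\tfrac12\na^2_{\mbS^n}h-\tfrac1{4h}\rd h\otimes\rd h .
\]
I then expand $\na^2 h=f_j\na^2\phi+\phi\na^2f_j+\rd f_j\otimes\rd\phi+\rd\phi\otimes\rd f_j$ and $\rd h=f_j\rd\phi+\phi\,\rd f_j$. The term $\frac{1}{4h}\rd h\otimes\rd h$ splits as
\[
\frac{f_j}{4\phi}\rd\phi\otimes\rd\phi+\frac14(\rd\phi\otimes\rd f_j+\rd f_j\otimes\rd\phi)+\frac{\phi}{4f_j}\rd f_j\otimes\rd f_j .
\]
Subtracting $f_j\mcQ_{\phi^{1/2}}=f_j\phi g+\frac{f_j}2\na^2\phi-\frac{f_j}{4\phi}\rd\phi\otimes\rd\phi$ cancels the pure-$\phi$ terms. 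The surviving pieces are $\frac\phi2\na^2f_j$, the symmetrized cross term $\frac12-\frac14=\frac14$ times $(\rd f_j\otimes\rd\phi+\rd\phi\otimes\rd f_j)$, and $-\frac{\phi}{4f_j}\rd f_j\otimes\rd f_j$, which is exactly \eqref{eq:single-factor-product}.

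For \eqref{eq:single-factor-dkP}, I will write $\log(kP)=\log k+\log f_j+\log\phi(t)$. Since $\frac{\rd}{\rd t}\log\phi=\bm\xi/t$, differentiating gives $\rd(kP)=kP(\bm\xi\,\rd t/t+\rd\log f_j)=\mu(\bm\xi\,\rd t+t\,\rd\log f_j)$.

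For the Hessian \eqref{eq:single-factor-hess-kP}, I will use $\na^2(kP)=k(f_j\na^2\phi+\phi\na^2f_j+\rd f_j\otimes\rd\phi+\rd\phi\otimes\rd f_j)$, and the steps are as follows.
\begin{itemize}
\item \emph{Scalar derivatives of $\phi$.} From $\phi'=\bm\xi\phi/t$, differentiating in $t$ and using $\frac{\rd\bm\xi}{\rd t}=\bm\eta/t$ gives $\phi''=\frac{\phi}{t^2}(\bm\eta+\bm\xi^2-\bm\xi)=\frac{\phi\bm\kappa}{t^2}$.
\item \emph{Hessian of $\phi(t)$ on the sphere.} The chain rule gives $\na^2\phi=\phi''\rd t\otimes\rd t+\phi'\na^2t$, and \eqref{eq:gradient-Hessian-t} gives $\na^2t=(1-t)g_{\mbS^n}$.
\item \emph{Assembly.} Multiplying by $kf_j$ turns $k f_j\phi'$ into $\mu\bm\xi$ and $kf_j\phi''$ into $\mu\bm\kappa/t$. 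This yields the first bracket $\mu[\bm\xi(1-t)g_{\mbS^n}+\frac{\bm\kappa}t\rd t\otimes\rd t]$.
\item \emph{Cross terms.} Since $k\phi'\rd f_j=\mu\bm\xi\,\rd\log f_j$, they become $\mu\bm\xi(\rd t\otimes\rd\log f_j+\rd\log f_j\otimes\rd t)$.
\item \emph{Remaining term.} $k\phi\na^2f_j=\mu t\,\na^2f_j/f_j$.
\end{itemize}

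The only step needing care is the identity $\phi''=\phi\bm\kappa/t^2$, specifically the conversion between $\frac{\rd}{\rd\log t}$ and $\frac{\rd}{\rd t}$. Everything else is bookkeeping, so no real obstacle is expected.
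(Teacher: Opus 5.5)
Your proposal is correct and follows the paper's own argument: expand $\mcQ_{(f_j\phi)^{1/2}}$ via \eqref{defn:mcQ-2} with the product rule, then substitute $\phi'=\bm\xi\phi/t$ and $\phi''=\bm\kappa\phi/t^2$ into $\rd(kf_j\phi)$ and $\na^2_{\mbS^n}(kf_j\phi)$. Your derivation of $\phi''$ (using $\frac{\rd\bm\xi}{\rd t}=\bm\eta/t$) and your conversions $kf_j\phi'=\mu\bm\xi$ and $k\phi\,\na^2 f_j=\mu t\,\na^2 f_j/f_j$ all check out.
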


\begin{proof}
By \eqref{defn:mcQ-2} we compute
\begin{align*}
\mcQ_{(f_j\phi)^{1/2}}
={}&f_j\left(\phi g_{\mbS^n}+\frac12\na_{\mbS^n}^2\phi
-\frac1{4\phi}\rd\phi\otimes\rd\phi\right)
+\frac\phi2\na_{\mbS^n}^2f_j\\
&+\frac14(\rd f_j\otimes\rd\phi+\rd\phi\otimes\rd f_j)
-\frac\phi{4f_j}\rd f_j\otimes\rd f_j,
\end{align*}
which gives~\eqref{eq:single-factor-product}. Recalling \eqref{defn:bm-xi-eta-kappa}, we note
\eq{
\phi'=\frac{\bm\xi\phi}{t},\quad
\phi''=\frac{\bm\kappa\phi}{t^2},
}
then substituting in the product rules for $\rd(kf_j\phi)$ and $\na_{\mbS^n}^2(kf_j\phi)$ gives
\eqref{eq:single-factor-dkP}, \eqref{eq:single-factor-hess-kP}.
\end{proof}

The next lemma records a useful decomposition of $T_y\mbS^n$, which helps us to estimate the lower bound of \eqref{eq:A2g2-gDelta-g}.

\begin{lemma}\label{lem:mcQ_a-projection}
If $s_L(y)=0$, then
\eq{\label{eq:sL-zero-decomposition}
T_y\mbS^n=T_y(L\cap\mbS^n)\oplus L^\perp,
\qquad
\frac12\na_{\mbS^n}^2(s_L^2)=\pi_{L^\perp}.
}
If $s_L(y)>0$, then $\mcQ_{s_L}$ is the orthogonal projection onto
$L^\perp\cap\pi_{L^\perp}(y)^\perp$,
and its kernel has dimension $\ell$.  If, in addition,
$s_L(y)<1$, setting
\eq{\label{defn:E_1-E_2-E_3}
\mbE_1
=T_{y_L}(L\cap\mbS^n),\qquad
\mbE_2
=\mbR\xi_y,\qquad
\mbE_3
=L^\perp\cap y_{L^\perp}^{\perp},
}
we then have
\eq{\label{eq:sL-three-block}
T_y\mbS^n=\mbE_1\oplus\mbE_2\oplus\mbE_3,
}
and also
\begin{align}
\rd(s_L^2)
&=2s_L\sqrt{1-s_L^2}\,\xi_y^\flat,
\label{eq:d-sL-square}\\
\frac12\na_{\mbS^n}^2(s_L^2)
&=-s_L^2I_{\mbE_1\oplus\mbE_2}
+(1-s_L^2)\xi_y^\flat\otimes\xi_y^\flat
+(1-s_L^2)I_{\mbE_3}.
\label{eq:hess-sL-square}
\end{align}
In particular,
\eq{\label{eq:QsL-projection}
\mcQ_{s_L}
=\pi_{\mbE_3},\qquad
\ker\mcQ_{s_L}
=\mbE_1\oplus\mbE_2.
}
\end{lemma}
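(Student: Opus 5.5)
The plan is to compute everything directly from Lemma \ref{lem:shared-differentials}, using the identity \eqref{defn:mcQ-2} for $\mcQ_{s_L}=s_L^2g_{\mbS^n}+\frac12\na^2_{\mbS^n}(s_L^2)-\frac1{4s_L^2}\rd(s_L^2)\otimes\rd(s_L^2)$ together with \eqref{eq:a2-gradient} and \eqref{eq:a2-hessian}.

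\textbf{The case $s_L(y)=0$.} Here $y\in L$. Hence $T_y\mbS^n=y^\perp=(y^\perp\cap L)\oplus L^\perp$, and $y^\perp\cap L=T_y(L\cap\mbS^n)$. By \eqref{eq:a2-hessian} with $s_L=0$ we get $\frac12\na^2(s_L^2)(u,v)=\langle\pi_{L^\perp}u,\pi_{L^\perp}v\rangle$, which is $\pi_{L^\perp}$. This gives \eqref{eq:sL-zero-decomposition}.

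\textbf{The case $s_L(y)>0$.} Write $w=\pi_{L^\perp}(y)$. Then \eqref{eq:a2-gradient} and \eqref{eq:a2-hessian} give
\[
\mcQ_{s_L}(u,v)=s_L^2\langle u,v\rangle+\langle\pi_{L^\perp}u,\pi_{L^\perp}v\rangle-s_L^2\langle u,v\rangle-\frac{\langle w,u\rangle\langle w,v\rangle}{s_L^2}.
\]
The first and third terms cancel. What remains is $\langle\pi_{L^\perp}u,\pi_{L^\perp}v\rangle-\langle y_{L^\perp},u\rangle\langle y_{L^\perp},v\rangle$, i.e. the form of the orthogonal projection onto $L^\perp\cap y_{L^\perp}^\perp$. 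One small point needs checking: this subspace lies in $T_y\mbS^n$. Indeed, for $e\in L^\perp$ with $e\perp y_{L^\perp}$ we have $\langle e,y\rangle=\langle e,w\rangle=0$. Its dimension is $n+1-\ell-1=n-\ell$, so the kernel inside the $n$-dimensional space $T_y\mbS^n$ has dimension $\ell$.

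\textbf{The case $0<s_L<1$.} I will check three things.
\begin{itemize}
\item $\mbE_1=L\cap y_L^\perp$ has dimension $\ell-1$, $\mbE_2=\mbR\xi_y$, and $\mbE_3$ has dimension $n-\ell$. All three lie in $y^\perp$ and are mutually orthogonal: $\mbE_1\subset L$ and $\mbE_3\subset L^\perp$ are orthogonal, and $\xi_y\in{\rm span}\{y_L,y_{L^\perp}\}$ is orthogonal to both. Counting dimensions gives \eqref{eq:sL-three-block}.
\item For \eqref{eq:d-sL-square}, the tangential part of $w$ is $w-s_L^2y$. Since $w=s_Ly_{L^\perp}$, this equals $s_L\sqrt{1-s_L^2}\,\xi_y$.
\item For \eqref{eq:hess-sL-square}, evaluate $\langle\pi_{L^\perp}u,\pi_{L^\perp}v\rangle$ blockwise. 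It vanishes on $\mbE_1$, is the identity on $\mbE_3$, and on $\xi_y$ gives $|\pi_{L^\perp}\xi_y|^2=1-s_L^2$, with no cross terms. Subtracting $s_L^2 I$ and rewriting $-s_L^2I_{\mbE_3}+I_{\mbE_3}$ and $-s_L^2\xi_y^\flat\otimes\xi_y^\flat+(1-s_L^2)\xi_y^\flat\otimes\xi_y^\flat$ in the stated form yields the formula.
\end{itemize}
Then \eqref{eq:QsL-projection} follows from the projection description, since $L^\perp\cap y_{L^\perp}^\perp=\mbE_3$.

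Nothing here is a real obstacle. The only care needed is the bookkeeping of the $\mbE_2$ coefficient, with the $-s_L^2$ and $1-s_L^2$ terms arranged as in \eqref{eq:hess-sL-square}.
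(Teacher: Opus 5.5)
Your proposal is correct and follows the paper's proof essentially step for step: you substitute \eqref{eq:a2-gradient}--\eqref{eq:a2-hessian} into \eqref{defn:mcQ-2}, identify the resulting form as the projection onto $L^\perp\cap y_{L^\perp}^\perp$, and evaluate $\pi_{L^\perp}$ blockwise on $\mbE_1,\mbE_2,\mbE_3$. The differences are minor. You read off $\mcQ_{s_L}=\pi_{\mbE_3}$ directly from the projection formula, whereas the paper re-substitutes \eqref{eq:d-sL-square}--\eqref{eq:hess-sL-square}. You also check the three-block splitting (containment in $y^\perp$, mutual orthogonality, dimension count) more explicitly than the paper does.
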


\begin{proof}
If $s_L(y)=0$, then $y\in L\cap\mbS^n$. By \eqref{eq:a2-hessian},
\eq{
\frac12\na_{\mbS^n}^2(s_L^2)(u,v)
=\langle\pi_{L^\perp}(u),\pi_{L^\perp}(v)\rangle,
}
which proves \eqref{eq:sL-zero-decomposition}.

Suppose now $s_L(y)>0$.  By \eqref{defn:mcQ-2},
\eqref{eq:a2-gradient}, and \eqref{eq:a2-hessian}, we find
\eq{
\mcQ_{s_L}(u,v)
=\langle\pi_{L^\perp}(u),\pi_{L^\perp}(v)\rangle
-\frac{\langle\pi_{L^\perp}(y),u\rangle
\langle\pi_{L^\perp}(y),v\rangle}{s_L^2},
}
which is exactly the orthogonal projection onto $L^\perp\cap\pi_{L^\perp}(y)^\perp$, with kernel of dimension
$\ell$.

If $s_L(y)<1$, then the definitions of $y_L,y_{L^\perp},\xi_y$ yield
\eqref{eq:sL-three-block}. On $\mbE_1, \mbE_2$ and $\mbE_3$, the projection
onto $L^\perp$ (i.e. $\pi_{L^\perp}$) satisfies
\eq{
\pi_{L^\perp}(v)
=&0,\quad\forall v\in\mbE_1,\\
\pi_{L^\perp}(\xi_y)
=&\sqrt{1-s_L^2}y_{L^\perp},\\
\pi_{L^\perp}(w)
=&w,\quad\forall w\in\mbE_3.
}
Substituting in
\eqref{eq:a2-gradient}--\eqref{eq:a2-hessian} then yields
\eqref{eq:d-sL-square}--\eqref{eq:hess-sL-square}. Moreover,
\eq{
\mcQ_{s_L}
={}&s_L^2g_{\mbS^n}+\frac12\na_{\mbS^n}^2(s_L^2)
-\frac1{4s_L^2}\rd(s_L^2)\otimes\rd(s_L^2)
=\pi_{\mbE_3},
}
which proves~\eqref{eq:QsL-projection}.
\end{proof}

The projection obtained above is the positive semidefinite leading term of
$\mcQ_G$, independent of the parameter $k$, as shown in the following lemma.

\begin{lemma}\label{lem:mcQ_G-lower}
For $y\notin\{p_1,\ldots,p_m\}$,
\begin{align}
\mcQ_G&=\frac12\na_{\mbS^n}^2(s_L^2)+k\mcQ_{P^{1/2}}
&&\text{if }s_L(y)=0,\label{eq:mcQ_G-a-P-1}\\
\mcQ_G&\geq\mcQ_{s_L}+k\mcQ_{P^{1/2}}
&&\text{if }s_L(y)>0.\label{eq:mcQ_G-a-P-2}
\end{align}
\end{lemma}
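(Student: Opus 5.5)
The plan is to write everything through $G^2=s_L^2+kP$ and use the second expression \eqref{defn:mcQ-2} for $\mcQ$. Every term in \eqref{defn:mcQ-2} except the last is linear in $f^2$. Hence
\begin{equation}
\mcQ_G=s_L^2g_{\mbS^n}+\tfrac12\na_{\mbS^n}^2(s_L^2)+kPg_{\mbS^n}+\tfrac k2\na_{\mbS^n}^2P-\frac{1}{4G^2}\rd(G^2)\otimes\rd(G^2),
\end{equation}
and only the last term needs work.

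\textbf{Case $s_L(y)=0$.} I compare $\mcQ_G$ with $\frac12\na^2(s_L^2)+k\mcQ_{P^{1/2}}$. By \eqref{eq:a2-gradient}, $\rd(s_L^2)=0$ at $y$, so $\rd(G^2)=k\,\rd P$ and $G^2=kP$. The last term is therefore $\frac{k^2}{4kP}\rd P\otimes\rd P=\frac{k}{4P}\rd P\otimes\rd P$. The term $s_L^2g_{\mbS^n}$ vanishes. The remaining terms $kPg+\frac k2\na^2P-\frac k{4P}\rd P\otimes \rd P$ are exactly $k\mcQ_{P^{1/2}}$ by \eqref{defn:mcQ-2} applied to $f=P^{1/2}$. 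This gives the identity \eqref{eq:mcQ_G-a-P-1}. No positivity of $P$ is needed beyond $y\notin\{p_i\}$, which makes $P>0$ and $P^{1/2}$ smooth near $y$.

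\textbf{Case $s_L(y)>0$.} I again split the linear part into $\mcQ_{s_L}+k\mcQ_{P^{1/2}}$ plus the correction terms
\begin{equation}
\frac{1}{4s_L^2}a\otimes a+\frac{1}{4kP}b\otimes b-\frac{1}{4(s_L^2+kP)}(a+b)\otimes(a+b),
\end{equation}
where $a=\rd(s_L^2)$ and $b=k\,\rd P$. It suffices to show this expression is positive semidefinite. This is the Cauchy–Schwarz (or convexity of $(x,\alpha)\mapsto x^2/\alpha$) inequality
\begin{equation}
\frac{(\langle a,v\rangle+\langle b,v\rangle)^2}{\alpha+\beta}\le\frac{\langle a,v\rangle^2}{\alpha}+\frac{\langle b,v\rangle^2}{\beta}
\end{equation}
for $\alpha=s_L^2>0$, $\beta=kP>0$ and any $v\in T_y\mbS^n$. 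Evaluating the correction form on $v$ gives exactly the difference of the two sides, which is nonnegative. This proves \eqref{eq:mcQ_G-a-P-2}.

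There is no real obstacle here. The only points needing care are checking that the linear part of \eqref{defn:mcQ-2} splits additively, and that both $s_L$ and $P^{1/2}$ are smooth and positive near $y$, so that $\mcQ_{s_L}$ and $\mcQ_{P^{1/2}}$ are defined. In the first case one must note that $\mcQ_{s_L}$ is not defined, which is why $\frac12\na^2(s_L^2)$ appears there instead.
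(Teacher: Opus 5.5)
Your proposal is correct and follows the paper's proof essentially verbatim: expand $\mcQ_G$ via \eqref{defn:mcQ-2} with $G^2=s_L^2+kP$, use $\rd(s_L^2)=0$ at $s_L(y)=0$ to get the identity \eqref{eq:mcQ_G-a-P-1}, and for $s_L(y)>0$ bound the gradient-square term by the Cauchy--Schwarz inequality $\frac{(x+y)^2}{\alpha+\beta}\le\frac{x^2}{\alpha}+\frac{y^2}{\beta}$ to obtain \eqref{eq:mcQ_G-a-P-2}. Your extra remarks on where $s_L$, $P^{1/2}$ and $G$ are smooth and positive are accurate and only make explicit what the paper leaves implicit.
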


\begin{proof}
If $s_L(y)=0$, then by \eqref{defn:mcQ-2} and $G^2=s_L^2+kP$, we find
\eq{
\mcQ_G
={}&kPg_{\mbS^n}+\frac12\na_{\mbS^n}^2(s_L^2)
+\frac k2\na_{\mbS^n}^2P-\frac{k}{4P}\rd P\otimes\rd P=\frac12\na_{\mbS^n}^2(s_L^2)+k\mcQ_{P^{1/2}}.
}
For $s_L(y)>0$, Cauchy's inequality gives, as quadratic forms,
\eq{
\frac1{G^2}\rd(G^2)\otimes\rd(G^2)
\leq\frac1{s_L^2}\rd(s_L^2)\otimes\rd(s_L^2)
+\frac{k}{P}\rd P\otimes\rd P.
}
Substituting this in~\eqref{defn:mcQ-2} gives
\eqref{eq:mcQ_G-a-P-2}.
\end{proof}

For future purpose we compute the radial and tangential eigenvalues of
$\mcQ_{\phi(t)^{1/2}}$.

\begin{lemma}\label{lem:mcQ_P^half-q_r-q_T}
For $t=1-\langle p_j,y\rangle>0$,
\eq{\label{defn:mcQ-P-half}
\mcQ_{\phi(t)^{1/2}}
=\left(\phi+\frac{1-t}{2}\phi'\right)g_{\mbS^n}
+\left(\frac12\phi''-\frac{(\phi')^2}{4\phi}\right)
\rd t\otimes\rd t.
}
Its eigenvalue in the direction of $\na_{\mbS^n}t$ is
\eq{\label{defn:q-rad}
q_{\rm rad}
=\phi+\frac{1-t}{2}\phi'
+\frac{2t-t^2}{2}\phi''
-\frac{2t-t^2}{4\phi}(\phi')^2,
}
and its eigenvalue in every orthogonal direction is
\eq{\label{defn:q-tan}
q_{\rm tan}
=\phi+\frac{1-t}{2}\phi'.
}
\end{lemma}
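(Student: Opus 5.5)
The plan is a direct substitution into the second form \eqref{defn:mcQ-2} of $\mcQ_f$, with $f=\phi(t)^{1/2}$ and $f^2=\phi(t)$, followed by the differential identities of Lemma \ref{lem:shared-differentials}. By the chain rule on $\mbS^n$,
\[
\rd\phi=\phi'\,\rd t,\qquad \na_{\mbS^n}^2\phi=\phi''\,\rd t\otimes\rd t+\phi'\,\na_{\mbS^n}^2t .
\]
By \eqref{eq:gradient-Hessian-t} we have $\na_{\mbS^n}^2t=(1-t)g_{\mbS^n}$. Inserting these into \eqref{defn:mcQ-2} gives
\[
\mcQ_{\phi^{1/2}}=\phi\,g_{\mbS^n}+\tfrac12\phi''\,\rd t\otimes\rd t+\tfrac{1-t}{2}\phi'\,g_{\mbS^n}-\tfrac{(\phi')^2}{4\phi}\,\rd t\otimes\rd t .
\]
This is exactly \eqref{defn:mcQ-P-half}. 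It is legitimate because $t>0$ forces $\phi(t)>0$, since $\phi^{-1}(0)=0$, so $\phi^{1/2}$ is smooth and positive near $y$.

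The eigenvalues then come from the fact that the form is $a\,g_{\mbS^n}+b\,\rd t\otimes\rd t$, with
\[
a=\phi+\tfrac{1-t}{2}\phi',\qquad b=\tfrac12\phi''-\tfrac{(\phi')^2}{4\phi}.
\]
Identifying forms with endomorphisms through $g_{\mbS^n}$, the rank-one part $\rd t\otimes\rd t$ acts as $\abs{\na_{\mbS^n}t}^2$ on $\na_{\mbS^n}t$ and vanishes on its orthogonal complement. So every vector orthogonal to $\na_{\mbS^n}t$ has eigenvalue $a=q_{\rm tan}$. The vector $\na_{\mbS^n}t$ has eigenvalue $a+b\abs{\na_{\mbS^n}t}^2$, and using $\abs{\na_{\mbS^n}t}^2=2t-t^2$ from \eqref{eq:gradient-Hessian-t} this is \eqref{defn:q-rad}.

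The only point needing care is the radial direction when $\na_{\mbS^n}t=0$, which happens exactly at $t=2$, i.e. $y=-p_j$. There the rank-one term vanishes, the form is the multiple $a\,g_{\mbS^n}$ of the metric, and \eqref{defn:q-rad} reduces to $q_{\rm tan}$ because $2t-t^2=0$. The statement therefore stays consistent, with $q_{\rm rad}$ read as the common eigenvalue. I expect no genuine obstacle; the whole argument is bookkeeping with the chain rule.
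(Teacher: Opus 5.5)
Your proposal is correct and follows the paper's argument: a direct chain-rule computation using $\na_{\mbS^n}^2t=(1-t)g_{\mbS^n}$ and $\abs{\na_{\mbS^n}t}^2=2t-t^2$, then reading off the eigenvalues of $a\,g_{\mbS^n}+b\,\rd t\otimes\rd t$. The only difference is cosmetic: you substitute into the form \eqref{defn:mcQ-2} with $f^2=\phi$, whereas the paper uses \eqref{defn:mcQ-1} and computes $\sqrt\phi(\sqrt\phi)'$ and $\sqrt\phi(\sqrt\phi)''$; your remark about the degenerate point $y=-p_j$ is a harmless extra.
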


\begin{proof}
By \eqref{eq:gradient-Hessian-t},
\eq{
\na_{\mbS^n}^2(\sqrt{\phi(t)})
=(\sqrt\phi)''\rd t\otimes\rd t
+(\sqrt\phi)'(1-t)g_{\mbS^n}.
}
Using \eqref{defn:mcQ-1} and
\eq{
\sqrt\phi(\sqrt\phi)'=\frac12\phi',\qquad
\sqrt\phi(\sqrt\phi)''
=\frac12\phi''-\frac{(\phi')^2}{4\phi},
}
we get \eqref{defn:mcQ-P-half}.
Evaluating \eqref{defn:mcQ-P-half} along $\na_{\mbS^n}t$ and its orthogonal complement, and using $\abs{\na_{\mbS^n}t}^2=2t-t^2$ (recalling \eqref{eq:gradient-Hessian-t}), yields \eqref{defn:q-rad}--\eqref{defn:q-tan}.
\end{proof}

To complete this subsection, we record four algebraic estimates used in due course.

\begin{lemma}\label{lem:tr-n=2}
Let $\mbE$ be a two-dimensional Euclidean space and let $B$ be a
symmetric form on $\mbE$.  Then every
$S\in{\rm Sym}_0(\mbE)$ satisfies
\eq{\label{eq:trace-SBS-2d}
{\rm tr}(SBS)
=\frac12{\rm tr}(B)\abs S^2.
}
\end{lemma}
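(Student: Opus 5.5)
The identity is pure linear algebra in dimension two, so the plan is to diagonalize one of the two forms and expand the trace directly. Since $S$ is trace-free and symmetric on the two-dimensional space $\mbE$, we choose an orthonormal basis $\{e_1,e_2\}$ of $\mbE$ in which $S$ is diagonal. Tracelessness forces $S=\mathrm{diag}(\lambda,-\lambda)$ for some $\lambda\in\mbR$, and then $\abs S^2=2\lambda^2$.

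We write $B$ in this same basis as a symmetric matrix $(b_{ab})$ and compute ${\rm tr}(SBS)=\sum_{a}(SBS)_{aa}=\sum_a s_a^2 b_{aa}$, where $s_1=\lambda$ and $s_2=-\lambda$. This gives
\eq{
{\rm tr}(SBS)=\lambda^2(b_{11}+b_{22})=\lambda^2{\rm tr}(B)=\frac12{\rm tr}(B)\abs S^2.
}
The off-diagonal entries $b_{12}$ drop out because only the diagonal of $SBS$ enters the trace, and $S$ is diagonal in this basis.

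A basis-free alternative is also available. By Cayley--Hamilton in dimension two, a trace-free symmetric $S$ satisfies $S^2=\frac12\abs S^2 I$. Cyclicity of the trace then gives ${\rm tr}(SBS)={\rm tr}(BS^2)=\frac12\abs S^2{\rm tr}(B)$. We would present this version, since it is a single line.

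There is no real obstacle. The only point needing care is the identification of forms with endomorphisms via the metric, which makes ${\rm tr}(SBS)$ and $\abs S^2={\rm tr}(S^2)$ well defined; both are computed with respect to the Euclidean structure fixed in the preliminaries.
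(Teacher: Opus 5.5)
Your proof is correct and is essentially the paper's argument. The paper diagonalizes $B$ and writes a general trace-free $S=\begin{pmatrix}a_1&a_2\\ a_2&-a_1\end{pmatrix}$, whereas you diagonalize $S$ so that the off-diagonal entry of $B$ drops out at once; your basis-free version via $S^2=\frac12\abs S^2 I$ is equally valid, and the paper itself uses that identity (as $\mathring A^2=\frac12\lvert\mathring A\rvert^2 I_{\mbF_1}$) in the proof of Lemma~\ref{lem:schur-trace-estimate}.
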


\begin{proof}
In an orthonormal basis diagonalizing $B$, write $B=\begin{pmatrix}\lambda_1&0\\
0&\lambda_2\end{pmatrix}$ and $S=\begin{pmatrix}a_1&a_2\\
a_2&-a_1\end{pmatrix}$.
It follows that
\eq{
{\rm tr}(SBS)
=(\lambda_1+\lambda_2)(a_1^2+a_2^2)
=\frac12{\rm tr}(B)\abs S^2.
}
\end{proof}
In the 3-d case, we have the following trace free criterion:
\begin{lemma}\label{lem:three-dimensional-trace-free}
Let $\mbE$ be a three-dimensional Euclidean space and let $\bm\Psi$
be a symmetric bilinear form on $\mbE$ with eigenvalues
$\lambda_1,\lambda_2,\lambda_3$.  Set
\eq{\label{eq:three-dimensional-q}
\sigma_1
=\lambda_1+\lambda_2+\lambda_3,\qquad
\sigma_2
=\lambda_1\lambda_2+\lambda_1\lambda_3
+\lambda_2\lambda_3,\qquad
\bm q(s)
=3s^2-2\sigma_1s+\sigma_2.
}
If $\bm q(\bm b_0)>0$ and $\bm q'(\bm b_0)>0$, then ${\rm tr}\left(\widetilde S(\bm b_0I-\bm\Psi)\widetilde S\right)>0$ for every nonzero $\widetilde S\in{\rm Sym}_0(\mbE)$.
Moreover, the estimate is uniform: if $\bm b_0$ and $\bm\Psi$ vary in a compact family and $\mathbf q(\bm b_0),\mathbf q'(\bm b_0)$ are uniformly bounded away from $0$ on this compact family, then there exists $c>0$, depending only on the compact family, such that
\eq{
{\rm tr}\left(\widetilde S(\bm b_0I-\bm\Psi)\widetilde S\right)
\geq c\abs{\widetilde S}^2.
}

\end{lemma}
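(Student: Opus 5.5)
The plan is to reduce everything to the eigenvalues of $B\coloneqq\bm b_0I-\bm\Psi$. The point is that $\bm q$ is exactly the derivative of the characteristic polynomial of $\bm\Psi$, so the two hypotheses should turn into sign conditions on the elementary symmetric functions of the eigenvalues of $B$.

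\textbf{Step 1: translate the hypotheses.} Let $p(s)=\prod_{i=1}^3(s-\lambda_i)=s^3-\sigma_1s^2+\sigma_2s-\sigma_3$ be the characteristic polynomial of $\bm\Psi$. Then $\bm q=p'$ and $\bm q'=p''$. Set $\mu_i=\bm b_0-\lambda_i$; these are the eigenvalues of $B$. Differentiating the product gives
\begin{equation*}
\bm q(\bm b_0)=\mu_1\mu_2+\mu_1\mu_3+\mu_2\mu_3=e_2(\mu),
\qquad
\bm q'(\bm b_0)=2(\mu_1+\mu_2+\mu_3)=2e_1(\mu).
\end{equation*}
So the hypotheses say exactly that $e_1(\mu)>0$ and $e_2(\mu)>0$.

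\textbf{Step 2: split the trace.} Work in an orthonormal eigenbasis $e_1,e_2,e_3$ of $\bm\Psi$ and write $\widetilde S=(S_{ij})$. Then
\begin{equation*}
{\rm tr}(\widetilde SB\widetilde S)=\sum_{i,j}\mu_iS_{ij}^2=\sum_{i}\mu_ia_i^2+\sum_{i<j}(\mu_i+\mu_j)S_{ij}^2,
\end{equation*}
where $a_i=S_{ii}$ and $a_1+a_2+a_3=0$. The diagonal and off-diagonal parts decouple, so it suffices to prove two things: $\mu_i+\mu_j>0$ for every pair $i\neq j$, and $\sum_i\mu_ia_i^2$ is positive definite on the plane $\{a_1+a_2+a_3=0\}$.

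\textbf{Step 3: positivity of the pair sums.} For distinct $i,j,k$ one has $(\mu_i+\mu_j)(\mu_i+\mu_k)=\mu_i^2+e_2(\mu)>0$. Hence all three pair sums have the same sign. Their total is $2e_1(\mu)>0$, so all three are positive.

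\textbf{Step 4: the diagonal part.} Substitute $a_3=-a_1-a_2$. The diagonal part becomes a quadratic form in $(a_1,a_2)$ with matrix
\begin{equation*}
\begin{pmatrix}\mu_1+\mu_3&\mu_3\\ \mu_3&\mu_2+\mu_3\end{pmatrix}.
\end{equation*}
Its determinant is $e_2(\mu)>0$ and its $(1,1)$ entry $\mu_1+\mu_3$ is positive by Step 3. By Sylvester's criterion it is positive definite. Together with Step 3 this gives ${\rm tr}(\widetilde SB\widetilde S)>0$ for every nonzero traceless $\widetilde S$.

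\textbf{Step 5: uniformity.} The map $(\bm b_0,\bm\Psi)\mapsto\min_{|\widetilde S|=1,\,{\rm tr}\widetilde S=0}{\rm tr}(\widetilde S(\bm b_0I-\bm\Psi)\widetilde S)$ is continuous, since the unit sphere in ${\rm Sym}_0(\mbE)$ is compact and the integrand is jointly continuous. Consider the subset of the given compact family on which $\bm q(\bm b_0)\geq\delta$ and $\bm q'(\bm b_0)\geq\delta$. It is closed, because $\sigma_1,\sigma_2$ depend continuously on $\bm\Psi$, hence it is compact. On it the minimum is positive by Steps 1–4, so it has a positive lower bound $c$.

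The only genuinely non-obvious point is Step 3: deducing positivity of each pair sum from the symmetric conditions alone, via the identity $(\mu_i+\mu_j)(\mu_i+\mu_k)=\mu_i^2+e_2$. Everything else is direct computation.
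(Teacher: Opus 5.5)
Your proof is correct and follows the paper's argument. You use the same eigenbasis splitting of ${\rm tr}(\widetilde S(\bm b_0I-\bm\Psi)\widetilde S)$ into diagonal and off-diagonal parts, Sylvester's criterion on the diagonal part (whose determinant is $\bm q(\bm b_0)$), and continuity on the compact family for uniformity. The only difference is how you show $\mu_i+\mu_j>0$. The paper observes that $\bm q\bigl(\tfrac{\lambda_i+\lambda_j}{2}\bigr)=-\tfrac14(\lambda_i-\lambda_j)^2\leq0$, and that $\bm q(\bm b_0)>0$, $\bm q'(\bm b_0)>0$ force $\bm b_0$ to lie to the right of the largest root of $\bm q$. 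Your identity $(\mu_i+\mu_j)(\mu_i+\mu_k)=\mu_i^2+e_2(\mu)$ reaches the same conclusion directly, without locating roots.
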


\begin{proof}
Choose an orthonormal basis in which
$\bm\Psi={\rm diag}(\lambda_1,\lambda_2,\lambda_3)$ and
write $\widetilde S=(s_{ij})$.  Since
$s_{11}+s_{22}+s_{33}=0$, we have
\eq{\label{eq:three-dimensional-trace-expansion}
{\rm tr}
\left(\widetilde S(\bm b_0I-\bm\Psi)\widetilde S\right)
={}&\sum_{i=1}^3(\bm b_0-\lambda_i)s_{ii}^2+\sum_{i<j}(2\bm b_0-\lambda_i-\lambda_j)s_{ij}^2.
}
For every $i<j$, direct substitution gives
\eq{\label{eq:q-pairwise-average}
\bm q\left(\frac{\lambda_i+\lambda_j}{2}\right)
=-\frac{(\lambda_i-\lambda_j)^2}{4}
\leq0.
}
Note that $\sigma_1^2-3\sigma_2\ge0$, and
since $\bm q(\bm b_0)>0$ and $\bm q'(\bm b_0)>0$, the number
$\bm b_0$ lies to the right of the largest root of $\bm q$.
\eqref{eq:q-pairwise-average} therefore gives
\eq{\label{eq:off-diagonal-positivity}
2\bm b_0-\lambda_i-\lambda_j>0\qquad \text{for every }i<j.
}

For the diagonal part, we substitute $s_{33}=-s_{11}-s_{22}$ to find
\eq{
&\sum_{i=1}^3(\bm b_0-\lambda_i)s_{ii}^2=(2\bm b_0-\lambda_1-\lambda_3)s_{11}^2
+2(\bm b_0-\lambda_3)s_{11}s_{22}
+(2\bm b_0-\lambda_2-\lambda_3)s_{22}^2.
}
The determinant of this quadratic form (with variables $s_{11}$ and $s_{22}$) is
\eq{
&(2\bm b_0-\lambda_1-\lambda_3)
(2\bm b_0-\lambda_2-\lambda_3)-(\bm b_0-\lambda_3)^2=3\bm b_0^2-2\sigma_1\bm b_0+\sigma_2
=\bm q(\bm b_0)>0,
}
and its first diagonal coefficient is positive by
\eqref{eq:off-diagonal-positivity}.  Thus both parts of
\eqref{eq:three-dimensional-trace-expansion} are positive definite, which proves that ${\rm tr}\left(\widetilde S(\bm b_0I-\bm\Psi)\widetilde S\right)>0$ for every nonzero $\widetilde S\in{\rm Sym}_0(\mbE)$.
Finally, the uniformity of the estimate on compact families follows easily by continuity. The proof is thus completed.
\end{proof}

\begin{lemma}\label{lem:algebraic-trace-estimate}
Let $n\geq2$ and $C_B,c>0$.  Let
$\mbE=\mbF_1\oplus\mbF_2$ be an orthogonal decomposition of an
$n$-dimensional Euclidean space, with $\dim\mbF_1\geq1$, and let $B$
be symmetric with $\norm B\leq C_B$.  Suppose that
\eq{\label{condi:algebraic-trace-estimate}
{\rm tr}(\widetilde S B\widetilde S)
\geq c\abs{\widetilde S}^2
}
for every $\widetilde S\in{\rm Sym}_0(\mbE,\mbF_1)$ (note that the hypothesis is vacuous when $\dim\mbF_1=1$).
Then there are $\widetilde\varepsilon,C>0$, depending only on
$n,C_B,c$, such that, for $0<\varepsilon<\widetilde\varepsilon$,
\eq{
{\rm tr}
\left(S(\pi_{\mbF_2}+\varepsilon B)S\right)
\geq C\varepsilon\abs S^2
}
for every $S\in{\rm Sym}_0(\mbE)$.
\end{lemma}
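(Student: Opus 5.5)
Write $S$ in block form with respect to $\mbE=\mbF_1\oplus\mbF_2$: $S=\begin{pmatrix}S_{11}&S_{12}\\ S_{21}&S_{22}\end{pmatrix}$, where $S_{21}=S_{12}^*$. The leading term is explicit:
\[
{\rm tr}(S\pi_{\mbF_2}S)=\abs{S\pi_{\mbF_2}}^2=\abs{S_{22}}^2+\abs{S_{12}}^2 .
\]
This is nonnegative and vanishes exactly when $S_{12}=0$ and $S_{22}=0$. In that case ${\rm tr}S_{11}=0$, so $S\in{\rm Sym}_0(\mbE,\mbF_1)$. On this kernel the hypothesis \eqref{condi:algebraic-trace-estimate} gives ${\rm tr}(SBS)\geq c\abs S^2$. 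The plan is the standard "positive on the kernel, positive semidefinite leading part" perturbation argument, carried out quantitatively so that the constants depend only on $n,C_B,c$. When $\dim\mbF_1=1$ the kernel is $\{0\}$, because a trace-free $1\times1$ block vanishes. In that case the leading term alone controls $\abs S^2$, and a small $\varepsilon$ absorbs $\varepsilon\norm B\abs S^2$.

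Concretely, decompose $S=S_0+R$ with
\[
S_0=\begin{pmatrix}S_{11}-\tfrac{{\rm tr}S_{11}}{\dim\mbF_1}I_{\mbF_1}&0\\0&0\end{pmatrix}\in{\rm Sym}_0(\mbE,\mbF_1).
\]
The remainder $R$ collects $S_{12}$, $S_{21}$, $S_{22}$ and the trace part $\frac{{\rm tr}S_{11}}{\dim\mbF_1}I_{\mbF_1}$. Since ${\rm tr}S_{11}=-{\rm tr}S_{22}$, we have $\abs{{\rm tr}S_{11}}\leq\sqrt n\abs{S_{22}}$. Hence
\[
\abs R^2\leq C(n)\bigl(\abs{S_{12}}^2+\abs{S_{22}}^2\bigr)=C(n)\,{\rm tr}(S\pi_{\mbF_2}S).
\]
Set $a^2:={\rm tr}(S\pi_{\mbF_2}S)$. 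Expand ${\rm tr}(SBS)={\rm tr}(S_0BS_0)+2{\rm tr}(S_0BR)+{\rm tr}(RBR)$ and bound the pieces:
\[
{\rm tr}(S_0BS_0)\geq c\abs{S_0}^2,\qquad
\abs{2{\rm tr}(S_0BR)}\leq \tfrac c2\abs{S_0}^2+\tfrac{C_B^2C(n)}{c}a^2,\qquad
\abs{{\rm tr}(RBR)}\leq C_BC(n)a^2 .
\]
This gives
\[
{\rm tr}\bigl(S(\pi_{\mbF_2}+\varepsilon B)S\bigr)\geq a^2\Bigl(1-\varepsilon C_1\Bigr)+\varepsilon\tfrac c2\abs{S_0}^2,
\]
where $C_1=C(n)(C_B+C_B^2/c)$.

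Choose $\widetilde\varepsilon=1/(2C_1)$, so that the first coefficient is at least $\tfrac12$. Finally,
\[
\abs S^2\leq 2\abs{S_0}^2+2\abs R^2\leq 2\abs{S_0}^2+2C(n)a^2 .
\]
Since $\varepsilon<\widetilde\varepsilon\le 1$ after possibly shrinking $\widetilde\varepsilon$, we have $\tfrac12a^2\geq\tfrac{\varepsilon}{2}a^2$. Therefore the right-hand side is at least $\varepsilon\min\{\tfrac12,\tfrac c2\}\bigl(a^2+\abs{S_0}^2\bigr)\geq C\varepsilon\abs S^2$, with $C$ depending on $n,c$.

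The only delicate point is the trace part of $S_{11}$. The hypothesis controls only trace-free forms supported in $\mbF_1$, so the trace of $S_{11}$ must be charged to the leading term via ${\rm tr}S_{11}=-{\rm tr}S_{22}$. This is exactly where trace-freeness of $S$ on all of $\mbE$ is used. The remaining work is Cauchy–Schwarz bookkeeping.
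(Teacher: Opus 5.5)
Your argument is correct and is essentially the paper's proof: the same splitting of $S$ into the trace-free part of the $\mathbb F_1$-block (to which the hypothesis applies) plus a remainder bounded by $C(n)\,{\rm tr}(S\pi_{\mathbb F_2}S)$ via ${\rm tr}S_{11}=-{\rm tr}S_{22}$, followed by the same Cauchy--Schwarz/Young absorption for small $\varepsilon$. The only differences are cosmetic. In the last step the paper uses the orthogonality $\lvert S\rvert^2=\lvert S_0\rvert^2+\lvert S_\perp\rvert^2$, where you use the cruder $\lvert S\rvert^2\le 2\lvert S_0\rvert^2+2\lvert R\rvert^2$. You also state explicitly the harmless requirement $\widetilde\varepsilon\le 1$, which the paper leaves implicit.
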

\begin{proof}
We delay the proof to Appendix~\ref{App:algebraic-lemmas}.
\end{proof}

\begin{lemma}\label{lem:schur-trace-estimate}
Let $n\geq3$ and $c,C>0$.  There are
$\widetilde\de\in(0,1)$ and $\widetilde C>0$, depending only on
$n,c,C$, with the following property.  Let
$\mbE=\mbF_1\oplus\mbF_2$, where $\dim\mbF_1=2$, and write with respect to this decomposition:
\eq{
\mcQ
=\begin{pmatrix}\mcQ_{11}&\mcQ_{12}\\
\mcQ_{21}&\mcQ_{22}\end{pmatrix},\qquad
\widehat{\mcQ}=\mcQ_{11}-\mcQ_{12}\mcQ_{22}^{-1}\mcQ_{21}.
}
Suppose that $\mcQ_{22}$ is positive definite and, for some
$\mu>0$ and $\de\in(0,\widetilde\de)$,
\eq{
\norm{\mcQ_{22}^{-1}\mcQ_{21}}\leq C\sqrt\de,
\qquad
\mcQ_{22}\geq c\frac\mu\de I_{\mbF_2},
}
and also
\eq{
\norm{\widehat\mcQ}\leq C\mu,
\qquad
{\rm tr}\widehat\mcQ\geq c\mu.
}
Then for every $S\in{\rm Sym}_0(\mbE)$, there holds
\eq{
{\rm tr}(S\mcQ S)\geq\widetilde C\mu\abs S^2.
}
\end{lemma}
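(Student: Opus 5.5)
The idea is to complete the square with respect to the Schur complement, so that $\mathrm{tr}(S\mathcal Q S)$ splits into a $\widehat{\mathcal Q}$-part acting on the $\mathbb F_1$-rows of $S$ and a $\mathcal Q_{22}$-part acting on the $\mathbb F_2$-rows. Then I would run a two-case argument depending on whether the $\mathbb F_2$-rows are small compared with the $\mathbb F_1$-rows.

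\emph{Block decomposition.} Write $S=\begin{pmatrix}S_{11}&S_{12}\\ S_{21}&S_{22}\end{pmatrix}$ with $S_{21}=S_{12}^T$, and set $a={\rm tr}S_{11}=-{\rm tr}S_{22}$. Let
\[
X=(S_{11}\ S_{12}),\qquad Z=(S_{21}\ S_{22}),\qquad K=\mathcal Q_{22}^{-1}\mathcal Q_{21},\qquad Y=Z+KX .
\]
For any vector $v=(v_1,v_2)$ one has $\mathcal Q(v,v)=\widehat{\mathcal Q}(v_1,v_1)+\langle \mathcal Q_{22}(v_2+Kv_1),v_2+Kv_1\rangle$. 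Summing over the columns $v=Se_i$ gives
\[
{\rm tr}(S\mathcal Q S)={\rm tr}\bigl(\widehat{\mathcal Q}(S_{11}^2+S_{12}S_{12}^T)\bigr)+{\rm tr}(Y^T\mathcal Q_{22}Y).
\]
The second term is always $\geq c\frac{\mu}{\delta}\lvert Y\rvert^2\geq 0$.

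\emph{Lower bound for the first term.} Write $S_{11}=S_{11}^0+\frac a2 I$ with $S_{11}^0\in{\rm Sym}_0(\mathbb F_1)$. Then
\[
{\rm tr}(\widehat{\mathcal Q}S_{11}^2)={\rm tr}(S_{11}^0\widehat{\mathcal Q}S_{11}^0)+a\,{\rm tr}(\widehat{\mathcal Q}S_{11}^0)+\tfrac{a^2}{4}{\rm tr}\widehat{\mathcal Q}.
\]
By Lemma \ref{lem:tr-n=2}, the first piece equals $\frac12{\rm tr}\widehat{\mathcal Q}\,\lvert S_{11}^0\rvert^2\geq \frac{c\mu}{2}\lvert S_{11}^0\rvert^2$. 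The last piece is nonnegative. The error terms are bounded using $\lVert\widehat{\mathcal Q}\rVert\leq C\mu$ and $\lvert a\rvert+\lvert S_{12}\rvert\leq C(n)\lvert Z\rvert$ (the bound on $\lvert a\rvert$ holds because $a=-{\rm tr}S_{22}$). This yields
\[
{\rm tr}\bigl(\widehat{\mathcal Q}(S_{11}^2+S_{12}S_{12}^T)\bigr)\geq \tfrac{c\mu}{2}\lvert S_{11}^0\rvert^2-C\mu\lvert S_{11}^0\rvert\lvert Z\rvert-C\mu\lvert Z\rvert^2 .
\]

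\emph{Case split.} Fix a large $M=M(n,c,C)$.

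Case 1: $\lvert Z\rvert^2\geq M\delta\lvert X\rvert^2$. Then $\lvert KX\rvert\leq C\sqrt\delta\lvert X\rvert\leq CM^{-1/2}\lvert Z\rvert\leq\frac12\lvert Z\rvert$, so $\lvert Y\rvert\geq\frac12\lvert Z\rvert$ and the second term is $\geq\frac{c\mu}{4\delta}\lvert Z\rvert^2$. Apply Young's inequality to the cross term $C\mu\lvert S_{11}^0\rvert\lvert Z\rvert$. For $\delta<\widetilde\delta$ small, $\frac{c\mu}{4\delta}\lvert Z\rvert^2$ absorbs all the $C\mu\lvert Z\rvert^2$ terms and still leaves $\gtrsim\mu(\lvert S_{11}^0\rvert^2+\lvert Z\rvert^2)$. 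This dominates $\mu\lvert S\rvert^2$, since $\lvert S\rvert^2\leq \lvert S_{11}^0\rvert^2+a^2+2\lvert Z\rvert^2$.

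Case 2: $\lvert Z\rvert^2<M\delta\lvert X\rvert^2$. Drop the nonnegative $\mathcal Q_{22}$-term. Since $\lvert a\rvert,\lvert S_{12}\rvert\leq C\sqrt{M\delta}\lvert X\rvert$, we get $\lvert S_{11}^0\rvert^2\geq(1-C M\delta)\lvert X\rvert^2$. The error terms are then $\leq C\mu\sqrt{M\delta}\lvert X\rvert^2$, so for $\delta$ small the first term is $\geq\frac{c\mu}{4}\lvert X\rvert^2\geq\widetilde C\mu\lvert S\rvert^2$, using $\lvert S\rvert^2\leq 2\lvert X\rvert^2$ in this case.

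The main obstacle is that completing the square produces the coupling $KX$. The bound $\lvert KX\rvert^2\leq C\delta\lvert X\rvert^2$ multiplied by the large coefficient $\mu/\delta$ gives a non-small $O(\mu)\lvert X\rvert^2$ loss. A direct estimate therefore fails, and this is what the case split is designed to avoid. The other delicate point is that $\widehat{\mathcal Q}$ need not be positive semidefinite. This is handled by exploiting the trace-free structure via Lemma \ref{lem:tr-n=2} and by noting that the non-trace-free parts $a$ and $S_{12}$ of the $\mathbb F_1$-block are controlled by $Z$.
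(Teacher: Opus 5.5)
Your argument is correct, and its skeleton is the paper's. It uses the same column-by-column Schur completion of the square (your $Y$ is the paper's $W$). It uses the two-dimensional identity of Lemma \ref{lem:tr-n=2}, which the paper writes as $\mathring A^2=\frac12\lvert\mathring A\rvert^2 I_{\mbF_1}$, for the trace-free part of $S_{11}$. And it controls ${\rm tr}S_{11}$ and $S_{12}$ by the $\mbF_2$-rows, as the paper does.

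The difference is how you handle the coupling $KX$. The paper does not split cases. It bounds the $\mbF_2$-rows from above, using $\lvert Z\rvert^2\le 2\lvert Y\rvert^2+2C^2\de\lvert X\rvert^2$ together with $\lvert X\rvert^2\le\lvert S_{11}^0\rvert^2+C(n)\lvert Z\rvert^2$. Absorbing gives $\lvert Z\rvert^2\le 4\lvert Y\rvert^2+C\de\lvert S_{11}^0\rvert^2$. The loss $-C\mu\lvert Z\rvert^2$ is then paid for by $\frac{c\mu}{\de}\lvert Y\rvert^2$ and $\frac{c\mu}{4}\lvert S_{11}^0\rvert^2$ in a single chain of inequalities.

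So your remark that a direct estimate fails holds only for the naive lower bound $\lvert Y\rvert\ge\lvert Z\rvert-\lvert KX\rvert$; estimating $\lvert Z\rvert$ from above avoids the dichotomy. Your two cases are exactly the two regimes this absorption merges: either the $\mcQ_{22}$-term dominates, or $Z$ is negligible against $S_{11}^0$. The case split buys some transparency, while the paper's route is shorter and produces the constants in one pass.
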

\begin{proof}
We delay the proof to Appendix~\ref{App:algebraic-lemmas}.
\end{proof}

With these preliminary ingredients in hand, we now consider the near-zero,
away from zero, and transition regions separately.

\subsection{Near the zeros}\label{sec:3-near-zeros}

\begin{proposition}\label{prop:near}
Under the assumptions of Theorem \ref{thm:SS-ineq}, there exist
positive constants $T_1,\varepsilon_1,C_1$, depending only on $n,{\bf p}$, such that if $0<T\leq T_1$, $0<kT^{\frac{2}{m}-1}\leq\varepsilon_1$, and, for some
$j\in\{1,\ldots,m\}$,
\eq{
0<1-\langle p_j,y\rangle\leq T.
}
Then for any $S\in{\rm Sym}_0(T_y\mbS^n)$, we have
\eq{
{\rm tr}(S\mcQ_GS)
\geq C_1kT^{\frac{2}{m}-1}\abs{S}^2.
}
\end{proposition}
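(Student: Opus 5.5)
The plan is to work entirely in the linear regime of $\phi$. There $\phi(t)=T^{\frac2m-1}t$, so $\mcQ_G$ will be a small perturbation, of relative size $kT^{\frac2m-1}$, of the projection $\pi_{\mbE_3}$ (respectively $\pi_{L^\perp}$ when $s_L(y)=0$). The perturbing form will be positive on trace-free forms supported in the kernel of that projection. Lemma \ref{lem:algebraic-trace-estimate} then gives the claimed lower bound.

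\emph{Step 1: lower bound for $\mcQ_G$.} By Lemma \ref{lem:mcQ_G-lower}, together with \eqref{eq:sL-zero-decomposition} and \eqref{eq:QsL-projection}, we have
\begin{equation*}
\mcQ_G\geq\pi_{\mbF_2}+k\mcQ_{P^{1/2}},
\end{equation*}
with $\mbF_2=L^\perp$ if $s_L(y)=0$ and $\mbF_2=\mbE_3$ if $s_L(y)>0$. In both cases the complement $\mbF_1$ is $\ell$-dimensional, with $\ell\in\{2,3\}$. (The case $s_L(y)=1$ cannot occur: $t\le T$ small forces $y$ near $p_j\in L$.) It therefore suffices to study $k\mcQ_{P^{1/2}}$.

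\emph{Step 2: the form $k\mcQ_{P^{1/2}}$.} Write $P=f_j\phi(t)$ and use \eqref{eq:single-factor-product}. Since the $p_i$ are distinct, taking $T_1$ small makes $f_j$ smooth, bounded above, and bounded below by a positive constant near $p_j$. Each factor $\phi(1-\langle p_i,y\rangle)$ with $i\neq j$ is evaluated at a point $\ge c(\mathbf p)\gg T$, and is at most of size $T^{\frac2m-1}\cdot O(1)$ or $O(1)$.

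By Lemma \ref{lem:mcQ_P^half-q_r-q_T} with $\phi=at$ and $a=T^{\frac2m-1}$, one computes
\begin{equation*}
q_{\rm tan}=a\Bigl(\tfrac12+\tfrac t2\Bigr),\qquad
q_{\rm rad}=a\Bigl(t+\tfrac{1-t}{2}-\tfrac{2-t}{4}\Bigr)=\tfrac34\,at.
\end{equation*}
Setting $\varepsilon\coloneqq k a f_j(y)$, this gives
\begin{equation*}
kf_j\mcQ_{\phi^{1/2}}=\varepsilon\bigl(\tfrac12\pi_{(\na t)^\perp}+O(t)\bigr).
\end{equation*}
The remaining terms in \eqref{eq:single-factor-product} are $k\phi\,\na^2f_j/2=O(\varepsilon t)$, $k\,\rd f_j\otimes\rd\phi=O(\varepsilon\sqrt t)$ (using $|\rd\phi|=a|\na t|\le a\sqrt{2t}$), and $k\phi\,\rd f_j\otimes \rd f_j/f_j=O(\varepsilon t)$. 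Hence
\begin{equation*}
k\mcQ_{P^{1/2}}=\varepsilon B,\qquad B=\tfrac12\pi_{(\na t)^\perp}+O(\sqrt T),
\end{equation*}
with $\norm B\le C_B(n,\mathbf p)$.

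\emph{Step 3: positivity on $\mbF_1$ and conclusion.} Let $B_0=\tfrac12\pi_{(\na t)^\perp}$. For $\widetilde S\in{\rm Sym}_0(T_y\mbS^n,\mbF_1)$, written in a basis adapted to $\na t$ with $e_1=\na t/|\na t|$, we get
\begin{equation*}
{\rm tr}(\widetilde SB_0\widetilde S)=\tfrac12\sum_{i,j\ge2}s_{ij}^2+\tfrac12\sum_{j\geq2}s_{1j}^2 .
\end{equation*}
This vanishes only if every entry except $s_{11}$ vanishes. But $\widetilde S$ is trace free and has image in $\mbF_1$, and $\dim\mbF_1\le3<n+1$ forces any such $\widetilde S$ with only $s_{11}\neq 0$ to be zero. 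So by compactness (or directly, via Lemmas \ref{lem:tr-n=2} and \ref{lem:three-dimensional-trace-free}, e.g. for $\ell=2$ one has ${\rm tr}(B_0|_{\mbF_1})\ge\tfrac12$), we obtain ${\rm tr}(\widetilde SB_0\widetilde S)\ge c(n)|\widetilde S|^2$.

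Choosing $T_1$ small absorbs the $O(\sqrt T)$ error, so the hypothesis \eqref{condi:algebraic-trace-estimate} of Lemma \ref{lem:algebraic-trace-estimate} holds for $B$. Choosing $\varepsilon_1$ so that $\varepsilon=kaf_j\le\widetilde\varepsilon$, the lemma gives
\begin{equation*}
{\rm tr}(S\mcQ_GS)\ge C\varepsilon|S|^2\ge C_1kT^{\frac2m-1}|S|^2 .
\end{equation*}

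The main obstacle is Step 2: controlling the cross terms uniformly. The term $\rd f_j\otimes\rd\phi$ is only $O(\sqrt t)$ relative to the leading part. One must also check that the lower bound of Lemma \ref{lem:mcQ_G-lower} discards nothing essential when $s_L$ is small but positive, i.e. that the positive $\pi_{\mbE_3}$ part is not needed on $\mbF_1$. If the $\sqrt t$ error proves too crude, I would instead keep $\mcQ_{\phi^{1/2}}$ exactly and use its explicit eigenvalues on $\mbF_1$.
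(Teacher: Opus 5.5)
Your proposal is correct and follows the paper's proof essentially step by step: the lower bound $\mcQ_G\geq\pi_{\mbF_2}+\varepsilon B$, the $O(\sqrt T)$ comparison of $B$ with $f_j\mcQ_{t^{1/2}}$, the eigenvalues $q_{\rm rad}=\frac34t$ and $q_{\rm tan}=\frac{1+t}2$, and the final application of Lemma~\ref{lem:algebraic-trace-estimate}. The one variation is that you absorb $q_{\rm rad}$ into the error and note that $\frac12\pi_{(\na t)^\perp}$ is already coercive on all trace-free forms, because trace-freeness controls the one entry it misses; this spares you the paper's use of $\na_{\mbS^n}t\in\mbF_1$ and its case split on $\dim\mbF_1$, at the cost of a constant depending on $n$, which is allowed. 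Two small fixes: when $s_L(y)=0$ one has $\dim\mbF_1=\ell-1$, not $\ell$ (harmless for your argument); and the $T$-independent two-sided bounds on $f_j$ and its derivatives hold because every factor with $i\neq j$ lies in the power-law regime $\phi(s)=C_ms^{2/m}$ of \eqref{defn:phi}, so your parenthetical allowing a size $T^{\frac2m-1}\cdot O(1)$ should be dropped.
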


\begin{proof}
Choose $T_1=T_1({\bf p})$ sufficiently small so that the caps 
\eq{
\{y\in\mbS^n:1-\left<p_i,y\right>\leq T_1\},\quad {1\leq i\leq m},
}
are pairwise disjoint and, whenever $y$ lies in the $j$-th cap, we have
\eq{
1-\langle p_i,y\rangle>e^{\rho_0}T_1\qquad(i\neq j),
}
where $\rho_0$ is the constant from Lemma \ref{lem:phi}.
It follows that for any $y$ in the $j$-th cap,
\eq{
t=1-\langle p_j,y\rangle,\qquad
P=T^{\frac2m-1}f_jt,\quad\text{where }
f_j=\prod_{i\neq j}\phi(1-\langle p_i,y\rangle),
}
and the explicit power form \eqref{defn:phi} of all the remaining factors gives: for some $C=C({\bf p})>0$,
\eq{\label{eq:near-fj}
C^{-1}\leq f_j\leq C,\qquad
\abs{\na_{\mbS^n}f_j}+\abs{\na_{\mbS^n}^2f_j}\leq C.
}
To proceed, we put
\[
\varepsilon=kT^{\frac2m-1},\qquad
B_y=\mcQ_{(f_jt)^{1/2}},\qquad
\Pi_y=
\begin{cases}
\mcQ_{s_L},&s_L(y)>0,\\
\frac12\na_{\mbS^n}^2(s_L^2),&s_L(y)=0.
\end{cases}
\]
By Lemma~\ref{lem:mcQ_a-projection}, $\Pi_y$ is an orthogonal
projection, and \eqref{eq:mcQ_G-a-P-1}--\eqref{eq:mcQ_G-a-P-2} give
\eq{\label{eq:near-lower}
\mcQ_G\geq\Pi_y+\varepsilon B_y.
}
Applying \eqref{eq:single-factor-product} with $\phi(t)=t$ gives
\eq{
B_y-f_j\mcQ_{t^{1/2}}
=\frac t2\na_{\mbS^n}^2f_j
+\frac14\left(\rd f_j\otimes\rd t+\rd t\otimes\rd f_j\right)
-\frac{t}{4f_j}\rd f_j\otimes\rd f_j.
}
Since $\abs{\rd t}^2=2t-t^2\leq2T$, \eqref{eq:near-fj} thus gives
\eq{\label{eq:near-profile-error}
\norm{B_y-f_j\mcQ_{t^{1/2}}}\leq C({\bf p})\sqrt T.
}

Let $\mbF_1=\ker\Pi_y$ and $\mbF_2=\mbF_1^\perp$. Then
$\Pi_y=\pi_{\mbF_2}$ and
\eq{
\dim\mbF_1=
\begin{cases}
\ell,&s_L(y)>0,\\
\ell-1,&s_L(y)=0.
\end{cases}
}
We also note that $\na_{\mbS^n}t\in\mbF_1$ since in the case $s_L(y)=0$, we have $y\in L$, and hence $\na_{\mbS^n}t=-p_j+(1-t)y\in L\cap T_y\mbS^n=\mbF_1$; while in the case $s_L(y)>0$, we can use the fact that $\pi_{L^\perp}(\na_{\mbS^n}t)=(1-t)\pi_{L^\perp}(y)$ to compute
\eq{
\left<\na_{\mbS^n}t,w\right>
=(1-t)\left<\pi_{L^\perp}(y),w\right>=0,\quad\forall w\in L^\perp\cap\pi_L(y)^\perp,
}
and hence $\na_{\mbS^n}t\in{\rm ker}\Pi_y=\mbF_1$ thanks to Lemma \ref{lem:mcQ_a-projection}.

Applying Lemma~\ref{lem:mcQ_P^half-q_r-q_T} with $\phi(t)=t$, so that $\phi'=1$ and $\phi''=0$. By
\eqref{defn:q-rad} and \eqref{defn:q-tan},
\eq{
q_{\rm rad}=\frac34t,\qquad q_{\rm tan}=\frac{1+t}{2}.
}
If $\dim\mbF_1=1$, the hypothesis of
Lemma~\ref{lem:algebraic-trace-estimate} is vacuous. When $\dim\mbF_1=2$, Lemma~\ref{lem:tr-n=2} gives, for every
$\widetilde S\in{\rm Sym}_0(T_y\mbS^n,\mbF_1)$,
\eq{
{\rm tr}
\left(\widetilde S\mcQ_{t^{1/2}}\widetilde S\right)
=\frac12(q_{\rm rad}+q_{\rm tan})\abs{\widetilde S}^2
\geq\frac14\abs{\widetilde S}^2.
}
When $\dim\mbF_1=3$, choose an orthonormal eigenbasis
$e_1,e_2,e_3$, with $e_1$ radial, and write the restriction of
$\widetilde S$ as $(s_{\alpha\beta})$.  Since
$s_{11}+s_{22}+s_{33}=0$, we see
\eq{
{\rm tr}
\left(\widetilde S\mcQ_{t^{1/2}}\widetilde S\right)
={}&q_{\rm rad}s_{11}^2
+q_{\rm tan}(s_{22}^2+s_{33}^2)
+(q_{\rm rad}+q_{\rm tan})(s_{12}^2+s_{13}^2)
+2q_{\rm tan}s_{23}^2
\geq{}c\abs{\widetilde S}^2,
}
thanks to the estimates
\eq{
q_{\rm tan}
\geq\frac12,\qquad
q_{\rm rad}+q_{\rm tan}
\geq\frac12,\qquad
q_{\rm rad}+\frac12q_{\rm tan}
\geq\frac14.
}
Thus, after decreasing $T_1$, \eqref{eq:near-fj} and
\eqref{eq:near-profile-error} imply
\eq{
{\rm tr}(\widetilde S B_y\widetilde S)
\geq c\abs{\widetilde S}^2
}
in every possible dimension of $\mbF_1$. Since $\norm{B_y}\leq C(n,{\bf p})$, applying Lemma \ref{lem:algebraic-trace-estimate} and using
\eqref{eq:near-lower} gives
\eq{
{\rm tr}(S\mcQ_GS)
\geq C_1(n,{\bf p})\varepsilon\abs{S}^2
=C_1(n,{\bf p})kT^{\frac2m-1}\abs{S}^2,
}
provided $0<\varepsilon=kT^{\frac2m-1}\leq\varepsilon_1=\varepsilon_1(n,{\bf p})$ sufficiently small.
\end{proof}

\subsection{Away from the zeros}

We suppose throughout this subsection that
\eq{\label{eq:away-region}
1-\langle p_j,y\rangle\geq e^{2\rho_0}T
\qquad\text{for every }j\in\{1,\ldots,m\},
}
and hence by \eqref{defn:phi},
\eq{\label{eq:away-P}
P
=C_m^m\prod_{j=1}^m
\left(1-\langle p_j,y\rangle\right)^{\frac2m}.
}
Note that there exists a suitably small $T_2=T_2(n,{\bf p})$, such that for any $0<T\leq T_2$, \eqref{eq:away-region} is possible to hold, and we assume throughout the subsection that $T\leq T_2=T_2(n,{\bf p})$.

\subsubsection{The cases with a kernel of dimension at most two}

\begin{proposition}\label{prop:away-low}
In the cases $\ell=2\leq n$, and $n=2$, $\ell=3$, there are positive constants $k_2,C_2$, depending only on $\ell,n,T,{\bf p}$, such that $0<k\leq k_2$ and \eqref{eq:away-region} imply
\eq{
{\rm tr}(S\mcQ_GS)\geq kC_2\abs{S}^2
}
for every $S\in{\rm Sym}_0(T_y\mbS^n)$.
\end{proposition}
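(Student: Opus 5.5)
Throughout, $y$ satisfies \eqref{eq:away-region}. Then $P$ is given by the explicit power formula \eqref{eq:away-P}, and $P\geq c(T,\mathbf p)>0$ with all derivatives bounded by $C(T,\mathbf p)$. The plan is to reduce to Lemma~\ref{lem:algebraic-trace-estimate} with $\varepsilon=k$, using a two-dimensional spherical computation in the spirit of Lemma~\ref{lem:necessary-condition}.

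\emph{Reduction.} Set $B_y=\mcQ_{P^{1/2}}$. Then $\norm{B_y}\leq C(n,T,\mathbf p)$ on the away region. By Lemma~\ref{lem:mcQ_G-lower} and Lemma~\ref{lem:mcQ_a-projection},
\[
\mcQ_G\geq\Pi_y+kB_y ,
\]
where $\Pi_y$ is the orthogonal projection $\pi_{\mbF_2}$ with $\mbF_1=\ker\Pi_y$. Here $\dim\mbF_1=\ell$ if $s_L(y)>0$ and $\dim\mbF_1=\ell-1$ if $s_L(y)=0$. In the two cases of the proposition, $\dim\mbF_1\leq2$:
\begin{itemize}
\item If $n=2$ and $\ell=3$, then $L=\mbR^3$, so $s_L\equiv0$ and $\mbF_1=T_y\mbS^2$.
\item If $\ell=2$, then $\dim\mbF_1\leq 2$ directly.
\end{itemize}
When $\dim\mbF_1=1$ the hypothesis of Lemma~\ref{lem:algebraic-trace-estimate} is vacuous. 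When $\dim\mbF_1=2$, Lemma~\ref{lem:tr-n=2} reduces that hypothesis to the uniform bound
\[
{\rm tr}\bigl(B_y|_{\mbF_1}\bigr)\geq c(T,\mathbf p)>0 .
\]
Once this holds, Lemma~\ref{lem:algebraic-trace-estimate} gives ${\rm tr}(S\mcQ_GS)\geq C_2k\abs S^2$ for $0<k\leq k_2:=\widetilde\varepsilon/2$.

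\emph{Identifying $\mbF_1$ with a great $2$-sphere.} In both cases $\mbF_1$ is the tangent space at $y$ of a totally geodesic great two-sphere $\Sigma\ni y$:
\begin{itemize}
\item For $n=2$, $\Sigma=\mbS^2$.
\item For $\ell=2$ and $s_L(y)>0$, take $\Sigma=\mbS^n\cap(L\oplus\mbR y_{L^\perp})$. Indeed $\mbF_1=\mbE_1\oplus\mbE_2$ when $s_L<1$, and $\mbF_1=T_y\mbS^n\cap L$ when $s_L=1$; in both subcases $\mbF_1=T_y\Sigma$.
\end{itemize}
Since $\Sigma$ is totally geodesic, the Hessian of $P^{1/2}$ restricted to $T_y\Sigma$ is the intrinsic Hessian on $\Sigma$. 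As in the proof of Lemma~\ref{lem:necessary-condition}, this gives
\[
{\rm tr}\bigl(B_y|_{\mbF_1}\bigr)=P^{1/2}\,(\De_\Sigma+2)P^{1/2}.
\]
Moreover $P|_\Sigma=C\prod_i(1-\langle y,p_i\rangle)^{2/m}$ with all $p_i\in\Sigma$. Different choices of $y_{L^\perp}$ give isometric configurations, so it suffices to treat a single $\mbS^2$ containing the $p_i$.

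\emph{Main step: uniform positivity of $(\De_{\mbS^2}+2)P^{1/2}$.} I expect this to be the main obstacle, and I would argue in stereographic coordinates as the paper set up. Choose $o\in\Sigma$ away from the $p_i$. Then $P^{1/2}=c\,v$ with $v=\varphi w$, where
\[
\varphi=(1+\abs z^2)^{-1},\qquad w=e^h,\qquad h=\tfrac1m\log\Bigl\lvert\prod_i(z-z_i)\Bigr\rvert^2 \text{ harmonic}.
\]
Using $\De_{\mbS^2}=\tfrac14(1+\abs z^2)^2\De_{\rm flat}$, we have
\[
\De_{\rm flat}(\varphi w)=w\bigl(\De\varphi+2\na\varphi\cdot\na h+\varphi\abs{\na h}^2\bigr)\geq w\Bigl(\De\varphi-\frac{\abs{\na\varphi}^2}{\varphi}\Bigr)=w\varphi\,\De\log\varphi=-\frac{4\varphi w}{(1+\abs z^2)^2}.
\]
Hence $\De_{\mbS^2}v\geq-v$, and therefore $(\De_{\mbS^2}+2)v\geq v$. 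The point $o$ itself is handled by continuity, or by moving $o$.

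On the away region $v\geq c(T,\mathbf p)>0$. This yields ${\rm tr}(B_y|_{\mbF_1})\geq cP\geq c(T,\mathbf p)$, uniformly in $y$. The constants depend only on $\ell,n,T,\mathbf p$, which completes the argument.
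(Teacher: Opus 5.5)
Your proof is correct, and your reduction is exactly the paper's: Lemma~\ref{lem:mcQ_G-lower} gives $\mcQ_G\ge\Pi_y+k\mcQ_{P^{1/2}}$, you check $\dim\mbF_1\le2$, and you apply Lemma~\ref{lem:tr-n=2} and then Lemma~\ref{lem:algebraic-trace-estimate} with $\varepsilon=k$. Where you genuinely differ is the proof of the key trace bound ${\rm tr}_{\mbF_1}\mcQ_{P^{1/2}}\ge P$.

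The paper works pointwise on all of $T_y\mbS^n$ using the logarithmic identity
\[
P^{-1}\mcQ_{P^{1/2}}=\Bigl(\tfrac1m\textstyle\sum_j t_j^{-1}\Bigr)g_{\mbS^n}-\tfrac1m\textstyle\sum_j\rd\log t_j\otimes\rd\log t_j+\rd\log P^{1/2}\otimes\rd\log P^{1/2}.
\]
It drops the last, nonnegative term and uses $\abs{\na\log t_j}^2=2/t_j-1$. This gives $P^{-1}{\rm tr}_\Pi\mcQ_{P^{1/2}}\ge1$ for \emph{every} two-plane $\Pi\subset T_y\mbS^n$.

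You instead identify $\mbF_1$ with $T_y\Sigma$, where $\Sigma$ is a totally geodesic great two-sphere through $y$ and all the $p_i$. You then run the conformal computation $\De_{\mbS^2}v\ge-v$ in stereographic coordinates. There the discarded nonnegative term is $\varphi\abs{\na h+\na\log\varphi}^2$. The two arguments yield the same bound, namely $(\De_\Sigma+2)P^{1/2}\ge P^{1/2}$.

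What your route buys is conceptual. It makes quantitative the stereographic heuristic stated before Lemma~\ref{lem:phi}, and it explains why the exponent $2/m$ is the right one.

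It does, however, depend on three things the paper's argument avoids:
\begin{itemize}
\item the two-dimensional conformal structure;
\item the geometric identification of $\mbF_1$ as tangent to a great sphere containing the $p_i$ (true in these cases, but not available for an arbitrary two-plane);
\item the power form of $P$ holding on a neighbourhood of $y$ in $\Sigma$, which is supplied by the margin between $e^{2\rho_0}T$ and $e^{\rho_0}T$.
\end{itemize}
The paper's identity needs none of these. The same logarithmic calculus is also what reappears on the three-dimensional kernel in Proposition~\ref{prop:away-nondegenerate}.
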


\begin{proof}
Set $t_j=1-\langle p_j,y\rangle$.
Using \eqref{eq:away-P}, the chain rule, and
\eqref{defn:mcQ-2}, we obtain the following logarithmic identities:
\begin{align}
\log(P^{1/2})
={}&\log(C_m^{m/2})+\frac1m\sum_{j=1}^m\log t_j,
\label{eq:away-logP}\\
P^{-1}\mcQ_{P^{1/2}}
={}&g_{\mbS^n}+\na_{\mbS^n}^2\log(P^{1/2})
+\rd\log(P^{1/2})\otimes\rd\log(P^{1/2}),              \label{eq:away-QP}\\
\na_{\mbS^n}^2\log t_j
={}&
\left(\frac1{t_j}-1\right)g_{\mbS^n}
-\rd\log t_j\otimes\rd\log t_j.                        \label{eq:away-logtj}
\end{align}
Substituting \eqref{eq:away-logP} and \eqref{eq:away-logtj} into \eqref{eq:away-QP} gives
\eq{\label{eq:away-profile}
P^{-1}\mcQ_{P^{1/2}}
=\left(\frac1m\sum_{j=1}^m\frac1{t_j}\right)g_{\mbS^n}
-\frac1m\sum_{j=1}^m\rd\log t_j\otimes\rd\log t_j
+\rd\log(P^{1/2})\otimes\rd\log(P^{1/2}).
}
Hence, for every two-dimensional subspace
$\Pi\subset T_y\mbS^n$,
\eq{
P^{-1}{\rm tr}_{\Pi}\mcQ_{P^{1/2}}
&\geq\frac1m\sum_{j=1}^m
\left(\frac2{t_j}-\abs{\na_{\mbS^n}\log t_j}^2\right)
=\frac1m\sum_{j=1}^m
\left(\frac2{t_j}-\frac{2t_j-t_j^2}{t_j^2}\right)
=1.
}
The lower bound for $t_j$ in \eqref{eq:away-region} and \eqref{eq:away-P} therefore yield
\eq{\label{eq:away-two-plane}
{\rm tr}_{\Pi}\mcQ_{P^{1/2}}
\geq c(m)T^2,\qquad
\norm{\mcQ_{P^{1/2}}}
\leq C(n,m,T),
}
where we have used the fact that $\abs{\rd\log t_j}^2=\frac{2t_j-t_j^2}{t_j^2}=\frac{2}{t_j}-1\leq\frac{2}{e^{2\rho_0}T}$.

To proceed, put
\eq{
\Pi_y
=\begin{cases}
\mcQ_{s_L},&s_L(y)>0,\\
\frac12\na_{\mbS^n}^2(s_L^2),&s_L(y)=0,
\end{cases}
\qquad
\mbF_1=\ker\Pi_y,\qquad
\mbF_2=\mbF_1^\perp.
}
By Lemma~\ref{lem:mcQ_G-lower} and the definition of $\Pi_y$,
\eq{
\Pi_y
=\pi_{\mbF_2},\qquad
\mcQ_G
\geq\Pi_y+k\mcQ_{P^{1/2}},\qquad
\dim\mbF_1
=
\begin{cases}
\ell,&s_L(y)>0,\\
\ell-1,&s_L(y)=0.
\end{cases}
}
Note that if $n=2$ and $\ell=3$, then $s_L\equiv0$. Hence in all cases, we have $\dim\mbF_1\leq2$.
By Lemma~\ref{lem:tr-n=2} and \eqref{eq:away-two-plane},
\eq{
{\rm tr}(\widetilde S\mcQ_{P^{1/2}}\widetilde S)
=\frac12{\rm tr}_{\mbF_1}(\mcQ_{P^{1/2}})
\abs{\widetilde S}^2
\geq c(m)T^2\abs{\widetilde S}^2.
}
Applying Lemma \ref{lem:algebraic-trace-estimate} with $\varepsilon=k$ then gives the required result and completes the proof.
\end{proof}

\subsubsection{The nondegenerate case when \texorpdfstring{$\ell=3$}{ell=3}}

\begin{remark}\label{rem:O-dependence}
\normalfont
Here and in all follows, the implicit constant in the notation $O(\cdot)$ may depend on $n$ and ${\bf p}$. We indicate the additional dependencies by subscripts. For example, $O_{T,\delta}(a)$ denotes a quantity whose absolute value is bounded by $C,a$, where $C$ may also depend on $T$, and $\delta$, but is independent of all other parameters under consideration.
\end{remark}

\begin{proposition}\label{prop:away-nondegenerate}
Let $\ell=3\leq n$ and $\de\in(0,1)$. There are positive constants $k_2,C_2$, depending only on $n,T,\de,{\bf p}$, such that $0<k\leq k_2$, \eqref{eq:away-region}, and $s_L(y)\geq\de$ imply
\eq{
{\rm tr}(S\mcQ_GS)
\geq kC_2\abs{S}^2
}
for every $S\in{\rm Sym}_0(T_y\mbS^n)$.
\end{proposition}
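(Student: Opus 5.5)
The plan is to avoid the lossy bound \eqref{eq:mcQ_G-a-P-2}. Since $s_L\geq\de$ on the region considered, I will instead expand $G$ exactly as a perturbation of $s_L$. The relevant set of $y$, namely \eqref{eq:away-region} together with $s_L(y)\geq\de$, is compact, and on it $P$ is smooth and bounded above and below by constants depending on $n,T,{\bf p}$ (by \eqref{eq:away-P}). Writing
\[
G=s_L\Bigl(1+\tfrac{kP}{s_L^2}\Bigr)^{1/2}=s_L+kh+k^2R_k,\qquad h=\frac{P}{2s_L},
\]
the remainder $R_k$ has $C^2$ norm bounded by a constant depending on $n,T,\de,{\bf p}$, uniformly for $k\leq1$. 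Since $\mcQ_f=f^2g_{\mbS^n}+f\na^2_{\mbS^n}f$ is quadratic in $f$, this yields
\[
\mcQ_G=\mcQ_{s_L}+kB+O_{T,\de}(k^2),\qquad B=2s_Lh\,g_{\mbS^n}+h\na^2_{\mbS^n}s_L+s_L\na^2_{\mbS^n}h .
\]
By Lemma~\ref{lem:mcQ_a-projection}, $\mcQ_{s_L}=\pi_{\mbE_3}$ with kernel $\mbF_1=\mbE_1\oplus\mbE_2$ of dimension $3$. So if I show
\[
{\rm tr}(\widetilde SB\widetilde S)\geq c\abs{\widetilde S}^2\quad\text{for all }\widetilde S\in{\rm Sym}_0(T_y\mbS^n,\mbF_1),
\]
with $c=c(n,T,\de,{\bf p})$, then Lemma~\ref{lem:algebraic-trace-estimate} with $\varepsilon=k$ gives ${\rm tr}(S(\pi_{\mbE_3}+kB)S)\geq Ck\abs S^2$. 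The $O(k^2)$ error is then absorbed by taking $k\leq k_2$ small.

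To simplify $B$ on $\mbF_1$, I use that $s_L$ is the restriction of the $1$-homogeneous function $X\mapsto\abs{\pi_{L^\perp}X}$. Its ambient Hessian vanishes on $L$-directions and on the direction $y_{L^\perp}$, and $\mbF_1\subset L\oplus\mbR y_{L^\perp}$. Hence the Gauss formula gives $\na^2_{\mbS^n}s_L=-s_Lg_{\mbS^n}$ on $\mbF_1$, and so
\[
B|_{\mbF_1}=s_L\bigl(h\,g_{\mbS^n}+\na^2_{\mbS^n}h\bigr)\big|_{\mbF_1}=\frac{s_L}{h}\,\mcQ_h\big|_{\mbF_1}.
\]
The claim therefore reduces to a trace-free positivity statement for $\mcQ_h$ on the $3$-dimensional space $\mbF_1$. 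Geometrically, $\mbF_1$ is the tangent space at $y$ of the $3$-sphere $\mbS^n\cap(L\oplus\mbR y_{L^\perp})$. In the coordinates $y=\cos\theta\,y_L+\sin\theta\,y_{L^\perp}$ one has $s_L=\sin\theta$, and $P$ depends only on $\cos\theta\,y_L$ through the factors $(1-\cos\theta\langle y_L,p_j\rangle)^{2/m}$.

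To verify the positivity I will write $\mcQ_h$ restricted to $\mbF_1$ as $\bm b_0I-\bm\Psi$ and check $\bm q(\bm b_0)>0$ and $\bm q'(\bm b_0)>0$, so that Lemma~\ref{lem:three-dimensional-trace-free} applies. Compactness of the region then makes these bounds uniform. The cleanest way to organize this should be logarithmic, as in the proof of Proposition~\ref{prop:away-low}. Using $h^{-1}\mcQ_h=g+\na^2\log h+\rd\log h\otimes\rd\log h$ together with $\log h=\mathrm{const}+\frac1m\sum_j\log t_j-\log s_L$, formula \eqref{eq:away-logtj}, and the analogous formula for $\log s_L$, I expect an expression of the form
\[
\Bigl(\tfrac1m\sum_j t_j^{-1}+\text{(explicit term in }s_L)\Bigr)g-\tfrac1m\sum_j\rd\log t_j^{\otimes2}+\ldots+\rd\log h^{\otimes2}.
\]
Its pairwise traces over $2$-planes should be controlled as in the bound ${\rm tr}_\Pi\geq1$ obtained there, which is the mechanism behind the $t^{2/m}$ profile and the identity $(\De_{\mbS^2}+2)v>0$.

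The main obstacle is exactly this step. Positivity of all $2$-plane traces, which Lemma~\ref{lem:tr-n=2} sufficed for when $\dim\mbF_1\leq2$, does not imply trace-free positivity in dimension $3$. One needs the stronger quadratic criterion $\bm q(\bm b_0)>0$, and this has to be checked after the nontrivial mixing of the $\theta$-direction $\xi_y$ with $\mbE_1$ through $\rd\log h$. I would first carry out the computation in an orthonormal frame of $\mbF_1$ consisting of $\xi_y$ and two vectors of $\mbE_1$, one of them along $\pi_{\mbE_1}\na\log P$, and attempt to verify the two inequalities for $\bm q$ directly. Separately, I would handle the endpoint $s_L=1$ (where $y_L$ is undefined but $P$ is constant to first order) by continuity or a direct computation.
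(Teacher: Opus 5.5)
Your setup is correct and is in substance the paper's own. The first-order term you obtain satisfies $B=P\,\mathcal B_0$ exactly, since the $h\nabla^2 s_L$ contributions cancel; here $\mathcal B_0$ is the $k\to0$ limit of $\mathcal B_k=(\mathcal Q_G-\mathcal Q_{s_L})/(kP)$ used in the paper. Your identity $B|_{\mathbb F_1}=\frac{s_L}{h}\mathcal Q_h$ is just a repackaging of $\mathcal B_0|_{\mathbb F_1}$, and the closing appeal to Lemma~\ref{lem:algebraic-trace-estimate} is the same.

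\textbf{The gap.} The one nontrivial step is the trace-free coercivity of this form on the three-dimensional kernel $\mathbb F_1$, and you have not proved it. You only announce that you will write it as $\bm b_0I-\bm\Psi$ and ``attempt to verify'' $\bm q(\bm b_0)>0$ and $\bm q'(\bm b_0)>0$. But $\bm q(\bm b_0)=\sigma_2(\bm b_0I-\bm\Psi)$ and $\bm q'(\bm b_0)=2\sigma_1(\bm b_0I-\bm\Psi)$, so this merely restates the goal. As you note yourself, the two-plane trace heuristic from Proposition~\ref{prop:away-low} cannot give $\sigma_2>0$.

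\textbf{The missing idea.} What is missing is the change of variables that makes $\sigma_2$ tractable. Set $w=\pi_{L^\perp}(y)/s_L^2-y$ and $\xi_j=(p_j-y)/t_j+\pi_{L^\perp}(y)/s_L^2\in\mathbb F_1$. Then $\rd\log t_j=w-\xi_j$, $\rd\log s_L=w$, and, crucially, $|\xi_j|=s_L^{-1}$ for every $j$.

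With these, the $w$-terms cancel from the tensor part and $\mathcal B_0|_{\mathbb F_1}=\bm b_0g-\bm\Psi$, where $\bm b_0=s_L^{-2}-\langle\bar\xi,w\rangle$, $\bm\Psi=\bm\Phi-2\bar\xi\otimes\bar\xi$ and $\bm\Phi=\frac1m\sum_j\xi_j\otimes\xi_j$.

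The verification then goes as follows:
\begin{itemize}
\item ${\rm tr}\,\bm\Phi=s_L^{-2}$ and $\bm\Phi\geq\bar\xi\otimes\bar\xi$ give $\sigma_1=s_L^{-2}-2|\bar\xi|^2$ and $\sigma_2\geq|\bar\xi|^4-|\bar\xi|^2s_L^{-2}$.
\item $|w|=(s_L^{-2}-1)^{1/2}$ gives $\bm b_0\geq(1-s_L|\bar\xi|)/s_L^2$.
\item At that point one checks $\bm q\geq\bigl((1-s_L|\bar\xi|)/s_L\bigr)^4$ and $\bm q'>0$. Since $\bm q''=6$, both inequalities propagate to $\bm b_0$.
\item Uniformity requires $|\bar\xi|<s_L^{-1}$ uniformly on the compact region. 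The paper gets this from the fact that the equal-length vectors $\xi_j$ cannot all coincide, by injectivity of $p\mapsto(p-y)/(1-\langle p,y\rangle)$.
\end{itemize}
None of this structure appears in your sketch, so as written the proof is incomplete at its central step.

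\textbf{A minor error.} At $s_L=1$, $P$ is not constant to first order in general: there $\rd P=-\frac{2P}{m}\langle\sum_jp_j,\cdot\rangle$ on $L$. The $w,\xi_j$ formulas simply remain valid at that endpoint, with $\mathbb F_1=L$, $w=0$ and $\xi_j=p_j$.
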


\begin{proof}
Set $t_j=1-\langle p_j,y\rangle$ and
\eq{
\mfK_\de
=\left\{y\in\mbS^n:t_j\geq e^{2\rho_0}T
\text{ for every }j,\ s_L(y)\geq\de\right\}.
}
On this compact set, $P$ is positive with uniformly bounded derivatives. We define
\eq{
\mcB_k
=\frac{\mcQ_G-\mcQ_{s_L}}{kP}.
}
Using \eqref{defn:mcQ-2}, a direct computation gives
\eq{\label{eq:away-Bk}
\mcB_k
={}&g_{\mbS^n}+\frac1{2P}\na_{\mbS^n}^2P
+\frac{\rd(s_L^2)\otimes\rd(s_L^2)}
{4s_L^2(s_L^2+kP)}\\
&-\frac{\rd(s_L^2)\otimes\rd P+\rd P\otimes\rd(s_L^2)}
{4P(s_L^2+kP)}
-\frac{k}{4P(s_L^2+kP)}\rd P\otimes\rd P.
}
Since $s_L\geq\de$, \eqref{eq:away-Bk} gives $\mcB_k=\mcB_0+O_{T,\de}(k)$ uniformly on $\mfK_\de$, where
\eq{\label{eq:away-B0}
\mcB_0={}&g_{\mbS^n}+\frac12\na_{\mbS^n}^2\log P
+\frac12\rd\log P\otimes\rd\log P\\
&-\frac1{2s_L}
\left(\rd s_L\otimes\rd\log P+\rd\log P\otimes\rd s_L\right)
+\frac1{s_L^2}\rd s_L\otimes\rd s_L.
}
In particular, $\norm{\mcB_k}\leq C(n,T,\de,{\bf p})$.

By Lemma~\ref{lem:mcQ_a-projection}, the kernel
$\mbF_1=\ker\mcQ_{s_L}$ is three-dimensional. More precisely,
\eq{
\mbF_1=T_y\mbS^3,\quad\text{where }
\mbS^3=\mbS^n\cap(L\oplus\mbR y_{L^\perp}).
}
Using the induced metric on $\mbF_1=T_y\mbS^3$ to identify vectors and covectors, we then define
\eq{
w
=\frac{\pi_{L^\perp}(y)}{s_L^2}-y,\qquad
\xi_j
=\frac{p_j-y}{t_j}+\frac{\pi_{L^\perp}(y)}{s_L^2},\qquad
\bar\xi
=\frac1m\sum_{j=1}^m\xi_j,\qquad
\bm\Phi
=\frac1m\sum_{j=1}^m\xi_j\otimes\xi_j.
}
Computing the ambient derivatives
\eq{
D\log t_j
=-\frac{p_j}{t_j},\qquad
D^2\log t_j
=-\frac{p_j\otimes p_j}{t_j^2},\qquad
Ds_L
=\frac{\pi_{L^\perp}(y)}{s_L},
}
in conjunction with the Gauss formula, give
\begin{gather}
\rd\log t_j
=w-\xi_j,\qquad
\rd s_L
=s_Lw,\qquad
\abs{\xi_j}=\frac1{s_L},                              \label{eq:away-xij}\\
\na_{\mbS^3}^2\log t_j
=\left(\frac1{t_j}-1\right)g_{\mbS^3}
-(w-\xi_j)\otimes(w-\xi_j).                           \label{eq:away-hesstj}
\end{gather}
Since $\log P=\frac{2}{m}\sum_j\log t_j$, we can substitute \eqref{eq:away-xij} and \eqref{eq:away-hesstj} into \eqref{eq:away-B0} to obtain the following decomposition of
$\mcB_0|_{\mbF_1}$:
\eq{\label{eq:away-B0-profile}
\mcB_0\big|_{\mbF_1}
={}&\frac1m\sum_{j=1}^m\frac1{t_j}g_{\mbS^3}
-\frac1m\sum_{j=1}^m(w-\xi_j)\otimes(w-\xi_j)\\
&+2(w-\bar\xi)\otimes(w-\bar\xi)
-w\otimes(w-\bar\xi)-(w-\bar\xi)\otimes w+w\otimes w\\
={}&\left(\frac{1}{m}\sum^m_{j=1}\frac{1}{t_j}\right)g_{\mbS^3}-\bm\Psi
=\bm b_0g_{\mbS^3}-\bm\Psi.
}
where
\begin{equation}\label{eq:away-b0-Psi}
\bm b_0=\frac1{s_L^2}-\langle\bar\xi,w\rangle,\qquad
\bm\Psi=\bm\Phi-2\bar\xi\otimes\bar\xi.
\end{equation}
Here, for the last equality, we have used
\eq{
\langle\xi_j,w\rangle
=\frac1{s_L^2}-\frac1{t_j},\qquad
\frac1m\sum_{j=1}^m\frac1{t_j}
=\frac1{s_L^2}-\langle\bar\xi,w\rangle.
}

To proceed, let $\lambda_1,\lambda_2,\lambda_3$ be the eigenvalues of $\bm\Psi$, and let $\sigma_1,\sigma_2$ be their first two elementary symmetric sums. From $\abs{\xi_j}=\frac{1}{s_L}$ (recalling \eqref{eq:away-xij}) and
$\bm\Phi-\bar\xi\otimes\bar\xi\geq0$, we find
\begin{align}
\sigma_1
={}&\frac1{s_L^2}-2\abs{\bar\xi}^2,             \label{eq:away-sigma1}\\
\sigma_2
={}&\sigma_2(\bm\Phi-\bar\xi\otimes\bar\xi)
-\abs{\bar\xi}^2\underbrace{{\rm tr}
(\bm\Phi-\bar\xi\otimes\bar\xi)}_{=\frac{1}{s_L^2}-\abs{\bar\xi}^2}+\left\langle
(\bm\Phi-\bar\xi\otimes\bar\xi)\bar\xi,\bar\xi
\right\rangle
\geq\abs{\bar\xi}^4-\frac{\abs{\bar\xi}^2}{s_L^2}.      \label{eq:away-sigma2}
\end{align}
Also $\abs w=\sqrt{s_L^{-2}-1}<s_L^{-1}$, and hence
$\bm b_0>(1-s_L\abs{\bar\xi})/s_L^2$.
Using \eqref{eq:three-dimensional-q}, \eqref{eq:away-sigma1}, and \eqref{eq:away-sigma2}, we thus obtain
\eq{
\bm q\left(\frac{1-s_L\abs{\bar\xi}}{s_L^2}\right)
&\geq\left(\frac{1-s_L\abs{\bar\xi}}{s_L}\right)^4,\\
\bm q'\left(\frac{1-s_L\abs{\bar\xi}}{s_L^2}\right)
&=\frac2{s_L^2}
\left(2s_L^2\abs{\bar\xi}^2-3s_L\abs{\bar\xi}+2\right)>0.
}
The vectors $\xi_j$ have the same length $s_L^{-1}$ but are not all equal, because $p\mapsto\frac{p-y}{1-\langle p,y\rangle}$ is injective on $\mbS^n\setminus\{y\}$. Thus we have $\abs{\bar\xi}<s_L^{-1}$ uniformly on $\mfK_\de$.
Since $\bm q''=6$ and $\bm b_0>\frac{1-s_L\abs{\bar\xi}}{s_L^2}$, both $\bm q(\bm b_0)$ and $\bm q'(\bm b_0)$ are uniformly positive on $\mfK_\de$. Lemma \ref{lem:three-dimensional-trace-free} therefore gives, uniformly on $\mfK_\de$, that
\eq{\label{eq:away-B0-coercive}
{\rm tr}(\widetilde S\mcB_0\widetilde S)
\geq c(n,T,\de,{\bf p})\abs{\widetilde S}^2
}
for every $\widetilde S\in{\rm Sym}_0(T_y\mbS^n,\mbF_1)$. Since $\mcB_k=\mcB_0+O_{T,\de}(k)$ uniformly on $\mfK_\de$, after decreasing $k_2$, the coercivity estimate \eqref{eq:away-B0-coercive} remains
valid for $\mcB_k$, possibly with a smaller constant $c>0$.  Finally, since
\eq{
\mcQ_G
=\mcQ_{s_L}+kP\mcB_k,
}
and $P$ is bounded above and below on $\mfK_\de$, applying
Lemma \ref{lem:algebraic-trace-estimate} with $\varepsilon=kP$ then gives the required result and completes the proof.
\end{proof}

\subsubsection{The degenerate case when
  \texorpdfstring{$\ell=3$}{ell=3}}

\begin{proposition}\label{prop:away-degenerate}
Let $\ell=3\leq n$.  There are positive constants
$k_2,\de_{\bf p},C_2$, depending only on $n,T,{\bf p}$, such that
$0<k\leq k_2$, \eqref{eq:away-region}, and
$0\leq s_L(y)\leq\sqrt{\de_{\bf p}}$ imply
\[
{\rm tr}(S\mcQ_GS)\geq kC_2\abs{S}^2
\]
for every $S\in{\rm Sym}_0(T_y\mbS^n)$.
\end{proposition}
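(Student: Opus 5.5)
We treat this regime as a perturbation of the spherical slice $L\cap\mbS^n\cong\mbS^2$, where $s_L=0$. We use the Schur-complement criterion of Lemma~\ref{lem:schur-trace-estimate}, with $\mu=kP$ (comparable to $k$ on the away region) and $\de=s_L^2$ in the regime $s_L>0$.

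\emph{The case $s_L(y)=0$.} By \eqref{eq:mcQ_G-a-P-1} and \eqref{eq:sL-zero-decomposition}, we have $\mcQ_G=\pi_{L^\perp}+k\mcQ_{P^{1/2}}$ with $\mbF_1=T_y(L\cap\mbS^n)$ two-dimensional. Lemma~\ref{lem:tr-n=2} reduces the hypothesis of Lemma~\ref{lem:algebraic-trace-estimate} to positivity of ${\rm tr}_{\mbF_1}\mcQ_{P^{1/2}}$. By \eqref{eq:QG-on-S2} this trace equals $\widetilde P^{1/2}(\De_{\mbS^2}+2)\widetilde P^{1/2}$. By \eqref{eq:away-P}, $\widetilde P^{1/2}$ is a constant multiple of the profile $v$, which is the function designed to make this positive; concretely, \eqref{eq:away-profile} with $\Pi=\mbF_1$ gives ${\rm tr}_\Pi\geq P\geq cT^2$. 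Lemma~\ref{lem:algebraic-trace-estimate} with $\varepsilon=k$ then settles this case. By continuity, the same argument gives coercivity on the two-plane $\mbE_1$ for small $s_L$.

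\emph{The case $0<s_L\leq\sqrt{\de_{\bf p}}$.} We decompose $T_y\mbS^n=\mbE_1\oplus\mbE_2\oplus\mbE_3$ as in Lemma~\ref{lem:mcQ_a-projection}. We use the exact formula $\mcQ_G=\mcQ_{s_L}+kP\mcB_k$ from \eqref{eq:away-Bk}, and take $\mbF_1=\mbE_1$, $\mbF_2=\mbE_2\oplus\mbE_3$.

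\emph{Block $\mcQ_{22}$.} On $\mbE_3$ it is at least $I$, since $\mcQ_{s_L}=\pi_{\mbE_3}$. In the $\xi_y$ direction, \eqref{eq:d-sL-square} shows that the term $\frac{\rd(s_L^2)\otimes\rd(s_L^2)}{4s_L^2(s_L^2+kP)}$ contributes $kP\cdot\frac{1-s_L^2}{s_L^2+kP}\approx kP/(s_L^2+kP)$. This is not of size $\mu/\de$ when $s_L^2\ll kP$, so the subcase $s_L^2\lesssim k$ must be handled differently. Rather than splitting further, I would first compute $\mcQ_G$ exactly in terms of $G^2=s_L^2+kP$:
\[
\mcQ_G=G^2g+\tfrac12\na^2(s_L^2)+\tfrac k2\na^2P-\tfrac{1}{4G^2}\,\rd(G^2)\otimes\rd(G^2).
\]
On $\xi_y$, the Hessian term $(1-2s_L^2)$ from \eqref{eq:hess-sL-square} dominates. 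The gradient term is $\frac{(2s_L\rd s_L+k\rd P)^2}{4G^2}\leq s_L^2/G^2\cdot O(1)+O(k)$, which stays at most $1-c$ because the cross term $s_Lk\,\rd P(\xi_y)/G^2$ is $O(\sqrt k)$ by AM--GM. So $\mcQ_{22}\geq cI$ on all of $\mbF_2$; I would take $\de$ comparable to $k$ in Lemma~\ref{lem:schur-trace-estimate}, so that $\mu/\de=O(1)$.

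\emph{Off-diagonal block $\mcQ_{21}$.} Using $\mbE_1\perp L^\perp$, the entries $\mcQ_{12}$ come only from $k\na^2P$ and the $\rd(G^2)$ cross terms. They are $O(k)+O(s_L^2)$ (from the $-s_L^2 I_{\mbE_1\oplus\mbE_2}$ block) plus $O(ks_L/G^2)\cdot s_L$. I expect the bound $\norm{\mcQ_{22}^{-1}\mcQ_{21}}\lesssim k+s_L^2$. The correction $\mcQ_{12}\mcQ_{22}^{-1}\mcQ_{21}=O((k+s_L^2)^2)$ must then be small compared with $\mu\sim k$. This forces the quadratic dependence in the hypothesis, which is why the statement uses $s_L\leq\sqrt{\de_{\bf p}}$, but it does not close when $s_L^2\gg\sqrt k$.

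\emph{The main obstacle.} This is the Schur complement $\widehat\mcQ$ on $\mbE_1$. Its trace must equal $kP$ times something converging, as $s_L\to0$, to ${\rm tr}_{\mbS^2}$ of the $L$-slice quantity ($\geq P>0$ by the $s_L=0$ case). Every cross term involving $\rd s_L\otimes\rd\log P$ restricted to $\mbE_1$ must be shown to vanish to first order. This holds because $\rd s_L=s_Lw$ and $w\perp\mbE_1$ up to $O(s_L)$. The $-s_L^2$ contribution to the trace is absorbed by choosing $\de_{\bf p}$ small relative to $kP$? No: that would make $\de_{\bf p}$ depend on $k$.

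Instead I would exploit the exact structure already used in Proposition~\ref{prop:away-nondegenerate}. For $s_L>0$, the relevant kernel $\mbF_1=T_y\mbS^3$ is three-dimensional, and $\mcB_0|_{\mbF_1}=\bm b_0 g-\bm\Psi$. I would check that as $s_L\to0$ the criterion of Lemma~\ref{lem:three-dimensional-trace-free} still holds uniformly after rescaling by $s_L^2$. Here $\bm b_0s_L^2\to1$ and $\bm\Psi s_L^2$ tends to the degenerate form $\frac1m\sum\hat\xi_j\otimes\hat\xi_j-2\bar{\hat\xi}\otimes\bar{\hat\xi}$ of unit vectors, with $\abs{\bar{\hat\xi}}$ bounded away from $1$ by the injectivity argument. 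Combined with the Schur treatment for $s_L^2\lesssim k$, this gives the uniform constant $C_2$.
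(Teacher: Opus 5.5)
Your $s_L(y)=0$ case is fine; it is the argument of Proposition~\ref{prop:away-low}. The case $0<s_L\le\sqrt{\de_{\bf p}}$, however, has a genuine gap, because the block estimates you rely on are wrong.

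First, the $\xi_y$-entry is
\[
\mcQ_G(\xi_y,\xi_y)=G^2+1-2s_L^2-\frac{s_L^2(1-s_L^2)}{G^2}+O(kP)=\frac{(1-s_L^2)kP}{G^2}+O(kP).
\]
When $s_L^2\gg kP$, the pure gradient term $s_L^2(1-s_L^2)/G^2$ cancels the Hessian term to leading order. (The cross term you treat by AM--GM is not the issue.) The entry is then only about $kP/s_L^2$, so $\mcQ_{22}\ge cI$ is false.

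Second, $-s_L^2I_{\mbE_1\oplus\mbE_2}$ is diagonal, so it contributes nothing to $\mcQ_{12}$. On $\mbE_1$ it cancels the $s_L^2$ part of $G^2g$, since $G^2-s_L^2=kP$, so there is no ``$-s_L^2$ contribution to the trace''. The actual mixed entry comes from $-\frac1{4G^2}\rd(s_L^2)\otimes\rd(kP)$ and equals $-\frac{s_L\sqrt{1-s_L^2}\,kP}{2G^2}\rd\log\widetilde P(v)+O(s_LkP)\abs v$. Hence the Schur correction is $\frac{s_L^2kP}{4G^2}\rd\log\widetilde P\otimes\rd\log\widetilde P+O(s_L^2kP)$. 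This is comparable to $\mu=kP$ once $s_L^2\gtrsim kP$, not $O((k+s_L^2)^2)$, so it cannot be treated as a perturbation.

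The missing idea is that this correction combines exactly with the term $kP\bigl(\frac12-\frac{kP}{4G^2}\bigr)\rd\log\widetilde P\otimes\rd\log\widetilde P$ of $(\mcQ_G)_{11}$. Since $\frac12-\frac{kP}{4G^2}-\frac{s_L^2}{4G^2}=\frac14$, one gets $\widehat{\mcQ}_G=kP\mcB_{y_L}+O(s_L^2kP)$, where $\mcB_{y_L}=g_{\mbS^2}+\frac12\na^2_{\mbS^2}\log\widetilde P+\frac14\rd\log\widetilde P\otimes\rd\log\widetilde P$. Its trace is $1+\frac14\abs{\na_{\mbS^2}\log\widetilde P}^2\ge1$, because $\De_{\mbS^2}\log\widetilde P=-2$.

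This bound is uniform in the ratio $s_L^2/(kP)$, so no splitting at $s_L^2\sim k$ is needed. Take $\de=\de_{\bf p}$ fixed in Lemma~\ref{lem:schur-trace-estimate} and $k_2\le c\,\de_{\bf p}$. Then $G^2\le C\de_{\bf p}$, which gives $(\mcQ_G)_{22}\ge c\,kP/\de_{\bf p}$ on all of $\mbF_2$. Also $\norm{(\mcQ_G)_{22}^{-1}(\mcQ_G)_{21}}\le Cs_L\le C\sqrt{\de_{\bf p}}$, because the $\xi_y$-entry and the mixed entry carry the same factor $kP/G^2$.

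Your fallback through the three-dimensional kernel also fails.
\begin{itemize}
\item On the away region $\bm b_0=\frac1m\sum_jt_j^{-1}$ is bounded, so $s_L^2\bm b_0\to0$, not $1$.
\item Since $s_L\xi_j=y_{L^\perp}+s_L\frac{p_j-y}{t_j}$, all the rescaled unit vectors $s_L\xi_j$ collapse onto the same vector as $s_L\to0$. Thus $s_L\abs{\bar\xi}\to1$; injectivity gives only a strict, non-uniform inequality.
\item Consequently $s_L^2\mcB_0|_{\mbF_1}=\xi_y^\flat\otimes\xi_y^\flat+O(s_L)$ degenerates to a rank-one form. It annihilates trace-free $\widetilde S$ supported on $\xi_y^\perp\cap\mbF_1=\mbE_1$, so Lemma~\ref{lem:three-dimensional-trace-free} yields no uniform constant.
\item The approximation $\mcB_k=\mcB_0+O_{T,\de}(k)$ is itself valid only when $k\ll s_L^2$.
\end{itemize}
This degeneration is precisely why the paper handles small $s_L$ through the two-dimensional block $\mbE_1$ and the Schur complement, rather than through the three-dimensional kernel.
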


\begin{proof}
Assume first that $s_L(y)>0$.
Since (recalling \eqref{defn:y_L})
\eq{
1-\langle p_j,y\rangle
=1-\sqrt{1-s_L^2}\langle p_j,y_L\rangle,\qquad
\abs{\pi_L(y)-y_L}
=1-\sqrt{1-s_L^2}=O(s_L^2),
}
after decreasing $\de_{\bf p}$, the point $y_L$ satisfies
\eq{
1-\langle p_j,y_L\rangle\geq e^{\rho_0}T
\qquad\text{for every }j.
}
On the resulting neighborhood of $y_L$, by \eqref{eq:away-P} and its first three derivatives, we have
\eq{\label{eq:away-P-derivatives}
\abs{D^r\log P}\leq C(n,T,{\bf p}),\qquad
\abs{D^rP}\leq C(n,T,{\bf p})P,
\qquad r=1,2,3,
}
Let $\widetilde P=P|_{L\cap\mbS^n}$.  In the pointwise comparisons
below, $P$ and its derivatives are evaluated at $y$, whereas
$\widetilde P$ and its derivatives are evaluated at $y_L$.  Since
$L\cap\mbS^n\cong\mbS^2$,
\eq{
\De_{\mbS^2}\log\widetilde P
=\frac2m\sum_{j=1}^m
\De_{\mbS^2}\log(1-\langle p_j,y_L\rangle)=-2.
}
Define the bilinear form on $T_{y_L}(L\cap\mbS^n)$ by
\eq{\label{eq:away-ByL}
\mcB_{y_L}
=g_{\mbS^2}
+\frac12\na_{\mbS^2}^2\log\widetilde P
+\frac14\rd\log\widetilde P\otimes\rd\log\widetilde P,
}
which satisfies
\eq{\label{eq:away-ByL-bounds}
{\rm tr}\mcB_{y_L}
=1+\frac14\abs{\na_{\mbS^2}\log\widetilde P}^2\geq1,\qquad
\norm{\mcB_{y_L}}
\leq C(n,T,{\bf p}).
}

With respect to the three-block decomposition \eqref{eq:sL-three-block}, put
\eq{
\mbF_1
=\mbE_1,\qquad
\mbF_2
=\mbE_2\oplus\mbE_3.
}
Thus $T_y\mbS^n=\mbF_1\oplus\mbF_2$ and $\dim\mbF_1=2$.
We now verify the four hypotheses of Lemma~\ref{lem:schur-trace-estimate}.

For $\xi_y$ given by \eqref{defn:xi_y}, and for any $w,w'\in\mbE_3$, \eqref{eq:P-gradient-hessian},
\eqref{eq:d-sL-square}, \eqref{eq:hess-sL-square}, and
\eqref{eq:away-P-derivatives} give
\eq{
\mcQ_G(w,w')
=\left(1+O_T(kP(y))\right)\langle w,w'\rangle,\qquad
\mcQ_G(w,\xi_y)=0,\\
\mcQ_G(\xi_y,\xi_y)
=\frac{(1-s_L^2)kP(y)}{G^2}+O_T(kP(y)),
}
where we have simplified in the $\xi_y$-entry as follows:
\eq{
\mcQ_G(\xi_y,\xi_y)
={}&G^2+1-2s_L^2
-\frac{s_L^2(1-s_L^2)}{G^2}+O_T(kP(y))\\
={}&\frac{(1-s_L^2)kP(y)}{G^2}+O_T(kP(y)).
}
Therefore
\eq{\label{eq:away-Q22}
(\mcQ_G)_{22}
=\begin{pmatrix}
\dfrac{(1-s_L^2)kP(y)}{G^2}+O_T(kP(y))&0\\
0&(1+O_T(kP(y)))I_{\mbE_3}
\end{pmatrix}
}
with respect to $\mbF_2=\mbE_2\oplus\mbE_3$.

Next let $v,v'\in\mbF_1$. Since $\abs{\pi_L(y)-y_L}=O(s_L^2)$, by \eqref{eq:away-P-derivatives}, Taylor's theorem, and $\pi_L(\xi_y)=-s_Ly_L$, we obtain
\begin{align}
(\rd P)_y(v)
={}&
P(y)(\rd\log\widetilde P)_{y_L}(v)
+O_T(s_L^2P(y))\abs v,
\label{eq:away-dP-Taylor}\\
(\na_{\mbS^n}^2P)_y(v,v')
={}&
P(y)\left[(\na_{\mbS^2}^2\log\widetilde P)_{y_L}(v,v')
+(\rd\log\widetilde P)_{y_L}(v)
(\rd\log\widetilde P)_{y_L}(v')\right]\notag\\
&+O_T(s_L^2P(y))\abs v\abs{v'},\label{eq:away-hessP-Taylor}\\
(\rd P)_y(\xi_y)
={}&O_T(s_LP(y)),\qquad
(\na_{\mbS^n}^2P)_y(v,\xi_y)
=O_T(s_LP(y))\abs v.                                    \label{eq:away-xi-Taylor}
\end{align}
Substituting these estimates into \eqref{defn:mcQ-2}, and taking $\rd(s_L^2)(v)=0$, $\na_{\mbS^n}^2(s_L^2)(v,\xi_y)=0$ into account, we deduce
\begin{align}
\mcQ_G(v,w)={}&0,\qquad w\in\mbE_3,\label{eq:away-mixed-E3}\\
\mcQ_G(v,\xi_y)={}&
-\frac{s_L\sqrt{1-s_L^2}\,kP(y)}{2G^2}
(\rd\log\widetilde P)_{y_L}(v)
+O_T(s_LkP(y))\abs v,\label{eq:away-mixed-xi}
\end{align}
and also
\eq{\label{eq:away-Q11}
(\mcQ_G)_{11}(v,v')={}&
kP(y)\left[\langle v,v'\rangle
+\frac12(\na_{\mbS^2}^2\log\widetilde P)_{y_L}(v,v')\right]\\
&+kP(y)\left(\frac12-\frac{kP(y)}{4G^2}\right)
(\rd\log\widetilde P)_{y_L}(v)
(\rd\log\widetilde P)_{y_L}(v')
+O_T(s_L^2kP(y))\abs v\abs{v'}.
}

Choose first $\de_{\bf p}$ sufficiently small and then
$k_2\leq c\de_{\bf p}$, both depending only on $n,T,{\bf p}$. Since $s_L^2\leq\de_{\bf p}$ and $P$ is bounded above and below, \eqref{eq:away-Q22} thus gives
\eq{\label{eq:away-Q22-lower}
(\mcQ_G)_{22}
\geq c(n,T,{\bf p})\frac{kP(y)}{\de_{\bf p}}I_{\mbF_2}.
}
Moreover, the inverse of the $\xi_y$-entry in
\eqref{eq:away-Q22} is
\eq{\label{eq:away-xi-inverse}
(\mcQ_G)_{22}^{-1}(\xi_y,\xi_y)
=\frac{G^2}{(1-s_L^2)kP(y)}+O_T\left(\frac{G^4}{kP(y)}\right).
}
\eqref{eq:away-mixed-E3}--\eqref{eq:away-xi-inverse} therefore imply
\begin{align}
\norm{(\mcQ_G)_{22}^{-1}(\mcQ_G)_{21}}
&\leq C(n,T,{\bf p})s_L\leq C(n,T,{\bf p})\sqrt{\de_{\bf p}},\label{eq:away-mixed-bound}\\
(\mcQ_G)_{12}(\mcQ_G)_{22}^{-1}(\mcQ_G)_{21}
&=\frac{s_L^2kP(y)}{4G^2}
(\rd\log\widetilde P)_{y_L}\otimes
(\rd\log\widetilde P)_{y_L}
+O_T(s_L^2kP(y)).                                       \label{eq:away-Schur-term}
\end{align}
Subtracting \eqref{eq:away-Schur-term} from \eqref{eq:away-Q11}, and using the fact that $\left(\frac12-\frac{kP(y)}{4G^2}\right)-\frac{s_L^2}{4G^2}=\frac14$, we obtain
\eq{\label{eq:away-Schur}
\widehat{\mcQ}_G
=kP(y)\mcB_{y_L}+O_T(s_L^2kP(y)).
}
After further decreasing $\de_{\bf p}$, if necessary, \eqref{eq:away-ByL-bounds} then gives
\eq{\label{eq:away-Schur-bounds}
\norm{\widehat{\mcQ}_G}\leq C(n,T,{\bf p})kP(y),\qquad
{\rm tr}_{\mbF_1}\widehat{\mcQ}_G\geq\frac12kP(y).
}
Applying Lemma \ref{lem:schur-trace-estimate} with $\mu=kP(y)$ and $\de=\de_{\bf p}$, we obtain as desired that
\eq{
{\rm tr}(S\mcQ_GS)\geq C(n,T,{\bf p})kP(y)\abs S^2
\geq kC_2(n,T,{\bf p})\abs S^2.
}

It remains to consider $s_L(y)=0$. In this case, we use
\eqref{eq:sL-zero-decomposition} and put
\eq{
\mbF_1
=T_y(L\cap\mbS^n),\qquad
\mbF_2
=L^\perp.
}
Note that in this case $y_L=y$, and we use $\mcB_y$ to denote the same bilinear form as $\mcB_{y_L}$. By \eqref{eq:mcQ_G-a-P-1} and the fact that $P$ depends only on the $L$-component, we find
\eq{
(\mcQ_G)_{12}
=(\mcQ_G)_{21}=0,\qquad
(\mcQ_G)_{22}
=I_{L^\perp}+O_T(kP(y)),\qquad
\widehat{\mcQ}_G
=(\mcQ_G)_{11}=kP(y)\mcB_y.
}
Thus \eqref{eq:away-ByL-bounds} and that $k_2\leq c\de_{\bf p}$ verify the same four Schur-complement bounds directly. The proof is thus completed after applying Lemma \ref{lem:schur-trace-estimate} again as above.
\end{proof}

Combining Proposition~\ref{prop:away-degenerate} with Proposition~\ref{prop:away-nondegenerate}, we obtain the required estimate as follows.
\begin{proposition}\label{prop:away}
Let $\ell=3\leq n$.  There is $T_2=T_2(n,{\bf p})>0$ and, for every
$T\in(0,T_2]$, positive constants $k_2,C_2$ depending only on
$\ell,n,T,{\bf p}$, such that $0<k\leq k_2$ and
\eqref{eq:away-region} imply
\eq{
{\rm tr}(S\mcQ_GS)\geq kC_2\abs S^2
}
for every $S\in{\rm Sym}_0(T_y\mbS^n)$.
\end{proposition}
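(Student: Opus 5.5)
The proof will be a direct combination of Proposition~\ref{prop:away-degenerate} and Proposition~\ref{prop:away-nondegenerate}, with the constants chosen in the right order.

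\textbf{Choice of $T_2$.} Take $T_2=T_2(n,{\bf p})$ as fixed at the start of this subsection. It is small enough that, for $T\leq T_2$, the region \eqref{eq:away-region} is nonempty. It is also small enough that the first step in the proof of Proposition~\ref{prop:away-degenerate} works: the condition $1-\langle p_j,y_L\rangle\geq e^{\rho_0}T$ follows from \eqref{eq:away-region} once $s_L$ is small.

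\textbf{Splitting the region.} Fix $T\in(0,T_2]$. Proposition~\ref{prop:away-degenerate} supplies
\[
k_2',\ \de_{\bf p},\ C_2'
\]
depending only on $n,T,{\bf p}$. It covers the degenerate part $0\leq s_L(y)\leq\sqrt{\de_{\bf p}}$. Set $\de:=\sqrt{\de_{\bf p}}\in(0,1)$; this is legitimate since $\de_{\bf p}$ may be decreased if needed. Then apply Proposition~\ref{prop:away-nondegenerate} with this $\de$. It gives $k_2'',C_2''$ depending on $n,T,\de,{\bf p}$, hence only on $n,T,{\bf p}$, valid on the part $s_L(y)\geq\de$.

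\textbf{Combining.} The two parts cover every $y$ satisfying \eqref{eq:away-region}, because $s_L\in[0,1]$. Take
\[
k_2=\min\{k_2',k_2''\},\qquad C_2=\min\{C_2',C_2''\}.
\]
Then ${\rm tr}(S\mcQ_GS)\geq kC_2\abs S^2$ for all $0<k\leq k_2$ and all $S\in{\rm Sym}_0(T_y\mbS^n)$.

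\textbf{Edge cases and main point.} The case $s_L=1$ lies in the nondegenerate part. There $y\perp L$, so every $t_j=1$ and \eqref{eq:away-region} holds automatically; the compact set $\mfK_\de$ already includes such points, so no separate treatment is needed. The only point needing care is the order of choices: $\de$ must be taken from the degenerate proposition before invoking the nondegenerate one. This keeps all constants independent of $k$ and of the location of $y$. No genuine obstacle remains.
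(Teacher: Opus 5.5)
Your proposal is correct and matches the paper, which obtains Proposition~\ref{prop:away} by combining Proposition~\ref{prop:away-degenerate} on $\{s_L\leq\sqrt{\de_{\bf p}}\}$ with Proposition~\ref{prop:away-nondegenerate} applied with $\de=\sqrt{\de_{\bf p}}$, and taking minima of the constants. One small attribution slip: the paper secures $1-\langle p_j,y_L\rangle\geq e^{\rho_0}T$ in the degenerate case by shrinking $\de_{\bf p}$ depending on $T$, not through the smallness of $T_2$, but since you take that proposition as given this does not affect your argument.
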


\subsection{The transition region}\label{sec:3-transition}
In this final subsection, we suppose that for some $j\in\{1,\ldots,m\}$,
\eq{\label{eq:transition-region}
T
\leq t=1-\langle p_j,y\rangle
\leq e^{2\rho_0}T.
}
Though the following analysis is technical, we point out that the idea is exactly the same as in the previous two subsections.

Now we set things up. First we note that, after decreasing $T=T({\bf p})$, there is at most one such index and we can write
\eq{
P
=f_j\phi(t),\quad\text{where }
f_j
=\prod_{i\neq j}\phi(1-\langle p_i,y\rangle).
}
The construction of $\phi$ (recalling \eqref{defn:phi}) and the separation of the remaining factors give
\eq{\label{eq:transition-basic-bounds}
C^{-1}T^{\frac2m-1}\leq\frac{\phi(t)}t
\leq CT^{\frac2m-1},\qquad
C^{-1}\leq f_j\leq C,\qquad
\abs{\na f_j}+\abs{\na^2f_j}\leq C,
}
for some $C=C(n,{\bf p})>0$.
Moreover, we have
\eq{\label{esti:dt-phi-transition}
\abs{\rd t}^2=2t-t^2\leq CT,\qquad
\phi\leq CT\frac{\phi}{t},\qquad
\abs{\rd\phi}\leq C\sqrt T\frac{\phi}{t},
}
for some $C=C({\bf p})>0$.
Thus \eqref{eq:single-factor-product} implies
\eq{\label{eq:transition-product-error}
\norm{\mcQ_{(f_j\phi)^{1/2}}-f_j\mcQ_{\phi^{1/2}}}
\leq C(n,{\bf p})\sqrt T\frac{\phi}{t}
\leq C(n,{\bf p})T^{\frac2m-\frac12}.
}
We also note that for the $y$ considered in this subsection, we have $0\leq s_L(y)<1$, otherwise $s_L(y)=1$, so that $\abs{\pi_L(y)}=0$, which then implies $t=1-\left<p_j,y\right>=1$, a contradiction to \eqref{eq:transition-region}.

\begin{lemma}\label{lem:transition-zero}
There are $T_0=T_0({\bf p})>0$ and $c,C>0$, depending only on
$n,{\bf p}$, with the following property.  Suppose that
$T\in(0,T_0]$ and \eqref{eq:transition-region} holds.  Set
\eq{\label{defn:B_y-transition}
B_y
=T^{1-\frac2m}\mcQ_{(f_j\phi)^{1/2}}.
}
Then
\begin{equation}\label{eq:transition-profile}
\norm{B_y}\leq C,\qquad
{\rm tr}_{\mbE}B_y\geq c
\end{equation}
for every two-dimensional subspace
$\mbE\subset T_y\mbS^n$ containing $\na_{\mbS^n}t$.

If, in addition, $\ell\in\{2,3\}$ and $s_L(y)=0$, then there are
$k_0,C_0>0$, depending only on $n,T,{\bf p}$, such that
$0<k\leq k_0$ implies
\begin{equation}\label{eq:transition-zero}
{\rm tr}(S\mcQ_GS)
\geq C_0kT^{\frac2m-1}\abs S^2
\end{equation}
for every $S\in{\rm Sym}_0(T_y\mbS^n)$.
\end{lemma}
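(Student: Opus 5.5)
The plan is to reduce both parts of the lemma to the single-factor profile $\mcQ_{\phi^{1/2}}$. For \eqref{eq:transition-profile}, the estimate \eqref{eq:transition-product-error} shows that $B_y=T^{1-\frac2m}f_j\mcQ_{\phi(t)^{1/2}}+O(\sqrt T)$. It therefore suffices to bound $T^{1-\frac2m}\mcQ_{\phi^{1/2}}$ from above and to bound its trace on 2-planes through $\na_{\mbS^n}t$ from below. Lemma~\ref{lem:mcQ_P^half-q_r-q_T} gives the eigenvalues $q_{\rm tan},q_{\rm rad}$. We rewrite them using $\phi'=\bm\xi\phi/t$ and $\phi''=\bm\kappa\phi/t^2$:
\[
q_{\rm tan}=\phi+\frac{(1-t)\bm\xi\phi}{2t},\qquad
q_{\rm rad}=\phi+\frac{\phi}{t}\Big[\frac{(1-t)\bm\xi}{2}+\frac{(2-t)}{2}\bm\kappa-\frac{(2-t)}{4}\bm\xi^2\Big].
\]
By \eqref{eq:transition-basic-bounds}, $\phi/t\sim T^{\frac2m-1}$ and $\phi\le CT\phi/t$. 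So, up to $O(T)$ relative errors,
\[
\frac{t}{\phi}q_{\rm tan}\approx\frac{\bm\xi}{2},\qquad
\frac{t}{\phi}(q_{\rm rad}+q_{\rm tan})\approx\bm\xi+\bm\kappa-\frac{\bm\xi^2}{2}=\bm\eta+\frac{\bm\xi^2}{2}.
\]
Lemma~\ref{lem:phi} gives $\bm\eta\ge-\frac18\bm\xi^2$ and $\bm\xi\in[\frac2m,1]$, so $\bm\eta+\bm\xi^2/2\ge\frac38\bm\xi^2\ge\frac{3}{2m^2}$. The upper bound $\norm{B_y}\le C$ follows from $\abs{\bm\eta}\le C$. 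For a 2-plane $\mbE\ni\na t$, the trace is exactly $q_{\rm rad}+q_{\rm tan}$, multiplied by $f_j\ge C^{-1}$. This gives ${\rm tr}_{\mbE}B_y\ge c$ once $T\le T_0$.

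For \eqref{eq:transition-zero}, assume $s_L(y)=0$. By \eqref{eq:mcQ_G-a-P-1} and \eqref{eq:sL-zero-decomposition}, $\mcQ_G=\pi_{L^\perp}+kT^{\frac2m-1}B_y$, with $\mbF_2=L^\perp$ and $\mbF_1=T_y(L\cap\mbS^n)$ of dimension $\ell-1\in\{1,2\}$. We apply Lemma~\ref{lem:algebraic-trace-estimate} with $\varepsilon=kT^{\frac2m-1}$ and $B=B_y$, which has $\norm{B_y}\le C$. If $\dim\mbF_1=1$, the hypothesis is vacuous. If $\dim\mbF_1=2$, note that $y\in L$ implies $\na_{\mbS^n}t=-p_j+(1-t)y\in\mbF_1$. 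Hence $\mbF_1$ is a 2-plane containing $\na t$ (when $\na t\ne0$, which holds since $t\ge T>0$ and $t<2$). Lemma~\ref{lem:tr-n=2} then gives ${\rm tr}(\widetilde SB_y\widetilde S)=\frac12{\rm tr}_{\mbF_1}B_y\abs{\widetilde S}^2\ge\frac c2\abs{\widetilde S}^2$ for $\widetilde S\in{\rm Sym}_0(T_y\mbS^n,\mbF_1)$. Lemma~\ref{lem:algebraic-trace-estimate} then yields the claim for $kT^{\frac2m-1}<\widetilde\varepsilon$, that is, for $k\le k_0(n,T,{\bf p})$.

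The main obstacle is the first step. We must check carefully that the combination $\bm\xi+\bm\kappa-\bm\xi^2/2$, which is where the specific lower bound $\bm\eta\ge-\frac18\bm\xi^2$ in Lemma~\ref{lem:phi} is used, stays uniformly positive. We must also check that the $O(t)$ and $O(\sqrt T)$ corrections, such as the $-t\bm\xi/2$ terms and the cross terms from $f_j$, are absorbed for small $T_0$. The rest is bookkeeping with the already established algebraic lemmas.
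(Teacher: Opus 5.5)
Your proposal is correct and follows the paper's proof essentially step by step. The paper likewise writes $q_{\rm rad}+q_{\rm tan}=\frac{\phi}{t}\bigl[\bm\eta+\frac12\bm\xi^2+O(t)\bigr]$ and uses $\bm\eta\geq-\frac18\bm\xi^2$ together with \eqref{eq:transition-product-error} to obtain \eqref{eq:transition-profile}. Then, for $s_L(y)=0$, it writes $\mcQ_G=\pi_{L^\perp}+kT^{\frac2m-1}B_y$ and applies Lemmas~\ref{lem:tr-n=2} and~\ref{lem:algebraic-trace-estimate}, using $\na_{\mbS^n}t\in T_y(L\cap\mbS^n)$.
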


\begin{proof}
The radial and tangential eigenvalues in
Lemma~\ref{lem:mcQ_P^half-q_r-q_T} satisfy
\begin{align*}
q_{\rm rad}+q_{\rm tan}
={}&2\phi+(1-t)\phi'
+\frac{2t-t^2}{2}\phi''
-\frac{2t-t^2}{4\phi}(\phi')^2\\
={}&\frac{\phi}{t}
\left[\bm\eta+\frac12\bm\xi^2
+t\left(2-\frac12\bm\eta-\frac12\bm\xi
-\frac14\bm\xi^2\right)\right].
\end{align*}
By \eqref{ineq:log-phi-log-t-derivatives}, after decreasing $T_0=T_0(\bf p)$, we have
\eq{\label{eq:transition-eigenvalue}
q_{\rm rad}+q_{\rm tan}
\geq\frac{\phi}{4t}\bm\xi^2
\geq\frac1{m^2}\frac{\phi}{t},\qquad
\norm{\mcQ_{\phi^{1/2}}}\leq C(n,{\bf p})\frac{\phi}{t}.
}
Since the radial direction is $\na_{\mbS^n}t$,
\eqref{eq:transition-basic-bounds}, \eqref{eq:transition-product-error}, and \eqref{eq:transition-eigenvalue} then imply \eqref{eq:transition-profile}.

Assume now that $s_L(y)=0$. By Lemma \ref{lem:mcQ_a-projection},
\eq{
\Pi_y
=\frac12\na_{\mbS^n}^2(s_L^2)=\pi_{L^\perp},\qquad
\mbF_1
=\ker\Pi_y=T_y(L\cap\mbS^n),\qquad
\mbF_2
=L^\perp,\qquad
\dim\mbF_1=\ell-1.
}
If $\ell=2$, the space ${\rm Sym}_0(T_y\mbS^n,\mbF_1)=\{0\}$. If $\ell=3$, $\mbF_1$ is two-dimensional and contains
\eq{
\na_{\mbS^n}t=-p_j+(1-t)y.
}
Thus Lemma~\ref{lem:tr-n=2} and
\eqref{eq:transition-profile} give
\eq{
{\rm tr}(\widetilde S B_y\widetilde S)
\geq c\abs{\widetilde S}^2
}
for every
$\widetilde S\in{\rm Sym}_0(T_y\mbS^n,\mbF_1)$,
with the statement vacuous when $\ell=2$.  Finally,
\eq{
\mcQ_G
=\pi_{\mbF_2}+kT^{\frac2m-1}B_y,
}
so applying Lemma \ref{lem:algebraic-trace-estimate} yields \eqref{eq:transition-zero}.
\end{proof}

\begin{proposition}\label{prop:transition-low}
In the cases $\ell=2\leq n$, and $n=2$, $\ell=3$, there are
$T_3=T_3(\ell,n,{\bf p})>0$ and positive constants $k_3,C_3$,
depending only on $\ell,n,{\bf p}$ and $T\in(0,T_3]$, such that
$0<k\leq k_3$ and \eqref{eq:transition-region} imply
\eq{\label{eq:transition-conclusion}
{\rm tr}(S\mcQ_GS)
\geq C_3kT^{\frac2m-1}\abs S^2.
}
\end{proposition}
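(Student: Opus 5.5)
We follow the scheme of Proposition~\ref{prop:away-low}, with the profile estimate \eqref{eq:transition-profile} of Lemma~\ref{lem:transition-zero} replacing \eqref{eq:away-two-plane}. Fix $T\in(0,T_3]$ with $T_3\leq T_0$, and let $y$ satisfy \eqref{eq:transition-region} for the unique index $j$. Set
\[
\Pi_y=\begin{cases}\mcQ_{s_L},&s_L(y)>0,\\ \tfrac12\na_{\mbS^n}^2(s_L^2),&s_L(y)=0,\end{cases}\qquad \mbF_1=\ker\Pi_y,\qquad \mbF_2=\mbF_1^\perp .
\]
By Lemma~\ref{lem:mcQ_a-projection}, $\Pi_y=\pi_{\mbF_2}$. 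Lemma~\ref{lem:mcQ_G-lower} then gives
\[
\mcQ_G\geq\pi_{\mbF_2}+kT^{\frac2m-1}B_y ,
\]
where $B_y=T^{1-\frac2m}\mcQ_{(f_j\phi)^{1/2}}$ is the form defined in \eqref{defn:B_y-transition}, and $\norm{B_y}\leq C(n,{\bf p})$ by \eqref{eq:transition-profile}. The plan is to verify the hypothesis \eqref{condi:algebraic-trace-estimate} for $B_y$ on ${\rm Sym}_0(T_y\mbS^n,\mbF_1)$. Lemma~\ref{lem:algebraic-trace-estimate}, applied with $\varepsilon=kT^{\frac2m-1}$, then yields \eqref{eq:transition-conclusion} once $k\leq k_3:=\widetilde\varepsilon\,T^{1-\frac2m}$.

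\emph{Dimension count.} By Lemma~\ref{lem:mcQ_a-projection}, $\dim\mbF_1=\ell$ if $s_L(y)>0$ and $\dim\mbF_1=\ell-1$ if $s_L(y)=0$. When $n=2$ and $\ell=3$ we have $L=\mbR^3$, so $s_L\equiv0$ and $\dim\mbF_1=2$. When $\ell=2$ we have $\dim\mbF_1\leq2$. In every case $\dim\mbF_1\leq2$. If $\dim\mbF_1=1$, the hypothesis of Lemma~\ref{lem:algebraic-trace-estimate} is vacuous and we are done. (The case $s_L=0$ is in fact already covered by Lemma~\ref{lem:transition-zero}.)

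\emph{Case $\dim\mbF_1=2$.} Lemma~\ref{lem:tr-n=2} gives ${\rm tr}(\widetilde SB_y\widetilde S)=\frac12{\rm tr}_{\mbF_1}(B_y)\abs{\widetilde S}^2$. By \eqref{eq:transition-profile}, it therefore suffices that $\na_{\mbS^n}t\in\mbF_1$, so that ${\rm tr}_{\mbF_1}B_y\geq c$. This membership was already established in the proof of Proposition~\ref{prop:near} for both cases $s_L(y)=0$ and $s_L(y)>0$, and that argument used only $t=1-\langle p_j,y\rangle$ and $p_j\in L$. Concretely, $\na_{\mbS^n}t=-p_j+(1-t)y$, so $\pi_{L^\perp}(\na_{\mbS^n}t)=(1-t)\pi_{L^\perp}(y)$, which is orthogonal to $L^\perp\cap\pi_{L^\perp}(y)^\perp$. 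Hence $c=c(n,{\bf p})$, and the constants from Lemma~\ref{lem:algebraic-trace-estimate} depend only on $n,{\bf p}$.

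\emph{Main obstacle.} The delicate point is the lower bound ${\rm tr}_{\mbE}B_y\geq c$ for planes $\mbE$ containing the radial direction. In the transition region $\phi$ is neither linear nor a pure power, so $q_{\rm rad}$ alone may be small or even negative. This is exactly where the convexity control $\bm\eta\geq-\frac18\bm\xi^2$ from Lemma~\ref{lem:phi} enters, through \eqref{eq:transition-eigenvalue}. Since that bound is already packaged in Lemma~\ref{lem:transition-zero}, what remains here is only the bookkeeping that $\mbF_1$ is two-dimensional and contains $\na_{\mbS^n}t$. This is precisely why the argument is restricted to $\dim\mbF_1\leq2$, i.e. to $\ell=2$ or $n=2$. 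When $\ell=3\leq n$ and $s_L>0$, $\mbF_1$ becomes three-dimensional, and a Schur-complement argument as in Proposition~\ref{prop:away-degenerate} would be needed instead.
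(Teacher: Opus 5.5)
Your proposal is correct and follows essentially the same route as the paper: bound $\mcQ_G\geq\pi_{\mbF_2}+kT^{\frac2m-1}B_y$ via Lemma~\ref{lem:mcQ_G-lower}, then check that $\dim\mbF_1\le 2$ and $\na_{\mbS^n}t\in\mbF_1$, so that Lemma~\ref{lem:tr-n=2} and \eqref{eq:transition-profile} verify the hypothesis of Lemma~\ref{lem:algebraic-trace-estimate}, and apply that lemma with $\varepsilon=kT^{\frac2m-1}$. The only cosmetic difference is that the paper dispatches the case $s_L(y)=0$ by citing \eqref{eq:transition-zero}, whereas you run the same argument uniformly for both cases.
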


\begin{proof}
We first choose $T_3\leq T_0$ and note that the case $s_L(y)=0$ is exactly proved by \eqref{eq:transition-zero}. If $s_L(y)>0$, then we must have $\ell=2$. By lemma \ref{lem:mcQ_a-projection}, we set
\eq{
\mbF_1
=\ker\mcQ_{s_L}=\mbE_1\oplus\mbE_2,
\qquad
\mbF_2
=\mbE_3
=L^\perp\cap y^\perp_{L^\perp}.
}
Note that the space $\mbF_1$ is two-dimensional, and since
\eq{
\na_{\mbS^n}t
=-p_j+(1-t)\pi_L(y)+(1-t)\pi_{L^\perp}(y),
}
we have $\pi_{L^\perp}(\na_{\mbS^n}t)=(1-t)\pi_{L^\perp}(y)$, so that $\na_{\mbS^n}t\perp\mbE_3$. From this and \eqref{eq:QsL-projection} we see 
\eq{\label{eq:na-t-in-mbF_1}
\na_{\mbS^n}t\in\mbF_1.
}
Hence Lemma \ref{lem:tr-n=2} and \eqref{eq:transition-profile} give the trace estimate \eqref{condi:algebraic-trace-estimate} for $B_y$ defined by \eqref{defn:B_y-transition}.
By \eqref{eq:mcQ_G-a-P-2}, we have
\eq{
\mcQ_G
\geq\mcQ_{s_L}+kT^{\frac2m-1}B_y,
}
applying Lemma \ref{lem:algebraic-trace-estimate} then completes the proof.
\end{proof}

It then suffices to consider only the case $\ell=3\leq n$. We point out here that though the analysis below is technical, but the strategy follows essentially from the previous two subsection.
Let us first record some useful facts:

Since $p_j\in L$, we have $\pi_L(y)=\underbrace{\left<p_j,y\right>}_{1-t}p_j+\pi_{p_j^\perp}(\pi_L(y))$, so
\eq{
1=\abs{y}^2
=\abs{\pi_L(y)}^2+s_L^2
=(1-t)^2+\abs{\pi_{p_j^\perp}(\pi_L(y))}^2+s_L^2,
}
implying that $s_L^2\leq2t-t^2$.
Therefore we introduce the following parameter to measure whether $s_L$ is degenerate or not:
\eq{\label{eq:transition-theta}
\theta
=\frac{s_L^2}{2t-t^2}\in[0,1].
}
For later purposes, it is also convenient to introduce the normalized function
\eq{\label{defn:bar-mu}
\bar\mu
=\frac{G^2}{t}=\theta(2-t)+\mu,
}
where $\mu$ is defined as \eqref{defn:t-f_j-mu}. By \eqref{eq:transition-basic-bounds} we have
\eq{\label{eq:transition-mu}
c(n,{\bf p})kT^{\frac2m-1}
\leq\mu
\leq C(n,{\bf p})kT^{\frac2m-1}.
}

We continue to use Remark \ref{rem:O-dependence} in this subsection.
By virtue of the estimates \eqref{eq:transition-basic-bounds} and \eqref{esti:dt-phi-transition},
the identities
\eqref{eq:single-factor-dkP}--\eqref{eq:single-factor-hess-kP} give
\begin{align}
\rd(kP)={}&\mu(\bm\xi\,\rd t+t\,\rd\log f_j),\notag\\
\na_{\mbS^n}^2(kP)={}&
\mu\left[\bm\xi(1-t)g_{\mbS^n}
+\frac{\bm\kappa}{t}\rd t\otimes\rd t\right]
+O(\mu\sqrt t).                                      \label{eq:transition-P}
\end{align}

Whenever $\theta>0$, we have the decomposition \eqref{eq:sL-three-block}, and we choose a unit
vector $\tau\in\mbE_1$ (when $\theta=1$, any unit vector in $\mbE_1$ may be chosen) such that
\eq{\label{eq:transition-tau}
\rd t|_{\mbE_1}=\abs{\rd t|_{\mbE_1}}\tau^\flat,\qquad
\abs{\rd t|_{\mbE_1}}^2
=\frac{(1-\theta)t(2-t)}{1-s_L^2}.
}

By \eqref{eq:hess-sL-square}, \eqref{eq:transition-P}, and
\eqref{defn:mcQ-2}, a direct computation gives
\eq{\label{eq:transition-E1}
\mcQ_G|_{\mbE_1}={}&
\mu\left(t+\frac{\bm\xi(1-t)}2\right)I_{\mbE_1}
+\frac{\mu}{2t}
\left(\bm\kappa-\frac{\mu\bm\xi^2}{2\bar\mu}\right)
\left(\rd t|_{\mbE_1}\right)^{\otimes2}
+O(\mu\sqrt t),
}
and for any $Z\in\mbE_1\oplus\mbE_2$, $V,V'\in\mbE_3$,
\begin{align}
\mcQ_G(Z,V)
={}&0,\label{eq:transition-block}\\
\mcQ_G(V,V')
={}&
\left[1+kP-\frac k2
\langle DP(\pi_L(y)),\pi_L(y)\rangle\right]
\langle V,V'\rangle
=\left(1+O(\mu)\right)\langle V,V'\rangle.            \label{eq:transition-E3}
\end{align}

\subsubsection{The nondegenerate case when
  \texorpdfstring{$\ell=3$}{ell=3}}

\begin{proposition}\label{prop:transition-nondegenerate}
Let $\ell=3\leq n$ and $\bm\theta\in(0,1)$.  There are $T_3>0$,
depending only on $\ell,n,\bm\theta,{\bf p}$, and positive constants
$k_3,C_3$, depending only on $\ell,n,\bm\theta,{\bf p}$ and
$T\in(0,T_3]$, such that $0<k\leq k_3$,
\eqref{eq:transition-region}, and $\theta\in[\bm\theta,1]$ imply
\eqref{eq:transition-conclusion}.
\end{proposition}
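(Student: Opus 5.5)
We reduce to Lemma \ref{lem:algebraic-trace-estimate}, mirroring Proposition \ref{prop:away-nondegenerate}. In the transition region with $\theta\ge\bm\theta>0$ we have $s_L^2=\theta(2t-t^2)\ge\bm\theta t\ge\bm\theta T>0$, so $s_L>0$ and the three-block decomposition \eqref{eq:sL-three-block} holds. Moreover $\bar\mu=\theta(2-t)+\mu$ is bounded below by $\bm\theta$. By \eqref{eq:transition-mu}, $\mu/\bar\mu=O_{\bm\theta}(kT^{\frac2m-1})$ is small once $k\le k_3$ with $k_3$ allowed to depend on $T$.

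Set $\mbF_1=\mbE_1\oplus\mbE_2=\ker\mcQ_{s_L}$ and $\mbF_2=\mbE_3$. By \eqref{eq:transition-block} the mixed block vanishes, and by \eqref{eq:transition-E3} the $\mbE_3$-block is $(1+O(\mu))I$. It therefore suffices to show that $\mcQ_G|_{\mbF_1}$ has trace-free coercivity of order $\mu$. Once this is known, we write $\mcQ_G=\pi_{\mbF_2}+\mu B$ with $\norm B\le C$. Lemma \ref{lem:algebraic-trace-estimate} with $\varepsilon\sim\mu\sim kT^{\frac2m-1}$ then gives \eqref{eq:transition-conclusion}.

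Since $\dim\mbF_1=3$, the coercivity step will use Lemma \ref{lem:three-dimensional-trace-free}. We compute $\mcQ_G$ on $\mbF_1$ as in the derivation of \eqref{eq:transition-E1}: \eqref{defn:mcQ-2} for $G^2=s_L^2+kP$, \eqref{eq:d-sL-square}--\eqref{eq:hess-sL-square}, and \eqref{eq:transition-P}. On $\mbE_2=\mbR\xi_y$ the $s_L^2$ terms give $G^2+1-2s_L^2-\frac{s_L^2(1-s_L^2)}{G^2}$. Using $G^2=s_L^2+kP$, the $O(1)$ parts cancel and leave $\frac{(1-s_L^2)kP}{G^2}=O_{\bm\theta}(\mu)$. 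The cross terms between $\rd(s_L^2)$ and $\rd(kP)$ are of order $\frac{s_L\mu\sqrt t}{G^2}$, which is $O_{\bm\theta}(\mu)$ because $G^2\ge s_L^2\gtrsim\bm\theta t$ and $|\rd t|\lesssim\sqrt t$. Thus $\mcQ_G|_{\mbF_1}=\mu\,\mathcal B$, where $\mathcal B$ is bounded and depends, up to $O(\sqrt T)$ errors, only on $\theta$, $\bm\xi,\bm\eta,\bm\kappa$, $\mu/\bar\mu\approx0$, and the geometry of $\rd t$ within $\mbF_1$. That geometry is given by \eqref{eq:transition-tau}, which fixes $\rd t$ in the plane spanned by $\tau$ and $\xi_y$.

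Normalizing by $t$, we will show that $t^{-1}\mathcal B$ tends, as $T\to0$ and $k\to0$, to an explicit limit form $\mathcal B_\infty(\theta,\bm\xi,\bm\eta)$ on $\mbR^3$ with $\mbE_1$ split as $\mbR\tau\oplus\tau^\perp$. We then verify $\bm q(\bm b_0)>0$ and $\bm q'(\bm b_0)>0$ for this limit. The verification uses the constraints \eqref{ineq:log-phi-log-t-derivatives}: $\bm\xi\in[\frac2m,1]$ and $\bm\eta\ge-\frac18\bm\xi^2$. These are the same constraints that gave $q_{\rm rad}+q_{\rm tan}\ge\frac{\bm\xi^2}{4}\frac{\phi}{t}$ in Lemma \ref{lem:transition-zero}.

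The family of limit forms is compact, parametrized by $\theta\in[\bm\theta,1]$ and $(\bm\xi,\bm\eta)$ in a compact set, so Lemma \ref{lem:three-dimensional-trace-free} gives a uniform constant. Choosing $T_3$ small absorbs the $O(\sqrt T)$ errors, and then $k_3$ small absorbs the $O(\mu/\bar\mu)$ errors.

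The main obstacle is the explicit $3\times3$ algebra. We must identify the eigenvalues of the limit form, or at least $\sigma_1,\sigma_2$ and a good $\bm b_0$, and check positivity of $\bm q$ and $\bm q'$ uniformly, including at $\theta=1$ where $\rd t|_{\mbE_1}=0$. We would first test the endpoints: $\bm\xi=1$ (the linear regime, matching Proposition \ref{prop:near}) and $\bm\xi=\frac2m$ with $\bm\eta=0$ (the power regime, matching Proposition \ref{prop:away-nondegenerate}). We would then argue by monotonicity or convexity in $(\bm\xi,\bm\eta)$, using $\bm\kappa=\bm\eta+\bm\xi^2-\bm\xi$. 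If the direct argument fails at intermediate $\bm\xi$, we would fall back on the slack in the choice of $\psi$ in Lemma \ref{lem:phi}: taking $\rho_0$ larger makes $\bm\eta$ arbitrarily small, which reduces the check to a one-parameter family in $\bm\xi$.
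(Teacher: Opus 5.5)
\textbf{Where the proposal stands.} Your reduction is the same as the paper's: $\mbF_1=\mbE_1\oplus\mbE_2$ and $\mbF_2=\mbE_3$; $\mcQ_G=\mcQ_{s_L}+\mu\tilde\mcB_k$ with vanishing mixed block; a limit form on the three-dimensional $\mbF_1$, obtained by sending $T\to0$ and then $k\to0$, with $\bar\mu\ge\bm\theta$ keeping things bounded; Lemma \ref{lem:three-dimensional-trace-free} on the compact family $\theta\in[\bm\theta,1]$; and finally Lemma \ref{lem:algebraic-trace-estimate} with $\varepsilon\sim\mu\sim kT^{\frac2m-1}$.

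\textbf{The gap.} The one non-routine step, trace-free coercivity of the limit form, is never carried out. You do not write down the limit form, nor exhibit a $\bm b_0$ with $\bm q(\bm b_0)>0$ and $\bm q'(\bm b_0)>0$. You flag this yourself as the main obstacle, and it is essentially the whole content of the proposition.

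There is also a scaling slip. $\mu^{-1}\mcQ_G|_{\mbF_1}$ already has an $O(1)$ limit: its identity coefficient $t+\bm\xi(1-t)/2$ tends to $\bm\xi/2$, so $t^{-1}\mathcal B$ would diverge. What must be normalized is $\rd t$, via $r=(\rd t)^\sharp/\sqrt{t(2-t)}$.

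\textbf{Your suggested route does not close it.} Testing $\bm\xi=1$ and $\bm\xi=\frac2m$ with $\bm\eta=0$ misses the intermediate range of $\bm\xi$, where $\bm\eta=\psi'$ is negative, down to $-\frac18\bm\xi^2$. No convexity in $(\bm\xi,\bm\eta)$ is available a priori. Enlarging $\rho_0$ is unnecessary.

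\textbf{How the paper closes it.} Carrying out the computation you describe gives
\[
\tilde\mcB=\tfrac{\bm\xi}{2}I_{\mbF_1}-\tilde{\bm\Psi},\qquad
\tilde{\bm\Psi}=-\bm\kappa\,r_0^\flat\otimes r_0^\flat+\tfrac{\bm\xi}{2\sqrt\theta}\bigl(\xi_y^\flat\otimes r_0^\flat+r_0^\flat\otimes\xi_y^\flat\bigr)-\tfrac1{2\theta}\,\xi_y^\flat\otimes\xi_y^\flat,
\]
with $r_0=\sqrt{1-\theta}\,\tau+\sqrt\theta\,\xi_y$.

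Since $\tilde{\bm\Psi}$ is supported on ${\rm span}\{\tau,\xi_y\}$, its $\sigma_1,\sigma_2$ are the trace and determinant of a $2\times2$ block. This gives \eqref{eq:transition-sigma}.

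Take $\bm b_0=\bm\xi/2$. Then $4\theta\tilde{\bm q}(\bm\xi/2)$ and $\theta\tilde{\bm q}'(\bm\xi/2)$ are affine in $\theta$, by \eqref{eq:transition-q}--\eqref{eq:transition-qprime}. So the reduction should be in $\theta$, not in $(\bm\xi,\bm\eta)$, and it suffices to check $\theta=0$ and $\theta=1$. Both expressions are nondecreasing in $\bm\eta$: the coefficients are $2(1-\theta)+4\theta\bm\xi>0$ and $2\theta\ge0$. Hence the worst case is $\bm\eta=-\frac18\bm\xi^2$, and for $\bm\xi\in[\frac2m,1]$:
\begin{itemize}
\item at $\theta=0$, $2\bm\eta+\bm\xi^2\ge\frac34\bm\xi^2$;
\item at $\theta=1$, $\bm\xi(4\bm\eta+4\bm\xi^2-5\bm\xi+2)\ge\bm\xi(\frac72\bm\xi^2-5\bm\xi+2)>0$;
\item at $\theta=1$, $1+2\bm\eta+2\bm\xi^2-\bm\xi\ge1-\bm\xi+\frac74\bm\xi^2>0$.
\end{itemize}
With this computation inserted, your outline becomes the paper's proof.
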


\begin{proof}
By Lemma \ref{lem:mcQ_a-projection}, we set
\eq{
\mbF_1=\mbE_1\oplus\mbE_2,\qquad
\mbF_2=\mbE_3,\qquad
\tilde\mcB_k=\frac{\mcQ_G-\mcQ_{s_L}}{\mu},\qquad
r=\frac{(\rd t)^\sharp}{\sqrt{t(2-t)}}.
}
The vector $r$ is in fact $\frac{\na_{\mbS^n}t}{\abs{\na_{\mbS^n}t}}$ and $r\in\mbF_1$ for the same reason as in \eqref{eq:na-t-in-mbF_1}, hence by \eqref{eq:transition-tau},
\eq{
r
=\frac{\sqrt{1-\theta}}{\sqrt{1-s_L^2}}\tau
+\frac{\sqrt\theta(1-t)}{\sqrt{1-s_L^2}}\xi_y.
}
Substituting \eqref{eq:d-sL-square}--\eqref{eq:hess-sL-square} and \eqref{eq:single-factor-dkP}--\eqref{eq:single-factor-hess-kP} into \eqref{defn:mcQ-2}, using $G^2=t\bar\mu$ and $s_L^2=\theta t(2-t)$, a direct computation gives
\eq{\label{eq:transition-leading-form}
\tilde\mcB_k|_{\mbF_1}
=\frac{1}{\mu}\mcQ_G|_{\mbF_1}
={}&
\left(t+\frac{\bm\xi(1-t)}2\right)I_{\mbF_1}
+\frac{2-t}{2}
\left(\bm\kappa-\frac{\mu\bm\xi^2}{2\bar\mu}\right)
r^\flat\otimes r^\flat\\
&-\frac{\bm\xi\sqrt{1-s_L^2}\sqrt\theta(2-t)}{2\bar\mu}
\left(\xi_y^\flat\otimes r^\flat
+r^\flat\otimes\xi_y^\flat\right)
+\frac{1-s_L^2}{\bar\mu}
\xi_y^\flat\otimes\xi_y^\flat
+O_{\bm\theta}(\sqrt t),
}
where all terms involving $\rd\log f_j$ and $\na_{\mbS^n}^2f_j/f_j$ in the computation are absorbed into $O_{\bm\theta}(\sqrt t)$ after division by $\mu$, and in estimating the gradient-square term $-\frac{1}{4G^2}\rd(G^2)\otimes\rd(G^2)$ of \eqref{defn:mcQ-2}, we have used $\mu/\bar\mu\leq1$, as well as $\bar\mu\geq\theta(2-t)\geq\frac{1}{2}\bm\theta$.

Provided that $T_3$ is sufficiently small, depending only on ${\bf p}$, we have
\eq{
r
=\sqrt{1-\theta}\,\tau+\sqrt\theta\,\xi_y+O(t),\qquad
\bar\mu
=2\theta+O(t+\mu).
}
Set
\eq{
r_0
=\sqrt{1-\theta}\tau+\sqrt{\theta}\xi_y,
}
since $\theta\geq\bm\theta$, the quantities $\theta^{-1}$, $\bar\mu^{-1}$ are uniformly bounded in terms of $\bm\theta$.
Then we define $\tilde\mcB$ by replacing the explicit occurrences of $t,s_L,\mu$ in the leading
part of \eqref{eq:transition-leading-form} by zero, and replacing $r,\bar\mu$ by $r_0,2\theta$,
respectively, so that
\eq{
\tilde\mcB
=\frac{\bm\xi}{2}I_{\mbF_1}+\bm\kappa r^\flat_0\otimes r^\flat_0-\frac{\bm\xi}{2\sqrt\theta}\left(\xi_y^\flat\otimes r_0^\flat+r_0^\flat\otimes\xi_y^\flat\right)
+\frac{1}{2\theta}\xi^\flat_y\otimes\xi^\flat_y
\eqqcolon\frac{\bm\xi}{2}I_{\mbF_1}-\tilde{\bm\Psi},
}
and by \eqref{eq:transition-block}, \eqref{eq:transition-E3}, we have
\eq{\label{eq:transition-B-limit}
\norm{\tilde\mcB_k}\leq C(n,{\bf p},\bm\theta),\qquad
\norm{\tilde\mcB_k|_{\mbF_1}-\tilde\mcB}
\leq C(n,{\bf p},\bm\theta)\,(\sqrt T+\mu).
}

Computing the first two elementary symmetric functions of $\tilde{\bm\Psi}$ from \eqref{eq:transition-leading-form}, we obtain
\eq{\label{eq:transition-sigma}
\sigma_1(\tilde{\bm\Psi})
=-\bm\eta-\bm\xi^2+2\bm\xi-\frac1{2\theta},\qquad
\sigma_2(\tilde{\bm\Psi})
=\frac{1-\theta}{4\theta}
\left(2\bm\eta+\bm\xi^2-2\bm\xi\right).
}

Let $\tilde{\bm q}$ be in the form \eqref{eq:three-dimensional-q}, with $\sigma_i$ replaced by $\sigma_i(\tilde{\bm\Psi})$. Substituting \eqref{eq:transition-sigma} into \eqref{eq:three-dimensional-q} yields
\begin{align}
4\theta\tilde{\bm q}\left(\frac{\bm\xi}{2}\right)
={}&2\bm\eta+\bm\xi^2
+\theta\left(4\bm\xi\bm\eta+4\bm\xi^3-6\bm\xi^2
+2\bm\xi-2\bm\eta\right),                            \label{eq:transition-q}\\
\theta\tilde{\bm q}'\left(\frac{\bm\xi}{2}\right)
={}&1+\theta(2\bm\eta+2\bm\xi^2-\bm\xi).
\label{eq:transition-qprime}
\end{align}
The right-hand side of \eqref{eq:transition-q} is affine in $\theta$, and at $\theta=0$ it satisfies $2\bm\eta+\bm\xi^2\geq\frac34\bm\xi^2\geq\frac3{m^2}$, whereas at $\theta=1$ it satisfies
\eq{
\bm\xi(4\bm\eta+4\bm\xi^2-5\bm\xi+2)
\geq\bm\xi\left(\frac72\bm\xi^2-5\bm\xi+2\right)
>0.
}
Together with \eqref{eq:transition-qprime} and
\eqref{ineq:log-phi-log-t-derivatives}, this gives uniform bounds $\tilde{\bm q}(\bm\xi/2)\geq c(m)$ and $\tilde{\bm q}'(\bm\xi/2)\geq c(m)$ for $\theta\in[\bm\theta,1]$.
Lemma~\ref{lem:three-dimensional-trace-free} now shows that ${\rm tr}(\widetilde S\tilde\mcB\widetilde S)\geq c(m,\bm\theta)\abs{\widetilde S}^2$ for every
$\widetilde S\in{\rm Sym}_0(\mbF_1)$.
After first decreasing $T_3$, then $k_3$, depending only on the stated quantities, \eqref{eq:transition-B-limit} then transfers this estimate from $\tilde\mcB$ to $\tilde\mcB_k$. Finally, since
\eq{
\mcQ_G
=\mcQ_{s_L}+\mu\tilde\mcB_k,
}
Lemma~\ref{lem:algebraic-trace-estimate} in conjunction with \eqref{eq:transition-mu} yields \eqref{eq:transition-conclusion} as desired.
\end{proof}

\subsubsection{The degenerate case when
  \texorpdfstring{$\ell=3$}{ell=3}}

\begin{lemma}\label{lem:transition-degenerate}
Let $\ell=3\leq n$. There are $c,C,T_3>0$ and $\bm\theta\in(0,1)$, depending only on $\ell,n,{\bf p}$, and a positive constant $k_3$, depending only on $\ell,n,{\bf p}$ and $T\in(0,T_3]$, with the following property.

Suppose that $0<k\leq k_3$, \eqref{eq:transition-region} holds, and
$0<\theta\leq\bm\theta$.
Using the notations in \eqref{defn:E_1-E_2-E_3}, set
\eq{
\mbF_1
=\mbE_1,\qquad
\mbF_2
=\mbE_2\oplus\mbE_3,
}
and write $\mcQ_G=((\mcQ_G)_{ab})$ with respect to
$T_y\mbS^n=\mbF_1\oplus\mbF_2$.  Then
\eq{\label{esti:Q_G_22}
(\mcQ_G)_{22}\geq c\frac{\mu}{\bm\theta}I_{\mbF_2},
\qquad
\norm{(\mcQ_G)_{22}^{-1}(\mcQ_G)_{21}}
\leq C\sqrt{\bm\theta}.
}
Moreover, the Schur complement
\eq{
\widehat{\mcQ}_G
=(\mcQ_G)_{11}
-(\mcQ_G)_{12}(\mcQ_G)_{22}^{-1}(\mcQ_G)_{21}
}
satisfies
\eq{\label{esti:Schur-complement}
\norm{\widehat{\mcQ}_G}\leq C\mu,\qquad
{\rm tr}_{\mbF_1}\widehat{\mcQ}_G\geq c\mu.
}
\end{lemma}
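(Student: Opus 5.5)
The plan is to follow the proof of Proposition~\ref{prop:away-degenerate} (the degenerate case away from the zeros), but to compute the blocks of $\mcQ_G$ directly from \eqref{defn:mcQ-2}. We should not quote \eqref{eq:transition-leading-form}: its derivation used the lower bound $\bar\mu\geq\frac12\bm\theta$, which fails here because $\theta$ is small. Throughout we work with $\mbF_1=\mbE_1$ and $\mbF_2=\mbE_2\oplus\mbE_3$, spanned by $\tau$, by $\xi_y$, and by $\mbE_3$ respectively. We use $s_L^2=\theta t(2-t)$ and $G^2=t\bar\mu$ with $\bar\mu=\theta(2-t)+\mu$, together with \eqref{eq:d-sL-square}, \eqref{eq:hess-sL-square}, \eqref{eq:single-factor-dkP}, \eqref{eq:single-factor-hess-kP} and \eqref{eq:transition-P}. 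The constants will be fixed in the order: first $\bm\theta$ (depending only on $n,{\bf p}$), then $T_3$, then $k_3$ so small that $\mu\leq C kT^{\frac2m-1}\leq\bm\theta$ by \eqref{eq:transition-mu}.

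\textbf{Step 1: the $\mbF_2$ block.} By \eqref{eq:transition-block} and \eqref{eq:transition-E3}, $\mbE_3$ is orthogonal to $\mbE_1\oplus\mbE_2$ for $\mcQ_G$ and carries $(1+O(\mu))I$, so the only work is the $\xi_y$-entry. Expanding \eqref{defn:mcQ-2}, the $s_L^2$-parts combine exactly as in the derivation of \eqref{eq:away-Q22}. With $\rd t(\xi_y)=\sqrt{\theta t(2-t)}(1-t)/\sqrt{1-s_L^2}$, this should give
\[
\mcQ_G(\xi_y,\xi_y)=\frac{(1-s_L^2)\mu}{\bar\mu}+O(\mu).
\]
Since $\bar\mu\leq2\theta+\mu\leq3\bm\theta$, this entry is at least $c\mu/\bm\theta$ once $\bm\theta$ is small, which gives the first bound in \eqref{esti:Q_G_22}. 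The off-diagonal entry $\mcQ_G(\tau,\xi_y)$ comes from the cross term $-\frac1{4G^2}\bigl(\rd(s_L^2)\otimes\rd(kP)+\rd(kP)\otimes\rd(s_L^2)\bigr)$, together with the $\bm\kappa$ and $\bm\xi^2$ parts evaluated on $\rd t(\tau)\,\rd t(\xi_y)$. I expect it to be $-\mu\bm\xi\sqrt{\theta}(2-t)\sqrt{1-\theta}/(2\bar\mu)+O(\mu\sqrt{\theta})$, up to harmless factors $1+O(t)$. Multiplying by the inverse entry $\approx\bar\mu/\mu$ gives $\norm{(\mcQ_G)_{22}^{-1}(\mcQ_G)_{21}}\leq C\sqrt\theta\leq C\sqrt{\bm\theta}$.

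\textbf{Step 2: the Schur complement.} Computing the $(\tau,\tau)$ entry in the same way gives
\[
(\mcQ_G)_{11}=\mu\Bigl[\frac{\bm\xi}{2}+\frac{2-t}{2}\Bigl(\bm\kappa-\frac{\mu\bm\xi^2}{2\bar\mu}\Bigr)(1-\theta)\,\tau^\flat\otimes\tau^\flat\Bigr]+O\bigl(\mu(\sqrt t+t)\bigr).
\]
We then subtract $c^2/d$, where $c$ is the mixed entry and $d$ the $\xi_y$-entry from Step 1, i.e.\ $\mu\bm\xi^2\theta(2-t)^2(1-\theta)/(4\bar\mu)$. The identity $\mu+\theta(2-t)=\bar\mu$ should make the $1/\bar\mu$ terms cancel, exactly as the identity $\frac12-\frac{kP}{4G^2}-\frac{s_L^2}{4G^2}=\frac14$ did in \eqref{eq:away-Schur}. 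The result should be
\[
\widehat\mcQ_G=\mu\Bigl[\frac{\bm\xi}{2}I_{\mbE_1}+\Bigl(\bm\kappa-\frac{\bm\xi^2}{2}\Bigr)\tau^\flat\otimes\tau^\flat\Bigr]+O\bigl(\mu(\sqrt T+\sqrt{\bm\theta})\bigr).
\]
This has norm $\leq C\mu$ and trace $\mu(\bm\xi+\bm\kappa-\bm\xi^2/2)+\dots=\mu(\bm\eta+\bm\xi^2/2)+\dots\geq\mu\bigl(\tfrac38\bm\xi^2-o(1)\bigr)\geq c\mu$ by \eqref{ineq:log-phi-log-t-derivatives}. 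This is consistent with $q_{\rm rad}+q_{\rm tan}$ in \eqref{eq:transition-eigenvalue}, which is the $\theta=0$ limit.

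\textbf{Main obstacle: the error bookkeeping.} Because $\bar\mu$ may be as small as $\mu$, no error may carry an uncompensated factor $1/\bar\mu$. My first check is that every error in the mixed entry is $O(\mu\sqrt{t\theta}/\bar\mu+\mu\sqrt t)$ and every error in the $\xi_y$-entry is $O(\mu)$. Then the errors produced in $c^2/d$ are at most $O(\mu\sqrt t)$ plus $O(\mu\theta)$, hence absorbable. The needed factor $\sqrt{\theta}$ in $\rd(s_L^2)(\xi_y)$ comes from $s_L=\sqrt{\theta t(2-t)}$. The terms involving $\rd\log f_j$ are $O(\mu t)$ and so are harmless.
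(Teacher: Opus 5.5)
Your proposal is correct and follows essentially the same route as the paper. The paper also splits $\mbF_1=\mbE_1$, $\mbF_2=\mbE_2\oplus\mbE_3$, and computes $q_\xi=\mcQ_G(\xi_y,\xi_y)=\frac{\mu}{\bar\mu}(1+O(\theta+\mu+\sqrt T))$ and the mixed form $\omega(u)=\mcQ_G(u,\xi_y)$ directly from \eqref{defn:mcQ-2}. It then uses the identity $\mu+\theta(2-t)=\bar\mu$ to cancel the $1/\bar\mu$ terms, obtaining ${\rm tr}_{\mbF_1}\widehat{\mcQ}_G=\mu(\bm\eta+\frac12\bm\xi^2)+O(\mu(\theta+\mu+\sqrt T))$.

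The differences are minor. The paper quotes \eqref{eq:transition-E1} for the $\mbF_1$ block; unlike \eqref{eq:transition-leading-form}, that formula needs no lower bound on $\bar\mu$. The paper also takes only the trace of the Schur complement rather than identifying the full form as you do. One small correction: the mixed-entry error is not $O(\mu\sqrt\theta)$ as stated in your Step 1, because it contains the $O(\mu\sqrt{\theta t}/\bar\mu)$ term you list later. That term is harmless once multiplied by $q_\xi^{-1}\approx\bar\mu/\mu$.
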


\begin{proof}
After decreasing $k_3$, depending only on the stated quantities, we may assume $\mu\leq\bm\theta$.
Since $(\mcQ_G)_{11}=\mcQ_G|_{\mbF_1}$,
\eqref{eq:transition-tau}--\eqref{eq:transition-E1} then give $\norm{(\mcQ_G)_{11}}\leq C(n,{\bf p})\mu$.
Taking the trace in \eqref{eq:transition-E1} and using \eqref{eq:transition-tau}, also
\eq{
\frac{(1-\theta)(2-t)}{1-s_L^2}=2+O(\theta+t),
}
we obtain
\eq{\label{eq:transition-Q11-trace}
{\rm tr}_{\mbF_1}(\mcQ_G)_{11}
=\mu(\bm\eta+\bm\xi^2)
-\frac{\mu^2\bm\xi^2}{2\bar\mu}
+O\left(\mu(\theta+\sqrt T)\right).
}

Set for simplicity $q_\xi=\mcQ_G(\xi_y,\xi_y)$.
By \eqref{defn:mcQ-2}, \eqref{eq:d-sL-square} and
\eqref{eq:hess-sL-square},
\begin{align}
q_\xi={}&G^2+1-2s_L^2
-\frac{s_L^2(1-s_L^2)}{G^2}
+\frac12\na_{\mbS^n}^2(kP)(\xi_y,\xi_y)\notag\\
&-\frac{\rd(s_L^2)(\xi_y)\rd(kP)(\xi_y)}{2G^2}
-\frac{\rd(kP)(\xi_y)^2}{4G^2}.\label{eq:transition-qxi-full}
\end{align}
The first four terms have the following simplification using the definitions of $\mu,\bar\mu,\theta$:
\eq{
G^2+1-2s_L^2-\frac{s_L^2(1-s_L^2)}{G^2}
=\frac{\mu(1+t\mu)}{\bar\mu}.
}
By the full differential formulas \eqref{eq:single-factor-dkP}--\eqref{eq:single-factor-hess-kP}, the remaining terms in \eqref{eq:transition-qxi-full} are, respectively,
\eq{
O(\mu),\qquad
\frac{\mu}{\bar\mu}O\left(\theta+\sqrt{\theta t}\right),\qquad
\frac{\mu}{\bar\mu}O\left(\mu(\theta+t)\right).
}
Combining we have
\eq{\label{eq:transition-qxi}
q_\xi
=\frac{\mu}{\bar\mu}
\left(1+O(\theta+\mu+\sqrt T)\right),\qquad
q_\xi^{-1}
=\frac{\bar\mu}{\mu}
\left(1+O(\theta+\mu+\sqrt T)\right),
}
where the second estimate follows after decreasing $\bm\theta,T_3,k_3$, depending only the stated quantities.

To proceed, we define a one-form $\omega$ on $\mbF_1$ by
\eq{
\omega(u)
=\mcQ_G(u,\xi_y),\quad\forall u\in\mbF_1.
}
By \eqref{eq:a2-gradient}, \eqref{eq:a2-hessian},
we note $\rd(s_L^2)(u)=0$, $\na_{\mbS^n}^2(s_L^2)(u,\xi_y)=0$, and hence by \eqref{defn:mcQ-2}, we have
\eq{
\omega(u)
={}&\frac12\na_{\mbS^n}^2(kP)(u,\xi_y)-\frac{\rd(kP)(u)
\left[\rd(s_L^2)(\xi_y)+\rd(kP)(\xi_y)\right]}{4G^2}.
}
Using \eqref{eq:single-factor-dkP}--\eqref{eq:single-factor-hess-kP} and \eqref{eq:transition-tau}, we thus find
\eq{\label{eq:transition-omega}
\omega(u)=
-\frac{\mu\bm\xi s_L\sqrt{(1-\theta)t(2-t)}}{2t\bar\mu}
\langle u,\tau\rangle
+\mu O\left(\sqrt\theta+\sqrt t
+\frac{\sqrt{\theta t}}{\bar\mu}\right)\abs u.
}
Combining \eqref{eq:transition-qxi} and \eqref{eq:transition-omega}, for any $u,v\in\mbF_1$, we have
\eq{\label{eq:transition-Schur-term}
q_\xi^{-1}\omega(u)\omega(v)
=\frac{\mu\theta(2-t)\bm\xi^2}{2\bar\mu}
\langle u,\tau\rangle\langle v,\tau\rangle
+O\left(\mu(\theta+\mu+\sqrt T)\right)\abs u\abs v.
}
More precisely, the product of the leading terms is
\eq{
&\frac{\bar\mu}{\mu}
\left(\frac{\mu\bm\xi s_L\sqrt{(1-\theta)t(2-t)}}
{2t\bar\mu}\right)^2=
\frac{\mu\bm\xi^2\theta(1-\theta)(2-t)^2}{4\bar\mu}
=\frac{\mu\theta(2-t)\bm\xi^2}{2\bar\mu}
+O\left(\mu(\theta+t)\right),
}
while the cross terms and the square of the error in
\eqref{eq:transition-omega} are bounded by
\eq{
C(n,{\bf p})\mu\sqrt\theta
\left(\sqrt\theta+\sqrt t+\frac{\sqrt{\theta t}}{\bar\mu}\right)
&\leq C(n,{\bf p})\mu(\theta+\sqrt T),\\
C(n,{\bf p})\mu\bar\mu
\left(\sqrt\theta+\sqrt t+\frac{\sqrt{\theta t}}{\bar\mu}\right)^2
&\leq C(n,{\bf p})\mu(\theta+\mu+\sqrt T),
}
which proves \eqref{eq:transition-Schur-term}.

By \eqref{eq:transition-block}, the only nonzero part of $(\mcQ_G)_{21}$ is $\omega$, namely, $(\mcQ_G)_{21}u=\omega(u)\xi_y$ for any $u\in\mbF_1$. Hence
\[
\widehat{\mcQ}_G(u,v)
=(\mcQ_G)_{11}(u,v)-q_\xi^{-1}\omega(u)\omega(v).
\]
Taking the trace and using
\eqref{eq:transition-Q11-trace},
\eqref{eq:transition-Schur-term} and \eqref{defn:bar-mu},
we obtain
\eq{\label{eq:transition-Schur-trace}
{\rm tr}_{\mbF_1}\widehat{\mcQ}_G
={}&\mu(\bm\eta+\bm\xi^2)
-\frac{\mu\bm\xi^2}{2\bar\mu}
\left[\mu+\theta(2-t)\right]
+O\left(\mu(\theta+\mu+\sqrt T)\right)\\
={}&\mu\left(\bm\eta+\frac12\bm\xi^2\right)
+O\left(\mu(\theta+\mu+\sqrt T)\right).
}
Also, it is direct to check that $\norm{\widehat{\mcQ}_G}\leq C(n,{\bf p})\mu$.  Recalling \eqref{ineq:log-phi-log-t-derivatives}, we see
\eq{
\bm\eta+\frac12\bm\xi^2
\geq\frac38\bm\xi^2\geq\frac3{2m^2},
}
the two asserted Schur-complement bounds \eqref{esti:Schur-complement} then follow after decreasing the parameters $\bm\theta, T_3, k_3$, depending only on the stated quantities.

Finally, by \eqref{eq:transition-block},
\eq{
(\mcQ_G)_{22}
=q_\xi\oplus\mcQ_G|_{\mbE_3}.
}
Since $\mu\leq\bm\theta$ and
$\bar\mu=\theta(2-t)+\mu\leq3\bm\theta$, using \eqref{eq:transition-E3},
\eqref{eq:transition-qxi} we find
\eq{
(\mcQ_G)_{22}
\geq c(n,{\bf p})\frac{\mu}{\bm\theta}I_{\mbF_2}.
}
Moreover, thanks to \eqref{eq:transition-qxi} and
\eqref{eq:transition-Schur-term}, taking into account that $\theta\leq\bar\mu\leq3\bm\theta$,
\eq{
\norm{(\mcQ_G)_{22}^{-1}(\mcQ_G)_{21}}^2
=\sup_{\substack{u\in\mbF_1\\ \abs u=1}}
q_\xi^{-2}\omega(u)^2
\leq&C(n,{\bf p})\frac{\bar\mu}{\mu}\left(\frac{\mu\theta}{\bar\mu}+\mu\left(\theta+\mu+\sqrt{T}\right)\right)\\
\leq& C(n,{\bf p})\bar\mu
\leq C(n,{\bf p})\bm\theta.
}
This proves \eqref{esti:Q_G_22} and completes the proof.
\end{proof}

\begin{proposition}\label{prop:transition-degenerate}
Let $\ell=3\leq n$.  There are $T_3>0$ and
$\bm\theta\in(0,1)$, depending only on $\ell,n,{\bf p}$, and positive
constants $k_3,C_3$, depending only on
$\ell,n,{\bf p}$ and $T\in(0,T_3]$, such that
$0<k\leq k_3$, \eqref{eq:transition-region}, and
$\theta\in[0,\bm\theta]$ imply \eqref{eq:transition-conclusion}.
\end{proposition}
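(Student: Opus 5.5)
The plan is to split according to whether $\theta>0$ or $\theta=0$. When $\theta>0$ we already have the three-block decomposition \eqref{eq:sL-three-block}, and Lemma \ref{lem:transition-degenerate} supplies everything needed to apply Lemma \ref{lem:schur-trace-estimate}. When $\theta=0$, i.e. $s_L(y)=0$, the estimate \eqref{eq:transition-conclusion} is exactly \eqref{eq:transition-zero} of Lemma \ref{lem:transition-zero}, with $C_0kT^{\frac2m-1}$ in place of $C_3kT^{\frac2m-1}$.

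For $0<\theta\le\bm\theta$, first fix $T_3$ and $\bm\theta$ as in Lemma \ref{lem:transition-degenerate}, also taking $T_3\le T_0$. If needed, decrease $\bm\theta$ so that $\bm\theta<\widetilde\de^2$ and $\bm\theta<\widetilde\de$, where $\widetilde\de$ is the constant of Lemma \ref{lem:schur-trace-estimate} for $n$ and the constants $c,C$ produced by Lemma \ref{lem:transition-degenerate}. This is consistent because the constants $c,C$ in Lemma \ref{lem:transition-degenerate} are independent of $\bm\theta$, and the lemma only ever decreases $\bm\theta$. Then apply Lemma \ref{lem:schur-trace-estimate} with $\mbE=T_y\mbS^n$, $\mbF_1=\mbE_1$ (which is two-dimensional since $\ell=3$), $\mbF_2=\mbE_2\oplus\mbE_3$, $\mcQ=\mcQ_G$, $\mu$ given by \eqref{defn:t-f_j-mu}, and $\de=\bm\theta$. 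The four hypotheses are precisely \eqref{esti:Q_G_22} and \eqref{esti:Schur-complement}. Positive definiteness of $(\mcQ_G)_{22}$ follows from the lower bound $c\mu/\bm\theta$. The lemma then gives ${\rm tr}(S\mcQ_GS)\ge\widetilde C\mu\abs S^2$, and the lower bound in \eqref{eq:transition-mu} converts this into \eqref{eq:transition-conclusion}.

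Finally, take $k_3$ to be the minimum of the $k_3$ from Lemma \ref{lem:transition-degenerate} and the $k_0$ from Lemma \ref{lem:transition-zero}. Take $C_3$ to be the minimum of $C_0$ and $\widetilde C\,c(n,{\bf p})$. The main point to watch is the order in which constants are chosen. $\bm\theta$ must be fixed before $k_3$, since $k_3$ may depend on $T$ and $\bm\theta$. The requirement $\de<\widetilde\de$ must be met by $\bm\theta$ alone, and this works because the Schur lemma's threshold depends only on $n,c,C$, which do not depend on $\bm\theta$.
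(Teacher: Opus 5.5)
Your proposal is correct and matches the paper's proof: the case $s_L(y)=0$ is exactly \eqref{eq:transition-zero}, and for $\theta>0$ Lemma~\ref{lem:transition-degenerate} supplies the four hypotheses of Lemma~\ref{lem:schur-trace-estimate} with $\de=\bm\theta$, after which \eqref{eq:transition-mu} converts the bound into \eqref{eq:transition-conclusion}. Your explicit check that $\bm\theta$ can be shrunk below the threshold $\widetilde\de$ without changing $c,C$ makes precise a point the paper leaves implicit; the extra condition $\bm\theta<\widetilde\de^2$ is unnecessary but harmless.
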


\begin{proof}
We choose the constants so that Lemmas~\ref{lem:transition-zero} and \ref{lem:transition-degenerate} apply.
Note that the case $s_L(y)=0$ is exactly proved by
\eqref{eq:transition-zero}. If $s_L(y)>0$, by \eqref{eq:transition-theta} we must have $\theta>0$, and we have shown in Lemma~\ref{lem:transition-degenerate} that the hypotheses of Lemma~\ref{lem:schur-trace-estimate}, with $\de=\bm\theta$, hold in this case. Applying Lemma~\ref{lem:schur-trace-estimate} and using \eqref{eq:transition-mu}, we thus deduce
\eq{
{\rm tr}(S\mcQ_GS)
\geq c\mu\abs S^2
\geq ckT^{\frac2m-1}\abs S^2.
}
This completes the proof.
\end{proof}
Combining Proposition~\ref{prop:transition-degenerate} with Proposition~\ref{prop:transition-nondegenerate}, we obtain the required estimate as follows.

\begin{proposition}\label{prop:transition}
Let $\ell=3\leq n$.  There are $T_3>0$, depending only on
$\ell,n,{\bf p}$, and positive constants $k_3,C_3$, depending only on
$\ell,n,{\bf p}$ and $T\in(0,T_3]$, such that $0<k\leq k_3$ and
\eqref{eq:transition-region} imply \eqref{eq:transition-conclusion}.
\end{proposition}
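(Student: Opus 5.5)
The plan is a compactness/case-splitting argument on the parameter $\theta=\frac{s_L^2}{2t-t^2}\in[0,1]$ defined in \eqref{eq:transition-theta}, combining Proposition~\ref{prop:transition-degenerate} and Proposition~\ref{prop:transition-nondegenerate}. The key point is the order in which the constants are fixed, because the threshold separating the two regimes must be chosen before the nondegenerate proposition is invoked.

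First apply Proposition~\ref{prop:transition-degenerate}. This produces $T_3^{\rm deg}>0$ and a threshold $\bm\theta_*\in(0,1)$, both depending only on $\ell,n,{\bf p}$. For each $T\in(0,T_3^{\rm deg}]$ it also gives constants $k_3^{\rm deg},C_3^{\rm deg}$ such that \eqref{eq:transition-conclusion} holds whenever $\theta\in[0,\bm\theta_*]$. This covers the case $s_L(y)=0$, i.e.\ $\theta=0$, which there reduces to \eqref{eq:transition-zero}.

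Next, with $\bm\theta=\bm\theta_*$ now fixed and depending only on $\ell,n,{\bf p}$, apply Proposition~\ref{prop:transition-nondegenerate}. This gives $T_3^{\rm nd}$, depending on $\ell,n,\bm\theta_*,{\bf p}$ and hence only on $\ell,n,{\bf p}$. For $T\in(0,T_3^{\rm nd}]$ it gives $k_3^{\rm nd},C_3^{\rm nd}$ such that \eqref{eq:transition-conclusion} holds for $\theta\in[\bm\theta_*,1]$.

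Set
\[
T_3=\min\{T_3^{\rm deg},T_3^{\rm nd},T_0\},
\]
where $T_0$ is the constant from Lemma~\ref{lem:transition-zero}, and also take $T_3$ small enough that at most one index $j$ satisfies \eqref{eq:transition-region}. For $T\in(0,T_3]$ set
\[
k_3=\min\{k_3^{\rm deg},k_3^{\rm nd}\},\qquad C_3=\min\{C_3^{\rm deg},C_3^{\rm nd}\}.
\]
Since $s_L^2\le 2t-t^2$, every $y$ satisfying \eqref{eq:transition-region} has a well-defined $\theta\in[0,1]$. Every such $\theta$ lies in one of the two intervals, so \eqref{eq:transition-conclusion} holds throughout the transition region.

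The only delicate step is this bookkeeping of dependencies. One must check that the $\bm\theta$ produced by the degenerate case depends on nothing beyond $\ell,n,{\bf p}$, so that feeding it into the nondegenerate case does not introduce a dependence on $T$ or $k$ that would make the choice circular. This holds as stated in Proposition~\ref{prop:transition-degenerate}, so no genuine obstacle remains; the analytic difficulty was already absorbed into Lemma~\ref{lem:transition-degenerate} and into the verification of $\tilde{\bm q},\tilde{\bm q}'>0$ in the nondegenerate case.
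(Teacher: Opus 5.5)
Your proposal is correct and matches the paper, which obtains this proposition simply by combining Proposition~\ref{prop:transition-degenerate} (covering $\theta\in[0,\bm\theta]$, including $s_L=0$) with Proposition~\ref{prop:transition-nondegenerate} applied at that same $\bm\theta$ (covering $\theta\in[\bm\theta,1]$). Your explicit ordering of the constants is exactly the bookkeeping the paper leaves implicit: you fix $\bm\theta=\bm\theta(\ell,n,{\bf p})$ first, so that the nondegenerate $T_3$ depends only on $\ell,n,{\bf p}$, and then take minima of $T_3$, $k_3$, $C_3$.
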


We are now ready to prove the generalized Schoen differential inequality.

\begin{proof}[Proof of Theorem~\ref{thm:SS-ineq}]
We first note that $G^2=s_L^2+kP$ constructed above satisfies all the required properties.

Now we choose $T=\frac{1}{2}\min\{T_1,T_2,T_3\}>0$, where $T_1,T_2,T_3$ are constants resulting from the precious subsections, which depend only on $\ell,n,{\bf p}$, so in turn $T=T(\ell,n,{\bf p})$.
Once $T$ is fixed, we then choose $k=\frac{1}{2}\min\{k_1,k_2,k_3\}>0$, where $k_1,k_2,k_3$ are constants resulting from previous subsections (with $T=T(\ell,n,{\bf p})$ chosen).

For any $y\notin\{p_1,\ldots,p_m\}$, either
$1-\langle p_j,y\rangle\leq T$ for some $j$, or
$T\leq1-\langle p_j,y\rangle\leq e^{2\rho_0}T$
for some $j$, or $1-\langle p_j,y\rangle\geq e^{2\rho_0}T$ for every $j$.
Thus Proposition~\ref{prop:near}, together with either
Propositions~\ref{prop:away-low} and~\ref{prop:transition-low} or
Propositions~\ref{prop:away} and~\ref{prop:transition}, gives
\eq{
{\rm tr}(S\mcQ_GS)
\geq C_0\abs S^2
}
for every $S\in{\rm Sym}_0(T_y\mbS^n)$, where $C_0=C_0(\ell,n,{\bf p})>0$. Thanks to \eqref{eq:A2g2-gDelta-g}, the proof is thus completed.

\end{proof}

\section{Branched sheeting theorem for cones with normal rank at most three}\label{Sec:4}

As in the beginning of Section \ref{sec:generalized-SS}, we fix $p_1,\dots,p_m\in\mathbb S^n$, then put ${\bf p}$, $L=L({\bf p})$, $\ell={\rm dim}L$, and assume throughout this section that $\ell=\dim L\in\{2,3\}$.
We continue to write $G,g$ for the functions $G_{\bf p},g_{\bf p}$ obtained in Theorem \ref{thm:SS-ineq}, when there is no ambiguity.

\begin{proposition}\label{prop:gradient-reduction}
Let $g$ be the tilt function in Theorem \ref{thm:SS-ineq}, namely
\eq{
    g^2(X)
    =
    1-\abs{\pi_L(\nu(X))}^2+kP(\nu(X)),
}
where $k\in(0,1)$ and $\phi$ are chosen in the proof of Theorem \ref{thm:SS-ineq}.
Then there exists a constant
$C>0$, depending only on
$\ell,n,{\bf p}$, such that at every $X\in M$ with $g(X)>0$,
\eq{
    \Abs{\na g}\le C\Abs{A}.
}
\end{proposition}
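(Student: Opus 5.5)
Since $g=G\circ\nu$ and $\rd\nu=-A$ (as a map $T_XM\to T_{\nu(X)}\mbS^n$), the chain rule gives $\na g=-A(\na_{\mbS^n}G)$, so $\abs{\na g}\le\abs{A}\,\abs{\na_{\mbS^n}G}(\nu(X))$. The proposition therefore reduces to a purely spherical statement: $\abs{\na_{\mbS^n}G}\le C(\ell,n,{\bf p})$ on $\{G>0\}=\mbS^n\setminus\{p_1,\dots,p_m\}$. I will prove this by writing $\na G=\frac{\na(G^2)}{2G}$ and bounding the two pieces of $G^2=s_L^2+kP$ separately.

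For the $s_L^2$ part, \eqref{eq:a2-gradient} gives $\abs{\na(s_L^2)}\le 2\abs{\pi_{L^\perp}(y)}=2s_L$, so $\frac{\abs{\na(s_L^2)}}{2G}\le\frac{s_L}{G}\le1$. For the $kP$ part I need $\abs{\na P}\le C\sqrt{P}$, since then $\frac{k\abs{\na P}}{2G}\le\frac{kC\sqrt P}{2\sqrt{kP}}\le C\sqrt{k}\le C$.

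The main step is the bound $\abs{\na P}^2\le CP$. Write $P=\prod_i\phi(t_i)$ with $t_i=1-\langle p_i,y\rangle$, so $\na\log P=\sum_i\bm\xi(t_i)\,\na\log t_i$. By \eqref{ineq:log-phi-log-t-derivatives}, $\bm\xi\le1$, and by \eqref{eq:gradient-Hessian-t}, $\abs{\na\log t_i}=\sqrt{2t_i-t_i^2}/t_i\le\sqrt{2/t_i}$. I will split into two regions using the disjoint caps from Proposition~\ref{prop:near}.
\begin{itemize}
\item[(i)] Away from all $p_i$, i.e.\ $t_i\ge c({\bf p})$ for every $i$: here $P$ is bounded above and below and $\abs{\na\log P}\le C$, so $\abs{\na P}\le CP\le C\sqrt P$.
\item[(ii)] Near a single $p_j$, with the other $t_i$ bounded below: write $P=f_j\phi(t)$, where $f_j$ is smooth and bounded above and below as in \eqref{eq:near-fj}. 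Then
\[
\abs{\na P}\le C\phi(t)+C\phi'(t)\sqrt{t}=C\phi\Bigl(1+\bm\xi\,t^{-1/2}\Bigr)\le C\phi\,t^{-1/2}.
\]
Hence $\abs{\na P}^2/P\le C\phi/t$.
\end{itemize}
It remains to check that $\phi(t)/t$ is bounded. From \eqref{defn:phi} together with $\bm\xi\ge2/m$ (so $\phi(t)/t$ is nonincreasing), $\phi(t)/t\le T^{2/m-1}$, which is a constant once $T=T(\ell,n,{\bf p})$ is fixed as in the proof of Theorem~\ref{thm:SS-ineq}. This gives $\abs{\na P}\le C\sqrt P$ in both regions.

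This last step, controlling the vanishing of $\phi$ relative to its derivative near the zeros, is the only delicate point. It works because $\phi$ is linear near $0$, so that $\sqrt P\sim\sqrt t$ behaves like a distance function with bounded gradient. Combining the two pieces gives $\abs{\na_{\mbS^n}G}\le1+C$, and hence $\abs{\na g}\le C\abs{A}$.
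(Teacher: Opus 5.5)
Your argument is correct and is essentially the paper's. Both proofs split $\na(G^2)$ into the $s_L^2$ part, controlled by $2s_L\le 2G$, and the $kP$ part, controlled by $C\sqrt{P}$ through the boundedness of $t(\phi')^2/\phi=\bm\xi^2\phi/t$. The only difference is bookkeeping: the paper works directly on $M$ with $\abs{\na t_j}^2\le 2t_j\abs{A}^2$ and a global product rule, where its appeal to $\phi\le1$ should really read $\phi\le C(m)$ on $[0,2]$. You instead bound $\abs{\na_{\mbS^n}G}$ on the sphere via the cap decomposition and then apply the chain rule. One small slip: $\phi(t)/t$ is nonincreasing because $\bm\xi\le1$, not because $\bm\xi\ge 2/m$; the conclusion $\phi(t)/t\le T^{\frac2m-1}$ is unaffected.
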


\begin{proof}
Since (recalling \eqref{defn:s_L})
\eq{
2g\abs{\na g}
=\abs{\na g^2}
=\abs{\na\left(s_L^2(\nu)+kP(\nu)\right)},
}
we just need to estimate $\na s_L^2$ and $\na P$.

First, by virtue of Lemma \ref{lem:phi}, if we put $C_\phi\coloneqq\sup_{0<t\leq2}\frac{t\abs{\phi'(t)}^2}{\phi(t)}$, then $C_\phi<\infty$ because near $t=0$, $C_\phi$ is just $T=T(\ell,n,{\bf p})$ (recalling the proof of Theorem \ref{thm:SS-ineq}), while away from $t=0$ it is clearly bounded.
Since $T=T(\ell,n,{\bf p})$, we see that $C_\phi$, as a constant determined by $\phi$, in fact depends only on $\ell,n,{\bf p}$, thanks to Lemma \ref{lem:phi}.

Then we observe that, for $t_j=1-\left<p_j,\nu\right>$, there holds
\eq{
\tau(t_j)
=-\left<D_\tau\nu,p_j\right>
=-\left<D_\tau\nu,p_j^\top\right>,\quad\forall\tau\in T_XM,
}
and hence $\abs{\na t_j}^2\leq\abs{p_j^\top}^2\abs{A}^2\leq 2t_j\abs{A}^2$ thanks to that
\eq{
\abs{p_j^\top}^2
=1-\left<p_j,\nu\right>^2
=(1-\left<p_j,\nu\right>)(1+\left<p_j,\nu\right>)
=t_j(2-t_j).
}
It follows that
\eq{
\abs{\na\phi(t_j)}
=\abs{\phi'(t_j)}\abs{\na t_j}
\leq\sqrt{2C_\phi}\sqrt{\phi(t_j)}\abs{A}.
}
Hence, by the product rule we have
\eq{
\abs{\na P}
=\Abs{\sum_{j=1}^m\left(\prod_{1\leq i\neq j\leq m}\phi(t_i)\right)\na\left(\phi(t_j)\right)}
\leq\sqrt{2C_\phi}\abs{A}\sum_{j=1}^m\sqrt{\phi(t_j)}\left(\prod_{1\leq i\neq j\leq m}\phi(t_i)\right),
}
while by the assumption that $\phi\in[0,1]$, we have for each $j$ that
\eq{
\left(\sqrt{\phi(t_j)}\left(\prod_{1\leq i\neq j\leq m}\phi(t_i)\right)\right)^2
=\phi(t_j)\left(\prod_{1\leq i\neq j\leq m}\phi^2(t_i)\right)
=P\prod_{1\leq i\neq j\leq m}\phi(t_i)
\leq P,
}
so in turn
\eq{
k\abs{\na P}
\leq mk\sqrt{2C_\phi}\sqrt{P}\abs{A}
\leq m\sqrt{2C_\phi}g\abs{A}.
}

It is thus left to estimate $\abs{\na(s_L^2(\nu))}$.
For this we recall that $s_L^2(\nu)=\abs{\pi_{L^\perp}(\nu)}^2$, and hence we have (take any orthonormal basis of $T_XM$, say $\{\tau_1,\cdots,\tau_n\}$)
\eq{
\abs{\na(s_L^2(\nu))}^2
=\sum_{i=1}^n\Abs{2\left<\pi_{L^\perp}(\nu),\pi_{L^\perp}\left(D_{\tau_i}\nu\right)\right>}^2
\leq4\abs{\pi_{L^\perp}(\nu)}^2\sum_{i=1}^n\abs{D_{\tau_i}\nu}^2
=4s_L^2(\nu)\abs{A}^2,
}
namely, $\abs{\na(s_L^2(\nu))}\leq2s_L(\nu)\abs{A}\leq2\sqrt{s_L^2(\nu)+kP(\nu)}\abs{A}=2g\abs{A}$.
Combining, we find
\eq{
2g\abs{\na g}
\leq\left(2+m\sqrt{2C_\phi}\right)g\abs{A},
}
which yields the required estimate.
\end{proof}

We can now prove the following Caccioppoli inequality, in the spirit of \cite{Bellettini25}.
\begin{proposition}\label{Prop:Caccioppoli-ineq}
Following Theorem \ref{thm:SS-ineq},  there exist positive constants $\tilde C=\tilde C(\ell,n,{\bf p})$ and
$L=L(\ell,n,{\bf p})$ with the following property:
For any stable minimal immersed hypersurface $M$ in $B_2(0)$,
for every $t\in[0,L]$ and every Lipschitz function
$\psi$ with compact support in $B_2(0)$, one has
\eq{\label{ineq:Caccioppoli}
    \int_{\{g>t\}}\left(1-\frac{t}{g}\right)\Abs{\na g}^{2}\psi^{2}
    \le
    \tilde C\int_{\{g>t\}}(g-t)^2\Abs{\na\psi}^{2}.
}
Moreover, after increasing $\tilde C$ if necessary, one also has
\eq{\label{ineq:Caccioppoli-A}
    \int_{\{g>t\}}\left(1-\frac{t}{g}\right)\Abs A^{2}\psi^{2}
    \le
    \tilde C\int_{\{g>t\}}(g-t)^2\Abs{\na\psi}^{2}.
}
\end{proposition}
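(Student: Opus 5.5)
Following Schoen--Simon and Bellettini, we test the stability inequality \eqref{ineq:SS81-(1.17)} with $\varphi=(g-t)_+\psi$ and combine it with the pointwise inequality \eqref{eq:generalized-SS-intro} multiplied by $(1-t/g)\psi^2$. Concretely, on $\{g>t\}$ write $(g-t)_+=g(1-t/g)$. By \eqref{eq:generalized-SS-intro},
\begin{equation*}
C_0\abs A^2\Big(1-\frac tg\Big)\psi^2\le \abs A^2 g(g-t)\psi^2+(g-t)\psi^2\Delta g .
\end{equation*}
We integrate by parts in the Laplacian term:
\begin{equation*}
\int (g-t)_+\psi^2\Delta g=-\int_{\{g>t\}}\abs{\na g}^2\psi^2-2\int_{\{g>t\}}(g-t)\psi\langle\na g,\na\psi\rangle .
\end{equation*}
This is justified since $g$ is Lipschitz near $\{g=t\}$ for $t>0$, with the singular set handled by the capacity cutoffs from Section~\ref{sec:preliminaries}. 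For $t=0$ we use the zero set of $g$ and the standard truncation at level $t\downarrow0$. Stability with $\varphi=(g-t)_+\psi$ gives
\begin{equation*}
\int\abs A^2(g-t)^2\psi^2\le\int_{\{g>t\}}\big(\abs{\na g}^2\psi^2+2(g-t)\psi\langle\na g,\na\psi\rangle+(g-t)^2\abs{\na\psi}^2\big).
\end{equation*}

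The key mismatch is that the Schoen term is $\abs A^2g(g-t)$, while stability controls $\abs A^2(g-t)^2$. The difference is $\abs A^2 t(g-t)$, and we bound it by $t\abs A^2(1-t/g)\cdot g\le t\,\|g\|_\infty\abs A^2(1-t/g)$. Since $G$ is bounded on $\mbS^n$ (with $k\le1$ and $\phi$ bounded), choosing $t\le L$ small makes $t\sup G\le C_0/2$, so this term is absorbed into the left side. Adding the integrated Schoen inequality to the stability inequality, the $\abs{\na g}^2\psi^2$ and the cross terms cancel. We are left with
\begin{equation*}
\frac{C_0}{2}\int_{\{g>t\}}\Big(1-\frac tg\Big)\abs A^2\psi^2\le\int_{\{g>t\}}(g-t)^2\abs{\na\psi}^2 .
\end{equation*}
This is \eqref{ineq:Caccioppoli-A}.

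Then \eqref{ineq:Caccioppoli} follows immediately from Proposition~\ref{prop:gradient-reduction}, since $\abs{\na g}^2\le C^2\abs A^2$ on $\{g>0\}$.

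The main obstacle is the bookkeeping of signs: the cross terms must cancel exactly rather than requiring Cauchy--Schwarz absorption. If they do not, we absorb them using $\abs{\na g}\le C\abs A$ and Young's inequality, with $(g-t)\le g$ and the factor $(1-t/g)$ dominating. We also need the justification that the test function $(g-t)_+\psi$ is admissible near ${\rm Sing}M$ and near points where $g$ fails to be smooth. At the zeros of $G$, smoothness holds only away from $\{p_i\}$, but at level $t>0$ we avoid those zeros; the case $t=0$ follows by letting $t\downarrow0$ and applying monotone convergence.
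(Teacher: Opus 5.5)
Your proposal is correct and follows essentially the paper's proof. Both arguments test stability with $(g-t)_+\psi$, combine it with the Schoen inequality, and arrive at $\int_{\{g>t\}}(1-\frac tg)(C_0-tg)\abs A^2\psi^2\le\int_{\{g>t\}}(g-t)^2\abs{\na\psi}^2$, with the cross terms cancelling exactly, so your fallback absorption is not needed. The only difference is bookkeeping: the paper integrates by parts inside the stability inequality and substitutes $-\De g\le\frac{g^2-C_0}{g}\abs A^2$, whereas you integrate the weighted Schoen inequality by parts; your choice of $L$ from the boundedness of $G$ and your use of Proposition~\ref{prop:gradient-reduction} to get \eqref{ineq:Caccioppoli} match the paper.
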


\begin{proof}
We choose
$\varphi=(g-t)^+\psi$
in the stability inequality \eqref{ineq:SS81-(1.17)}. Since
$\varphi=(g-t)\psi$ on $\{g>t\}$ and $\varphi=0$ on $\{g\le t\}$, we get
\eq{
    \int_{\{g>t\}}\Abs A^2(g-t)^2\psi^2
    \le
    \int_{\{g>t\}}\Abs{\na\left((g-t)\psi\right)}^2.
}
By integration by parts, using $g-t=0$ on $\partial\{g>t\}$, we have
\eq{
    \int_{\{g>t\}}\Abs{\na\left((g-t)\psi\right)}^2
    =
    \int_{\{g>t\}}-(g-t)\De g\,\psi^2
    +
    \int_{\{g>t\}}(g-t)^2\Abs{\na\psi}^2.
}
Therefore,
\eq{\label{ineq:cacc-1}
    \int_{\{g>t\}}\Abs A^2(g-t)^2\psi^2
    \le
    \int_{\{g>t\}}-(g-t)\De g\,\psi^2
    +
    \int_{\{g>t\}}(g-t)^2\Abs{\na\psi}^2.
}
By Theorem \ref{thm:SS-ineq}, there exists $C_0=C_0(\ell,n,{\bf p})>0$ such that
\eq{
    -\De g\le\frac{g^2-C_0}{g}\Abs A^2.
}
Substituting this into \eqref{ineq:cacc-1}, we obtain
\eq{
    \int_{\{g>t\}}\Abs A^2(g-t)^2\psi^2
    \le
    \int_{\{g>t\}}\left(1-\frac{t}{g}\right)(g^2-C_0)\Abs A^2\psi^2
    \quad+
    \int_{\{g>t\}}(g-t)^2\Abs{\na\psi}^2.
}
Moving the first term on the RHS to the left gives
\eq{
    \int_{\{g>t\}}
    \left(
        (g-t)^2-\left(1-\frac{t}{g}\right)(g^2-C_0)
    \right)\Abs A^2\psi^2
    \le
    \int_{\{g>t\}}(g-t)^2\Abs{\na\psi}^2.
}
The coefficient in the square brackets can be rewritten as
\eq{
    (g-t)^2-\left(1-\frac{t}{g}\right)(g^2-C_0)
    =
    \left(1-\frac{t}{g}\right)(C_0-tg),
}
thus
\eq{\label{ineq:cacc-2}
    \int_{\{g>t\}}\left(1-\frac{t}{g}\right)(C_0-tg)\Abs A^2\psi^2
    \le
    \int_{\{g>t\}}(g-t)^2\Abs{\na\psi}^2.
}
Since $0\le P\le1$, we have
\eq{
    g^2=1-\abs{\pi_L(\nu)}^2+kP(\nu)\le1+k.
}
Set
$L=\frac{C_0}{2\sqrt{1+k}}$,
then for every $t\in[0,L]$,
\eq{
    C_0-tg\ge C_0-t\sqrt{1+k}\ge\frac{C_0}{2}\text{ on }\{g>t\}.
}
Therefore \eqref{ineq:cacc-2} yields
\eq{
    \frac{C_0}{2}
    \int_{\{g>t\}}\left(1-\frac{t}{g}\right)\Abs A^2\psi^2
    \le
    \int_{\{g>t\}}(g-t)^2\Abs{\na\psi}^2.
}
This proves \eqref{ineq:Caccioppoli-A}.
Taking Proposition
\ref{prop:gradient-reduction} into account, we deduce that \eqref{ineq:Caccioppoli} holds after increasing $\tilde C$ if necessary.
Since $C_0$ and $k$ depend only on $\ell,n$ and ${\bf p}$, the constants
$\tilde C$ and $L$ also depend only on $\ell,n$ and ${\bf p}$. The proof is complete.
\end{proof}

For a two-sided immersion $M$ with unit normal $\nu$, define
\eq{\label{eq:p-tilt-excess}
   E_{\mathbf p,R}(M)
  =R^{-n}\int_{M\cap B_R(0)}g_{\mathbf p}^{2}\,\rd\mathcal H^n,
}
where $g_{\bf p}$ is the tilt function in Theorem \ref{thm:SS-ineq}.
We have the following $\varepsilon$-regularity in terms of $E_{\mathbf p,R}(M)$.

\begin{theorem}\label{thm:epsilon-regularity}
Let $n\geq2, \ell\in\{2,3\}, \Lambda\in(0,\infty)$.
Let $M$ be a properly immersed, two-sided, stable minimal hypersurface in $B_R(0)$ with $\mcH^{n-2}\left({\rm Sing}M\right)<\infty$ and $\frac{\mcH^n(M)}{\om_nR^n}\leq\Lambda$.
Under the assumptions of Theorem \ref{thm:SS-ineq},
there exist positive constants $\varepsilon_0=\varepsilon_0(\ell,n,{\bf p}), C(\ell,n,{\bf p})$ with the following property:
if $E_{\mathbf p,R}\leq\varepsilon_0$,
then
\eq{\label{ineq:sup-g_theta}
\sup_{M\cap B_\frac{R}2(0)}g
\leq C(\ell,n,{\bf p})E_{\mathbf p,R}^\frac{1}{2}.
}
\end{theorem}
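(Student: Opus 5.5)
We will follow Bellettini's De Giorgi iteration scheme, using the Caccioppoli inequality of Proposition~\ref{Prop:Caccioppoli-ineq} as the only PDE input together with the Michael--Simon Sobolev inequality on the minimal immersion $M$. By scaling we take $R=1$. The Lipschitz cutoff functions needed below are admissible test functions in \eqref{ineq:Caccioppoli}, because stability extends across ${\rm Sing}M$ under the hypothesis $\mathcal H^{n-2}({\rm Sing}M)<\infty$, as explained in the preliminaries.

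\emph{Step 1: level sets and radii.} We set $k_j=d(1-2^{-j})$ with $d$ to be fixed, $d\leq L$, where $L$ is the constant of Proposition~\ref{Prop:Caccioppoli-ineq}. We take radii $r_j=\frac12+2^{-j-1}$ and Lipschitz cutoffs $\psi_j$ equal to $1$ on $B_{r_{j+1}}$, supported in $B_{r_j}$, with $|\nabla\psi_j|\le C2^j$. We define
\[
Y_j=\int_{M\cap B_{r_j}}(g-k_j)_+^2 .
\]
Note that $Y_0\le E_{\mathbf p,1}$.

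\emph{Step 2: an energy inequality for $(g-k)_+$.} On $\{g>k_{j+1}\}$ we have
\[
1-\frac{k_j}{g}\ \ge\ \frac{k_{j+1}-k_j}{k_{j+1}}\ \ge\ 2^{-j-2},
\]
at least when $j\ge1$; for $j=0$, $k_0=0$ and the weight is identically $1$. We apply \eqref{ineq:Caccioppoli} with $t=k_j$ and compare the truncation at level $k_{j+1}$. This gives
\[
\int \bigl|\nabla\bigl((g-k_{j+1})_+\psi_j\bigr)\bigr|^2\ \le\ C\,4^{j}\,Y_j .
\]
The extra $2^{j}$ factor coming from the weight only alters the geometric constant of the iteration.

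\emph{Step 3: Sobolev and Chebyshev.} The Michael--Simon inequality holds for Lipschitz compactly supported functions on $M$, again by the capacity cutoff argument. We use it in the form $\|f\|_{L^{2^*}}\le C\|\nabla f\|_{L^2}$; for $n=2$ we instead use an $L^q$ version with some fixed $q>2$, which is available since $H=0$. Combining this with H\"older's inequality gives
\[
Y_{j+1}\ \le\ \|(g-k_{j+1})_+\psi_j\|_{L^{2^*}}^2\,\bigl|\{g>k_{j+1}\}\cap B_{r_j}\bigr|^{2/n}.
\]
Chebyshev's inequality bounds the measure factor:
\[
\bigl|\{g>k_{j+1}\}\cap B_{r_j}\bigr|\ \le\ \frac{4^{j+1}}{d^2}\,Y_j .
\]
Together with Step 2 this yields
\[
Y_{j+1}\ \le\ C\,b^{\,j}\,d^{-4/n}\,Y_j^{1+2/n}.
\]

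\emph{Step 4: conclusion.} By the standard lemma on fast geometric convergence, $Y_j\to0$ provided $Y_0\le c\,d^2$. We choose
\[
d=C' E_{\mathbf p,1}^{1/2},
\]
which is admissible, i.e. $d\le L$, once $\varepsilon_0$ is small. Then $g\le d$ on $M\cap B_{1/2}$, which is \eqref{ineq:sup-g_theta}.

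\emph{Main obstacle.} The delicate point is Step 2, specifically the degenerating weight $(1-t/g)$ in \eqref{ineq:Caccioppoli}. We must check that passing from level $k_j$ to $k_{j+1}$ costs only a geometric factor. Two further points also need care:
\begin{itemize}
\item the constants must not depend on $\Lambda$, or we must allow a $\Lambda$-dependence;
\item the Sobolev inequality must be used on the immersed $M$ rather than on a graph.
\end{itemize}
If the weight causes trouble, the fallback is to apply \eqref{ineq:Caccioppoli-A} with Proposition~\ref{prop:gradient-reduction} to $|\nabla g|^2\le C|A|^2$ directly at $t=k_j$ on $\{g>k_{j+1}\}$.
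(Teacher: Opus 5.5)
Your proposal is correct and is essentially the paper's argument: the paper runs Bellettini's De Giorgi iteration (his Theorem 4) with the Caccioppoli inequality of Proposition \ref{Prop:Caccioppoli-ineq} substituted for his. That is exactly your Steps 1--4: you compare levels $k_j$ and $k_{j+1}$ to control the weight $1-t/g$, then apply Michael--Simon, Chebyshev, and fast geometric convergence. One point needs fixing for $n=2$. There is no scale-invariant bound $\|f\|_{L^q}\le C\|\nabla f\|_{L^2}$ in that case, so use $\|f\|_{L^4}\le C\,|\{f\neq0\}|^{1/4}\|\nabla f\|_{L^2}$ (Michael--Simon applied to $f^2$), which gives the same recursion $Y_{j+1}\le Cb^jd^{-2}Y_j^{2}$ with no dependence on $\Lambda$.
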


\begin{proof}
Replacing $\frac{1}{2n}$ in the proof of \cite[Theorem 4]{Bellettini25} with $\frac{\tilde C(\ell,n,{\bf p})}{2}$, and thanks to Proposition \ref{Prop:Caccioppoli-ineq}, one can follow essentially the same argument as in \cite{Bellettini25} to conclude the proof.
\end{proof}

\begin{lemma}\label{lem:small-g-imply-normal-closeness}
Under the assumptions of Theorem \ref{thm:SS-ineq}, there exists a positive constant $C=C(\ell,n,{\bf p})$ with the
following property:

If $X\in M$ satisfies
\eq{\label{assump:g<kT}
g(X)<\sqrt{k}T,
}
where $T=T(\ell,n,{\bf p})$ results from the proof of Theorem \ref{thm:SS-ineq}.
Then there exists a unique $j\in\{1,\cdots,m\}$ such that
$\left<\nu(X),p_j\right>>1-T$.
Moreover,
\eq{\label{ineq:nu-p_j-close}
\Abs{\nu(X)-p_j}\le Cg(X).
}
\end{lemma}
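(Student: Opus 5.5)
The plan is to use the two pieces of $g^2=s_L^2(\nu)+kP(\nu)$ separately. Smallness of $g$ gives smallness of $kP(\nu)$, hence closeness of $\nu$ to the zero set $\{p_1,\dots,p_m\}$. Smallness of $s_L$ then upgrades this to a linear estimate. Write $y=\nu(X)$ and $t_i=1-\langle p_i,y\rangle$.

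\emph{Step 1: a unique close index $j$.} From $g(X)^2<kT^2$ we get $P(y)<T^2\le T$, since $T<1$. Suppose $t_i>T$ for every $i$. By Lemma~\ref{lem:phi}, $\phi$ is nondecreasing and $\phi(T)=T^{2/m}$, so every factor satisfies $\phi(t_i)\ge T^{2/m}$. Hence $P(y)\ge T^{2}$, which is a contradiction. So some $j$ has $t_j\le T$. Strictness follows by the same count: if $t_j=T$ exactly, then $P\ge T^2$ again.

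Uniqueness comes from the choice $T\le T_1$ in Proposition~\ref{prop:near}. The caps $\{1-\langle p_i,\cdot\rangle\le T_1\}$ are pairwise disjoint, so at most one index qualifies.

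\emph{Step 2: the linear bound.} We are in the near-zero regime of Proposition~\ref{prop:near}. There, for $i\neq j$ we have $t_i>e^{\rho_0}T_1$, so $f_j=\prod_{i\ne j}\phi(t_i)\ge c(\ell,n,\mathbf p)>0$. Also $\phi(t_j)=T^{2/m-1}t_j$. Therefore
\[
  g^2\ \ge\ kP\ \ge\ c\,kT^{2/m-1}\,t_j .
\]
Since $k$ and $T$ are fixed constants depending on $\ell,n,\mathbf p$, this gives $t_j\le Cg^2$. Finally $|\nu-p_j|^2=2t_j$, so $|\nu(X)-p_j|\le Cg(X)$.

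In this argument the $s_L$ part is not even needed; the product term alone controls $t_j$ linearly because $\phi$ is linear near $0$.

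\emph{Main obstacle.} The only delicate point is bookkeeping the constants. The threshold $\sqrt k\,T$ must use the same $T$ as fixed in the proof of Theorem~\ref{thm:SS-ineq}. That $T$ is at most $T_1$, which is what guarantees the cap separation and the lower bound on $f_j$. It is also at most $1$, so that $T^2\le T^{2/m}\cdots$ closes the contradiction in Step~1.
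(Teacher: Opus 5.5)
Your proof is correct and follows essentially the same route as the paper. The steps match: the contradiction $kP\ge kT^2$ from monotonicity of $\phi$ and $\phi(T)=T^{2/m}$; uniqueness from $T\le T_1$ and disjointness of the caps; then $t_j\lesssim g^2$ from the linear profile of $\phi$ near $0$, together with $\lvert\nu-p_j\rvert^2=2t_j$. The only difference is how you bound $\prod_{i\neq j}\phi(t_i)$ from below. You use the cap separation $t_i>e^{\rho_0}T_1$ and the power form of $\phi$, whereas the paper uses $t_i\ge T$ (from uniqueness) and monotonicity to get $\prod_{i\neq j}\phi(t_i)\ge T^{2(m-1)/m}$, which yields the explicit bound $t_j\le g^2/(kT)$.
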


\begin{proof}
We argue by contradiction and suppose that for all $j\in\{1,\cdots,m\}$, we have $t_j\coloneqq1-\left<p_j,\nu(X)\right>\geq T$.
Since $\phi$ is nondecreasing with $\phi(T)=T^{\frac{2}{m}}$, it follows that
\eq{
g^2(X)
\geq kP(\nu(X))
=k\prod_{j=1}^m\phi(t_j)
\geq kT^2,
}
which contradicts \eqref{assump:g<kT}.
Hence there exists at least one $j\in\{1,\cdots,m\}$ such that $t_j<T$.
Since $T<T_1=T_1({\bf p})$, by construction of $T_1$ (recalling the proof of Proposition \ref{prop:near}) we deduce, there is exactly one $j$ such that $t_j<T$.
It follows that
\eq{
g^2(X)
\geq kP(\nu(X))
=k\phi(t_j)\prod_{1\leq i\leq m, i\neq j}\phi(t_i)
\geq kT^{\frac{2}{m}-1}t_j\left(T^{\frac{2}{m}}\right)^{m-1}
=kTt_j,
}
namely, $t_j\leq\frac{g^2(X)}{kT}$.
Note that $\abs{\nu(X)-p_j}=\sqrt{2t_j}$, and recall that $k,T$ are constants depending only on $\ell,n,{\bf p}$, we thus obtain \eqref{ineq:nu-p_j-close}.
This completes the proof.
\end{proof}

\begin{proof}[Proof of Theorem \ref{thm:epsilon-regularity-sheeting}]

For an immersion $M=\iota(\S)$, every $Y\in \S$ has a neighborhood $D_Y$ on which the immersion is an embedded disk.
If $\varepsilon=\varepsilon(n,\Lambda,\mathbf C)$ is sufficiently small, Lemma \ref{lem:small-g-imply-normal-closeness} and Theorem \ref{thm:epsilon-regularity} imply that on $D_Y$ there is a unique $j\in\{1,\dots,m\}$ such that the continuous unit normal $\nu$ satisfies
\eq{\label{ineq:<nu,p_j>>1-T}
\left<\nu,p_j\right>>1-T.
}
Since $T$ is sufficiently small, we can group the local disks $D_Y$ according to the unique pair $\{\pm p_i\}$ near which their normals lie.
Since $\pi_{P_i}$ is uniformly nonsingular on the corresponding disks, the properness and continuation argument in \cite[Proof of Theorem 5]{Bellettini25} gives a finite Lipschitz multi-valued graph over each $P_i\cap B_{R/2}(0)$, initially of some degree $d_i$.

It remains to prove that $\mathbf C$ must be a hyperplane cone and that $d_i=q_i$ for every $i$.
If either conclusion failed for arbitrarily small $\varepsilon$, then after rescaling there would be a sequence $M_k$ for which both terms in \eqref{eq:sheeting-smallness} tend to zero, and the graphical parts associated with $P_i$ (resulting from the above discussion) have uniformly bounded degrees, thanks to the uniform mass bounds $\Lambda$.
After passing to a subsequence, their varifolds converge to stationary integral varifolds supported on $P_i$.
The Constancy Theorem implies that these limits are $d_i\lvert P_i\rvert$ for integers $d_i\geq0$.
On the other hand, the varifold distance term in \eqref{eq:sheeting-smallness} implies that $\lvert M_k\rvert$ converges to $\mathbf C$.
Consequently,
$\mathbf C=\sum_{i=1}^{J}d_i\lvert P_i\rvert$, proving that $\mathbf C$ is a hyperplane cone.
Moreover, since the $P_i$ are distinct, equality of the integral varifolds gives $d_i=q_i$ for every $i$.
This proves \eqref{eq:graph-decompose-Lip} with the required multiplicities.

Finally, Theorem \ref{thm:epsilon-regularity} and \eqref{ineq:nu-p_j-close} give
\begin{equation*}
  \sup_{M\cap B_{R/2}(0)}{\rm dist}
  \bigl(\nu,\mathcal N(\mathbf C)\bigr)
  \leq C E_{\mathbf C,R}(M)^{1/2}.
\end{equation*}
The elementary relation between the normal of a graph and its slope then yields \eqref{eq:sheeting-estimate}. This completes the proof.
\end{proof}

\section{Regularity and precompactness}\label{sec:regularity-precompactness}

In this section we prove the main regularity and precompactness theorem.
To our purpose, we first introduce a special subset of the branch singularities ${\rm Sing}_bV$ for $V\in\overline{\mathscr V}(\Lambda)$ as in Definition \ref{def:intro-varifold-class}:

Let ${\rm Sing}_{b,2}V$ consist of the points
$X\in{\rm Sing}_bV$ for which there exists
$\mathbf C\in{\rm VarTan}(V,X)$ of the form as in \eqref{eq:precompactness-branch-cone}, such that $\dim\bigcap_{i=1}^{N}P_i\geq n-2$.
Since ${\rm Sing}_{b,2}V\subset{\rm Sing}_b V$, we have the inclusion ${\rm Sing}_e V\cap B_1(0)\subset
    \bigl({\rm Sing}V\cap B_1(0)\bigr)\setminus{\rm Sing}_{b,2}V$.
    
\begin{proof}[Proof of Theorem \ref{thm:l3-precompactness}]
By Allard compactness, after passing to a subsequence, $V_j\rightharpoonup V$ as varifolds, where $V$ is stationary and integral.
By the standard Ahlfors regularity of stationary varifolds and the assumption that $0\in\overline{M_j}$ for each $j\in\mbN$, we have $V\neq0$ and $0\in{\rm spt}\norm{V}$.

\noindent{\em Claim 1. The blow-up limit of the sequence cannot be a unpaired classical cone.}

Indeed, let $\widehat V_j=\lvert(\widehat M_j,\widehat\beta_j)\rvert$ be a sequence of translations and dilations of the $V_j$, and assume by contradiction that
\eq{\label{eq:hat-V_j-mfC_0}
  \widehat V_j\ra
  \mathbf C_0=\sum_{i=1}^J q_i\lvert H_i\rvert,
}
where $\{H_i\}$ are half-hyperplanes with a common boundary spine and their
multiplicities do not pair to give full hyperplanes.
Since $\widehat\beta_j$ is a positive integer constant on each connected component, $\widehat V_j$ is induced by the stable immersion with multiplicity $\widehat\beta_j$ on each corresponding component.

Since the function $G_{\mathbf C_0}$ defined as \eqref{defn:g_mfC} is even, the map $P\mapsto G_{\mathbf C_0}^2(\nu_P)$ is well-defined and continuous on $G(n,n+1)$, which vanishes on every tangent plane of $\mathbf C_0$. Therefore by the varifold convergence \eqref{eq:hat-V_j-mfC_0} and the standard disintegration of varifolds, on every fixed smaller ball, we have
\eq{
  r^{-n}\int_{\widehat M_j\cap B_r(0)}
  \widehat\beta_j G_{\mathbf C_0}^2(\nu)\,\rd\mathcal H^n
  \ra0,
}
and also the convergence of the varifold-distance term in \eqref{eq:sheeting-smallness} to zero.
Applying Theorem \ref{thm:epsilon-regularity-sheeting} to the sequence $\{\widehat M_j\}$, we conclude that $\mathbf C_0$ is paired, contradicting its definition. This proves {\em Claim 1}. We also note that the same exclusion holds for iterated blow-ups.

\noindent{\em Claim 2. For any $X\in\bigl({\rm Sing}V\cap B_1(0)\bigr)\setminus{\rm Sing}_{b,2}V$,
\eq{
\label{eq:l3-away-sb2-stratum-inclusion}
    \mathbf C\in{\rm VarTan}(V,X)
    \quad\Longrightarrow\quad
    \dim\mathcal S(\mathbf C)\le n-3.
}
}
To see this, assume by contradiction that $s=\dim\mathcal S(\mathbf C)\ge n-2$.  After rotation, we can write $\mathbf C=\mathbb R^s\times\mathbf C'$, with $\mathbf C'$ a stationary integral cone of dimension
$d=n-s\le2$ in $\mathbb R^{d+1}$.

If $d=0$, then $\mathbf C$ is an integer-multiplicity hyperplane.

If $d=1$, by the structure theorem for stationary integral one-dimensional cones, we can write
$\mathbf C'$ as a finite sum of rays, which is stationary in $\mathbb R^2$. Note that $\mathbf C$ is an unpaired classical cone, unless opposite rays occur with equal multiplicity. But by {\em Claim 1}, this is impossible. Hence $\mathbf C$ is again a finite integer sum of hyperplanes.

If $d=2$. Let $\Gamma_{\mfC'}$ be the stationary integral one-varifold on $\mathbb S^2$ induced by the link of $\mathbf C'$. By \cite{AA76}, $\Gamma_{\mfC'}$ can be written locally as an integer multiplicity geodesic network, and every tangent cone $\mfC_Y\in{\rm VarTan}(\Gamma_{\mfC'},Y)$ is a finite sum of rays, which is stationary in $\mbR^2$. If some $\mfC_Y$ has unpaired rays, then the corresponding iterated tangent cone to $\mathbf C$ is
\eq{
    \mathbb R^s\times\mathbb R Y\times\mfC_Y,
}
where $\mathbb RY=\{tY:t\in\mathbb R\}$ is the line spanned by $Y$. It is an unpaired classical cone with common spine $\mathbb R^s\times\mathbb R Y$, of dimension $s+1=n-1$, a contradiction to {\em Claim 1}. Hence all tangent rays pair in opposite directions with equal multiplicities, and it follows that each local geodesic arc should extend, with the same multiplicity, and along the
same complete geodesic. Thus $\Gamma_{\mfC'}$ must be a finite integer sum of complete geodesics, and hence $\mfC$ must be a finite integer sum of hyperplanes.

In all three cases $\mathbf C$ has the form
\eqref{eq:precompactness-branch-cone}, and its planes contain the spine of dimension at least $n-2$. Thus $X\in{\rm Sing}_{b,2}V$, contradicting the choice of $X$. This proves {\em Claim 2}.

By \eqref{eq:l3-away-sb2-stratum-inclusion} and the Federer dimension-reduction argument \cite[Appendix~A, Theorem~A.4]{Simon83}, for $n\geq3$, we conclude as desired that $\mathcal H^{n-3+\gamma}\left(\bigl({\rm Sing}V\cap B_1(0)\bigr)\setminus{\rm Sing}_{b,2}V\right)=0$ for every $\gamma>0$.

We next prove that if $n=3$, then
${\rm Sing}_eV\cap B_1(0)$ is discrete.
Fix $X\in{\rm Sing}_eV\cap B_1(0)$ and assume by contradiction that distinct points $X_i\in{\rm Sing}_eV\cap B_1(0)$ converge to $X$. Set $r_i=\lvert X_i-X\rvert$ and $\bseta_{X,r}(Z)=r^{-1}(Z-X)$. After passing to a subsequence,
\eq{
  (\bseta_{X,r_i})_\#V\to\mathbf C,
  \quad
  Y_i\coloneqq\frac{X_i-X}{r_i}\to Y\in\mathbb S^3.
}
By Ahlfors regularity of stationary varifolds, $Y\in{\rm spt}\lVert\mathbf C\rVert$.

For any $\mathbf C'\in{\rm VarTan}(\mathbf C,Y)$, since $\mathbf C$ is a cone and $Y\neq0$, we have $\mathbb RY\subset\mathcal S(\mathbf C')$, and hence $\dim\mathcal S(\mathbf C')\geq1=n-2$.
Thus the proof of {\em Claim 2} is applicable and shows that $\mathbf C'$ is a hyperplane cone.
After translation and rescaling of the original hypersurfaces $V_j=\abs{(M_j,\beta_j)}$, we obtain a new sequence $\{\abs{\widetilde M_j,\widetilde\beta_j}\in\overline{\mathscr V}(\Lambda)\}_{j}$ convergences to $\mfC'$.
Since $\mfC'$ is a hyperplane cone, both terms in \eqref{eq:sheeting-smallness} then tend to zero.
Applying Theorem \ref{thm:epsilon-regularity-sheeting} gives multi-valued graphical decompositions near $Y_i$, with Lipschitz constants tending to zero by \eqref{eq:sheeting-estimate}.
Thus $Y_i$ is either a (immersed) regular point, or a branch point for all sufficiently large $i$, contradicting that $Y_i\in{\rm Sing}_e\bigl((\bseta_{X,r_i})_\#V\bigr)$.
Hence ${\rm Sing}_eV\cap B_1(0)$ is discrete.

Finally, if $n=2$, we observe from the definition of ${\rm Sing}_{b,2}$ that ${\rm Sing}_{b,2}V={\rm Sing}_bV$.
{
Also, a contradiction argument in the same spirit as above shows that $\bigl({\rm Sing}V\cap B_1(0)\bigr)\setminus{\rm Sing}_{b,2}V
    =\emptyset$.
}
In particular, we conclude ${\rm Sing}_e V=\emptyset$.
This completes the proof.
\end{proof}

\section{Branched stable minimal cones}
\label{sec:stable-cone-example}

In this section, we prove Theorem \ref{thm:stable-cone-sharpness}. The example is constructed by combining the Kapouleas--Wiygul desingularizations of intersecting Clifford tori \cite{KapouleasWiygul2022} with Lawson's polar map \cite{Lawson1970}.

We first record some notations. Consider the two Clifford tori
\eq{
  \mathbb T_1
  =\bigl\{e^{it}(\cos s,\sin s):s,t\in\mathbb R\bigr\},\quad
  \mathbb T_2
  =\bigl\{e^{it}(\cos s,i\sin s):s,t\in\mathbb R\bigr\},
}
which meet orthogonally along the two disjoint great circles
\eq{
  \Gamma_1=\{(e^{it},0):t\in\mathbb R\},
  \quad
  \Gamma_2=\{(0,e^{it}):t\in\mathbb R\}.
}
Note that each of the four components of
$(\mathbb T_1\cup\mathbb T_2)\setminus(\Gamma_1\cup\Gamma_2)$
is isometric to the flat cylinder
\eq{\label{eq:stable-cone-limit-cylinder}
  \Omega=(0,\frac{\pi}{2})\times(\mathbb R/2\pi\mathbb Z),
  \quad
  g_0=\rd s^2+\rd t^2,
}
and $\lvert A\rvert^2=2$ on each Clifford torus.

To our purpose, we use the Kapouleas-Wiygul construction with $(k,n_1,n_2,\sigma)=(2,1,1,0)$, and their gluing theorem \cite[Theorem 7.1 and Proposition 4.17]{KapouleasWiygul2022} yields, for every sufficiently large $m$, a closed embedded minimal surface desingularizing $\mathbb T_1\cup\mathbb T_2$:
\eq{\label{eq:stable-cone-kw-surface}
  M_m\subset\mathbb S^3,
  \quad\text{with }
  {\rm genus}(M_m)=4m+1.
}
In their construction, the Scherk necks collapse to $\Gamma_1\cup\Gamma_2$, while each of the remaining four regions converge smoothly on compact subsets to the flat cylinder $\Omega$. For completeness, we record the construction together with some useful estimates in Appendix \ref{app:stable-cone-computations}. In particular, if we define
\eq{\label{eq:stable-cone-schrodinger-operator}
  \mathcal L_m
  =-\Delta_{M_m}+\frac18\lvert A_{M_m}\rvert^2,
}
and let $\lambda_m$ be its first eigenvalue, so that
\eq{\label{eq:stable-cone-rayleigh-quotient}
  \lambda_m
  =
  \inf_{0\ne u\in H^1(M_m)}
  \frac{
    \displaystyle\int_{M_m}
    \left(\lvert\na^{M_m} u\rvert^2
    +\frac18\lvert A_{M_m}\rvert^2u^2\right)\rd\mu_{M_m}}
  {\displaystyle\int_{M_m}u^2\,\rd\mu_{M_m}},
}
then the curvature concentrated in the shrinking Scherk necks will force the limit (as $\lambda\ra\infty$) of the eigenfunctions (with respect to $\lambda_m$) to have zero trace on the ends of the four cylinders, yielding the following following spectral lower bound:

\begin{proposition}\label{thm:stable-cone-spectral-lower-bound}
For the surfaces in \eqref{eq:stable-cone-kw-surface},
\eq{\label{eq:stable-cone-spectral-lower-bound}
  \liminf_{m\to\infty}\lambda_m\geq\frac{17}{4}.
}
In particular, $\lambda_m>2$ for all sufficiently large $m$.
\end{proposition}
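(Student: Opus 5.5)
The plan is to reduce \eqref{eq:stable-cone-spectral-lower-bound} to a Dirichlet eigenvalue problem on the limit cylinder $\Omega$. By separation of variables, the first eigenvalue of $-\Delta_{g_0}+\frac18\cdot 2$ on $\Omega=(0,\frac\pi2)\times(\mathbb R/2\pi\mathbb Z)$ with zero Dirichlet data at $s=0$ and $s=\frac\pi2$ is attained by $\sin(2s)$. It equals $4+\frac14=\frac{17}{4}$. So it suffices to show that eigenfunctions of $\mathcal L_m$ converge, without loss of mass, to a nonzero $H^1$ function on the four limit cylinders that has zero trace on $\partial\Omega$.

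\textbf{Step 1: normalization and uniform bounds.} Let $u_m$ be a first eigenfunction with $\int_{M_m}u_m^2=1$. Testing \eqref{eq:stable-cone-rayleigh-quotient} with a fixed cutoff of $\sin(2s)$ supported in one cylindrical region (away from the necks) shows that $\lambda_m\le C$. Hence
\[
\int_{M_m}\left(\lvert\na^{M_m}u_m\rvert^2+\tfrac18\lvert A_{M_m}\rvert^2u_m^2\right)\le C .
\]

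\textbf{Step 2: no mass is lost in the necks.} Let $N_m^\delta$ be the $\delta$-neighbourhood of $\Gamma_1\cup\Gamma_2$ in $M_m$, which contains the Scherk necks. Using the Michael--Simon Sobolev inequality on minimal surfaces in $\mathbb S^3$, applied to $u_m^2$ times a cutoff, I would show that $\int_{N_m^\delta}u_m^2\le C\,\mathcal H^2(N_m^\delta)^{1/2}$. From the estimates recorded in Appendix \ref{app:stable-cone-computations}, $\mathcal H^2(N_m^\delta)\le C\delta$ uniformly in $m$. Therefore the mass of $u_m$ outside $N_m^\delta$ is at least $1-C\sqrt\delta$.

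On $M_m\setminus N_m^\delta$ the surfaces converge smoothly to four copies of $\Omega\cap\{\delta<s<\frac\pi2-\delta\}$. By Rellich compactness and a diagonal argument, $u_m\to u$ in $L^2_{\rm loc}$ and weakly in $H^1_{\rm loc}$ on the four cylinders, with $\int u^2=1$. Lower semicontinuity, together with $\lvert A_{M_m}\rvert^2\to 2$ smoothly on compact sets, gives
\[
\liminf_{m\to\infty}\lambda_m\ \ge\ \int\left(\lvert\na u\rvert^2+\tfrac14u^2\right).
\]

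\textbf{Step 3: zero trace (main obstacle).} I need to show $u\in H^1_0$ on each cylinder, i.e.\ that $u$ has zero trace at $s=0$ and $s=\frac\pi2$. This is where the concentrated curvature must be used.

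By the scaling of the Scherk necks, $\lvert A_{M_m}\rvert^2\ge c\,m^2$ on a neck core of size comparable to $1/m$. There are of order $m$ such cores, periodically placed along $\Gamma_1\cup\Gamma_2$. The bound $\int\lvert A\rvert^2u_m^2\le C$ therefore forces the average of $u_m^2$ over the cores to tend to $0$.

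Next, I would connect each point of a fixed-$t$ segment in the transition annulus (at distance of order $1/m$ to $\delta$ from the neck) to a core by a path of length at most $\delta$. By the fundamental theorem of calculus and Cauchy--Schwarz, averaged over $t$, this gives
\[
\int_{\{s=\delta'\}}u_m^2\,\rd t\ \le\ o(1)+C\delta\int\lvert\na u_m\rvert^2
\]
for $\delta'\in(0,\delta)$. A Poincar\'e-type trace estimate, passed to the limit and followed by $\delta\to0$, then yields zero trace.

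The delicate point is the uniform geometric control of the transition region between the Scherk scale and the cylinder scale. I expect to take this from the graphical estimates of Kapouleas--Wiygul \cite{KapouleasWiygul2022}, as recorded in the Appendix. Once zero trace is established, the Rayleigh quotient of $u$ is at least $\frac{17}4$, which gives \eqref{eq:stable-cone-spectral-lower-bound}. In particular $\lambda_m>2$ for all large $m$.
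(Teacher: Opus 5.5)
Your proposal is correct, and its architecture matches the paper's. A fixed test function bounds $\lambda_m$. Normalized first eigenfunctions are compact in $H^1_{\mathrm{loc}}$ on the four bulk cylinders, no mass is lost, and the curvature concentrated in the Scherk necks forces zero trace. Finally, the Dirichlet Poincar\'e inequality $\int_\Omega\lvert Dv\rvert^2\ge 4\int_\Omega v^2$ gives $\frac{17}{4}$.

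Where you genuinely differ is the non-concentration step. You get a uniform bound $\lVert u_m\rVert_{L^4(M_m)}\le C$ from the Michael--Simon inequality, using $\lvert\vec H\rvert=2$ for $M_m\subset\mathbb S^3\subset\mathbb R^4$. You then apply H\"older against the $O(\delta)$ area of the $\delta$-collar. This is valid if you apply it to $u_m^2$ on the whole closed surface, or with a cutoff of fixed scale. A cutoff adapted to the $\delta$-collar, with gradient of order $\delta^{-1}$, would destroy the bound. The merit of your route is that it uses only area smallness, not the neck curvature.

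The paper instead proves one inequality by compactness on the fixed model cell $K$: $\int_K v^2+\int_{\partial_{\mathrm{wing}}K}v^2\le C\int_K\bigl(\lvert\nabla^K v\rvert^2+\tfrac18\lvert A_K\rvert^2v^2\bigr)$. After rescaling, this gives both $\int_{N_m}u_m^2=O(\tau_m^2)$ and $\int_{\Gamma_m}u_m^2=O(\tau_m)$. A one-dimensional fundamental-theorem-of-calculus estimate in $s$ on the strips $\{b\tau_m<s<\delta\}$ then bounds the strip mass by $C\delta^2$ and the trace at $s=\delta$ by $C\delta$.

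Your Step 3 (small averages of $u_m^2$ over high-curvature cores, then integration along paths out to $s=\delta'$) is a hands-on version of this same lemma. You leave implicit the bounded-Jacobian control of the path family inside the necks. The paper supplies exactly that without constructing paths: the compactness argument on $K$, combined with the uniform cell convergence and the comparison $C^{-1}g_0\le g_{m,a}\le Cg_0$ down to $s=b\tau_m$.

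So your Michael--Simon step is a clean alternative for mass control. But the trace estimate on $\Gamma_m$ is needed anyway, so in the paper's route mass control comes essentially for free from the same lemma.
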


We point out that, the constant $\frac{17}{4}$ is exactly the sum of the first Dirichlet eigenvalue $4$ of $-\Delta$ on the flat cylinder $\Omega$ and the limiting potential $\frac{1}{8}\lvert A\rvert^2=\frac{1}{4}$. For a detailed proof, see Appendix \ref{app:stable-cone-computations}.

Then we recall Lawson's polar map \cite{Lawson1970}:
Let $Y\colon M^2\to\mathbb S^3$ be a smooth oriented, minimal immersion of a closed connected surface, with global unit normal $\nu$, induced metric $g_Y$, and shape operator $S_Y$, and suppose that $Y$ is not totally geodesic.
For the polar map
\eq{\label{eq:stable-cone-polar-map}
  F=\nu\colon M\ra\mathbb S^3,
}
we have the following useful properties.
\begin{proposition}\label{prop:stable-cone-polar-identities}
On the set $U=\{\lvert A_Y\rvert>0\}$, the map $F$ is a minimal immersion and $Y$ is a unit normal to $F$. Moreover,
\eq{\label{eq:stable-cone-polar-identities}
g_F
=\frac{\lvert A_Y\rvert^2}{2}g_Y,\qquad
\lvert A_F\rvert^2=\frac{4}{\lvert A_Y\rvert^2},\qquad
\lvert A_F\rvert^2\,\rd\mu_F=2\,\rd\mu_Y.
}
The map $F$ extends across the finite branch set $Z\coloneqq M\setminus U$ as a smooth branched minimal immersion \footnote{The notion agrees with the so-called {\em generalized minimal surface} in \cite{Lawson1970}, see also \cite{Osserman86}.}.
\end{proposition}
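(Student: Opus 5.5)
We work on $U$ in a local orthonormal frame $e_1,e_2$ of $(M,g_Y)$ that diagonalizes $S_Y$, writing $S_Ye_i=\kappa_ie_i$. Minimality of $Y$ gives $\kappa_2=-\kappa_1=:-\kappa$, so $\lvert A_Y\rvert^2=2\kappa^2$, and $U=\{\kappa\neq0\}$. Since $\nu$ is a unit normal to $Y$ in $\mathbb S^3$, the Weingarten formula reads $d\nu(e_i)=-\kappa_i\,dY(e_i)$, where we used that $\langle\nu,Y\rangle=0$ kills the radial component.

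\textbf{Metric and normal of $F$.} From the Weingarten formula, $g_F(e_i,e_j)=\kappa_i\kappa_j\delta_{ij}=\kappa^2\delta_{ij}$. This gives $g_F=\frac{\lvert A_Y\rvert^2}{2}g_Y$ and shows that $F$ is an immersion exactly on $U$. The image $dF(T_xM)=dY(T_xM)$ is orthogonal both to $F=\nu$ and to $Y$. Moreover $\langle Y,F\rangle=0$ and $\lvert Y\rvert=1$, so $Y$ is a unit normal to $F$ in $\mathbb S^3$.

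\textbf{Second fundamental form and minimality.} Next we differentiate the normal $Y$ of $F$: $dY(e_i)=-\kappa_i^{-1}dF(e_i)$. The shape operator of $F$ with respect to $Y$ is therefore $S_F e_i=\kappa_i^{-1}e_i$, but we must measure it in the metric $g_F$. The vectors $\tilde e_i=e_i/\kappa$ form a $g_F$-orthonormal frame, and $S_F$ has eigenvalues $1/\kappa$ and $-1/\kappa$. Hence $F$ is minimal, with $\lvert A_F\rvert^2=2/\kappa^2=4/\lvert A_Y\rvert^2$. The area forms are related by $\rd\mu_F=\kappa^2\rd\mu_Y=\frac{\lvert A_Y\rvert^2}{2}\rd\mu_Y$, and multiplying the last two identities gives $\lvert A_F\rvert^2\rd\mu_F=2\,\rd\mu_Y$. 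Up to sign conventions for $S$, this is a routine pointwise computation.

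\textbf{Branch set.} This is the main point. We pass to a local conformal coordinate $z$ for $g_Y$ and use the Hopf differential $\Phi=h(z)\,dz^2$, which is holomorphic because $Y$ is minimal in the space form $\mathbb S^3$. Since $\lvert A_Y\rvert^2$ is a positive multiple of $\lvert h\rvert^2e^{-2u}$, the set $Z$ is the zero set of $h$. $Y$ is not totally geodesic and $M$ is connected, so $h\not\equiv0$, and since $M$ is closed, $Z$ is finite. In the $z$-coordinate $g_F=\frac12\lvert A_Y\rvert^2 e^{2u}\lvert dz\rvert^2=c\lvert h\rvert^2\lvert dz\rvert^2$, so $F$ is conformal and harmonic (the Gauss map of a minimal surface in $\mathbb S^3$ is harmonic). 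Its conformal factor vanishes like $\lvert z-z_0\rvert^{2k}$ at a zero of order $k$ of $h$. Near such a zero we would check the local normal form required of a branch point in Lawson's \emph{generalized minimal surface} sense: $F$ is smooth everywhere (it is $\nu$), and $F_z$ has the form $(z-z_0)^k\cdot(\text{nonvanishing smooth})$. This comes from the structure equations $\nu_z=-h e^{-2u}Y_{\bar z}$, in which $Y_{\bar z}$ is nonvanishing. So $F$ is a branched conformal harmonic map, that is, a branched minimal immersion with branch set $Z$. The work needing care is tracking the structure equations and conventions in this last step.
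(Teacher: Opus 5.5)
Your proposal is correct and follows the paper's route. The Weingarten relation gives $g_F=\frac{\lvert A_Y\rvert^2}{2}g_Y$, identifying $Y$ as the unit normal of $F$ gives $S_F=S_Y^{-1}$ (hence minimality and the curvature identities), and the holomorphic Hopf differential identifies $Z=\{\rd F=0\}$ as a finite branch set. Via $\nu_z\propto h\,e^{-2u}Y_{\bar z}$ and harmonicity of the Gauss map, you verify by hand what the paper simply cites from Lawson \cite{Lawson1970} (Section 10, Lemmas 1.2 and 1.4, Proposition 10.1).

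One harmless slip: with $g_Y=e^{2u}\lvert\rd z\rvert^2$, $\lvert A_Y\rvert^2$ is a positive multiple of $\lvert h\rvert^2e^{-4u}$ (not $e^{-2u}$). So $g_F$ is a positive multiple of $\lvert h\rvert^2e^{-2u}\lvert\rd z\rvert^2$, which leaves the vanishing order $\lvert z-z_0\rvert^{2k}$ and the rest of your argument unchanged.
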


\begin{proof}
By the Weingarten equation and minimality, we have
\eq{
  \rd F(X)
  =-\rd Y(S_YX),
  \quad
  S_Y^2
  =\frac{\lvert A_Y\rvert^2}{2}I,
}
The first identity in \eqref{eq:stable-cone-polar-identities} then follows.
As shown in \cite[Section 10]{Lawson1970}, on $U$, the map $Y$ is in fact the Gauss map of $F$. It follows that $S_F=S_Y^{-1}$,
which proves the second identity in \eqref{eq:stable-cone-polar-identities} and the minimality of $F$.
Combining the first two identities in \eqref{eq:stable-cone-polar-identities}, we obtain the third.

It is thus left to consider the set $Z=M\setminus U$. Since $Y:M^2\ra\mbS^3$ is minimal and not totally geodesic, we know its Hopf differential is holomorphic (cf. \cite[Lemma 1.2]{Lawson1970}) with finitely many zeros, which are exactly the points at which $A_Y=0$ (cf. \cite[Lemma 1.4]{Lawson1970}).
By the first identity in \eqref{eq:stable-cone-polar-identities}, we see that $Z=\{p\in M: \rd F(p)=0\}$, and hence $Z$ is indeed the branch set (see also \cite[Proposition 10.1]{Lawson1970}). Hence the smooth map $F=\nu:M\ra\mbS^3$ is a smooth branched minimal immersion with branch set $Z$.
\end{proof}

For the above branched minimal immersion $F\colon M^2\to\mathbb S^3$, we define the quadratic form
\eq{\label{eq:stable-cone-stability-quadratic-form}
  \mathcal J_F(\phi)
  =
  \int_{M\setminus Z}
  \left(
    \lvert\nabla^F\phi\rvert^2
    -\lvert A_F\rvert^2\phi^2
    +\frac14\phi^2
  \right)\rd\mu_F.
}
As a direct consequence of Proposition \ref{prop:stable-cone-polar-identities}, we find:
\begin{corollary}\label{prop:stable-cone-polar-quadratic-form}
For the polar map in \eqref{eq:stable-cone-polar-map} and every smooth function $\phi$ on $M$,
\eq{\label{eq:stable-cone-polar-quadratic-form}
  \mathcal J_F(\phi)
  =
  \int_M
  \left(
    \lvert\nabla^Y\phi\rvert^2
    +\frac18\lvert A_Y\rvert^2\phi^2
    -2\phi^2
  \right)\rd\mu_Y.
}
\end{corollary}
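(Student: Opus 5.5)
The plan is to transport each term of $\mathcal J_F(\phi)$ on $M\setminus Z$ back to the metric of $Y$ using the three identities in \eqref{eq:stable-cone-polar-identities}, and then to check that the branch set $Z$ contributes nothing.

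On $U=M\setminus Z$ we have $g_F=\frac{\lvert A_Y\rvert^2}{2}g_Y$. This is a conformal change on a surface, so the Dirichlet energy is conformally invariant:
\[
\lvert\nabla^F\phi\rvert^2\rd\mu_F=\lvert\nabla^Y\phi\rvert^2\rd\mu_Y .
\]
The identity $\lvert A_F\rvert^2\rd\mu_F=2\,\rd\mu_Y$ gives
\[
\lvert A_F\rvert^2\phi^2\rd\mu_F=2\phi^2\rd\mu_Y .
\]
The area element transforms by the conformal factor, $\rd\mu_F=\frac{\lvert A_Y\rvert^2}{2}\rd\mu_Y$. Hence
\[
\frac14\phi^2\rd\mu_F=\frac18\lvert A_Y\rvert^2\phi^2\rd\mu_Y .
\]
Summing the three terms, the integrand of $\mathcal J_F(\phi)$ on $M\setminus Z$ equals $\bigl(\lvert\nabla^Y\phi\rvert^2+\frac18\lvert A_Y\rvert^2\phi^2-2\phi^2\bigr)\rd\mu_Y$.

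It remains to extend the integral from $M\setminus Z$ to $M$. By Proposition \ref{prop:stable-cone-polar-identities}, $Z$ is finite, so it has $\mu_Y$-measure zero. The right-hand integrand is smooth and bounded on the closed surface $M$, so the integral over $M\setminus Z$ equals the integral over $M$. This also shows that the left-hand integral over $M\setminus Z$ converges absolutely, so the definition \eqref{eq:stable-cone-stability-quadratic-form} is meaningful despite the degeneration of $g_F$ at $Z$.

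The only step that needs care is the conformal invariance of the gradient term. Pointwise, $\lvert\nabla^F\phi\rvert^2=\frac{2}{\lvert A_Y\rvert^2}\lvert\nabla^Y\phi\rvert^2$ and $\rd\mu_F=\frac{\lvert A_Y\rvert^2}{2}\rd\mu_Y$, and the two conformal factors cancel because $\dim M=2$. This is valid wherever $\lvert A_Y\rvert>0$, which is exactly $U$.
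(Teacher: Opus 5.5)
Your proposal is correct and is the argument the paper intends. The paper states the corollary as a direct consequence of the three identities in Proposition \ref{prop:stable-cone-polar-identities}; your term-by-term transport uses those identities together with the two-dimensional conformal invariance of the Dirichlet energy and the fact that the finite branch set $Z$ is $\mu_Y$-null, which is exactly that consequence written out (including the absolute convergence of $\mathcal J_F(\phi)$).
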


\begin{proof}[Proof of Theorem \ref{thm:stable-cone-sharpness}]

By Proposition \ref{thm:stable-cone-spectral-lower-bound}, we can choose a sufficiently large $m$ such that $\lambda_m>2$. Then let $\nu_m$ be a global unit normal to the minimal surface $M_m$ (given by \eqref{eq:stable-cone-kw-surface}) and set the polar map $F_m=\nu_m\colon M_m\ra\mathbb S^3$.
By Corollary \ref{prop:stable-cone-polar-quadratic-form}, in conjunction with Proposition \ref{thm:stable-cone-spectral-lower-bound}, we find
\eq{\label{eq:stable-cone-positive-quadratic-form}
  \mathcal J_{F_m}(\phi)
  \geq
  (\lambda_m-2)\int_{M_m}\phi^2\,\rd\mu_{M_m}>0
}
for every nonzero smooth $\phi$. Moreover, by Proposition \ref{prop:stable-cone-polar-identities}, the branch set $Z_m$ of $F_m:M_m\ra\mbS^3$ is finite, and indeed, we have (cf. \cite[pp. 338]{Lawson1970})
\eq{
4{\rm genus}(M_m)-4
  =16m,
}
thanks to \eqref{eq:stable-cone-kw-surface}.

To proceed, let $\mathbf C_m\subset\mbR^4$ be the cone over $F_m$, given by
\eq{\label{eq:stable-cone-parametrization}
  X_m(r,p)=rF_m(p),
  \quad\forall
  (r,p)\in(0,\infty)\times M_m,
}
which is a stationary integral cone. Moreover, by \eqref{eq:stable-cone-positive-quadratic-form} and Simons' cone-stability criterion \cite[Lemmas~6.1.5 and 6.1.6]{Simons68}, we see that $(0,\infty)\times(M_m\setminus Z_m)$ is stable minimal in $\mbR^4$. Since $Z_m$ is a finite set, by standard approximation argument (cf. \cite{SS81}) we thus find that $\mfC_m$ itself is a stable minimal cone in $\mbR^4$. Note also that $\mfC_m$ is non-flat since on the nonempty open set where $\lvert A_{M_m}\rvert>0$, the polar map has rank two and the second identity in \eqref{eq:stable-cone-polar-identities} gives $\lvert A_{F_m}\rvert>0$. Thus $\mathbf C_m$ is non-flat. Since the unique tangent cone to $\mathbf C_m$ at its vertex is $\mathbf C_m$ itself, the vertex is not a branch point in the sense of \eqref{eq:precompactness-branch-cone}. Hence we conclude $0\in{\rm Sing}_e\mathbf C_m$.

Finally, for every $n\geq3$, by a standard argument using Fubini theorem, we see that the cone $\mathbf C_m\times\mathbb R^{n-3}$ is stationary, integral, two-sided, and stable in $\mbR^{n+1}$, which satisfies $\{0\}\times\mathbb R^{n-3}\subset{\rm Sing}_e\bigl(\mathbf C_m\times\mathbb R^{n-3}\bigr)$. This completes the proof.
\end{proof}

\appendix

\section{Algebraic trace estimates}\label{App:algebraic-lemmas}

{
\begin{proof}[Proof of Lemma \ref{lem:algebraic-trace-estimate}]

For any $S\in{\rm Sym}_0(\mathbb E)$, we write
$S=
\begin{pmatrix}
A&X^*\\
X&D
\end{pmatrix}$
with respect to the decomposition $\mathbb E=\mathbb F_1\oplus\mathbb F_2$.
By the definition of ${\rm Sym}_0(\mbE,\mbF_1)$, each of its elements
vanishes on $\mbF_2$.  Define
\eq{
S_0=
\begin{pmatrix}
A-\dfrac{{\rm tr}A}{\dim\mathbb F_1}
I_{\mathbb F_1}&0\\
0&0
\end{pmatrix},
\qquad
S_\perp=S-S_0
=
\begin{pmatrix}
\dfrac{{\rm tr}A}{\dim\mathbb F_1}
I_{\mathbb F_1}&X^*\\
X&D
\end{pmatrix},
}
then \(S=S_0+S_\perp\) is the orthogonal decomposition with respect to
${\rm Sym}_0(\mbE,\mbF_1)$.

By the Cauchy-Schwarz inequality and
${\rm tr}S=0$, we find
\eq{\label{eq:algebraic-kernel-gap}
\lvert S_\perp\rvert^2
&=
2\lvert X\rvert^2+\lvert D\rvert^2
+\frac{({\rm tr}D)^2}{\dim\mathbb F_1}\leq
n\bigl(\lvert X\rvert^2+\lvert D\rvert^2\bigr)
=n{\rm tr}(S\pi_{\mathbb F_2}S).
}
On the other hand, we have
\eq{
{\rm tr}(SBS)
&=
{\rm tr}(S_0BS_0)
+2{\rm tr}(S_0BS_\perp)
+{\rm tr}(S_\perp BS_\perp)\\
&\geq
c\lvert S_0\rvert^2
-2C_B\lvert S_0\rvert\lvert S_\perp\rvert
-C_B\lvert S_\perp\rvert^2\\
&\geq
\frac{c}{2}\lvert S_0\rvert^2
-\left(C_B+\frac{2C_B^2}{c}\right)\lvert S_\perp\rvert^2,
}
where we have used the assumption on \(B\) to \(S_0\) for the first inequality.
Combining with \eqref{eq:algebraic-kernel-gap}, we thus obtain
\eq{
{\rm tr}
\left(S(\pi_{\mathbb F_2}+\varepsilon B)S\right)
&\geq
\left(\frac{1}{n}-\left(C_B+\frac{2C_B^2}{c}\right)\varepsilon\right)
\lvert S_\perp\rvert^2
+\frac{c\varepsilon}{2}\lvert S_0\rvert^2.
}
Therefore, we can choose $\tilde{\varepsilon}=\tilde{\varepsilon}(n,c,C_B)>0$ sufficiently small so that for any $\varepsilon\in(0,\tilde{\varepsilon})$,
\eq{
{\rm tr}
\left(S(\pi_{\mathbb F_2}+\varepsilon B)S\right)
\geq\min\left\{\frac{1}{2n},\frac{c}{2}\right\}
\varepsilon\lvert S\rvert^2.
}
This completes the proof.
\end{proof}
}

{
\begin{proof}[Proof of Lemma \ref{lem:schur-trace-estimate}]
For any $z=(u,v)\in\mbF_1\oplus\mbF_2$, completing the square yields
\eq{\label{eq:Schur-complement-identity}
\mcQ(z,z)
=
\widehat{\mcQ}(u,u)
+
\mcQ_{22}\left(
v+\mcQ_{22}^{-1}\mcQ_{21}u,
v+\mcQ_{22}^{-1}\mcQ_{21}u
\right).
}

For any $S\in{\rm Sym}_0(\mbE)$, we write
$S=
\begin{pmatrix}
A&X^*\\
X&D
\end{pmatrix}$
with respect to the decomposition $\mbE=\mbF_1\oplus\mbF_2$, and put for simplicity
\eq{
W
\coloneqq\pi_{\mbF_2}S+\mcQ_{22}^{-1}\mcQ_{21}\pi_{\mbF_1}S.
}
Choose an orthonormal basis $\{e_\alpha\}_{\alpha=1}^n$ such that
\eq{
\mbF_1
={\rm span}\{e_1,e_2\},\qquad
\mbF_2
={\rm span}\{e_3,\ldots,e_n\}.
}
Using \eqref{eq:Schur-complement-identity}, $S=S^\ast$, and that $\mcQ_{22}\geq c\frac{\mu}{\de}I_{\mbF_2}$, we find
\eq{\label{eq:algebraic-schur-first-bound}
{\rm tr}(S\mcQ S)
&=\sum_{\alpha=1}^n\mcQ\left(Se_\alpha,Se_\alpha\right)=\sum_{\alpha=1}^n\widehat {\mcQ}
\left(\pi_{\mbF_1}Se_\alpha,\pi_{\mbF_1}Se_\alpha\right)+\sum_{\alpha=1}^n\mcQ_{22}(We_\alpha,We_\alpha)\\
&\geq\sum_{\alpha=1}^n\widehat Q
(\pi_{\mbF_1}Se_\alpha,\pi_{\mbF_1}Se_\alpha)
+c\frac{\mu}{\de}\lvert W\rvert^2.
}
We next estimate the first term.
Let $\mathring A$ be the traceless part of $A$, by direct computation
\eq{
{\rm tr}\left(A\widehat{Q}A\right)
={\rm tr}\left(\mathring{A}\widehat{Q}\mathring{A}\right)+{\rm tr}(A){\rm tr}\left(\mathring{A}\widehat{Q}\right)+\frac{({\rm tr}(A)^2)}{4}{\rm tr}\widehat{Q},
}
and by the assumption that $\abs{\widehat{\mcQ}}\leq C\mu$, we have
\eq{\label{ineq:estimate:trA-tr(mathringA-hatQ)}
\Abs{{\rm tr}(A){\rm tr}(\mathring{A}\widehat{\mcQ})}
\leq C'\mu\abs{{\rm tr}(A)}\abs{\mathring{A}}
\leq\frac{c}{4}\mu\abs{\mathring{A}}^2+C''\mu({\rm tr}A)^2,
}
where $C',C''$ are positive constants depending only on $c,C$.
Since $\dim\mbF_1=2$, we have
$\mathring A^2=\frac12\lvert\mathring A\rvert^2
I_{\mbF_1}$, and hence (note that $\pi_{\mbF_1}S=\begin{pmatrix}
    A&X^\ast
\end{pmatrix}$ with respect to $\mbE=\mbF_1\oplus\mbF_2$)
\eq{\label{eq:algebraic-schur-Qhat-bound}
{}&\sum_{\alpha=1}^n\widehat {\mcQ}(\pi_{\mbF_1}Se_\alpha,\pi_{\mbF_1}Se_\alpha)
=
{\rm tr}(A\widehat{\mcQ}A)
+
\sum_{\alpha=3}^n\widehat {\mcQ}(X^*e_\alpha,X^*e_\alpha)\\
={}&
\frac{\lvert\mathring A\rvert^2}{2}
{\rm tr}\widehat{\mcQ}
+
{\rm tr}(A){\rm tr}(\mathring A\widehat{\mcQ})
+
\frac{\bigl({\rm tr}A\bigr)^2}{4}
{\rm tr}\widehat{\mcQ}
+
\sum_{\alpha=3}^n\widehat{\mcQ}(X^*e_\alpha,X^*e_\alpha)\\
\geq{}&
c_0\mu\lvert\mathring A\rvert^2
-C_0\mu\left(\bigl({\rm tr}A\bigr)^2+\lvert X\rvert^2\right)\ge
c_0\mu\lvert\mathring A\rvert^2
-C_1\mu\left(\lvert X\rvert^2+\lvert D\rvert^2\right)\\
={}&
c_0\mu\lvert\mathring A\rvert^2-C_1\mu\lvert \pi_{\mbF_2}S\rvert^2,
}
where $c_0,C_1>0$ depend only on $c,C$. Here we have used ${\rm tr}\widehat{\mcQ}\geq c\mu$,
$\abs{\widehat{\mcQ}}\leq C\mu$, \eqref{ineq:estimate:trA-tr(mathringA-hatQ)} for the first inequality; ${\rm tr}A=-{\rm tr}D$ and Cauchy-Schwarz for the second inequality.

It remains to estimate the last term. Since $\pi_{\mbF_2}S=W-\mcQ_{22}^{-1}\mcQ_{21}\pi_{\mbF_1}S$ and $\lvert \mcQ_{22}^{-1}\mcQ_{21}\rvert\leq C\sqrt{\de}$, we have
\eq{
\lvert \pi_{\mbF_2}S\rvert^2
&\leq 2\lvert W\rvert^2+C\de\lvert \pi_{\mbF_1}S\rvert^2=2\lvert W\rvert^2
+C\de\left(\lvert A\rvert^2+\lvert X\rvert^2\right)\leq 2\lvert W\rvert^2
+C_2\de\left(\lvert\mathring A\rvert^2
+\lvert \pi_{\mbF_2}S\rvert^2\right),
}
where $C_2=C_2(n,C)>0$. For the last inequality, we have used $\abs{A}^2=\abs{\mathring{A}}^2+\frac{({\rm tr}A)^2}{2}$, ${\rm tr}(A)=-{\rm tr}D$, $({\rm tr}D)^2\leq(n-2)\abs{D}^2$, and also $\abs{\pi_{\mbF_2}S}^2=\abs{X}^2+\abs{D}^2$.
After decreasing $\widetilde\de=\widetilde\de(n,c,C)$, we may absorb
the resulting $C_2\de\lvert \pi_{\mbF_2}S\rvert^2$ term to the left and obtain
\eq{\label{eq:algebraic-schur-V-bound}
\lvert \pi_{\mbF_2}S\rvert^2
\leq
4\lvert W\rvert^2+C_2\de\lvert\mathring A\rvert^2.
}
As a by-product of the proof, we also have
\eq{\label{eq:algebraic-schur-F1-bound}
\abs{\pi_{\mbF_1}S}^2
=\abs{A}^2+\abs{X}^2
\leq\abs{\mathring{A}}^2+C(n)\abs{\pi_{\mbF_2}S}^2.
}
Using first
\eqref{eq:algebraic-schur-first-bound},
\eqref{eq:algebraic-schur-Qhat-bound}, then further shrinking $\widetilde\de$, and finally
\eqref{eq:algebraic-schur-V-bound}, \eqref{eq:algebraic-schur-F1-bound}, we deduce
\eq{
{\rm tr}(S\mcQ S)
\geq&
\left(c_0-C_1C_2\de\right)\mu\lvert\mathring A\rvert^2+
\left(\frac{c}{\de}-4C_1\right)\mu\lvert W\rvert^2\ge
c_2\mu\left(\lvert\mathring A\rvert^2+\lvert W\rvert^2\right)\ge\tilde C\mu\abs{S}^2,
}
where $\widetilde C>0$ depends only on $n,c,C$.
This completes the proof.
\end{proof}
}

\section{The Kapouleas--Wiygul gluing construction and estimates}
\label{app:stable-cone-computations}

\subsection{The Kapouleas--Wiygul construction}
\label{app:stable-cone-kw-geometry}

We first record the Kapouleas--Wiygul gluing construction \cite{KapouleasWiygul2022} used in Section~\ref{sec:stable-cone-example}.

\subsubsection{The initial surface}

Write $(x,y)=(\rho\cos\theta,\rho\sin\theta)$ and define toral coordinates around $\Gamma_1$ by
\eq{\label{eq:stable-cone-toral-coordinates}
\Phi(\rho\cos\theta,\rho\sin\theta,z)
=e^{iz}(\cos\rho,e^{i\theta}\sin\rho),
}
where the map $\Phi:\mbR^3\ra\mbS^3$ is as in \cite[(4.1)]{KapouleasWiygul2022}. Note that the $z$-axis maps to $\Gamma_1$, while the vertical half-planes $\theta=0$ and $\theta=\pi/2$ map to $\mathbb T_1$ and $\mathbb T_2$, respectively.
Also, the map is $2\pi$-periodic in $z$.

With $k=2$ chosen in \cite[Proposition~2.6]{KapouleasWiygul2022}, the Karcher--Scherk tower \(\mathcal S_2\) is exactly the classical {\em singly periodic Scherk surface}
\eq{\label{eq:stable-cone-scherk-surface}
\Sigma_{\mathrm{Sch}}
=\{(x,y,z)\in\mathbb R^3:\sinh x\sinh y=\sin z\},
}
which has period $2\pi$ in the $z$-direction and four exponentially asymptotic planar wings.
Put as in \cite[(4.4)]{KapouleasWiygul2022}
\eq{\label{eq:stable-cone-neck-scale}
a_m=\frac{m\pi}{4}-10,
}
and let $\psi[a,b]$ be the fixed cutoff as in \cite[(1.7)]{KapouleasWiygul2022}.
For simplicity we write
\eq{
\chi_m(x)
=\psi[a_m+1,a_m](x),
}
which satisfies $\chi_m=1$ for $x\leq a_m$ and $\chi_m=0$ for $x\geq a_m+1$. Let $R_2$ be a fixed radius beyond which the four wings of $\Sigma_{\mathrm{Sch}}$ are graphical, as in \cite[Proposition~2.6]{KapouleasWiygul2022}.
Let $W_2$ denote the exponentially decaying graphing function for the positive $x$-wing as in \cite[Proposition~2.6]{KapouleasWiygul2022}, so that this wing is written as
\eq{
  \{(x,W_2(x,z),z):x\geq R_2,\ z\in\mathbb R\}.
}
Following \cite[(4.6)]{KapouleasWiygul2022}, we straighten this wing by
\eq{\label{eq:stable-cone-straightened-wing}
  (x,W_2(x,z),z)
  \longmapsto
  (x,\chi_m(x)W_2(x,z),z),
}
and the other three straightened wings are defined by the Scherk symmetries.
Truncate the straightened tower at
\eq{
x^2+y^2
=(m\pi/2)^2.
}
In the notation of \cite[(4.7)]{KapouleasWiygul2022}, the resulting surface is
\eq{
\widetilde\Sigma_m
=\widetilde{\mathcal S}_{2,m}(m\pi/2).
}
After multiplication by $\tau_m\coloneqq\frac{1}{2m}$, its boundary radius is $\pi/4$ and its period is $\pi/m$. Choosing the parameters $k=2$, $n_1=n_2=1$ in \cite{KapouleasWiygul2022},
so that
\eq{
\tau_m
=\frac{1}{2m}=(kmn_j)^{-1},
\quad
kmn_j\frac{\pi}{4}=\frac{m\pi}{2}.
}
Consequently the first neck
\eq{
\Sigma_{m,1}
=\Phi(\tau_m\widetilde\Sigma_m)
}
contains exactly $2m$ periods and replaces a neighborhood of $\Gamma_1$.

Define as in \cite[(4.14)]{KapouleasWiygul2022} the ambient isometry (which is denoted by ${\rm Rot}_{C'_1}^{\pi}$ therein)
\eq{\label{eq:stable-cone-coordinate-interchange}
  \mathcal R\colon\mathbb S^3\ra\mathbb S^3,
  \quad
  \mathcal R(z_1,z_2)=(z_2,z_1),
}
which gives the second neck
\eq{
\Sigma_{m,2}
=\mathcal R(\Sigma_{m,1}).
}
The straightened toral wings of the two pieces meet smoothly at their common radius-$\pi/4$ boundary. Hence their union is the
closed smooth initial surface
\eq{\label{eq:stable-cone-kw-initial-surface}
\mathring M_m
=\Sigma_{m,1}\cup\Sigma_{m,2}
=M(2,m,1,1,0)
}
in the notation of \cite[Definition~4.13 and (4.14)]{KapouleasWiygul2022}.
By \cite[Proposition~4.17]{KapouleasWiygul2022},
\eq{
{\rm genus}(\mathring M_m)
=2(2-1)m(1+1)+1
=4m+1.
}

Let $\mathring\nu_m$ be a unit normal to $\mathring M_m$. By \cite[Theorem 7.1]{KapouleasWiygul2022}, $\mathring M_m$ can be perturbed to yield the minimal surface $M_m$ in \eqref{eq:stable-cone-kw-surface} by a normal graph map
\eq{\label{eq:stable-cone-normal-graph-map}
  \mathcal G_m(p)
  =\exp_p^{\mathbb S^3}\bigl(w_m(p)\mathring\nu_m(p)\bigr),
  \text{ with }
  \lVert w_m\rVert_{C^2(\mathring M_m,m^2g_{\mathring M_m})}
  \leq Cm^{-3/2}.
}
Here $w_m$ is denoted by $u$ in \cite[Theorem 7.1]{KapouleasWiygul2022}.

\subsubsection{Cells, necks, and bulk regions}

To establish the estimates in the following section, we introduce for simplicity some new notations and terminologies in contrast to \cite{KapouleasWiygul2022}.

Fix a sufficiently large $b>R_2$ and let
\eq{\label{eq:stable-cone-scherk-cell}
K
=\Sigma_{\mathrm{Sch}}\cap\left\{\frac\pi2\leq z\leq\frac{5\pi}{2},\ \lvert x\rvert\leq b,\ \lvert y\rvert\leq b\right\}.
}
With the choice of $b$, the set $K$ is a compact connected surface with Lipschitz boundary.
Set the {\em wing-cut} and {\em period-cut} boundaries as
\eq{
\partial_{\mathrm{wing}}K
\coloneqq&K\cap\left(\left\{\abs{x}=b\right\}\cup\left\{\abs{y}=b\right\}\right),\\
\partial_{\mathrm{per}}K
\coloneqq&K\cap\left(\left\{z=\frac{\pi}{2}\right\}\cup\left\{z=\frac{5\pi}{2}\right\}\right).
}
Namely, $\partial_{\mathrm{wing}}K$ consists of the four arcs on $\lvert x\rvert=b$ or $\lvert y\rvert=b$, and $\partial_{\mathrm{per}}K$ consists of the two period-cut sides.
Let $g_K$ and $A_K$ denote the metric and second fundamental form induced from $\mathbb R^3$, and let $\rd\mu_K$ and $\rd\sigma_K$ denote the corresponding area and wing-boundary measures.
Define the map
\eq{
T_j(x,y,z)
=(x,y,z+2\pi(j-1)),
\quad 1\leq j\leq2m,
}
and put
\eq{
\mathring\Psi_{m,1,j}(p)
=\Phi(\tau_mT_j(p)),\quad
\mathring\Psi_{m,2,j}(p)
=\mathcal R\bigl(\Phi(\tau_mT_j(p))\bigr),\quad p\in K,
}
where the two values of the first subscript correspond to the necks around $\Gamma_1$ and $\Gamma_2$, respectively. For all sufficiently large $m$, one has $b<a_m$ and $\sqrt2b<\frac{m\pi}{2}$. Therefore, the straightening and radial truncation do not change the translated cells $\{T_j(K)\}_j$, so that the maps
$\mathring\Psi_{m,i,j}$ are well defined with image in $\mathring M_m$.

Then we define the {\em physical cells} by
\eq{\label{eq:stable-cone-physical-cells}
  \Psi_{m,i,j}=\mathcal G_m\circ\mathring\Psi_{m,i,j},
  \quad
  K_{m,i,j}=\Psi_{m,i,j}(K),
  \quad
  i\in\{1,2\},\ 1\leq j\leq2m.
}
Note that, by construction, their period-cut sides are identified cyclically.
Figure~\ref{fig:stable-cone-cell-map} illustrates one of these cell maps.
\begin{figure}[htbp]
  \centering
  \includegraphics[width=\textwidth]{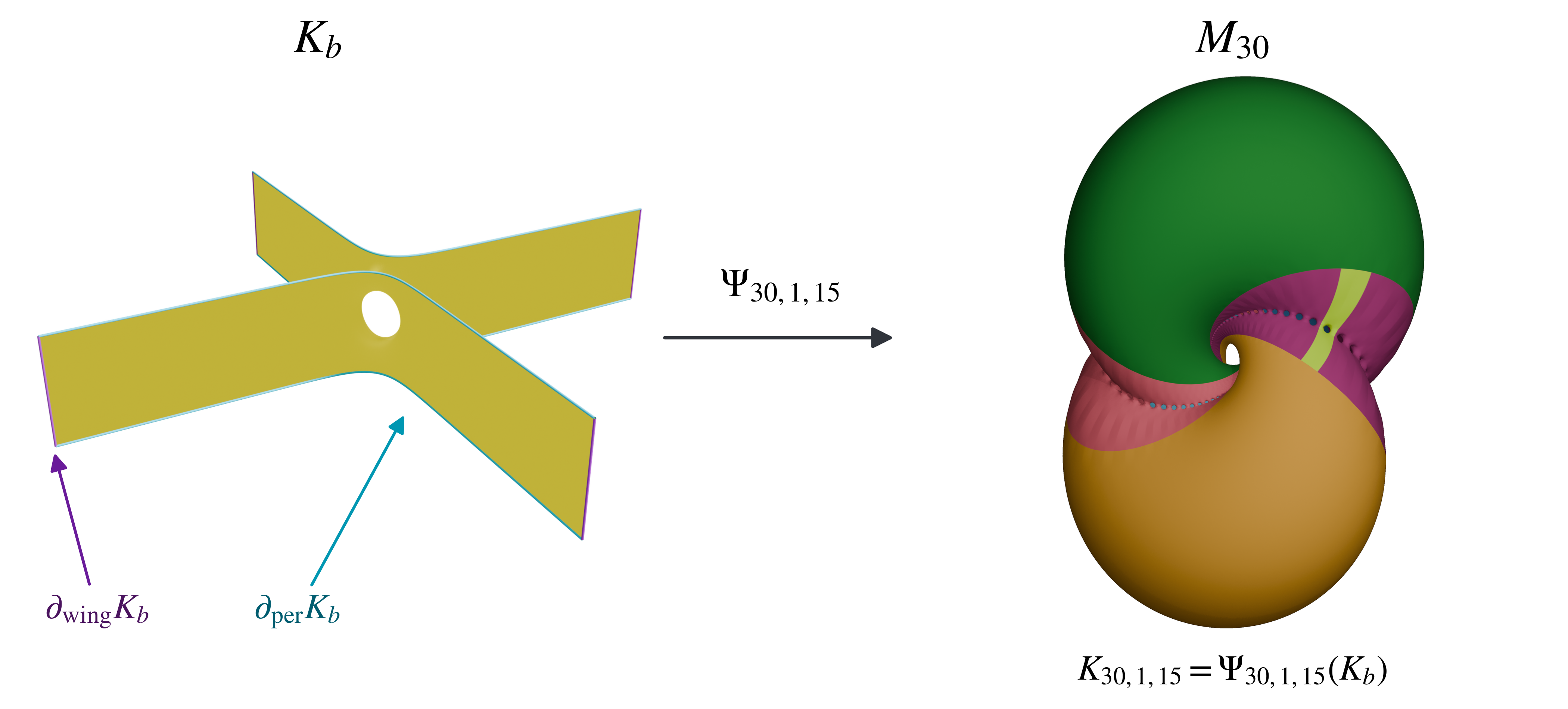}
  \caption{The map $\Psi_{30,1,15}$ from a truncated Scherk cell to the
  corresponding cell in $M_{30}$ is illustrated here. The truncated Scherk
  cell $K_b$ is shown on the left. The notation $K_b$ emphasizes its
  dependence on the truncation parameter $b$; this is the cell denoted by $K$
  in the context. On the right, we show a stereographic projection of
  $M_{30}\subset\mathbb S^3$, which is obtained by desingularizing two
  orthogonally intersecting Clifford tori. The corresponding physical cell
  $K_{30,1,15}=\Psi_{30,1,15}(K)$ is highlighted in yellow, illustrating the
  image of the Scherk cell shown on the left under $\Psi_{30,1,15}$. An
  \href{https://gaomw.com/interactive/branched-stable-example/}{interactive
  three-dimensional visualization} is also available online.}
  \label{fig:stable-cone-cell-map}
\end{figure}
Finally, put the {\em necks} and their boundary as
\eq{\label{eq:stable-cone-neck-interface}
  N_m=\bigcup_{i=1}^2\bigcup_{j=1}^{2m}K_{m,i,j},
  \quad
  \Gamma_m
  =\bigcup_{i=1}^2\bigcup_{j=1}^{2m}
  \Psi_{m,i,j}(\partial_{\mathrm{wing}}K).
}
The closures of the four components of $M_m\setminus N_m$, called {\em bulk regions}, are denoted by $B_{m,1},\dots,B_{m,4}$, which in fact correspond to the truncated portions of the four toral extended standard regions $S[T]$ in \cite[Section~5]{KapouleasWiygul2022}.
Here the cut is made at distance $\frac{b}{2m}=b\tau_m$ from the adjacent intersection circle, see \cite[Fig. 1]{KapouleasWiygul2022}.
Thus
\eq{\label{eq:stable-cone-surface-decomposition}
M_m
=N_m\cup\bigcup_{a=1}^4B_{m,a},\quad
N_m\cap\bigcup_{a=1}^4B_{m,a}
=\Gamma_m.
}

The following convergence results will be used in due course.

\begin{proposition}\label{prop:stable-cone-geometric-decomposition}
Uniformly in $i\in\{1,2\}$ and $1\leq j\leq2m$,
\eq{\label{eq:stable-cone-cell-convergence}
  \left\|
    \tau_m^{-2}\Psi_{m,i,j}^*g_{M_m}-g_K
  \right\|_{C^0(K,g_K)}
  +
  \left\|
    \tau_m^2\Psi_{m,i,j}^*\lvert A_{M_m}\rvert^2
    -\lvert A_K\rvert^2
  \right\|_{C^0(K,g_K)}
  \ra0.
}
For each $a\in\{1,\cdots,4\}$, the interior of $B_{m,a}$ has a parametrization
\eq{\label{defn:Om_m}
\Phi_{m,a}
\colon\Omega_m\ra{\rm int}B_{m,a},\quad
\Omega_m
=(b\tau_m,\frac{\pi}{2}-b\tau_m)\times(\mathbb R/2\pi\mathbb Z).
}
More precisely, let $T$ be the corresponding component of $(\mathbb T_1\cup\mathbb T_2)\setminus(\Gamma_1\cup\Gamma_2)$, and let $T_b$ be the part left after deleting the two strips of width $b/(2m)$ adjacent to its boundary circles.
The inverse of the projection $\varpi_{T,m}\colon S[T]\to T_b$ in \cite[Section~5]{KapouleasWiygul2022}, followed by the normal-graph map $\mathcal G_m$, gives $\Phi_{m,a}$ after identifying $T$ with its flat cylinder coordinates $(s,t)$. This parametrization extends to $\overline\Omega_m$ and identifies its two boundary circles with the corresponding components of $\Gamma_m$. Let $g_{m,a}=\Phi_{m,a}^*g_{M_m}$, and let $\rd\sigma_{m,a}$ be the line element induced by $g_{m,a}$ on $\partial\Omega_m$. There is a constant $C$, independent of $m$ and $a$, such that (recalling \eqref{eq:stable-cone-limit-cylinder})
\eq{\label{eq:stable-cone-bulk-equivalence}
  C^{-1}g_0\leq g_{m,a}\leq Cg_0,
  \quad
  C^{-1}\rd t\leq\rd\sigma_{m,a}\leq C\rd t.
}
On every compact subset of $\Omega$, uniformly in $a$, one has
\eq{\label{eq:stable-cone-bulk-convergence}
  g_{m,a}\ra g_0\quad\text{in }C^0,
  \quad
  \lvert A_{M_m}\rvert^2\circ\Phi_{m,a}\ra2
  \quad\text{uniformly}.
}
\end{proposition}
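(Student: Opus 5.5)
The plan is to read everything off from the Kapouleas--Wiygul construction recalled above: the cell maps $\Psi_{m,i,j}$ and the bulk parametrizations $\Phi_{m,a}$ are compositions of a fixed, $m$-independent geometric map with the normal-graph perturbation $\mathcal G_m$ coming from \eqref{eq:stable-cone-normal-graph-map}. So each estimate splits into two parts: an estimate on the initial surface $\mathring M_m$, and a perturbation estimate for passing from $\mathring M_m$ to $M_m$.

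\textbf{Cells.} On the initial surface, $\mathring\Psi_{m,i,j}=\Phi\circ\tau_m T_j$ (possibly composed with the isometry $\mathcal R$). Since $b<a_m$, the straightening and truncation do not affect $T_j(K)$. The map $\Phi$ in \eqref{eq:stable-cone-toral-coordinates} satisfies $\Phi^*g_{\mathbb S^3}=\rd\rho^2+\sin^2\!\rho\,\rd\theta^2+\cos^2\!\rho\,\rd z^2$ in cylindrical form. After rescaling by $\tau_m^{-2}$ this metric converges to the Euclidean metric in $C^2$ on the $\tau_m$-dilated region, uniformly in $z$. Therefore $\tau_m^{-2}\mathring\Psi^*g_{\mathring M_m}\to g_K$ and $\tau_m^2\lvert A_{\mathring M_m}\rvert^2\circ\mathring\Psi\to\lvert A_K\rvert^2$ uniformly on $K$. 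The second fundamental form converges because it depends only on the 2-jet of the ambient metric, and $K$ is compact. The right-hand sides are independent of $j$ because $\Phi$ is equivariant under $z$-translations, which are isometries. Next, the bound $\lVert w_m\rVert_{C^2(m^2g)}\le Cm^{-3/2}$ says exactly that, in the rescaled metric $\tau_m^{-2}g\sim m^2g$, the graph function and its first two derivatives are $O(m^{-3/2})$. Hence the rescaled metric and the rescaled $\lvert A\rvert^2$ of $M_m$ differ from those of $\mathring M_m$ by $o(1)$. This gives \eqref{eq:stable-cone-cell-convergence}.

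\textbf{Bulk regions.} I will use the projection $\varpi_{T,m}$ from \cite[Section~5]{KapouleasWiygul2022}. The region $S[T]$ consists of the flat torus piece glued to the straightened wings. On $\{x\ge a_m+1\}$ the wings coincide exactly with the torus. On the transition region they are graphs of $\chi_mW_2$, whose derivatives decay like $e^{-x}$ in unscaled units. So $\varpi_{T,m}^{-1}$ is a graph over $T_b$ whose graphing function has gradient bounded by $Ce^{-b}$ near the cut and tending to zero on compacts of $\Omega$; its $C^2$ norm is controlled in the rescaled sense. Composing with $\mathcal G_m$ adds a perturbation with $C^1$ norm $O(m^{-1/2})$ in the unscaled metric. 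This yields the uniform equivalence \eqref{eq:stable-cone-bulk-equivalence}, including the boundary line element $\rd\sigma_{m,a}\sim \rd t$. On compact subsets of $\Omega$ we are eventually at distance $\gg\tau_m$ from the circles, so both the wing graph and $w_m$ tend to zero in $C^2$ with respect to $g$. Hence $g_{m,a}\to g_0$ and $\lvert A\rvert^2\to 2$, the Clifford-torus value.

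\textbf{Main obstacle.} The delicate point is the uniform curvature and metric control in the bulk near the cut at scale $b\tau_m$. There the unscaled $C^2$ norm of $w_m$ is only $O(m^{1/2})$, so curvature convergence is not claimed there; only the $C^1$-level equivalence \eqref{eq:stable-cone-bulk-equivalence} is needed, and it is available. I would handle this by working in rescaled coordinates near the cut, where everything is uniformly controlled, and unscaling only the first-order quantities.
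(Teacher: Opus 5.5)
Your two-step structure (estimates on $\mathring M_m$, then transfer through $\mathcal G_m$) is the same as the paper's. The cell convergence and the uniform bulk equivalence are fine. They are in fact more explicit than the paper's appeal to \cite[Proposition~5.18]{KapouleasWiygul2022} and \cite[Propositions~2.6(viii), 5.12(i)]{KapouleasWiygul2022}: on the cells $\mathring M_m$ is literally $\Phi(\tau_mT_j(K))$, and near the cut the wing graph has dimensionless slope $O(e^{-b})$.

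The gap is the last assertion, $\lvert A_{M_m}\rvert^2\circ\Phi_{m,a}\to2$ on compact subsets of $\Omega$. You justify it by saying that on such compacts $w_m\to0$ in $C^2$ with respect to $g$. But the only estimate you have, $\lVert w_m\rVert_{C^2(\mathring M_m,m^2g)}\le Cm^{-3/2}$, is a uniform norm over all of $\mathring M_m$. It gives $\lvert w_m\rvert\le Cm^{-3/2}$ and $\lvert\nabla w_m\rvert_g\le Cm^{-1/2}$, but only $\lvert\nabla^2 w_m\rvert_g\le Cm^{1/2}$, and this loss occurs everywhere, not just near the cut. Being at distance $\gg\tau_m$ from $\Gamma_1\cup\Gamma_2$ does not improve a uniform bound, so your ``main obstacle'' paragraph misplaces the difficulty. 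With only this input, the unscaled second fundamental form of $M_m$ may differ from that of $\mathring M_m$ by $O(m^{1/2})$ on every compact subset of the bulk, and no curvature convergence follows. The metric convergence $g_{m,a}\to g_0$ is unaffected: it needs only the $C^1$ bounds, the exponentially small wing graph, and boundedness of $A_{\mathring M_m}$ there.

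You need one more ingredient. The paper uses the smooth convergence of $M_m$ to $\mathbb T_1\cup\mathbb T_2$ away from $\Gamma_1\cup\Gamma_2$ stated at the end of \cite[Theorem~7.1]{KapouleasWiygul2022}, together with the weighted estimates of \cite[Proposition~5.12]{KapouleasWiygul2022}. Alternatively, you can argue by interior elliptic regularity. On a fixed compact region of a torus component, $M_m$ is a normal graph over the Clifford torus of a function that tends to $0$ in $C^1$, by the bounds above and the $O(\tau_m e^{-\delta/\tau_m})$ size of the wing graph there. That function solves the minimal surface equation, which is uniformly elliptic for small gradient. Standard quasilinear estimates (e.g.\ Allard's theorem, or De Giorgi--Nash applied to difference quotients, followed by Schauder) then give $C^{2,\alpha}$ convergence to $0$ on slightly smaller compacts, hence $\lvert A_{M_m}\rvert^2\to2$.

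Two slips that do not affect the argument:
\begin{itemize}
\item The pulled-back metric is $\rd\rho^2+\sin^2\rho\,(\rd\theta+\rd z)^2+\cos^2\rho\,\rd z^2$, with a Hopf cross term. The $z$-translations are still isometries, since $\Phi(x,y,z+c)=e^{ic}\Phi(x,y,z)$, and the rescaled metric still tends to the Euclidean one.
\item In the rescaled picture the graph function is $\tau_m^{-1}w_m$, whose $C^2$ norm is $O(m^{-1/2})$ rather than $O(m^{-3/2})$. This is still $o(1)$, so the cell argument stands.
\end{itemize}
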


\begin{proof}
In our case, the period number in the notation of \cite{KapouleasWiygul2022} is $m_C=2m$, so that
$\frac{m^2}{m_C^2}g_K=\frac14g_K$.
Since $\tau_m^{-2}=4m^2$, multiplying \cite[Proposition 5.18 (i)]{KapouleasWiygul2022} by $4$ gives the metric convergence for the initial hypersurface ${\mathring{M}_m}$ in the form of \eqref{eq:stable-cone-cell-convergence}.
Thanks to \eqref{eq:stable-cone-normal-graph-map}, $\|mw_m\|_{C^2(\mathring M_m,m^2g_{\mathring M_m})}\leq Cm^{-1/2}$, it follows that on $\mathring{M}_m$, there holds
\eq{\label{ineq:initial-and-graph}
(1-\varepsilon_m)g_{\mathring{M}_m}
\leq\mathcal{G}^\ast_mg_{M_m}
\leq(1+\varepsilon_m)g_{\mathring{M}_m}\text{ with }\varepsilon_m\ra0.
}
which implies the first convergence in \eqref{eq:stable-cone-cell-convergence}. The second convergence can be proved similarly using \cite[Proposition 5.18 (ii)]{KapouleasWiygul2022}.

For each toral component $T$, the region $S[T]$ is a graph over the truncated cylinder $T_b$, and its two boundary circles are attached to planar ends of the scaled Scherk necks. By \cite[Proposition~2.6(viii)]{KapouleasWiygul2022}, these ends converge exponentially to their asymptotic planes. Combining this decay with \cite[Proposition~5.12(i)]{KapouleasWiygul2022}, we obtain, uniformly up to the moving boundary of $T_b$,
\eq{
  C^{-1}g_0
  \leq
  (\varpi_{T,m}^{-1})^*g_{\mathring M_m}
  \leq
  Cg_0.
}
The estimate \eqref{ineq:initial-and-graph} transfers this comparison to $M_m$ under the parametrizations $\Phi_{m,a}$. Restricting the metric comparison to $\partial\Omega_m$ gives the corresponding comparison of boundary line elements, and hence proves \eqref{eq:stable-cone-bulk-equivalence}. On compact subsets with a fixed positive distance from $\Gamma_1\cup\Gamma_2$, the final smooth convergence assertion of \cite[Theorem~7.1]{KapouleasWiygul2022}, together with the weighted estimates in \cite[Proposition~5.12(i),(ii)]{KapouleasWiygul2022}, yields \eqref{eq:stable-cone-bulk-convergence}.

\end{proof}

\subsection{The spectral lower bound}\label{app:stable-cone-spectral-lower-bound}

We divide the proof of Proposition \ref{thm:stable-cone-spectral-lower-bound} into the following Lemmatum.

\begin{lemma}\label{lem:stable-cone-cell-coercivity}
There is a constant $C$, depending only on $K$, such that every $v\in H^1(K)$ satisfies
\eq{\label{eq:stable-cone-model-cell-coercivity}
  \int_Kv^2\,\rd\mu_K
  +\int_{\partial_{\mathrm{wing}}K}v^2\,\rd\sigma_K
  \leq
  C\int_K
  \left(\lvert\na^{K} v\rvert^2
  +\frac18\lvert A_K\rvert^2v^2\right)\rd\mu_K.
}
\end{lemma}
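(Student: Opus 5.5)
We argue by contradiction combined with Rellich compactness, which is the standard route to a Poincaré--Friedrichs type inequality on a compact domain. The right-hand side is the quadratic form $\mathcal E(v)=\int_K(\lvert\na^Kv\rvert^2+\frac18\lvert A_K\rvert^2v^2)\,\rd\mu_K$. The left-hand side is the $L^2$ norm on $K$ plus the $L^2$ norm of the trace on the wing-cut boundary.

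Suppose the inequality fails for every $C$. Then there is a sequence $v_k\in H^1(K)$, normalized so that
\[
\int_Kv_k^2\,\rd\mu_K+\int_{\partial_{\mathrm{wing}}K}v_k^2\,\rd\sigma_K=1,
\qquad
\mathcal E(v_k)\ra0 .
\]
The plan then runs as follows.

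First, we bound $v_k$ in $H^1(K)$. The gradient part of $\mathcal E(v_k)$ tends to zero, and $\int_Kv_k^2\leq1$. Since $K$ is a compact connected surface with Lipschitz boundary, we may pass to a subsequence with $v_k\rightharpoonup v$ weakly in $H^1(K)$, $v_k\ra v$ strongly in $L^2(K)$ by Rellich, and $v_k|_{\partial K}\ra v|_{\partial K}$ strongly in $L^2(\partial K)$ by compactness of the trace map $H^1(K)\ra L^2(\partial K)$.

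Second, we identify the limit. Weak lower semicontinuity gives $\na^Kv=0$. Since $K$ is connected, $v$ is a constant $c$. The potential term also passes to the limit by strong $L^2$ convergence, so $\frac18\int_K\lvert A_K\rvert^2c^2=0$. The Scherk surface $\Sigma_{\mathrm{Sch}}$ is not planar, hence real-analytic and non-flat, so $\lvert A_K\rvert^2>0$ on a set of positive measure in $K$. It follows that $c=0$.

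Third, strong convergence in $L^2(K)$ and in $L^2(\partial_{\mathrm{wing}}K)$ passes the normalization to the limit, giving $\int_Kc^2+\int_{\partial_{\mathrm{wing}}K}c^2=1$. This contradicts $c=0$. The constant $C$ depends only on $K$, that is, on the fixed choice of $b$.

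The main point to check is the analytic input: the Rellich embedding and the compact trace embedding on $K$. These need $K$ to be a connected Lipschitz domain, which the paper asserts for sufficiently large $b>R_2$. We also need $A_K\not\equiv0$ on $K$, which holds because the neck region near the $z$-axis, where $\lvert x\rvert,\lvert y\rvert$ are small, is curved. Connectedness matters because it rules out locally constant functions supported on a flat component.

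An alternative, more quantitative route would avoid the contradiction argument. One would combine the trace inequality $\int_{\partial K}v^2\leq C\int_K(\lvert\na v\rvert^2+v^2)$ with the Poincaré inequality $\int_K(v-\bar v)^2\leq C\int_K\lvert\na v\rvert^2$. Then $\bar v^2$ is controlled by $\int_K\lvert A_K\rvert^2v^2$ plus the gradient term, again using $\int_K\lvert A_K\rvert^2>0$. Either route suffices.
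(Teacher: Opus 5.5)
Your proof is correct and follows the same route as the paper: a contradiction argument via Rellich, the limit is constant because $K$ is connected, and that constant is forced to vanish because $\int_K\lvert A_K\rvert^2\,\rd\mu_K>0$ (the Scherk surface is non-flat). The only difference is cosmetic: the paper first removes the wing-boundary term using the bounded trace inequality and normalizes only $\int_Kv_j^2\,\rd\mu_K=1$, whereas you keep the trace term in the normalization — and for that you do not need compactness of the trace map, since $\nabla^Kv_k\to0$ in $L^2$ already gives strong $H^1$ convergence, so bounded trace continuity suffices.
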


\begin{proof}
For every $v\in H^1(K)$, by the trace estimate we have
\eq{
  \int_{\partial_{\mathrm{wing}}K}v^2\,\rd\sigma_K
  \leq
  C\int_K\left(\lvert\na^{K} v\rvert^2+v^2\right)\rd\mu_K.
}
Therefore it suffices to prove the estimate \eqref{eq:stable-cone-model-cell-coercivity} for $\int_Kv^2\rd\mu_K$. We argue by contradiction and assume there are $v_j\in H^1(K)$ such that
\eq{
  \int_Kv_j^2\,\rd\mu_K=1,
  \text{ but }
  \int_K
  \left(\lvert\na^{K} v_j\rvert^2
  +\frac18\lvert A_K\rvert^2v_j^2\right)\rd\mu_K
  \ra0.
}
By Rellich compactness, after passing to a subsequence, $v_j$ weakly converges in $H^1(K)$ and strongly converges in $L^2(K)$ to a function $v\in H^1(K)$. Since $\int_K\abs{\na^K v_j}^2\rd\mu_K\to0$, and $K$ is connected, we deduce that $v$ is constant. Moreover, by the strong convergence of $v_j$ in $L^2(K)$ together with $\int_Kv_j^2\rd\mu_K=1$, we see $v\neq0$.
On the other hand, the strong $L^2$ convergence and the boundedness of $\lvert A_K\rvert$ imply $v^2\int_K\lvert A_K\rvert^2\,\rd\mu_K=0$. Since the Scherk surface is nonflat, we see that the integral has to be positive, which is a contradiction. This finishes the proof.
\end{proof}

\begin{lemma}\label{lem:stable-cone-neck-coercivity}
There is a constant $C$, independent of $m$, such that every $u\in H^1(M_m)$ satisfies
\eq{\label{eq:stable-cone-neck-coercivity}
  \int_{N_m}u^2\,\rd\mu_{M_m}
  +\tau_m\int_{\Gamma_m}u^2\,\rd\sigma_{M_m}
  \leq
  C\tau_m^2
  \int_{N_m}
  \left(\lvert\nabla^{M_m} u\rvert^2
  +\frac18\lvert A_{M_m}\rvert^2u^2\right)\rd\mu_{M_m}.
}
\end{lemma}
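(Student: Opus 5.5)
The plan is to apply Lemma \ref{lem:stable-cone-cell-coercivity} on each physical cell $K_{m,i,j}$ and sum over cells, tracking how each term scales. The point is that $K_{m,i,j}$ is, up to a metric error tending to zero, a copy of $K$ scaled by $\tau_m$. By \eqref{eq:stable-cone-cell-convergence}, for $m$ large and uniformly in $i,j$,
\[
\tfrac12 g_K\le \tau_m^{-2}\Psi_{m,i,j}^*g_{M_m}\le 2g_K,
\qquad
\tau_m^2\Psi_{m,i,j}^*\lvert A_{M_m}\rvert^2\ge \lvert A_K\rvert^2-o(1).
\]
The $o(1)$ error in the curvature is the only place where care is needed.

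For $u\in H^1(M_m)$, set $v=u\circ\Psi_{m,i,j}\in H^1(K)$ and write $\tilde g=\Psi_{m,i,j}^*g_{M_m}$. Under the comparison $\tilde g\approx\tau_m^2 g_K$ the basic quantities transform as follows:
\begin{itemize}
\item area: $\rd\mu_{\tilde g}\approx\tau_m^2\rd\mu_K$;
\item wing-boundary length: $\rd\sigma_{\tilde g}\approx\tau_m\rd\sigma_K$;
\item Dirichlet energy: $\lvert\nabla^{\tilde g}v\rvert^2\rd\mu_{\tilde g}\approx\lvert\nabla^K v\rvert^2\rd\mu_K$, since the energy is conformally invariant in dimension two.
\end{itemize}
Hence
\[
\int_{K_{m,i,j}}u^2\le C\tau_m^2\int_K v^2\,\rd\mu_K,
\qquad
\tau_m\int_{\Psi_{m,i,j}(\partial_{\rm wing}K)}u^2\le C\tau_m^2\int_{\partial_{\rm wing}K}v^2\,\rd\sigma_K .
\]
The right-hand side of \eqref{eq:stable-cone-neck-coercivity} restricted to this cell dominates
\[
c\,\tau_m^2\int_K\Bigl(\lvert\nabla^K v\rvert^2+\tfrac18\bigl(\lvert A_K\rvert^2-o(1)\bigr)v^2\Bigr)\rd\mu_K .
\]

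The $o(1)$ term is harmless. Lemma \ref{lem:stable-cone-cell-coercivity} bounds $\int_K v^2$ by $C$ times the unperturbed quadratic form, so
\[
o(1)\int_K v^2\le o(1)\,C\int_K\Bigl(\lvert\nabla^K v\rvert^2+\tfrac18\lvert A_K\rvert^2v^2\Bigr).
\]
For $m$ large this is absorbed, and the perturbed form controls $\int_K v^2+\int_{\partial_{\rm wing}K}v^2$ with a constant depending only on $K$. Combining the estimates gives, on each cell,
\[
\int_{K_{m,i,j}}u^2+\tau_m\int_{\Psi_{m,i,j}(\partial_{\rm wing}K)}u^2\le C\tau_m^2\int_{K_{m,i,j}}\Bigl(\lvert\nabla u\rvert^2+\tfrac18\lvert A_{M_m}\rvert^2u^2\Bigr).
\]

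Finally I sum over the $4m$ cells. They overlap only along period-cut sides, which have measure zero, so the left-hand sides add up exactly to the left-hand side of \eqref{eq:stable-cone-neck-coercivity} by \eqref{eq:stable-cone-neck-interface}. Likewise the right-hand sides add up to the integral over $N_m$. The constant $C$ is independent of $i$, $j$ and $m$ because the convergence in \eqref{eq:stable-cone-cell-convergence} is uniform. For the finitely many small $m$ the estimate holds with some constant, so it suffices to treat $m$ large.
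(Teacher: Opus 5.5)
Your proposal is correct and follows essentially the same route as the paper: pull $u$ back to $v=u\circ\Psi_{m,i,j}$ on the model cell, rescale by $\tau_m$, use \eqref{eq:stable-cone-cell-convergence} to compare with $g_K$ and $\lvert A_K\rvert^2$, apply Lemma~\ref{lem:stable-cone-cell-coercivity}, and sum over the $4m$ cells. Your explicit absorption of the $o(1)$ curvature error via Lemma~\ref{lem:stable-cone-cell-coercivity} spells out a step the paper leaves implicit in ``combining the above estimates''. Your aside about finitely many small $m$ is unnecessary, since $M_m$ and its cells are only defined for $m$ large anyway.
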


\begin{proof}
Fix a $K_{m,i,j}$ and set
\eq{
v
=u\circ\Psi_{m,i,j}\quad\text{on }K.
}
Since $K$ is two dimension, we have
\eq{
\int_{K_{m,i,j}}u^2\,\rd\mu_{M_m}
=&\tau_m^2\int_Kv^2\rd\mu_{\tau_m^{-2}\Psi^\ast_{m,i,j}g_{M_m}},\\
\tau_m\int_{\Psi_{m,i,j}(\partial_{\mathrm{wing}}K)}
  u^2\,\rd\sigma_{M_m}
=&\tau_m^2\int_{\p_{\rm wing}K}v^2\rd\sigma_{\tau_m^{-2}\Psi^\ast_{m,i,j}g_{M_m}},\\
\int_{K_{m,i,j}}\abs{\na^{M_m} u}^2\rd\mu_{M_m}
=&\int_{K}
\left({\tau_m^{-2}\Psi^\ast_{m,i,j}g_{M_m}}\right)^{-1}(\rd v,\rd v)
\rd\mu_{\tau_m^{-2}\Psi^\ast_{m,i,j}g_{M_m}},\\
\int_{K_{m,i,j}}\abs{A_{M_m}}^2u^2\rd\mu_{M_m}
=&\int_K\tau^{2}_m\abs{A_{M_m}}^2\circ\Psi_{m,i,j}v^2\rd\mu_{\tau_m^{-2}\Psi^\ast_{m,i,j}g_{M_m}}.
}
By \eqref{eq:stable-cone-cell-convergence},
we know
\eq{
\tilde\varepsilon_m
\coloneqq\sup_{i,j}\left(\left\|
    \tau_m^{-2}\Psi_{m,i,j}^*g_{M_m}-g_K
  \right\|_{C^0(K,g_K)}
  +
  \left\|
    \tau_m^2\Psi_{m,i,j}^*\lvert A_{M_m}\rvert^2
    -\lvert A_K\rvert^2
  \right\|_{C^0(K,g_K)}\right)\ra0\text{ as }m\ra\infty.
}
Hence there exists $C=C(K)>0$, such that for all sufficiently large $m$, and for any $i,j$,
\eq{
\left(1-C\tilde\varepsilon_m\right)g_K
\leq\tau_m^{-2}\Psi^\ast_{m,i,j}g_{M_m}
\leq\left(1+C\tilde\varepsilon_m\right)g_K,\quad
\abs{A_K}^2
\leq\tau_m^2\abs{A_{M_m}}^2\circ\Psi_{m,i,j}+\tilde\varepsilon_m,
}
uniformly on $K$.
Therefore, after multiplying \eqref{eq:stable-cone-model-cell-coercivity} by \(\tau_m^2\) and combining the above estimates, one obtains
\eq{
  \int_{K_{m,i,j}}u^2\,\rd\mu_{M_m}
  +\tau_m\int_{\Psi_{m,i,j}(\partial_{\mathrm{wing}}K)}
  u^2\,\rd\sigma_{M_m}
  \leq
  C\tau_m^2
  \int_{K_{m,i,j}}
  \left(\lvert\na^{M_m} u\rvert^2
  +\frac18\lvert A_{M_m}\rvert^2u^2\right)\rd\mu_{M_m},
}
where $C=C(K)>0$.
Summing over $i\in\{1,2\}$ and $1\leq j\leq2m$, we obtain \eqref{eq:stable-cone-neck-coercivity}.
\end{proof}

\begin{lemma}\label{lem:stable-cone-strip-estimates}
There is a constant $C$, independent of $m$ and $\delta$, such that if $b\tau_m<\delta<\frac{\pi}{8}$, then for every $v\in H^1(\Omega_m,g_{m,a})$, there hold
\begin{align}
  \int_{\{b\tau_m<s<\delta\}}v^2\,\rd\mu_{g_{m,a}}
  &\leq
  C\delta\int_{\{s=b\tau_m\}}v^2\,\rd\sigma_{m,a}
  +C\delta^2\int_{\{b\tau_m<s<\delta\}}
  \lvert\na^{g_{m,a}} v\rvert^2\,\rd\mu_{g_{m,a}},
  \label{eq:stable-cone-strip-mass}\\
  \int_{\{s=\delta\}}v^2\,\rd\sigma_{m,a}
  &\leq
  C\int_{\{s=b\tau_m\}}v^2\,\rd\sigma_{m,a}
  +C\delta\int_{\{b\tau_m<s<\delta\}}
  \lvert\na^{g_{m,a}} v\rvert^2\,\rd\mu_{g_{m,a}},
  \label{eq:stable-cone-strip-trace}
\end{align}
where $\na^{g_{m,a}}$ denotes the gradient on $\Om_{m,a}$ with respect to $g_{m,a}$. The same estimates hold at the other end of the cylinder.
\end{lemma}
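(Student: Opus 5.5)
The plan is to reduce both estimates to one-dimensional inequalities along the $s$-direction of the flat cylinder, and then transfer them to $g_{m,a}$ using the uniform comparison \eqref{eq:stable-cone-bulk-equivalence}. Since $C^{-1}g_0\leq g_{m,a}\leq Cg_0$ on all of $\overline\Omega_m$ (with $C$ independent of $m$), the two metrics are uniformly comparable in every relevant sense. The area elements $\rd\mu_{g_{m,a}}$ and $\rd s\,\rd t$ are comparable, and the boundary line elements $\rd\sigma_{m,a}$ and $\rd t$ are comparable. Finally, $\lvert\na^{g_{m,a}}v\rvert^2$ is comparable to $\lvert\p_sv\rvert^2+\lvert\p_tv\rvert^2\geq\lvert\p_sv\rvert^2$. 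It therefore suffices to prove both inequalities with $g_{m,a}$ replaced by $g_0$, keeping only $\lvert\p_sv\rvert^2$ on the right-hand side.

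By density I will assume $v$ is smooth up to the boundary of $\overline\Omega_m$. Set $s_0=b\tau_m$ and fix $t\in\mathbb R/2\pi\mathbb Z$. For $s_0<s<\delta$, the fundamental theorem of calculus and Cauchy--Schwarz give
\[
v(s,t)^2\leq 2v(s_0,t)^2+2(s-s_0)\int_{s_0}^{s}\lvert\p_sv(\sigma,t)\rvert^2\,\rd\sigma
\leq 2v(s_0,t)^2+2\delta\int_{s_0}^{\delta}\lvert\p_sv\rvert^2\,\rd\sigma .
\]

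\textbf{Trace estimate.} Taking $s=\delta$ and integrating in $t$ yields \eqref{eq:stable-cone-strip-trace} directly.

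\textbf{Mass estimate.} Integrating instead over $s\in(s_0,\delta)$, an interval of length at most $\delta$, gives the factors $\delta$ and $\delta^2$ in front of the two terms. Integrating in $t$ then yields \eqref{eq:stable-cone-strip-mass}.

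The restriction $\delta<\pi/8$ only guarantees that the strip $\{s_0<s<\delta\}$ lies inside $\Omega_m$, well away from the opposite end $s=\tfrac\pi2-b\tau_m$, so the one-dimensional argument applies. The other end is handled identically, replacing $s$ by $\tfrac\pi2-s$.

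The only point needing care is that the constants come solely from \eqref{eq:stable-cone-bulk-equivalence}, and therefore do not depend on $m$ or $\delta$. One must also use that this comparison is stated uniformly up to the moving boundary $\{s=b\tau_m\}$, and not merely on compact subsets (where only \eqref{eq:stable-cone-bulk-convergence} would be available). Once that is checked, there is no real obstacle; the argument is a routine Poincaré/trace inequality on a thin collar.
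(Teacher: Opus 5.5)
Your proposal is correct and follows the paper's proof essentially verbatim. Both arguments use the same pointwise bound along each $t$-line, obtained from the fundamental theorem of calculus and Cauchy--Schwarz. Evaluating it at $s=\delta$ and integrating in $t$ gives \eqref{eq:stable-cone-strip-trace}, and integrating it over $s\in(b\tau_m,\delta)$ gives \eqref{eq:stable-cone-strip-mass}. Both then transfer from $g_0$ to $g_{m,a}$ via the uniform comparison \eqref{eq:stable-cone-bulk-equivalence}, which is what makes the constants independent of $m$ and $\delta$, as you noted.
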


\begin{proof}
By the standard density argument, it suffices to prove the estimates for smooth $v$. For fixed $t$, by the fundamental theorem of calculus and Cauchy--Schwarz, we have
\eq{\label{ineq:v(s,t)-v(s_0,t)}
  \lvert v(s,t)\rvert^2
  \leq
  2\lvert v(s_0,t)\rvert^2
  +2(s-s_0)\int_{s_0}^s\lvert\partial_sv(\xi,t)\rvert^2\,\rd\xi.
}
Integrating first in $t$ and then in $s\in(s_0,\delta)$ gives
\eq{
  \int_{s_0}^{\delta}\int v^2\,\rd t\,\rd s
  \leq
  2\delta\int v^2(s_0,t)\,\rd t
  +2\delta^2\int_{s_0}^{\delta}\int
  \lvert\partial_sv\rvert^2\,\rd t\,\rd s.
}
Letting $s=\delta$ in \eqref{ineq:v(s,t)-v(s_0,t)} and integrating in $t$ gives
\eq{
  \int v^2(\delta,t)\,\rd t
  \leq
  2\int v^2(s_0,t)\,\rd t
  +2\delta\int_{s_0}^{\delta}\int
  \lvert\partial_sv\rvert^2\,\rd t\,\rd s.
}
Thanks to \eqref{eq:stable-cone-bulk-equivalence}, these estimates can be transformed from the flat metric $g_0$ to $g_{m,a}$, so that \eqref{eq:stable-cone-strip-mass} and \eqref{eq:stable-cone-strip-trace} follows.
Replacing $s$ by $\frac{\pi}{2}-s$ proves the estimates at the other end, which completes the proof.
\end{proof}

Recalling \eqref{eq:stable-cone-limit-cylinder} and \eqref{defn:Om_m}. Fix any nonzero $f\in C_c^\infty(\Omega)$, we know that $\spt(f)\subset\Om_m$ for all sufficiently large $m$. Define a function $\tilde f_m$ on $M_m$ by letting $\tilde f_m(\Phi_{m,a}(s,t))\coloneqq f(s,t)$ for some $a\in\{1,\cdots,4\}$, then extending $\tilde f_m$ by zero to $M_m\setminus B_{m,a}$. Clearly, $\tilde f_m\in H^1(M_m)$.
The corresponding Rayleigh quotient of $\tilde f_m$ is
\eq{
&\frac{\int_{M_m}\left(\abs{\na^{M_m}\tilde f_m}^2+\frac{1}{8}\abs{A_{M_m}}^2\tilde f_m^2\right)\rd\mu_{M_m}}{\int_{M_m}\tilde f^2_m\rd\mu_{M_m}}\\
=&\frac{\int_{\Om_m}\left(\abs{\na^{g_{m,a}}f}^2+\frac{1}{8}\left(\abs{A_{M_m}}^2\circ\Phi_{m,a}\right)f^2\right)\sqrt{{\rm det}g_{m,a}}\rd s\rd t}{\int_{\Om_m}f^2\sqrt{{\rm det}g_{m,a}}\rd s\rd t}.
}
By \eqref{eq:stable-cone-bulk-convergence} we see, for all sufficiently large $m$, $\lambda_m$ defined as \eqref{eq:stable-cone-rayleigh-quotient} satisfies the uniform bound $\lambda_m\leq C_f$ with $C_f>0$ depending on $f$ but independent of $m$.

Now we consider a subsequence (still indexed by $m$) whose limit is exactly $\liminf_{m\ra\infty}\lambda_m$, and denote by $u_m>0$ the corresponding first eigenfunction of this subsequence, normalized by $\int_{M_m}u_m^2\,\rd\mu_{M_m}=1$.
Then
\eq{\label{eq:stable-cone-eigenfunction-energy}
  \int_{M_m}
  \left(\lvert\na^{M_m} u_m\rvert^2
  +\frac18\lvert A_{M_m}\rvert^2u_m^2\right)\rd\mu_{M_m}
  =\lambda_m\leq C_f.
}
Set for each $a\in\{1,\cdots,4\}$ the function
\eq{
v_m^{(a)}
=u_m\circ\Phi_{m,a}\quad\text{on }\Omega_m.
}
Choose $\de_k\searrow0$ with $\de_k<\frac{\pi}{8}$ and set an exhaustion of $\Om$:
\eq{\label{defn:K_k}
K_k
=[\delta_k,\frac{\pi}{2}-\delta_k]\times(\mathbb R/2\pi\mathbb Z),\quad
K_1\subset K_2\subset\cdots,\quad\bigcup_kK_k=\Om.
}
Note that for fixed $k$, one has $K_k\subset\Omega_m$ for all large $m$. By \eqref{eq:stable-cone-bulk-equivalence} and \eqref{eq:stable-cone-eigenfunction-energy}, there holds
\eq{\label{ineq:v_m^a-W^1,2-upper-bound}
  \sum_{a=1}^4\lVert v_m^{(a)}\rVert_{H^1(K_k)}^2
  \leq C,
}
where $C$ is independent of $k$ and $m$. By Rellich compactness and a diagonal argument, there exist functions $v^{(a)}\in H^1_{\mathrm{loc}}(\Omega)$ such that, on every $K_k$,
\eq{\label{eq:stable-cone-local-eigenfunction-convergence}
  v_m^{(a)}\rightharpoonup v^{(a)}
  \quad\text{weakly in }H^1(K_k),
  \qquad
  v_m^{(a)}\ra v^{(a)}
  \quad\text{strongly in }L^2(K_k).
}

\begin{lemma}\label{lem:stable-cone-no-loss}
The limit functions belong to $H_0^1(\Omega)$ and satisfy
\eq{\label{eq:stable-cone-limit-mass}
  \sum_{a=1}^4\int_\Omega\bigl(v^{(a)}\bigr)^2\,\rd s\,\rd t=1.
}
\end{lemma}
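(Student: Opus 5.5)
The plan is to show that no $L^2$-mass of $u_m$ escapes into the necks $N_m$ or into thin strips near the ends of the cylinders. I will also show that the limits $v^{(a)}$ have vanishing trace at $s=0$ and $s=\pi/2$.

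\emph{Neck mass.} By Lemma \ref{lem:stable-cone-neck-coercivity} and \eqref{eq:stable-cone-eigenfunction-energy},
\[
\int_{N_m}u_m^2\,\rd\mu_{M_m}+\tau_m\int_{\Gamma_m}u_m^2\,\rd\sigma_{M_m}\leq C\tau_m^2C_f .
\]
Hence $\int_{N_m}u_m^2\to0$ and $\int_{\Gamma_m}u_m^2\,\rd\sigma\leq C\tau_m C_f\to0$. In particular, the boundary traces of $v_m^{(a)}$ on $\{s=b\tau_m\}$ and $\{s=\pi/2-b\tau_m\}$ tend to zero in $L^2(\rd\sigma_{m,a})$. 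By \eqref{eq:stable-cone-bulk-equivalence} they also tend to zero in $L^2(\rd t)$.

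\emph{Strip mass.} Applying \eqref{eq:stable-cone-strip-mass} with $\delta=\delta_k$ at both ends gives
\[
\int_{\{b\tau_m<s<\delta_k\}}(v_m^{(a)})^2\leq C\delta_k\,o_m(1)+C\delta_k^2C_f .
\]
Therefore $\limsup_m\int_{\Omega_m\setminus K_k}\sum_a(v_m^{(a)})^2\leq C\delta_k^2$. Combined with $\int_{M_m}u_m^2=1$, the decomposition \eqref{eq:stable-cone-surface-decomposition}, and the metric convergence \eqref{eq:stable-cone-bulk-convergence} on $K_k$ (which lets me replace $\rd\mu_{g_{m,a}}$ by $\rd s\,\rd t$ up to $o_m(1)$), the strong $L^2(K_k)$ convergence yields
\[
\Bigl|\sum_a\int_{K_k}(v^{(a)})^2\,\rd s\,\rd t-1\Bigr|\leq C\delta_k^2 .
\]
Letting $k\to\infty$ gives \eqref{eq:stable-cone-limit-mass}. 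Weak lower semicontinuity together with \eqref{ineq:v_m^a-W^1,2-upper-bound} also gives $v^{(a)}\in H^1(\Omega)$.

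\emph{Zero trace.} This is the main step. I would use \eqref{eq:stable-cone-strip-trace}. For fixed $\delta\in(0,\pi/8)$ and large $m$ it gives
\[
\int_{\{s=\delta\}}(v_m^{(a)})^2\,\rd t\leq C\,o_m(1)+C\delta C_f .
\]
Since $v_m^{(a)}\rightharpoonup v^{(a)}$ weakly in $H^1$ on $[\delta/2,\pi/2-\delta/2]\times\mathbb R/2\pi\mathbb Z$, the trace operator onto $\{s=\delta\}$ is compact from $H^1$ into $L^2$. So the traces converge strongly, and
\[
\int(v^{(a)})^2(\delta,t)\,\rd t\leq C\delta\quad\text{for all small }\delta .
\]
The $H^1(\Omega)$ function $v^{(a)}$ has a trace at $s=0$, which is the $L^2$-limit of $v^{(a)}(\delta,\cdot)$. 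That trace therefore vanishes, and likewise at $s=\pi/2$. On the flat cylinder, vanishing trace on both boundary circles gives $v^{(a)}\in H^1_0(\Omega)$.

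The delicate point is the uniformity in the trace and strip estimates. This is already provided by the $m$-independent constants in Lemma \ref{lem:stable-cone-strip-estimates} and \eqref{eq:stable-cone-bulk-equivalence}. I will just need to check that the boundary circles $\{s=b\tau_m\}$ are exactly the relevant components of $\Gamma_m$, as stated in Proposition \ref{prop:stable-cone-geometric-decomposition}.
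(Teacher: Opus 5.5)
Your proof is correct and follows the paper's argument essentially step by step. You use the neck coercivity to kill the neck mass and the $\Gamma_m$ traces, the strip estimate \eqref{eq:stable-cone-strip-mass} to control the mass near the ends, and \eqref{eq:stable-cone-strip-trace} passed to the limit to get $\int(v^{(a)})^2(\delta,\cdot)\,\rd t\le C\delta$ and hence zero boundary trace. The only cosmetic difference is that you use compactness of the trace operator to get strong convergence of the slices, whereas the paper uses mere boundedness (weak $L^2$ convergence of traces) together with weak lower semicontinuity of the norm; either suffices.
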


\begin{proof}
By \eqref{eq:stable-cone-neck-coercivity} and \eqref{eq:stable-cone-eigenfunction-energy}, we have
\eq{\label{eq:stable-cone-neck-vanishing}
  \int_{N_m}u_m^2\,\rd\mu_{M_m}=O(\tau_m^2),
  \quad
  \int_{\Gamma_m}u_m^2\,\rd\sigma_{M_m}=O(\tau_m).
}
It follows that for a fixed $\de$,
\eq{
\de\sum^4_{a=1}\left(\int_{\{s=b\tau_m\}}(v_m^{(a)})^2\rd\sigma_{m,a}+\int_{\{s=\frac{\pi}{2}-b\tau_m\}}(v_m^{(a)})^2\rd\sigma_{m,a}\right)
=\de\int_{\Gamma_m}u^2_m\rd\sigma_{M_m}
=O(\de\tau_m),
}
which turns to $0$ as $m\ra\infty$ (recalling $\tau_m=\frac{1}{2m}$). Using \eqref{eq:stable-cone-eigenfunction-energy}, we also find
\eq{
&\de^2\sum^4_{a=1}\left(\int_{\{b\tau_m<s<\de\}}\abs{\na^{g_{m,a}} v_m^{(a)}}^2\rd\mu_{g_{m,a}}+\int_{\{\frac{\pi}{2}-\de<s<\frac{\pi}{2}-b\tau_m\}}\abs{\na^{g_{m,a}} v_m^{(a)}}^2\rd\mu_{g_{m,a}}\right)\\
\leq&\de^2\int_{M_m}\abs{\na^{M_m} u_m}^2\rd\mu_{M_m}
\leq C_f\de^2.
}
Applying \eqref{eq:stable-cone-strip-mass} at both ends of every bulk cylinder, then summing up.
Using the above estimates we obtain
\eq{\label{ineq:limsup-bdry-strip}
  \limsup_{m\to\infty}
  \sum_{a=1}^4
  \int_{\{s<\delta\}\cup\{s>\frac{\pi}{2}-\delta\}}
  \bigl(v_m^{(a)}\bigr)^2\,\rd\mu_{g_{m,a}}
  \leq C\delta^2.
}
By \eqref{eq:stable-cone-surface-decomposition}, we can write
\eq{
1
=&\int_{M_m}u_m^2\rd\mu_{M_m}\\
=&\int_{N_m}u_m^2\rd\mu_{M_m}+\sum_{a=1}^4\int_{\{\de<s<\frac{\pi}{2}-\de\}}(v_m^{(a)})^2\rd\mu_{g_{m,a}}
+\sum_{a=1}^4\int_{\{s<\de\}\cup\{s>\frac{\pi}{2}-\de\}}(v_m^{(a)})^2\rd\mu_{g_{m,a}},
}
thus by \eqref{eq:stable-cone-neck-vanishing} and \eqref{ineq:limsup-bdry-strip},
\eq{
1-C\de^2+O(\tau_m^2)
\leq\sum_{a=1}^4\int_{\{\de<s<\frac{\pi}{2}-\de\}}(v_m^{(a)})^2\rd\mu_{g_{m,a}}
\leq1.
}
By virtue of the convergences \eqref{eq:stable-cone-local-eigenfunction-convergence}, \eqref{eq:stable-cone-bulk-convergence}, we can first let $m\ra\infty$ then $\de\searrow0$, which proves \eqref{eq:stable-cone-limit-mass}.
For each $K_k$, by the lower semicontinuity (thanks to the weak convergence in \eqref{eq:stable-cone-local-eigenfunction-convergence}), in conjunction with the estimate \eqref{ineq:v_m^a-W^1,2-upper-bound}, we find
\eq{
\sum_{a=1}^4\int_{K_k}\left(\abs{v^{(a)}}^2+\abs{D v^{(a)}}^2\right)\rd s\rd t
\leq C,
}
where $C$ is independent of $k$ and $m$.
Since $\{K_k\}_{k\in\mbN}$ is an exhaustion of $\Om$, by monotonicity convergence theorem we thus conclude $v^{(a)}\in H^1(\Om)$ for all $a\in\{1,\cdots,4\}$.

Finally, we show that $v^{(a)}$ has zero boundary trace. To this end we fix $\delta\in(0,\frac{\pi}{8})$. Similar to the proof of \eqref{ineq:limsup-bdry-strip}, we can use \eqref{eq:stable-cone-strip-trace}, in conjunction with \eqref{eq:stable-cone-neck-vanishing}, \eqref{eq:stable-cone-eigenfunction-energy} and \eqref{eq:stable-cone-bulk-equivalence}, to deduce
\eq{\label{ineq:limsup-boundary-mass}
  \limsup_{m\to\infty}
  \int_{\{s=\delta\}\cup\{s=\frac{\pi}{2}-\de\}}\bigl(v_m^{(a)}\bigr)^2\,\rd t
  \leq C\delta.
}
Consider the cylinder
\eq{
U_\de
\coloneqq
\left(\frac{\de}{2},\de\right)
\times(\mbR/2\pi\mbZ).
}
Then $\overline{U_\de}\Subset\Om$, and
$U_\de\subset\Om_m$ for all sufficiently large $m$.
The boundary of $U_\de$ has two components, and restriction of the trace operator to the component
$\{s=\de\}\times S^1$ gives a bounded linear map
\eq{
T_\de:H^1(U_\de)\to L^2(S^1),
\quad
T_\de w=w(\de,\cdot).
}
Hence by \eqref{eq:stable-cone-local-eigenfunction-convergence}, we have
\eq{
v^{(a)}_m(\de,\cdot)\rightharpoonup v^{(a)}(\de,\cdot)\quad\text{ weakly in }L^2(S^1),
}
and thanks to \eqref{ineq:limsup-boundary-mass},
\eq{\label{ineq:v^a-boundary-slide-estimate}
\int_{\{s=\de\}}(v^{(a)})^2\rd t
\leq\liminf_{m\ra\infty}\int_{\{s=\de\}}(v^{(a)}_m)^2\rd t
\leq C\de.
}
By the above, we can view $v^{(a)}(\cdot,\cdot)$ as a $L^2(\mbR/2\pi\mbZ)$-valued function on $\de\in(0,\frac{\pi}{2})$, satisfying $v^{(a)}\in H^1\left((0,\frac{\pi}{2});L^2(\mbR/2\pi\mbZ)\right)\hookrightarrow
  C^0\left([0,\frac{\pi}{2}];L^2(\mbR/2\pi\mbZ)\right)$.
Thanks to \eqref{ineq:v^a-boundary-slide-estimate}, we thus conclude that $v^{(a)}(0,\cdot)=0$.
Similarly, one sees that $v^{(a)}(\frac{\pi}{2},\cdot)=0$, and hence $v^{(a)}\in H^1_0(\Om)$.
This completes the proof.
\end{proof}

\begin{proof}[Proof of Proposition \ref{thm:stable-cone-spectral-lower-bound}]

For each $k\in\mbN$, we recall the notation $K_k$ given by \eqref{defn:K_k}.
For a fixed $k$, when $m$ is sufficiently large, we have
\eq{
\lambda_m
=&\int_{M_m}\left(\abs{\na^{M_m} u_m}^2+\frac{1}{8}\abs{A_{M_m}}^2u_m^2\right)\rd\mu_{M_m}\\
\geq&\sum_{a=1}^4\int_{\Phi_{m,a}(K_k)}\left(\abs{\na^{M_m} u_m}^2+\frac{1}{8}\abs{A_{M_m}}^2u_m^2\right)\rd\mu_{M_m}.
}
By \eqref{eq:stable-cone-bulk-convergence} and \eqref{ineq:v_m^a-W^1,2-upper-bound}, there exists a sequence $\hat\varepsilon_{m}=\hat\varepsilon_m(k)$, which $\ra0$ as $m\ra\infty$, such that
\eq{
\lambda_m
\geq\sum_{a=1}^4\int_{K_k}\left(\abs{Dv_m^{(a)}}^2+\frac{1}{4}(v^{(a)}_m)^2\right)\rd s\rd t
-C\hat\varepsilon_m,
}
where $C>0$ is independent of $m$ and $k$.
By the weak convergence in \eqref{eq:stable-cone-local-eigenfunction-convergence} and the lower semi-continuity of $H^1$-norm, we find
\eq{
\liminf_{m\ra\infty}\lambda_m
\geq\sum_{a=1}^4\int^{\frac{\pi}{2}-\de_k}_{\de_k}\int_{\mbR/2\pi\mbZ}\left(\abs{D v^{(a)}}^2+\frac{1}{4}(v^{(a)})^2\right)\rd t\rd s.
}
Letting $k\ra\infty$, by the monotonicity convergence theorem, we get
\eq{
\liminf_{m\ra\infty}\lambda_m
\geq\sum_{a=1}^4\int_\Om\left(\abs{D v^{(a)}}^2+\frac{1}{4}(v^{(a)})^2\right)\rd s\rd t.
}
Since the one-dimensional Poincar\'e inequality in the $s$-variable yields
\begin{equation}\label{eq:stable-cone-cylinder-poincare}
  \int_\Om\abs{D v}^2\rd s\rd t
  \geq4\int_\Om v^2\rd s\rd t,
  \quad\forall v\in H_0^1(\Om),
\end{equation}
with equality for $v(s,t)=\sin(2s)$.
By virtue of the fact that $v^{(a)}\in H^1_0(\Om)$, in conjunction with \eqref{eq:stable-cone-limit-mass}, we thus conclude
\eq{
\liminf_{m\ra\infty}\lambda_m
\geq\sum^4_{a=1}\left(4\int_\Om(v^{(a)})^2\rd s\rd t+\frac{1}{4}\int_\Om (v^{(a)})^2\rd s\rd t\right)
=\frac{17}{4}
}
as desired.
This completes the proof.
\end{proof}

\providecommand{\bysame}{\leavevmode\hbox to3em{\hrulefill}\thinspace}
\providecommand{\MR}{\relax\ifhmode\unskip\space\fi MR }
\providecommand{\MRhref}[2]{\href{http://www.ams.org/mathscinet-getitem?mr=#1}{#2}
}
\providecommand{\href}[2]{#2}

\end{document}